 \def\FF{{\mathbb F}}  
 \def\NN{{\mathbb N}}  
 \def\RR{{\mathbb R}}  
 \def\ZZ{{\mathbb Z}}
\def\Si{\Sigma}
\def\La{\Lambda}
\def\Om{\Omega}
\def\Ga{\Gamma}
\def\cA{{\cal A}}  \def\cG{{\cal G}} \def\cM{{\cal M}} \def\cS{{\cal S}}
\def\cB{{\cal B}}  \def\cH{{\cal H}}  
\def\cC{{\cal C}}  \def\cI{{\cal I}}  
   \def\cP{{\cal P}} 
\def\cE{{\cal E}}
\newtheorem{theo}{Theorem}[section]
\newtheorem{clai}{Claim}
\newtheorem{lemm}{Lemma}[section]
\newtheorem{coro}[lemm]{Corollary}
\newtheorem{defi}[lemm]{Definition}
\newtheorem{prop}[lemm]{Proposition}
\newtheorem{ques}{Question}
\newtheorem{nota}{Notation}
\newtheorem{rema}[lemm]{Remark}
\newenvironment{demo}[1][Proof]{\noindent {\bf #1~: }}{\hfill$\Box$\medskip}
\theoremstyle{definition}
\theoremstyle{remark}
\newcommand{\limite}[2]{\mathop{\longrightarrow}
\limits_{\mathrm{#1}}^{\mathrm{#2}}}
\newcommand{\N}{\mathbb{N}}
\newcommand{\Z}{\mathbb{Z}}
\newcommand{\R}{\mathbb{R}}
\newcommand{\mc}[1]{\mathcal{#1}}
\newcommand{\crochetl}{\left[\kern-0.15em\left[}
\newcommand{\crochetr}{\right]\kern-0.15em\right]}
\def\restriction#1#2{\mathchoice
              {\setbox1\hbox{${\displaystyle #1}_{\scriptstyle #2}$}
              \restrictionaux{#1}{#2}}
              {\setbox1\hbox{${\textstyle #1}_{\scriptstyle #2}$}
              \restrictionaux{#1}{#2}}
              {\setbox1\hbox{${\scriptstyle #1}_{\scriptscriptstyle #2}$}
              \restrictionaux{#1}{#2}}
              {\setbox1\hbox{${\scriptscriptstyle #1}_{\scriptscriptstyle #2}$}
              \restrictionaux{#1}{#2}}}
\def\restrictionaux#1#2{{#1\,\smash{\vrule height .8\ht1 depth .85\dp1}}_{\,#2}}
\title{Centralizers of $C^1$-contractions of the half line}
\author{C. Bonatti and \'E. Farinelli}
\date{\today}
\begin{document}
\maketitle

{\footnotesize
\noindent 
{\bf Abstract:} 
{\it A subgroup  $G\subset Diff^1_+([0,1])$ is  
\emph{\bf{$C^1$-close to the identity}} if  there is a sequence $h_n\in Diff^1_+([0,1])$ such that the conjugates
$h_n g h_n^{-1}$ tend to the identity for the $C^1$-topology, for every $g\in G$. 
This is equivalent to the fact that 
 $G$  can be \emph{\bf{embedded in the $C^1$-centralizer}} of a $C^1$-contraction of 
 $[0,+\infty)$(see \cite{Fa} and 
 Theorem~\ref{t.cohomologique}). 

We first describe the topological dynamics of groups $C^1$-close to the identity. Then, we show that 
the class of groups $C^1$-close to the 
identity is invariant under some natural dynamical and algebraic extensions.
As a consequence,  we can describe a large class of groups $G\subset Diff^1_+([0,1])$
whose topological dynamics implies that 
they are $C^1$-close to the identity.

This allows us to show that the free group $\FF_2$ admits faithfull 
actions which are $C^1$-close to the identity. In particular, the $C^1$-centralizer of  a $C^1$-contraction
may contain free groups.

}
\vskip 5mm

\noindent{\bf MSC 2010 classification}: 22F05, 37C85.

\noindent{{\bf Keywords:} actions on $1$-manifolds, free groups, centralizer, translation number,
$C^1$ diffeomorphisms.}
}

\section{Introduction}
\subsection{Groups $C^1$-close to the identity and centralizers of contractions}
The main  motivation of this paper is the study of centralizers of the \textbf{\emph{$C^1$-contractions 
of the half line $[0,+\infty)$}}. A diffeomorphism $f$ of 
$[0,+\infty)$ is called a contraction if $f(x)<x$ for every $x\neq 0$. 
Unless it is explicitely indicated, a contraction will now refer to a $C^1$-diffeomorphism.
When $f$ is at least $C^2$, 
 Szekeres, in \cite{Sz} (see also \cite{Se}),
proved that $f$ is the time-one map of the flow of a $C^1$-vector field $X$, and Kopell's Lemma (see \cite{Ko}) implies that the $C^1$-centralizer of $f$ is 
precisely the flow $\{X_t, t\in\RR \}$ of the Szekeres vector field. When $f$ is only required to be $C^1$, Szekeres result does not hold anymore
and neither does Kopell's Lemma. Actually, the $C^1$-centralizer of a $C^1$-contraction $f$ may be very different according to $f$.  Generically it
is trivial (i.e. equal to $\{f^n,n\in\ZZ\}$, see \cite{To}) but it can also be very large (non abelian and non countable). 
We will see that there are at the same time many algebraic and dynamical restrictions on the possible groups, but also a 
large variety of dynamical properties which allows a group to be included in a centralizer of a contraction.

In \cite{Fa}, we consider groups $G$ of diffeomorphisms of a segment $I\subset (0,1)$. We say  that 
$G$ is \textbf{\emph{embeddable in the centralizer of a contraction}} if  there exists a contraction $f$ of $[0,+\infty)$ 
and a subgroup  $\tilde G$ of the $C^1$-centralizer of $f$ which induces $G$ by restriction to $I$.  
\cite[Theorem 3]{Fa} shows that $G$ is embeddable in the centralizer of a contraction if and only if there is a $C^1$-continuous path of 
diffeomorphisms $h_t\in Diff^1_+(I)$ such that $h_t g h_t^{-1}$ tends to the identity for every $g\in G$.

As a direct consequence one deduces that, if $G$ is embeddable 
in the centralizer of a contraction, then
$G$ is also \textbf{\emph{embeddable in the centralizer of a diffeomorphism 
$f$ of $[0,1]$ without fixed point in $(0,1)$}}.  

 Finally,  an argument by A. Navas proves that:

\begin{theo}\label{t.cohomologique} Let $I$ be a compact segment. 
Given a group $G\subset Diff^1(I)$, the two following properties are equivalent:
\begin{itemize}
\item  there is a $C^1$-continuous path of 
diffeomorphisms $h_t\in Diff^1_+(I)$, $t\in[0,1)$, such that $h_t g h_t^{-1}$ tends to the identity in the $C^1$-topology when $t$ tends to $1$, for every $g\in G$;
\item there is a sequence of 
diffeomorphisms $h_n\in Diff^1_+(I)$, $n\in\NN$,  such that $h_n g h_n^{-1}$ tends to the identity  in the $C^1$-topology, for every $g\in G$ as $n\to +\infty$.
\end{itemize}
\end{theo}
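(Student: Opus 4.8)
The implication path $\Rightarrow$ sequence is trivial: take $h_n:=h_{1-1/n}$. For the converse I would interpolate between the terms of the given sequence $(h_n)$. The elementary building block is the \emph{affine path}: if $\varphi\in Diff^1_+(I)$ then $\varphi^{(s)}(x):=(1-s)x+s\varphi(x)$, $s\in[0,1]$, stays in $Diff^1_+(I)$ (its derivative $(1-s)+s\varphi'$ is positive and it fixes $\partial I$), is $C^1$-continuous in $s$, joins $\mathrm{id}$ to $\varphi$, and satisfies $\|\varphi^{(s)}-\mathrm{id}\|_{C^1}=s\|\varphi-\mathrm{id}\|_{C^1}$ with $\omega_{(\varphi^{(s)})'}\le\omega_{\varphi'}$ for the moduli of continuity of the derivatives. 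After passing to a subsequence and reindexing (this does not affect the hypothesis), on the interval $[1-1/n,1-1/(n+1)]$ I would put $h_t:=\psi_n^{(1-\sigma(t))}\circ h_{n+1}$, with $\psi_n:=h_nh_{n+1}^{-1}$ and $\sigma$ reparametrizing the interval onto $[0,1]$; this is a $C^1$-continuous path from $h_n$ to $h_{n+1}$, and the pieces glue into a $C^1$-continuous family $(h_t)_{t\in[0,1)}$.

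The point of this normal form is the identity
\[
h_t\,g\,h_t^{-1}\;=\;\rho_s\,\big(h_{n+1}\,g\,h_{n+1}^{-1}\big)\,\rho_s^{-1},\qquad \rho_s:=\psi_n^{(1-\sigma(t))},
\]
which exhibits $h_tgh_t^{-1}$ as a $\rho_s$-conjugate of $h_{n+1}gh_{n+1}^{-1}$ — a diffeomorphism the hypothesis makes $C^1$-close to $\mathrm{id}$ for $n$ large. A routine $C^1$-estimate then bounds $\sup_t\|h_tgh_t^{-1}-\mathrm{id}\|_{C^1}$ over the $n$-th interval by $F\big(R_n,\omega_n,\|h_{n+1}gh_{n+1}^{-1}-\mathrm{id}\|_{C^1}\big)$, where $R_n$ bounds $\|\rho_s\|_{C^1}$ and $\|\rho_s^{-1}\|_{C^1}$ for all $s$, $\omega_n$ is a modulus of continuity for $\rho_s'$ (both controlled by $\psi_n=h_nh_{n+1}^{-1}$), and $F(R,\omega,\delta)\to0$ as $\delta\to0$ for fixed $R,\omega$. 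For each fixed $g$ the last argument tends to $0$, so the bound tends to $0$ provided $R_n$ and $\omega_n$ do not degenerate too fast.

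This proviso is the crux and, I expect, the main obstacle: $R_n$ and $\omega_n$ depend on the \emph{pair} $(h_n,h_{n+1})$ of consecutive selected terms, so the rate of smallness demanded on the $n$-th interval refers to a diffeomorphism not yet chosen, and the hypothesis is moreover only pointwise in $g$ (the numbers $\|hgh^{-1}-\mathrm{id}\|_{C^1}$ need not be uniformly small over $g\in G$). I would handle this by first reducing to $G$ countable — fix a countable generating set and observe that $h_tgh_t^{-1}\to\mathrm{id}$ for all generators forces it for all $g\in G$ — and then choosing the subsequence $(h_{m_k})$ and the reparametrizations $\sigma$ inductively, invoking the continuity of conjugation in $Diff^1_+(I)$ to make the successive thresholds nonempty and a diagonal argument over an exhaustion of the generating set to force $\sup_t\|h_tgh_t^{-1}-\mathrm{id}\|_{C^1}\to0$ for every $g$. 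The delicate quantitative point is to verify that, at each stage, $m_{k+1}$ can be taken large enough that the decay of $\|h_{m_{k+1}}gh_{m_{k+1}}^{-1}-\mathrm{id}\|$ beats the growth of $R_k$ and $\omega_k$.

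Finally, a conceptual shortcut (and sanity check): a sequence $(h_n)$ with $h_ngh_n^{-1}\to\mathrm{id}$ is precisely the data for stacking rescaled copies of $(I,G)$ onto the fundamental domains of a model $C^1$-contraction $f$ of $[0,+\infty)$ with the $h_n$ as gluing maps — the $C^1$-regularity at the origin being the condition $h_ngh_n^{-1}\to\mathrm{id}$ — which realizes $G$ inside the $C^1$-centralizer of $f$; then \cite[Theorem 3]{Fa} yields the desired $C^1$-continuous path. I expect the interpolation argument above to be the one attributed to A. Navas.
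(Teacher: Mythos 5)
Your main argument has a genuine gap at the step you yourself flag as ``the crux.'' The bound you get on the $n$-th interpolation interval, $F\bigl(R_n,\omega_n,\|h_{n+1}gh_{n+1}^{-1}-\mathrm{id}\|_{C^1}\bigr)$, involves $R_n$ and $\omega_n$, which control $\psi_n=h_nh_{n+1}^{-1}$ and hence \emph{depend on the choice of $h_{n+1}$}; so the inductive prescription ``take $m_{k+1}$ so large that the decay of $\|h_{m_{k+1}}gh_{m_{k+1}}^{-1}-\mathrm{id}\|$ beats the growth of $R_k,\omega_k$'' is circular. Moreover you are only allowed to pass to a subsequence of the given $(h_n)$; if $\|h_n\|_{C^1}\to\infty$ (which is the interesting case, and is compatible with $h_ngh_n^{-1}\to\mathrm{id}$), then $\|h_{m_k}h_{m_{k+1}}^{-1}\|_{C^1}$ is out of your control, and the affine path $\rho_s$ between $\mathrm{id}$ and $\psi_k$ has arbitrarily large $C^1$-size for intermediate $s$ even though the two endpoint conjugates $\rho_0 g'\rho_0^{-1}$ and $\rho_1 g'\rho_1^{-1}$ are both small. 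Nothing in the affine interpolation forces smallness in between; the cancellation that makes $\psi_n g'\psi_n^{-1}=h_ngh_n^{-1}$ small at $s=1$ does not propagate through the convex combination $(1-s)\mathrm{id}+s\psi_n$.

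The paper interpolates in a different place: it passes to the logarithmic derivative. Since $h_ngh_n^{-1}\to\mathrm{id}$ is equivalent to $\psi_n\circ g-\psi_n\to-\log Dg$ uniformly with $\psi_n=\log Dh_n$ (an approximate solution of the linear cohomological equation), one sets $\psi_t=(n+1-t)\psi_n+(t-n)\psi_{n+1}$ on $[n,n+1]$ and $h_t=h_{\psi_t}$ where $h_\psi(x)=\int_0^xe^{\psi}/\int_0^1e^{\psi}$. Then $\psi_t\circ g-\psi_t$ is the convex combination of $\psi_n\circ g-\psi_n$ and $\psi_{n+1}\circ g-\psi_{n+1}$, so it converges to $-\log Dg$ automatically, with no control needed on $\|\psi_n\|$ itself, and $h_{\psi_n}=h_n$ so the path truly interpolates the sequence. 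This is the whole point of the linearization and it is exactly what your affine path lacks. Your closing ``conceptual shortcut'' (stacking rescaled copies on the fundamental domains of a model contraction and invoking \cite[Theorem~3]{Fa}) is in the right spirit and is essentially the equivalence the introduction of the paper refers to, but you do not carry it out, and making the gluing produce a genuine $C^1$-action commuting with $f$ requires work comparable to the cohomological argument.
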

This fact is not trivial at all and is specific to the identity map:  \cite{Fa} provides 
examples of pairs of diffeomorphisms 
$f,g\in Diff^1([0,1])$ such that there are sequences $h_n\in Diff^1([0,1])$ leading $g$  to $f$ by conjugacy
(that is $h_n g h_n^{-1}\underset{n\to\infty}{\overset{C^1}{\longrightarrow}} f$), but such that there is no continous path $h_t$ leading $g$ to $f$ by conjugacy.
The proof of Theorem \ref{t.cohomologique} is presented in Section~\ref{s.cohomologique}.

Therefore, we have four equivalent notions which induce a well defined class of subgroups $G$ of 
$Diff^1_+([0,1])$:

\begin{itemize}
\item $G$ is embeddable in the centralizer of a contraction;
\item $G$ is embeddable in the centralizer of a diffeomorphism $f\in\mc{D}iff^1_+(I)$ without fixed point in the interior of $I$;
\item there exists a path of diffeomorphisms $h_t\in\mc{D}iff^1_+([0;1])$ leading by conjugacy 
every  $g\in G$ to the identity;
\item  there exists a sequence of diffeomorphisms $h_n\in\mc{D}iff^1_+([0;1])$ leading by conjugacy every  $g\in G$ to the identity.
\end{itemize}

\begin{defi} Let $I$ be a segment. 
We say that a group $G\subset Diff^1_+([0,1])$ is \emph{\textbf{$C^1$-close to the identity}} if it 
satisfies one of the four equivalent properties above, that is, for instance, if there is a sequence of 
diffeomorphisms $h_n\in Diff^1_+([0;1])$ such that for every $g\in G$ $$h_n g h_n^{-1}\underset{n\to\infty}{\overset{C^1}{\longrightarrow}}id.$$
\\
We denote by \textbf{$\mc{C}^1_{id}(I)$} the class consisting of these groups; when $I=[0,1]$ we simply denote it by
$\mc{C}^1_{id}$. 
\end{defi}

The aim of this paper is to describe this class of groups $C^1$-close to identity, 
up to group isomorphisms, up to topological conjugacy, and/or  up to smooth conjugacy. 
\\
In other words, we try to answer to the following questions:

\begin{ques} What group $G$ admits a faithful representation $\varphi\colon  G\to Diff^1([0,1)$ so that 
$\varphi(G)$ is $C^1$-close to the identity? 
\end{ques}

As a partial answer, \cite{Mc} implies that Baumslag-Solitar groups cannot be $C^1$-close to the identity. 
In contrast,Theorem~\ref{t.free} shows  that the free group $\FF_2$ admits actions $C^1$-close 
to the identity. 

\begin{ques} What is the topological dynamics of a group $\mc{C}^1$-close to the identity? 
\\
In other words, given  a group $G\subset Homeo_+([0,1))$, under what hypotheses does there exist $h\in Homeo_+([0,1])$ 
such that  $hGh^{-1}$ is contained in $\cC^1_{id}\subset Diff^1_+([0,1])$?
\end{ques}
Theorem~\ref{t.realisation} presents a large class of groups $G\subset Diff^1_+([0,1])$, called 
\textbf{\emph{elementary groups}}, whose 
topological dynamics implies that they are $C^1$-close to the identity. Thus, every group $G'\subset Diff^1_+([0,1])$ 
topologically conjugated to $G$ is $C^1$-close to the identity. 

\begin{ques} Given a group $G\subset Diff^1_+([0,1])$, under what conditions does it belong to  $\mc{C}^1_{id}$ ?
 
\end{ques}

Let us illustrate this question by another which is more precise.
An immediate obstruction
for a group $G\subset Diff^1_+([0,1])$ to be $C^1$-close to the identity is the existence of a hyperbolic fixed point for an element $g\in G$.  
We will say that $G$ is \textbf{\emph{without hyperbolic fixed points}} if, for every $g\in G$ and every $x\in Fix(g)$, $Dg(x)=1$. 
A. Navas suggested that it could be a necessary and sufficient condition, 
maybe for finitely presented groups. \cite{Fa}  proved that this is true for cyclic groups. 

\begin{ques} Let $G$ be a subgroup of  $Diff^1_+([0,1])$  (maybe assuming countable, or finitely presented,
or any other natural hypothesis). We wonder if
\[  G \quad \mbox{without hyperbolic fixed point} \overset{?}{\Longleftrightarrow} G \quad C^1\mbox{-close to identity} \]
\end{ques}
We denote  by $\mc{C}^1_{nonhyp}$ the class of groups $G\subset Diff^1_+([0,1])$ without hyperbolic fixed points. For now we know:

\[ \mc{C}^1_{id}\subset\mc{C}^1_{nonhyp} \]

The rest of the introduction expounds the statements of the results above, and 
proposes some directions for answering these questions.

\subsection{Structure results: description of the topological dynamics}\label{s.topologicaldynamics}

This section gives necessary conditions on  the topological dynamics of a group $G\subset Diff^1_+([0,1])$, 
 for $G$ being  $C^1$-close to the identity.  
 
Given a group $G\subset Homeo^+([0,1])$, a \textbf{\emph{pair of successive fixed points 
of $G$}} is a pair $\{a,b\}$ such that $(a,b)$ is a 
connected component of $[0,1]\setminus Fix (g)$ for some $g\in G$. 
One says that two pairs of successive fixed points $\{a,b\}$ and $\{c,d\}$ are \textbf{\emph{linked}}
 if $(a,b)\cap\{c,d\}$ or $(c,d)\cap\{a,b\}$ consists in exactly one point. 

\begin{defi}\label{d.linked}
A group  $G\subset Homeo^+([0,1])$ is \textbf{\emph{without linked fixed points}} if 
there are no linked pairs 
of successive fixed points. 
\end{defi}
The main topological restriction  for a group $G\subset Diff^1_+([0,1])$ to be  $C^1$-close to identity is: 
 \begin{theo}\label{t.linked}
  If $G\subset Diff^1_+([0,1])$ is $C^1$-close to the identity, then $G$ is without linked  fixed points.
 \end{theo}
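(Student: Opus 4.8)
The plan is to argue by contradiction: suppose $G\subset Diff^1_+([0,1])$ is $C^1$-close to the identity but possesses two linked pairs of successive fixed points. Concretely, there are $g,h\in G$ and points $a<b\le c<d$ (after relabelling, say $a<c<b<d$, the genuinely linked case) such that $(a,b)$ is a connected component of the complement of $Fix(g)$ and $(c,d)$ is a connected component of the complement of $Fix(h)$, with exactly one endpoint of one interval lying strictly inside the other. I would first reduce to a normalized local picture: replacing $g$ by $g^{\pm1}$ and $h$ by $h^{\pm1}$, I may assume $g$ is a contraction on $(a,b)$ toward $a$ and $h$ is a contraction on $(c,d)$ toward $d$, so that near the overlap point $b$, $g$ pushes points left (out of $(c,d)$) while $h$ pushes points toward $d$. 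The key dynamical object is a point $x_0\in(c,b)$ in the overlap: iterating $h$ drives $x_0$ toward $d$, but $g^{-1}$ pushes it toward $b$, and suitable words in $g,h$ will fail to contract uniformly.

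The technical heart is the $C^1$-close-to-identity hypothesis. By definition there are $h_n\in Diff^1_+([0,1])$ with $h_n \gamma h_n^{-1}\to id$ in $C^1$ for every $\gamma\in G$; in particular the conjugated generators $g_n:=h_ng h_n^{-1}$ and $\bar h_n:=h_n h h_n^{-1}$ converge to the identity in $C^1$. I would track what happens to the configuration of fixed-point intervals under $h_n$. Set $a_n=h_n(a)$, etc.; then $(a_n,b_n)$ is a component of $[0,1]\setminus Fix(g_n)$ and $(c_n,d_n)$ one of $[0,1]\setminus Fix(\bar h_n)$, and these two intervals remain linked for every $n$ (linkedness is a purely order-theoretic property preserved by the orientation-preserving homeomorphism $h_n$). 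On the overlap $(c_n,b_n)$, $g_n$ is a contraction toward $a_n$ and $\bar h_n$ a contraction toward $d_n$. The strategy is the classical Kopell/Mather-type commutator estimate adapted to this linked configuration: because $g_n$ has no fixed point on the compact interval $[c_n,b_n)$ accumulating on $b_n$ from the left while $\bar h_n$ fixes $b_n$ is false — rather, $b_n\in Fix(g_n)$ but $b_n\in(c_n,d_n)$ is moved by $\bar h_n$ — one builds a point $y\in(c_n,b_n)$ and compares $\prod Dg_n$ along the $\bar h_n$-orbit of $y$. The cleanest route: pick $N_n$ so that $\bar h_n^{-N_n}(y_n)$ is close to $c_n$; on the orbit segment $y_n,\bar h_n^{-1}(y_n),\dots,\bar h_n^{-N_n}(y_n)$ estimate $\log D g_n$ at these points. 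Since $g_n\to id$ in $C^1$, $|\log Dg_n|$ is uniformly small, but the number $N_n$ of orbit points is forced to infinity (because $\bar h_n$, being $C^1$-small with a fixed point at $d_n>b_n$, has derivative near $1$ so needs many iterates to cross the fixed $g_n$-interval structure), and one extracts a contradiction with $g_n$ fixing $b_n$: the telescoping product $\prod_{k=0}^{N_n} Dg_n(\bar h_n^{-k}y_n)$ must both stay bounded (it is $\le$ something controlled by the fact that $b_n$ is fixed, via the mean value theorem on $[g_n(y_n),y_n]$ pushed around) and blow up or collapse.

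I expect the main obstacle to be making the commutator/distortion estimate genuinely quantitative and uniform in $n$: one must show that the "length" available for the $\bar h_n$-orbit inside the overlap $(c_n,b_n)$, combined with the constraint that $g_n$ fixes the endpoint $b_n$ of that overlap, produces distortion of $g_n$ that is bounded below independently of $n$ — contradicting $Dg_n\to 1$ uniformly. The subtlety is that $h_n$ can distort the metric wildly, so all estimates must be stated in terms of $g_n$, $\bar h_n$ themselves (which are $C^1$-close to $id$) and of the combinatorial linked configuration, never in terms of the original $g,h$ or the ambient Euclidean length of $(c,b)$. A convenient technical device is to normalize by a further conjugacy (affine on the relevant interval) so that the overlap has unit length; then Kopell's inequality $\big|\log Dg_n(b_n)-\log Dg_n(y)\big|\le \mathrm{Var}(\log Dg_n)$ together with $g_n(b_n)=b_n$, $Dg_n(b_n)$ possibly $\ne 1$ but close to $1$, and the fact that the $\bar h_n$-orbit of a point near $b_n$ under iteration visits a definite fraction of the overlap, pins down the contradiction. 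I would organize the final write-up as: (1) set-up and reduction to the normalized linked pair; (2) the combinatorial lemma that linkedness survives conjugation by $h_n$; (3) the uniform distortion estimate for $g_n$ along $\bar h_n$-orbits inside the overlap; (4) deriving the contradiction with $C^1$-convergence to the identity.
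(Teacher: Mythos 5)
Your proposal does not follow the paper's proof, and as written it has a genuine gap at its core.

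The paper proves Theorem~\ref{t.linked} by first developing the machinery of \emph{relative translation numbers} (Section~\ref{s.structure}). The key technical inputs are Lemma~\ref{l.C1closetoid} and Lemma~\ref{l.C1composition}, which say that for maps whose $C^1$-distance to the identity is small, displacements $x\mapsto f(x)-x$ are nearly constant over $n$ fundamental domains and nearly additive under composition. Feeding the conjugates $g_t=h_tgh_t^{-1}$ into these estimates, one shows the ratio $\frac{g_t(x_t)-x_t}{f_t(x_t)-x_t}$ converges to a well-defined $\tau_f(g,x)\in\RR\cup\{\pm\infty\}$, and that $\tau_{f,I}(g)=0$ forces $g$ to have a fixed point inside every fundamental domain of $f$ in $I$ (Corollary~\ref{c.translationponctelle}). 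The proof of Theorem~\ref{t.linked} is then one line: if an endpoint of $J$ lay in $I$, then $\tau_{f,I}(g)=0$, so $g$ has fixed points in every fundamental domain, forcing $J\subset\mathring{I}$ --- not linked. You take instead a direct contradiction route through a Kopell/Mather-type distortion estimate along mixed orbits.

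The trouble is that your central estimate is not available in $C^1$. You propose to control $\bigl|\log Dg_n(b_n)-\log Dg_n(y)\bigr|$ by $\mathrm{Var}(\log Dg_n)$, but bounded variation of $\log Dg$ is a $C^{1+bv}$ hypothesis: it is precisely what is \emph{lost} in the $C^1$ category, and the paper emphasizes this at the outset (Kopell's lemma and Szekeres' theorem both fail for $C^1$ contractions). The $C^1$-norm controls $\sup|\log Dg_n|$ (which indeed goes to $0$), but not $\mathrm{Var}(\log Dg_n)$; and since $\sup|\log Dg_n|\to 0$, the difference $\log Dg_n(b_n)-\log Dg_n(y)$ also goes to $0$ uniformly, so there is no tension to exploit there. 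More broadly, you never actually identify a quantity that is (a) forced to stay bounded away from $1$ by the linked configuration and (b) forced to $1$ by $C^1$-convergence; you acknowledge this yourself (``I expect the main obstacle to be making the commutator/distortion estimate genuinely quantitative and uniform in $n$''). The paper circumvents this exactly by avoiding variation estimates: Lemma~\ref{l.C1closetoid} is the $C^1$ substitute, giving control on displacement ratios rather than derivative ratios, and the translation-number formalism organizes the information so the contradiction becomes combinatorial. If you insist on a distortion route, look at the paper's proof of the stronger Theorem~\ref{t.sanshyperbolique}: Navas replaces bounded variation by \emph{uniform continuity} of $Dg$ together with exponential decay of interval lengths along a cleverly chosen infinite word (Lemmas~\ref{l.entropy}, \ref{l.ell}, \ref{l.distorsion}), producing a hyperbolic fixed point; since $C^1$-close-to-identity groups obviously have no hyperbolic fixed points, the theorem follows. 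That proof is the closest viable cousin of what you were reaching for.
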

Using completely different methods due to A. Navas, one proves a slightly stronger result: 
 \begin{theo}\label{t.sanshyperbolique} (\cite{Na})  Any group $G\subset Diff_+^1([0,1])$  
 without hyperbolic fixed point 
 is without linked fixed points. 
 \end{theo}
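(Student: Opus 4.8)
The plan is to argue by contradiction. Suppose $G\subset\diff^1_+([0,1])$ has two linked pairs of successive fixed points; say $\{a,b\}$ is a connected component of $[0,1]\setminus\mrm{Fix}(g)$ and $\{c,d\}$ is a connected component of $[0,1]\setminus\mrm{Fix}(h)$, with $a<c<b<d$ (the other linking configuration being symmetric). Up to replacing $g,h$ by their inverses we may assume $g(x)>x$ on $(a,b)$ and $h(x)>x$ on $(c,d)$. The point $c$ is fixed by $h$ but lies in the interior of the interval $(a,b)$ on which $g$ strictly moves points to the right, so the $g$-orbit of $c$ accumulates on $b$; symmetrically the $h$-orbit of $b$ accumulates on $d$. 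The heuristic is that near $b$ the diffeomorphism $g$ behaves like an expansion from the left endpoint and $h$ like an expansion toward $b$ from the right, and no $C^1$-small conjugate can hide this: I want to extract from $g$ and $h$ a product word whose derivative at some point is bounded below away from $1$, uniformly under conjugacy.

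The key step is a ping-pong / derivative-estimate argument at the shared point $b$. Fix a small interval $[p,q]$ with $a<p<c$ and $b<q<d$, so $b\in(p,q)$. Since $g^n(c)\to b$ and $h^n(b)\to d$, choose $n$ large so that $g^n(p)>c$ and $h^n$ pushes a neighborhood of $b$ past $q$; more precisely arrange, by taking powers $g^N$ and $h^M$, that $g^N([p,b])\supset[c,b]$ and $h^M([b,q])\supset[b,q']$ for a suitable $q'$, and iterate the pair so that the composition $w=h^M g^N$ (or a longer alternating word) maps $[p,q]$ strictly inside itself while being forced, by the chain rule applied along an orbit that repeatedly crosses a fixed region, to have derivative at the attracting fixed point of $w$ that is uniformly bounded away from $1$. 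Concretely, the interval $[c,b]$ is swept across by $g$-iterates and $[b,d]$ by $h$-iterates; a word realizing a full sweep of $[p,q]$ into a strictly smaller subinterval has a fixed point in $(p,q)$ and the derivative there is $\le\lambda<1$ for a constant $\lambda$ that does not change when we conjugate $g,h$ (conjugation permutes fixed points and rescales but cannot remove the sweeping behavior, which is topological). This gives a single element $w_n$ in $G$, for each $n$, with a hyperbolic-like contraction rate bounded away from $1$, contradicting that conjugates $h_n g h_n^{-1}\to id$ in $C^1$ for all $g\in G$, since then also $h_n w_n h_n^{-1}\to id$ forces its derivative at its fixed point to tend to $1$.

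The main obstacle, and where care is needed, is making the derivative bound genuinely uniform and conjugacy-invariant. The clean way to package this is to pass to the translation-number / $\log$-derivative cocycle: for $g\in\diff^1_+(I)$ fixing the endpoints of a component $J=(a,b)$, the quantity $\sup_{x\in J}\log Dg(x)+\inf_{x\in J}\log Dg(x)$ or rather the total variation of $\log Dg$ along $J$ controls how much $g$ can expand, and the mean value theorem on $\overline{J}$ forces $\int_J (\log Dg)'$-type cancellations; the linking configuration prevents such cancellation for a suitable word because the overlap interval $(c,b)$ gets traversed ``one way'' by $g$-power and the overlap $(b,d)$ the ``other way'' by $h$-power, so their $\log$-derivative contributions add rather than cancel. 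I would first prove the clean statement that if $g$ has a component $(a,b)$ of $\mrm{Fix}(g)^c$ and $c\in(a,b)$, then for every $\varepsilon>0$ there is $N$ with $Dg^N(c)\ge$ (something), using only $g^N(c)\to b$ and $Dg^N(b)=Dg^N(a)=$ (the endpoint derivatives, which need not be $1$ a priori — but $C^1$-closeness already forces $Dg(a)=Dg(b)=1$ by Theorem~\ref{t.cohomologique} applied to the cyclic group $\langle g\rangle$, or directly), and then combine the estimate for $g$ on $(a,b)$ with the one for $h$ on $(c,d)$ at the shared interior point, chaining them into one word whose derivative at a fixed point cannot be made $C^1$-close to $1$ by any conjugacy. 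Closing this chaining estimate rigorously — keeping the derivative bound from degenerating as $n\to\infty$ — is the crux; everything else is a compactness-and-contradiction wrapper using the definition of $\cC^1_{id}$.
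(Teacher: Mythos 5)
Your high-level plan (produce an element of $G$ with a hyperbolic fixed point and contradict the hypothesis) is the same as the paper's, but the execution has a genuine gap, precisely where you flag it. In $C^1$ regularity the claim you want as a first step --- that $g^N(c)\to b$ forces any particular behaviour of $Dg^N(c)$ --- is simply false: the mean value theorem gives a point $\xi_N\in[c,g(c)]$ with $Dg^N(\xi_N)$ equal to the (vanishing) ratio of fundamental-domain lengths, but $\xi_N$ moves and you have no distortion control to transport the estimate back to $c$ or to the (also moving) fixed point of the word. The ``$\log$-derivative cocycle'' intuition does not close this without extra structure. There is also a sign/configuration problem: with $g>\mathrm{id}$ on $(a,b)$ and $h>\mathrm{id}$ on $(c,d)$, a word like $h^Mg^N$ pushes $[p,q]$ to the right toward $d$ rather than into itself; one needs a negative power, as in the paper's $g^{-m}f^n$, to create the nested picture. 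Finally, you conflate the hypothesis ``without hyperbolic fixed point'' with ``$C^1$-close to the identity'' (the hypothesis of Theorem~\ref{t.linked}); once an element with a fixed point of derivative $<1$ is produced you are immediately done, and no appeal to conjugating sequences $h_n$ is needed or appropriate here.

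The paper (following Navas) circumvents exactly the difficulty you ran into by a three-step route that is genuinely different from yours. First (Lemma~\ref{l.entropy}), a crossing is converted into a Schottky pair: there are $h_1,h_2\in G$ and a segment $I$ with $h_1(I)$, $h_2(I)$ disjoint and contained in $I$, obtained by taking $h_i=g^{-m_i}f^n$ for large $n$ and distinct $m_i$. Second (Lemma~\ref{l.ell}), a counting/measure argument over the $2^n$ words in $\{h_1,h_2\}$ shows that some infinite word $\omega$ has $\ell(\omega_{n-1}\cdots\omega_0(I))$ decaying exponentially; disjointness forces most words to have small images, and a Borel--Cantelli-type selection extracts one that works for all $n$. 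Third (Lemma~\ref{l.distorsion}), and this is the key $C^1$-distortion-free trick: once the lengths decay to $0$, the uniform continuity of $Dh_1,Dh_2$ makes $\log D\omega_n$ nearly constant on the small intervals, so the length decay translates into $|D(\omega_{n-1}\cdots\omega_0)(x)|$ decaying exponentially \emph{for every} $x\in I$, not just at an elusive moving point. Then any fixed point of a long prefix (which exists since $\omega_{n-1}\cdots\omega_0(I)\subset I$) has derivative $<1$. Your sketch tries to get this pointwise derivative control directly for a single power word, which is exactly what $C^1$ regularity does not permit; the paper trades ``one explicit word'' for ``some word, found by counting, whose shrinking is so strong that the $C^1$ modulus of continuity suffices.''
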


Thus, the intervals of successive fixed points form a nested family. As a consequence, we will see that 
the family of pairs of successive fixed points is at most countable (Proposition~\ref{p.countable}).
 
Another consequence of beeing without linked fixed points is that, for every interval $(a,b)$ of 
successive fixed points and any $g\in G$, 
either $g([a,b])=[a,b]$ or $g([a,b])\cap (a,b)=\emptyset$. Considering  the stabilizers $G_{[a,b]}$
of the segments $[a,b]$, this provides   a stratified description of the dynamics of $G$, 
as stated in Theorem~\ref{t.structure} below.

 \begin{theo}\label{t.structure}\label{t.sanscroisement}Let $G\subset Homeo_+([0,1])$ be a group without linked fixed point, 
 and $\{a,b\}$  be a pair of successive fixed points. 
 
 Then:
 \begin{itemize} 
 \item for every $g\in G$, either $g([a,b])=[a,b]$ or $g((a,b))\cap (a,b)=\emptyset$. 
 We denote by $G_{[a,b]}$ \textbf{\emph{the stabilizer of $[a,b]$}}. 
  \item there is a morphisms $\tau_{a,b} : G_{[a,b]}\to \RR$ whose kernel is precisely the  set of $g\in G_{[a,b]}$ 
  having fixed points in $(a,b)$, and which is 
  positive at $g\in G_{[a,b]}$ if and only if $g(x)-x>0$ on $(a,b)$.
  \item The union of the minimal sets of the action of $G_{[a,b]}$ on $(a,b)$ is a non-empty closed subset $\La_{[a,b]}$ on 
  which the action of $G_{[a,b]}$ is 
 semi-conjugated to the group of translations $\tau_{a,b}(G_{[a,b]})$. 
 \item The elements of the kernel of $\tau_{a,b}$ induce the identity map on $\La_{a,b}$. 
 \end{itemize}

 The morphism $\tau_{a,b}$ is unique up to multiplication by a positive number. 
 \end{theo}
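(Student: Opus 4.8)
The plan is to establish the four assertions in order: the first from the combinatorics of the ``no linked fixed points'' condition, the next two via H\"older's theorem on Archimedean ordered groups applied to a quotient of $G_{[a,b]}$, and the last two as by-products of the same construction. For the dichotomy, fix $g_0\in G$ with $(a,b)$ a connected component of $[0,1]\setminus Fix(g_0)$; then for every $g\in G$ the set $g((a,b))$ is a component of $[0,1]\setminus Fix(gg_0g^{-1})$, so $\{g(a),g(b)\}$ is again a pair of successive fixed points. Suppose $g((a,b))\cap(a,b)\neq\emptyset$ and $g([a,b])\neq[a,b]$. Two overlapping successive-fixed-point intervals that are not equal must be strictly nested, with both endpoints of the smaller strictly interior to the larger, since otherwise exactly one endpoint of one lies inside the other, contradicting the absence of linked fixed points. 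Hence, replacing $g$ by $g^{-1}$ if necessary, $a<g(a)<g(b)<b$, so $g$ maps $[a,b]$ continuously into $(a,b)$ and therefore has a fixed point $p\in(a,b)$; since $g(a)>a$, the point $a$ lies in a component $(c,d)$ of $[0,1]\setminus Fix(g)$ with $d\le p<b$, and then $\{c,d\}$ is linked with $\{a,b\}$ — a contradiction. This proves the dichotomy, so $G_{[a,b]}=\{g\in G:g([a,b])=[a,b]\}$ is a subgroup preserving $(a,b)$.

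Next set $H:=G_{[a,b]}$ and $N:=\{g\in H:Fix(g)\cap(a,b)\neq\emptyset\}$. The key claim is that any $g_1,g_2\in N$ have a common fixed point in $(a,b)$: otherwise, picking $p_1\in Fix(g_1)\cap(a,b)$, this $p_1$ lies in a component $(c,d)$ of $[0,1]\setminus Fix(g_2)$ contained in $(a,b)$; since $g_2\in N$, the component is not $(a,b)$ itself, so one of $c,d$ — say $c$ — lies in $(a,b)$; if $c$ were fixed by $g_1$ we would be done, so $c$ lies in a component $(e,f)$ of $[0,1]\setminus Fix(g_1)$ with $f\le p_1<d$, whence $\{e,f\}$ is linked with $\{c,d\}$ — a contradiction (the case $d\in(a,b)$ is symmetric). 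So $g_1g_2$ fixes that common point and $g_1g_2\in N$; as $N$ is plainly stable under inverses and under conjugation by $H$ (which preserves $(a,b)$), it is a normal subgroup.

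Now each $g\in H\setminus N$ has no fixed point in $(a,b)$, hence $g(x)>x$ on all of $(a,b)$ or $g(x)<x$ on all of $(a,b)$; call $g$ positive in the first case. This sign depends only on the coset $gN$ (if $n\in N$ fixes $p$, then $gn$ and $g$ agree at $p$), and decreeing $gN>id$ exactly when $g$ is positive gives a bi-invariant total order on $Q:=H/N$. This order is Archimedean: if positive $g_1,g_2$ satisfied $g_1^n(y)<g_2(y)$ for all $n\ge1$ and all $y\in(a,b)$, the bounded increasing sequence $(g_1^n(y))_n$ would converge to a fixed point of $g_1$ inside $(a,b)$, which is absurd. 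By H\"older's theorem, $Q$ embeds — uniquely up to multiplication by a positive scalar — as an ordered subgroup of $(\RR,+)$; composing with $H\to Q$ yields the sought morphism $\tau_{a,b}\colon H\to\RR$, with $\ker\tau_{a,b}=N$ and $\tau_{a,b}(g)>0$ iff $g(x)-x>0$ on $(a,b)$, the scaling ambiguity being exactly the stated uniqueness.

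Finally, a standard construction produces a monotone equivariant map $\pi\colon(a,b)\to\RR$ with $\pi\circ g=\pi+\tau_{a,b}(g)$ for every $g\in H$ — for instance $\pi(x)=\sup\{\tau_{a,b}(g):g(x_0)\le x\}$ for a fixed base point $x_0$, which is surjective because $\tau_{a,b}(g_0^n)\to\pm\infty$. One then checks that the union $\Lambda_{[a,b]}$ of the minimal sets of $H$ on $(a,b)$ is nonempty and closed (that minimal sets exist inside the non-compact interval $(a,b)$ is read off from $\pi$), that it coincides with the complement of the interiors of the non-degenerate fibres $\pi^{-1}(t)$, and that $\pi$ restricts on $\Lambda_{[a,b]}$ to the desired semi-conjugacy onto the translation action of $\tau_{a,b}(H)$ on $\RR$; an element $g\in N=\ker\tau_{a,b}$ preserves each fibre of $\pi$ and, being increasing, fixes every point with a singleton fibre as well as both endpoints of each gap, so it acts trivially on $\Lambda_{[a,b]}$. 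The main obstacle is exactly this last step: producing $\pi$ and, above all, identifying the set defined through it with the union of the minimal sets, which forces one to treat separately the cases $\tau_{a,b}(H)\cong\ZZ$ and $\tau_{a,b}(H)$ dense in $\RR$ and to handle the non-compactness of $(a,b)$ with care; the first three steps, by contrast, are clean consequences of the no-linking hypothesis and of H\"older's theorem.
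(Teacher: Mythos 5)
Your proposal is correct but takes a genuinely different route from the paper, essentially reversing the order of the algebraic and dynamical steps. The paper goes dynamics-first: it defines $U=\bigcup\{I\setminus Fix(g): g\in G_I,\ Fix(g)\cap I\neq\emptyset\}$ and $\Lambda=I\setminus U$ directly, shows (Lemma~\ref{l.minimal}) that $\Lambda$ is nonempty closed invariant and that the elements with fixed points in $I$ are exactly those acting trivially on $\Lambda$, deduces from this that $G_I^0$ is normal, and only then extracts $\tau_{a,b}$ from the free action of $G_I/G_I^0$ on $\Lambda$ by splitting according to the type of minimal set. You go algebra-first: you prove directly — via a neat common-fixed-point lemma that the paper never states explicitly — that $N$ is closed under composition, hence a normal subgroup, order the quotient $H/N$, check the Archimedean property, and apply H\"older to produce $\tau_{a,b}$ cleanly and uniformly (and the scalar-multiplication uniqueness drops out immediately, which in the paper is somewhat implicit). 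Your treatment of the first bullet is also more careful than the paper's, which treats the dichotomy as immediate. The cost of your ordering shows up in the final step: having no concrete definition of $\Lambda$ yet, you must construct it as the complement of the interiors of the non-degenerate fibres of a translation-number map $\pi$ and then match it with the union of minimal sets; this is the most delicate point, it requires the same discrete/dense case split on $\tau_{a,b}(H)$ that the paper performs on the side of minimal sets, and you rightly flag it as the main remaining obstacle rather than carrying it out. In short: the paper's route is shorter to $\Lambda$ and gets the fourth bullet for free from Lemma~\ref{l.minimal}; your route is shorter and cleaner to $\tau_{a,b}$ and its uniqueness, at the price of more work in the last bullet.
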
  
The morphism $\tau_{a,b}$ is called a \textbf{\emph{relative translation number}}.
 
For describing the dynamics of $G$ we are lead to consider: 

 \begin{itemize}
  \item the nested configuration of the intervals of successive fixed points;
  \item for each interval $(a,b)$ of successive fixed points, the relative translation number $\tau_{a,b}$
  and the set $\La_{a,b}$,  union of the minimal sets in $(a,b)$.   Each connected component $I$ of 
  $(a,b)\setminus \La_{a,b}$ is 
  \begin{itemize}
  \item either a wandering interval
  \item or an interval of successive fixed points,
  \item  or else it may be the union  of 
  an increasing sequence of intervals  of successive fixed points. 
  \end{itemize}
  In the first case, one can stop the study.  In both last cases, one considers the restriction to $I$ of the stabilizer $G_{I}$: it is a group
  without linked fixed points, so one may proceed the study.  
 \end{itemize}



 \subsection{Completion of a group without linked fixed points}
 One difficulty for classifying the groups $C^1$-close to the identity it that each element $g\in G$ may have infinitely many pairs 
 of successive fixed points.
 For bypassing this difficulty, we  enrich the group $G$ so that,
 for every $g\in G$ and any pair $\{a,b\}$ of successive fixed points of $g$, the group $G$ contains  the diffeomorphism $g_{a,b}$
  which coincides with $g$ on $[a,b]$ and with the identity out of $[a,b]$.  The diffeomorphism $g_{a,b}$ has a unique pair of successive fixed point. 
  Such a diffeomorphism will be called  \textbf{\emph{simple}}.
 Every element $g\in G$ can be seen as an infinite product of the  simple elements $g_{a,b}$, for all the pairs $\{a,b\}$   of 
 successive fixed points of $g$. 
 
 More precisely, given $g,h\in Homeo_+([0,1])$, we say that \textbf{\emph{$h$ is induced by $g$}} 
 if  $g$ and $h$  coincide on the support of $h$.  
 We say that a group $G\subset Homeo_+([0,1])$ without linked fixed points is \textbf{\emph{complete}} if it contains any homeomorphism
 $h$ induced by an element $g\in G$. 
 
Corollary~\ref{c.c1completion} shows that every group $C^1$-close to the identity is a subgroup of a complete group $C^1$-close to the identity.  
Analoguous results hold for groups of homeomorphisms without linked fixed points, or for groups of diffeomorphisms without hyperbolic fixed points: 
Proposition~\ref{p.completion} associates to each group $G$ without linked fixed points its \emph{\textbf{completion} $\tilde G$} which is the 
 smallest complete group without linked fixed points  
 containing $G$. The families of intervals of successive fixed points of $\tilde G$ and $G$ are the same, and 
 Corollary~\ref{c.translationcompletion}
states that the morphisms of translation numbers 
associated to each interval of successive fixed points also coincide for $G$ and $\tilde G$.

\subsubsection{Totally rational groups and topological basis}

Let us present here a problem, coming from an unsuccessful attempt of us for classifying groups $C^1$-close to the identity.

 We say that a group $G$ without linked fixed points is \textbf{\emph{totaly rational}} 
 if, for any successive fixed points $\{a,b\}$, the image of the translation number 
 $\tau_{a,b}(G_{[a,b]})$ is a cyclic (monogene) group (i.e. a group of the form $\alpha \ZZ$). 
 Proposition~\ref{p.completion} and Corollary~\ref{c.translationcompletion} 
 imply that the completion of a totally rational group is totally rational, 
 allowing us to consider complete totally rational groups.

 Given a complete totally rational group $G$, 
 let us call a \textbf{\emph{topological basis}} any family $\{f_i\}_{i\in\cI}$ of elements 
 $f_i\in G$ such that:
 \begin{itemize}
  \item for every $i\in\cI$, $f_i$ has a unique pair $\{a_i,b_i\}$ of successive fixed points; 
  thus $[a_i,b_i]$ is the support of $f_i$.
  \item for every $i\in\cI$, let $\tau_i$ be the relative translation number associated to $(a_i,b_i)$. 
  As $G$ is totally rational, its image is a cyclic group.
  We require that  $\tau_i(f_i)$  is a generator of the image of $\tau_i$. 
  \item for every pair $\{a,b\}$ of successive fixed points of $G$, there is a unique $i\in \cI$ such that  
  $\{a_i,b_i\}$ and $\{a,b\}$ are in the same $G$-orbit.

  \item for every pair $\{a,b\}$ of successive fixed points of $G$, there is an element $g$ of the subgroup 
  $<f_i,i\in\cI>$ generated by the $f_i$ such that $\{a,b\}$ is a pair of successive fixed points of $g$.
 \end{itemize}

 The family of intervals of successive fixed points is countable (Proposition~\ref{p.countable}), 
 so that any countable basis is countable. We can get rather easily the three first items of the definition: 
 just choose one interval in each $G$-orbit of intervals of successive fixed points and for this interval choose a
 generator of the corresponding translation number.  Thus, the difficulty is 
 the last item. We have not been able to solve it\footnote{Consider the group generated by a family $g_i\in Diff^1_+([0,1])$, so that 
 the support of $g_i$ consists in one interval contained in a fundamental domain of $g_{i+1}$. Now consider 
 the family $f_i=g_{i+1}g_ig_{i+1}^{-1}$; 
 this family satisfies the three first hypotheses but not the last one.}, and
 the following natural question  remains open: 
 
 \begin{ques} Given a complete totally rational group $G\subset Homeo_+([0,1])$ without linked fixed points, does $G$ admit a topological basis ? 
\\
If the answer is negative, same question with the more restrictive assumption that $G\subset Diff^1_+([0,1])$ is
 $C^1$-close to the identity.
 \end{ques}

Our intuition is that every element $g\in G$ is be determined by its intervals of 
 successive fixed points and on each of them, by the value of the corresponding translation number, which provides a coordinates in
 the topological basis. However, even assuming the existence of a topological basis, there remain many issues 
 before making rigorous 
 our intuition.  In particular:
 
 \begin{ques} Let $G\subset Homeo_+([0,1])$ without linked fixed points, and assume $G$ admits a topological basis 
 $\{f_i\}_{i\in \cI}$.  Is $G$ contained in the $C^0$-closure of the group generated by the $f_i$? 
 
 Same question, in the $C^1$-topology, with the more restrictive assumption that $G\subset Diff^1_+([0,1])$ is
 $C^1$-close to the identity.  
 \end{ques}

 
 

\subsection{Realisation results}

\subsubsection{Invariance of $\cC^1_{id}$ by some extensions.}
Let us first present two  results, enlighting some invariance of the class $\cC^1_{id}$ by some natural 
extensions.

\begin{theo}\label{t.union}
Let $G_n\subset Diff^1_+([0,1])$, $n\in\NN$, be an increasing sequence of  subgroups:  
$$\forall n\in\NN, G_n\subset G_{n+1}.$$
\\
Assume that every $G_n$ is finitely generated and is $C^1$-close to the identity. 
\\
Then  $G= \bigcup_{n\in\NN} G_n \subset Diff^1_+([0,1])$ is $C^1$-close to the identity. 
\end{theo}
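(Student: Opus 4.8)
The plan is to bypass the path formulation and produce directly a \emph{sequence} $(h_k)_{k\in\NN}$ in $Diff^1_+([0,1])$ conjugating every element of $G$ towards the identity, by a diagonal argument along a filtration‑compatible enumeration of $G$. First I would note that $G$ is countable: each $G_n$ is finitely generated, hence countable, and $G=\bigcup_{n}G_n$ is an increasing union of countably many countable groups. Fix an enumeration $G=\{\gamma_1,\gamma_2,\dots\}$. Since the $G_n$ are increasing with union $G$, for every $k\in\NN$ there is an index $n(k)$ with $\{\gamma_1,\dots,\gamma_k\}\subset G_{n(k)}$.

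Next, for each $k$ I would invoke the hypothesis that $G_{n(k)}$ is $C^1$‑close to the identity: there is a sequence $(h^{(k)}_m)_{m\in\NN}$ of diffeomorphisms in $Diff^1_+([0,1])$ such that $h^{(k)}_m\, g\, (h^{(k)}_m)^{-1}$ tends to the identity in the $C^1$‑topology, for every $g\in G_{n(k)}$. Applying this to the \emph{finitely many} elements $\gamma_1,\dots,\gamma_k\in G_{n(k)}$, and taking the maximum of the finitely many resulting thresholds, I may choose $m(k)\in\NN$ so large that the $C^1$‑distance from $h^{(k)}_{m(k)}\,\gamma_j\,(h^{(k)}_{m(k)})^{-1}$ to the identity is less than $1/k$ for every $j\in\{1,\dots,k\}$. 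Set $h_k:=h^{(k)}_{m(k)}\in Diff^1_+([0,1])$.

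Finally I would verify that the sequence $(h_k)_{k\in\NN}$ does the job. Fix $g\in G$, say $g=\gamma_j$. For every $k\ge j$ one has $\gamma_j\in\{\gamma_1,\dots,\gamma_k\}$, hence the $C^1$‑distance from $h_k\,\gamma_j\,h_k^{-1}$ to the identity is less than $1/k$; letting $k\to\infty$ gives $h_k\,\gamma_j\,h_k^{-1}\to \mathrm{id}$ in the $C^1$‑topology. Since $\gamma_j$ ranges over all of $G$, the sequence $(h_k)$ witnesses that $G$ is $C^1$‑close to the identity, which is one of the four equivalent defining properties (and by Theorem~\ref{t.cohomologique} it may then be upgraded to a continuous path if desired).

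The argument involves no genuine analytic obstacle; the one point deserving attention is that the defining property of being $C^1$‑close to the identity is merely \emph{pointwise} in $g\in G$, with no uniformity over $G$ — and this is precisely what legitimizes the diagonal extraction. Correspondingly, finite generation of the $G_n$ is used only to ensure that $G$ is countable, so that a suitable enumeration exists. (One could also exploit, for each finitely generated $G_n$, the equivalent characterization ``some sequence $h_m$ brings the finitely many generators to the identity by conjugacy'', which follows since composition is continuous on $Diff^1_+([0,1])$ and every element is a \emph{fixed‑length} word in the generators; but the diagonal argument above does not require this refinement.)
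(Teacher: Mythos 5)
Your proof is correct, and it is essentially the same diagonal argument as in the paper. The one cosmetic difference worth noting: the paper fixes, for each $n$, a finite generating set $\cS_n$ of $G_n$, lets $\cE_n=\bigcup_{i\le n}\cS_i$, chooses $i(n)$ so that $h_{n,i(n)}$ conjugates all of $\cE_n$ to within $\varepsilon_n$ of the identity, and then appeals (implicitly) to the fact that an arbitrary $g\in G$ is a fixed-length word in some $\cE_m$ and that composition is continuous on $Diff^1_+([0,1])$ to conclude $h_{n,i(n)}gh_{n,i(n)}^{-1}\to id$ for all $g\in G$. You instead take the finite set to be the first $k$ terms of an enumeration of the (countable) group $G$, which sidesteps the ``from generators to words'' step entirely; as you observe, this is precisely why no uniformity over $G$ is needed and why finite generation is used only to guarantee countability. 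Both routes are sound, and both end by invoking Theorem~\ref{t.cohomologique} to identify the sequence formulation with the other equivalent definitions; yours is marginally more self-contained at the one point where the paper says ``as a straightforward consequence.''
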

We don't know if Theorem~\ref{t.union} holds for uncountable groups $G_n$.

The technical heart of this paper consists in proving: 
\begin{theo}\label{t.jump}  Consider a segment $I\subset (0,1)$.  Let $G\subset Diff^1_+(I)$ be a group
$C^1$-close to the identity  and $f\in Diff^1_+([0,1])$ be a diffeomorphism such that 
$$f(I)\cap I=\emptyset$$
(in other words, $I$ is contained in the interior of a fundamental domain of $f$).
\\
Then the group $<f,G>$ generated by $f$ and $G$ is $C^1$-close to the identity.
\end{theo}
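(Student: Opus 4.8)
The plan is to exploit the hypothesis $f(I)\cap I=\emptyset$ to reduce the structure of $\langle f,G\rangle$ to something manageable, then to produce the conjugating sequence $h_n$ by hand. First I would identify the subgroup $N=\langle g f^k g f^{-k}\, :\, g\in G,\ k\in\ZZ\rangle$... more precisely, let $N$ be the subgroup generated by all conjugates $f^k g f^{-k}$ for $g\in G$, $k\in\ZZ$. Since $f(I)\cap I=\emptyset$, the supports $f^k(I)$ of the subgroups $f^kGf^{-k}$ are pairwise disjoint, so $N$ is the (restricted) direct sum $\bigoplus_{k\in\ZZ} f^kGf^{-k}$, and $f$ acts on $N$ by the shift, whence $\langle f,G\rangle = N\rtimes\langle f\rangle$ where $\langle f\rangle\cong\ZZ$ (using that $f$ has no periodic points on the relevant region and that $G\subset Diff^1_+(I)$ with $I$ in the interior of a fundamental domain, so $f$ itself contributes an honest $\ZZ$). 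The topological dynamics of this group is thus very rigid: outside the union $\bigcup_k f^k(I)$ together with $\mathrm{Fix}(f)$, everything is essentially controlled by $f$ alone.

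Next I would build the conjugacies. By hypothesis there is a sequence $h_n\in Diff^1_+(I)$ with $h_n g h_n^{-1}\to\mathrm{id}$ in $C^1$ for every $g\in G$; I may also assume (shrinking $I$ slightly, or conjugating) that the $h_n$ fix the endpoints of $I$ with derivative $1$, so that they extend by the identity to elements $\hat h_n\in Diff^1_+([0,1])$ supported in $I$. Conjugating the whole group $\langle f,G\rangle$ by $\hat h_n$ already sends every element of $G$ towards the identity, but it does nothing to $f$; the point of Theorem~\ref{t.jump} is that one can simultaneously tame $f$. Here is where the disjointness $f(I)\cap I=\emptyset$ is essential: I would interleave the $\hat h_n$-conjugacy on $I$ with a separate conjugacy that ``crushes'' a fundamental domain of $f$ — i.e. pick diffeomorphisms $\phi_n$ that push $I$ and its first few forward/backward $f$-iterates into a tiny neighbourhood where $f$ looks nearly like the identity in the $C^1$ sense, while leaving the dynamics away from $I$ alone. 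Because the supports $f^k(I)$ are disjoint, the conjugacy needed on $f^k(I)$ is just $f^k\hat h_n f^{-k}$, and these can be assembled into a single diffeomorphism $H_n\in Diff^1_+([0,1])$ (identity off $\bigcup_k f^k(I)$) with $H_n (f^kgf^{-k}) H_n^{-1}\to\mathrm{id}$ for all $g,k$ — uniformly in $k$ once one controls how $Df^k$ distorts the $C^1$-size, which is the content of the ``no hyperbolic fixed point / $C^1$-close'' regime.

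The main obstacle, as I see it, is the uniformity over the infinitely many disjoint copies $f^k(I)$ together with the control of $f$ itself: conjugating by $H_n$ is harmless for each fixed $k$, but one needs $\sup_k \|H_n(f^kgf^{-k})H_n^{-1} - \mathrm{id}\|_{C^1}\to 0$, and the derivatives $Df^k$ on $f^{-k}(I)$ need not be bounded. This is precisely why Theorem~\ref{t.cohomologique} (the equivalence between a path $h_t$ and a sequence $h_n$) matters: I would work with a $C^1$-continuous path $h_t\in Diff^1_+(I)$, $t\in[0,1)$, with $h_t g h_t^{-1}\to\mathrm{id}$, extend it to a path on $[0,1]$, and then construct $H_t$ by declaring $H_t = f^k h_{\sigma_k(t)} f^{-k}$ on $f^k(I)$ with a clever reparametrisation $\sigma_k$ so that as $t\to 1$ the relevant estimate holds simultaneously for all $k$ while a second factor brings $f$ to the identity on the shrinking complement. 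Producing $\sigma_k$ and verifying the $C^1$-estimate — in particular checking that the conjugated $f$ tends to the identity and that matching the pieces across the boundaries $\partial f^k(I)$ keeps $H_t$ a genuine $C^1$-diffeomorphism with derivative $1$ at those points — is the technical heart, consistent with the paper's own remark that Theorem~\ref{t.jump} ``is the technical heart of this paper.'' Once $H_t$ is built, Theorem~\ref{t.cohomologique} converts it back to a sequence and we conclude that $\langle f,G\rangle$ is $C^1$-close to the identity.
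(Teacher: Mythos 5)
Your outline is the right shape — a two-stage conjugacy, one factor handling $G$ and a second ``crushing'' factor handling $f$ — and you correctly put your finger on the uniformity problem: the pushed copies $f^k h_\cdot f^{-k}$ live on infinitely many domains $f^k(I)$ accumulating on $\mathrm{Fix}(f)$, and $Df^k$ there is not controlled. But the mechanism you propose to solve it (a reparametrisation $\sigma_k(t)$) does not actually close the gap. If $h_t\neq\mathrm{id}$ for all $t<1$, then whatever $\sigma_k$ you choose, the piecewise assembly $H_t|_{f^k(I)}=f^k h_{\sigma_k(t)} f^{-k}$ is nontrivial on every $f^k(I)$; to be a $C^1$-diffeomorphism on $[0,1]$ the derivative must converge to $1$ along the accumulation of these domains, and this is exactly what fails in general. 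The paper's Lemma~\ref{l1.sansenlace} handles this differently: rather than pushing $h_t$ unchanged to the $f^k(I)$, it inserts small perturbations $\psi_{t,n}$ (via Proposition~\ref{p.groupesanspointfixeenlace}) in each iterate so that the pushed copies become exactly the identity after finitely many steps, for each fixed $t$. This is what yields a genuine $C^1$ extension $\tilde h_t$, supported in finitely many fundamental domains, still coinciding with $h_t$ on $I$, and with $\|\tilde h_t f\tilde h_t^{-1}-f\|_1<\varepsilon_t$.

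The second gap is the interaction between the two factors, which you pass over in the sentence ``the conjugacy needed on $f^k(I)$ is just $f^k\hat h_n f^{-k}$.'' That cannot be literally true once you also conjugate by the crushing factor, because the crushing map necessarily deforms $I$ and could destroy the $C^1$-smallness of $h_tgh_t^{-1}$ for $g\in G$. The paper's resolution is Lemma~\ref{l.step1induction} together with Lemma~\ref{l4.sansenlace}: one can choose the crushing isotopy $\alpha_t$ so that $\log D\alpha_t$ is equicontinuous on the fundamental domain (an affine $\alpha_t$ on $I$ would be the cleanest case); equicontinuity of the log-derivative is precisely what guarantees that conjugating a family tending to the identity by $\alpha_t$ still tends to the identity. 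Combined with the robustness notion (Definition~\ref{d1.sansenlace}, Lemma~\ref{l3.sansenlace}) applied to $\alpha_t(\tilde h_t f\tilde h_t^{-1})\alpha_t^{-1}$, this gives the theorem with $H_t=\alpha_t\circ\tilde h_t$. These two ingredients — the perturbed extension from~\cite{Fa} and the equicontinuous-log-derivative lemma — are the content you identify as ``the technical heart'' but do not supply; without them the proposal does not constitute a proof.
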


Actually, we will prove in Theorem~\ref{t.jump2}  a slightly stronger version, where $I$ is not assumed to be contained 
in the interior of a fundamental domain of $f$, but only contained in a fundamental domain. This weaker hypothesis
requires extra technical conditions.

\begin{rema} Under the hypotheses of Theorem~\ref{t.jump}, any   conjugates $f^iGf^{-i}$ 
and $f^j G f^{-j}$, $i\neq j$, have disjoint supports and therefore commute. Actually, 
the group $<f,G>$  generated by $f$ and $G$ 
is isomorphic to 
  $$<G,f>=\left(\bigoplus_\ZZ G\right)\rtimes \ZZ$$ 
where the factor $\ZZ$ is generated by $f$ and acts on  $\left(\bigoplus_\ZZ G\right)$ by conjugacy 
 as a shift of the $G$ factors (see Lemma~\ref{l.semidirect}).
\end{rema}

\subsubsection{Elementary groups and fundamental systems}
We now  define a class of subgroups $G\subset Diff^1([0,1])$, 
called \textbf{\emph{elementary groups}}, whose topological dynamics implies that they are $C^1$-close to the identity: 
 \emph{every group $G'$ topologically conjugated to $G$ is $C^1$-close to the identity}.

\begin{defi}A collection $(f_n)_{n\in\NN}$, $f_n\in Diff^1(\RR)$ is called a \textbf{\emph{fundamental system}} 
 if for every $n\in \NN$, $\RR\setminus Fix (f_n)$ consists in a
 unique connected component with compact closure $S_n$ (called the support of $f_n$)
and for every $n\in \NN$ there is a fundamental domain $I_n$ of $f_n$ such  that, 
for every $i,j\in \NN$  we have the following property:
\begin{itemize}
 \item either $S_i\subset I_j$
 \item or $S_j\subset I_i$
 \item or else $f_i$ and $f_i$ have supports with disjoint interiors
  $$\mathring{S_i}\cap\mathring{S_j}=\varnothing$$
 \end{itemize}
A group $G\subset Diff^1([0,1])$ is said to be an \textbf{\emph{elementary group}} if it is generated by a fundamental 
system supported in $[0,1]$. 
\end{defi}

\begin{rema}
 Every group $G'\subset Diff^1_+([0,1])$  topologically conjugated to 
an elementary group is an elementary group.

\end{rema}
Our main result is: 

\begin{theo}\label{t.realisation} Every elementary group $G\subset Diff^1_+([0,1])$ is $C^1$-close 
to the identity.
\end{theo}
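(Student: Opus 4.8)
The plan is to prove Theorem~\ref{t.realisation} by induction on the ``nested depth'' of the fundamental system generating $G$, using Theorems~\ref{t.union} and~\ref{t.jump} (and its refinement \ref{t.jump2}) as the two engines. First I would set up the combinatorial structure: given an elementary group $G=\langle f_n : n\in\NN\rangle$ with fundamental system $(f_n)$ and associated fundamental domains $I_n$, the trichotomy ($S_i\subset I_j$, or $S_j\subset I_i$, or $\mathring S_i\cap\mathring S_j=\varnothing$) organizes the supports into a forest: say $f_i$ is \emph{below} $f_j$ if $S_i\subset I_j$. I would first reduce to the finitely generated case: by Theorem~\ref{t.union} it suffices to show that every finitely generated subgroup $\langle f_{n_1},\dots,f_{n_k}\rangle$ is $C^1$-close to the identity (note a finite subfamily of a fundamental system is again a fundamental system, after relabelling), since $G$ is the increasing union of such subgroups. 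So from now on $G=\langle f_1,\dots,f_k\rangle$ is finitely generated.

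Next I would run an induction on $k$. For $k=1$, a single $f_1$ with one interval of successive fixed points and $Df_1=1$ at the endpoints: one checks directly (this is essentially the cyclic case from \cite{Fa}, or a one-step application of Theorem~\ref{t.jump2} with $G$ trivial) that $\langle f_1\rangle$ is $C^1$-close to the identity. For the inductive step, pick a generator, say $f_k$, which is \emph{maximal} in the forest order, i.e.\ no $S_j$ with $j\le k$ contains $S_k$ in its interior in a way that makes $f_k$ ``below'' another — more precisely choose $f_k$ so that for every other generator $f_j$ either $S_j\subset I_k$ (the fundamental domain of $f_k$) or $\mathring S_j\cap\mathring S_k=\varnothing$. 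Let $H=\langle f_j : j\ne k,\ S_j\subset I_k\rangle$ and let $K=\langle f_j : j\ne k,\ \mathring S_j\cap\mathring S_k=\varnothing\rangle$ be the generators whose supports are essentially disjoint from $S_k$; both are elementary groups generated by $<k$ elements, hence $C^1$-close to the identity by the induction hypothesis. The subgroup $H$ is supported in the fundamental domain $I_k$ of $f_k$, so $\langle f_k, H\rangle$ is $C^1$-close to the identity by Theorem~\ref{t.jump2} (the version allowing $I=I_k$ to be a closed fundamental domain rather than contained in the interior of one — this is exactly why that refinement is needed). Finally I would combine $\langle f_k, H\rangle$ with $K$: their supports have disjoint interiors, so $G=\langle f_k,H\rangle \times' K$ is (topologically) a ``product with amalgamation along the overlapping fixed-point structure'' of two groups each $C^1$-close to the identity with disjoint supports, and one shows such a group is again $C^1$-close to the identity by conjugating with diffeomorphisms $h_n$ that are built as products of the conjugators on the respective (disjoint-interior) supports — the $C^1$ control on each piece gives $C^1$ control on the product because the derivatives are $1$ at the common boundary points.

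The main obstacle I expect is the bookkeeping in the last step, making rigorous the claim that a group generated by finitely many elementary pieces with pairwise disjoint-interior supports, \emph{sharing boundary fixed points}, is $C^1$-close to the identity: one must choose a \emph{single} sequence $h_n\in Diff^1_+([0,1])$ that simultaneously pushes all the relevant $g\in G$ towards the identity, and near a shared endpoint $a$ of two adjacent supports the conjugators coming from the two sides must be glued into one $C^1$ diffeomorphism with matching $1$-jets. The honest difficulty is that ``$C^1$-close to the identity'' is not obviously preserved under taking the group generated by two such subgroups unless one re-examines the proof of Theorem~\ref{t.jump}; most likely the intended route is instead to see the whole configuration as arising from a \emph{single} inductive application of Theorem~\ref{t.jump2}, treating $f_k$ as the ``jump'' and the group generated by everything supported inside one fundamental domain of $f_k$ (which is $H$ together with suitably placed conjugates, and which is elementary of smaller complexity) as the group $G$ in that theorem — so the real content is organizing the fundamental system so that at each stage exactly one generator plays the role of $f$ and the rest sit inside one of its fundamental domains, up to the disjoint-support case which is handled separately and is comparatively soft.
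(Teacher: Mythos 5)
Your plan is the paper's proof: reduce to finite fundamental systems via Theorem~\ref{t.union}, then induct on the cardinal of the system, applying Theorem~\ref{t.jump2} in the case where a single generator's support is maximal (your $H\subset I_k$ step), and gluing along disjoint supports in the other case. The gluing step you flagged as the main obstacle is handled in the paper exactly as you guessed, by strengthening the inductive statement (Theorem~\ref{t.finitecase}) to require that the conjugating path $h_t$ be supported in $\bigcup_i\mathrm{Supp}(f_i)$ with $Dh_t(0)=Dh_t(1)=1$; with this side condition carried through the induction, the piecewise definition of $h_t$ over the several maximal supports is automatically a $C^1$ path, so no re-examination of Theorem~\ref{t.jump} is needed.
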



Theorem~\ref{t.realisation} is obtained as a consequence of Theorem~\ref{t.jump2}
(the stronger version of Theorem~\ref{t.jump} above) and  Theorem~\ref{t.union}: by Theorem~\ref{t.union}
one only needs to consider groups generated by a finite fundamental system, and for these groups, 
Theorem~\ref{t.jump2} enables us to arg\"ue by induction on the cardinal of the fundamental system. 
\\
The groups contained in an elementary group are very specific. In particular, we can prove :

\begin{prop}\label{p.solvable2} Every finitely generated group contained in an elementary group is solvable. 
\end{prop}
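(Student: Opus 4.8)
The plan is to analyze the structure of a finitely generated subgroup $H$ of an elementary group $G=\langle f_n\mid n\in\NN\rangle$ by induction on the combinatorial complexity of the nested/disjoint configuration of supports it uses. Since $H$ is finitely generated, each generator of $H$ is a word in finitely many of the $f_n$; hence $H$ is contained in a subgroup $G_0=\langle f_{n_1},\dots,f_{n_k}\rangle$ generated by a \emph{finite} fundamental system. So it suffices to prove that every finitely generated subgroup of a group generated by a finite fundamental system is solvable, and I would set up an induction on $k$, the size of the finite fundamental system. The case $k=1$ is trivial: $\langle f_{n_1}\rangle$ is infinite cyclic (or trivial), hence abelian, and so are its subgroups.

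For the inductive step, consider the finite fundamental system $\{f_1,\dots,f_k\}$ (reindexed) with supports $S_i$ and fundamental domains $I_i$. The key combinatorial observation is that one can choose an index $m$ whose support $S_m$ is \emph{maximal} among the $S_i$ for the nesting relation: for every $j$, either $S_j\subset I_m$, or $S_j$ and $S_m$ have disjoint interiors, or $S_m\subset I_j$ --- but by maximality the last option, $S_m\subsetneq S_j$, does not occur for $j\neq m$ (and if some $S_j=S_m$ one handles it as part of the same block). I would split the remaining generators into two families: those $f_j$ with $\mathring S_j\cap\mathring S_m=\varnothing$, call their indices $A$, and those with $S_j\subset I_m$, call their indices $B$. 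The elements of $\langle f_j\mid j\in A\rangle$ commute with $f_m$ (disjoint supports), so the subgroup they generate together with the $f_j$, $j\in B$, which are all "below" $f_m$ in a single fundamental domain, can be organized as follows: $G_0=\langle f_j\mid j\in A\cup B\rangle \rtimes \langle f_m\rangle$ is not quite right, but the analogous structure from the Remark after Theorem~\ref{t.jump} applies --- the normal closure of $\langle f_j\mid j\in B\rangle$ in $G_0$ is generated by the conjugates $f_m^{\,i}\,\langle f_j\mid j\in B\rangle\,f_m^{-i}$, which have pairwise disjoint supports (distinct translates of $I_m$) and therefore commute, giving a direct sum $\bigoplus_{i\in\ZZ}\langle f_j\mid j\in B\rangle$ on which $\langle f_m\rangle$ acts by shift.

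Concretely, I would argue: let $N$ be the subgroup of $G_0$ generated by all $f_m$-conjugates of the $f_j$ with $j\in B$, together with all $f_j$ with $j\in A$. Then $N$ is normal in $G_0$, $G_0/N$ is cyclic (generated by the image of $f_m$), so it is enough to see $N$ is solvable, and for this: $N$ is generated by $\langle f_j\mid j\in A\rangle$ (commuting with everything coming from the $B$-block because their supports are disjoint from $S_m\supset$ all translates of the $B$-supports) and by $\bigoplus_{i\in\ZZ}\langle f_j\mid j\in B\rangle$. Each $\langle f_j\mid j\in A\rangle$ restricted to a generator count $|A|\le k-1$ is solvable by the induction hypothesis (it is finitely generated and sits inside the group generated by the fundamental system $\{f_j\mid j\in A\}$ of size $<k$); likewise $\langle f_j\mid j\in B\rangle$ uses a fundamental system of size $|B|\le k-1$ and is solvable by induction. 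A direct sum of copies of a group of solvability length $\ell$ has solvability length $\ell$, and a product of two commuting solvable subgroups is solvable (the derived series of the product is contained in the product of the derived series); hence $N$ is solvable, and therefore so is $G_0$, and so is $H\le G_0$. One must be slightly careful that a finitely generated subgroup $H$ of the (possibly infinitely generated) direct sum $\bigoplus_{i\in\ZZ}\langle f_j\mid j\in B\rangle$ only involves finitely many coordinates, so it is contained in a finite direct sum and the bound still applies.

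The main obstacle I expect is bookkeeping the nesting combinatorics cleanly enough that the induction actually decreases complexity --- in particular making sure that when several supports $S_j$ coincide with or are "equivalent" to the maximal one $S_m$ they are grouped into a single block whose internal structure is itself handled by a fundamental system of strictly smaller size, and that the semidirect/shift decomposition (the analogue of Lemma~\ref{l.semidirect}) is set up with the right fundamental domain so that the translates $f_m^i(I_m)$ really are pairwise disjoint and contain the appropriate sub-supports. Once the decomposition $G_0 = N\rtimes\langle f_m\rangle$ with $N$ a product of commuting pieces that are solvable by induction is in place, solvability of $H$ is immediate from the standard closure properties of the class of solvable groups under subgroups, quotients, extensions by cyclic groups, and direct sums with uniformly bounded length.
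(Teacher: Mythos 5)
Your proposal is correct and takes essentially the same route as the paper: reduce a finitely generated subgroup to one contained in the group generated by a finite fundamental system, then induct on the size of that system by picking a maximal support $S_m$, splitting the remaining generators into the family $A$ with supports of disjoint interior from $S_m$ and the family $B$ with supports in $I_m$, and using the direct-sum/shift structure (the paper's Lemma~\ref{l.solvable} and the argument of Proposition~\ref{p.solvable}) to bound the derived length by the cardinality of the system. The only cosmetic difference is that you package the $A$-block and the $f_m$-conjugates of the $B$-block into a single normal subgroup $N$ with cyclic quotient, whereas the paper case-splits on whether $A$ is empty; both give the same bound, and your worry about distinct generators sharing a support is in fact ruled out by the definition of a fundamental system, since $S_f=S_g$ is incompatible with any of the three allowed configurations.
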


\subsection{Examples and counter examples}

A simple solvable (non nilpotent) group is the Baumslag-Solitar group whose presentation is 
$$B(1,n)=<a,b| aba^{-1}=b^n>,$$
where $n$ is an integer 
such that $|n|>1$.  This group has a very natural affine action on $\RR$ 
($a$ acts as the homothety of ratio $n$ and $b$ is a translation) and on the circle (by identifying the affine group to
a subgroup of $PSL(2,\RR)$), and therefore on the segment $[0,1]$ 
(by opening the fixed point $\infty$ of the circle). In particular, $B(1,n)$ has analytic actions on $[0,1]$. These analytic actions
have been classified in \cite{BW}. All these actions have linked pairs of successive fixed points, and therefore are not $C^1$-close to the identity.
On the other hand, $B(1,n)$ admits $C^0$-actions without linked fixed points, but \cite{BMNR} shows that these $C^0$-actions cannot be $C^1$. 
\\
Indeed, \cite{Mc} shows the following Theorem, which holds  in any dimension:

\begin{theo}[A. McCarthy] For every $n$, $|n|>1$, and any compact manifold $M$, there is a $C^1$-neighborhood $U_n$ of the 
identity in $Diff^1(M)$ such that every morphism $\rho\colon B(1,n)\to Diff^1(M)$ with 
$\rho(a)\in U_n$ and $\rho(b)\in U_n$ satisfies $\rho(b)=id$.  
\end{theo}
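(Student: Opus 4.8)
The plan is to take as the key invariant the $C^0$-\emph{displacement}: fixing a Riemannian metric with distance $d$ on $M$, set $\delta(\varphi)=\sup_{x\in M}d(\varphi(x),x)$ for $\varphi\in Diff^1(M)$, so that $\delta(\varphi)=0\iff\varphi=id$ and $\delta(\varphi^{-1})=\delta(\varphi)$. Write $A=\rho(a)$, $B=\rho(b)$; the defining relation of $B(1,n)$ gives $ABA^{-1}=B^{n}$. The point is that conjugation by a diffeomorphism $C^1$-close to the identity changes $\delta$ by a factor close to $1$, whereas raising a diffeomorphism $C^1$-close to the identity to the $|n|$-th power multiplies $\delta$ by a factor close to $|n|\ge 2$; these two facts are incompatible unless $\delta(B)=0$.

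First I would prove the upper bound. For any $\varphi$ and any $A$, substituting $x=A(y)$ gives $\delta(A\varphi A^{-1})=\sup_y d(A\varphi(y),A(y))\le \mathrm{Lip}(A)\,\delta(\varphi)$, where $\mathrm{Lip}(A)=\max_x\|D_xA\|$ is the Lipschitz constant of $A$ for $d$. Hence $\delta(B^{n})=\delta(ABA^{-1})\le \mathrm{Lip}(A)\,\delta(B)$, and by choosing the neighbourhood $U_n$ small enough we can guarantee $\mathrm{Lip}(A)\le \tfrac54$ for every $A\in U_n$ (the Lipschitz constant depends continuously on $A$ in the $C^1$-topology and equals $1$ at the identity).

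Next, the lower bound. I claim there is a $C^1$-neighbourhood $V_n$ of the identity such that, for the fixed integer $m=|n|\ge 2$, one has $\delta(\varphi^{m})\ge \tfrac32\,\delta(\varphi)$ for all $\varphi\in V_n$. Indeed, put $s=\delta(\varphi)$ and choose $x_0$ with $d(\varphi(x_0),x_0)\ge(1-\epsilon)s$; since every step moves a point by at most $s$, which is tiny, the orbit segment $x_0,\varphi(x_0),\dots,\varphi^{m}(x_0)$ stays inside one normal-coordinate ball of a fixed radius $r_0$. In those coordinates write $\varphi(p)=p+\phi(p)$ with $\|D\phi\|=\|D\varphi-I\|\le\eta$ uniformly small, so $\phi$ is $\eta$-Lipschitz; then $\varphi^{m}(x_0)-x_0=\sum_{i=0}^{m-1}\phi(\varphi^{i}(x_0))$, and $|\phi(\varphi^{i}(x_0))-\phi(x_0)|=O(\eta\, i\, s)$, whence in coordinates $|\varphi^{m}(x_0)-x_0|\ge m\,|\phi(x_0)|-O(\eta\, m^{2}s)$. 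Using the comparison between $d$ and the Euclidean distance in normal coordinates (with multiplicative error $\to 1$ as $r_0\to 0$, uniformly over the compact $M$) together with $m\ge 2$, one gets $\delta(\varphi^{m})\ge d(\varphi^{m}(x_0),x_0)\ge\tfrac32 s$, provided $\epsilon$, $r_0$ and $\eta$ (hence $V_n$) were chosen small enough. Now set $U_n$ to be a neighbourhood contained in $V_n$, with $\mathrm{Lip}(\cdot)\le\tfrac54$ on it and with inversion mapping it into $V_n$; for $\rho$ with $\rho(a),\rho(b)\in U_n$, apply the lower bound to $B$ if $n>0$ and to $B^{-1}$ (using $\delta(B^{-1})=\delta(B)$) if $n<0$, obtaining $\tfrac32\delta(B)\le\delta(B^{n})=\delta(ABA^{-1})\le\tfrac54\delta(B)$, so $\delta(B)=0$, i.e.\ $\rho(b)=id$.

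The main obstacle is the lower-bound step: one must show that taking the $|n|$-th power of a diffeomorphism $C^1$-close to the identity expands its $C^0$-displacement by a definite factor $>1$. This is false away from the identity (for instance for isometries, where $\delta$ of every power can stay bounded), so it genuinely uses the $C^1$-smallness — it forces the displacement field, read in charts, to be nearly constant at the relevant scale — and making it rigorous requires carefully tracking the metric/coordinate distortion at small scales on the compact manifold and controlling the $O(\eta\, m^{2}s)$ accumulation of errors over the $|n|$ iterates. This is also precisely why the conclusion concerns $\rho(b)$ and not $\rho(a)$, whose displacement is left unconstrained, and why a purely $C^1$ argument using $\max\|D\varphi\|$ instead of $\delta$ does not work: near-isometries produce cancellation.
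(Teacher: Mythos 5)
The paper does not prove this theorem; it quotes it from McCarthy's article [Mc], so there is no in-paper argument to compare against. Your proof is correct, and the mechanism --- bounding the $C^0$-displacement of $B^{n}=ABA^{-1}$ from above by $\mathrm{Lip}(A)\,\delta(B)\approx\delta(B)$ and from below by $\approx|n|\,\delta(B)$ via a near-linearity estimate for $|n|$-th powers of $C^1$-small diffeomorphisms --- is the same mechanism that underlies McCarthy's original argument. The two points that should be made fully explicit in a write-up are: the order of quantifiers (fix the normal-coordinate radius $r_0$ first, governed only by the geometry of $M$, then shrink $U_n$ so that both $|n|\,\delta(\varphi)<r_0$ and the $C^1$-smallness parameter $\eta$ are under control), and the uniformity over the $\varphi$-dependent base point $x_0$ of the translation between $C^1$-smallness in a fixed finite atlas and the bound $\|D\varphi-\mathrm{Id}\|\le\eta$ in the recentered normal coordinates, which follows by compactness of $M$. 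Both are implicit in what you wrote and are unproblematic; your observation that cancellations rule out a purely displacement-free (e.g.\ $\sup\|D\varphi\|$-based) argument is also the right reason why the $C^0$-displacement, controlled with $C^1$-input, is the correct quantity to track.
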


In particular,  no subgroup of $\cC^1_{id}$ is isomorphic to $B(1,n)$. Actually, \cite{Mc} considers a more 
general class of groups called \emph{abelian by cyclic}: 

$$G_A=<a,b_1,\dots,b_k| b_ib_j=b_jb_i,\forall i,j, \mbox{ and } fg_if^{-1}=g_1^{a_{i,1}}\dots g_k^{a_{i,k}},  \forall i>,$$
where $A=(a_{i,j}) \in GL(k,\RR)$ has integer entries. \cite{Mc} shows that, 
if $A$ has no eigenvalue of modulus $1$, then there is no faithful action on compact 
manifolds such that the generators belong to a neighborhood $U_A$ of the identity in $Diff^1(M)$. In particular, no subgroup
in $\cC^1_{id}$ is isomorphic to $G_A$.
Actually, \cite{BMNR} showed recently that, for every faithfull $C^1$-action of the group $G_A$ on $[0,1]$, 
the diffeomorphism associated to $a$ admits a hyperbolic fixed point 
whose derivative is an eigenvalue of the matrix $A$. Consequently, no group in $\cC^1_{nonhyp}$ is isomorphic to $G_A$. 


This shows that there are algebraic obstructions for a group admitting faithful actions on $[0,1]$ to be $C^1$-close to identity. 
It is natural to ask if $\cC^1_{id}$ contains free groups.  The answer is positive as stated below:

\begin{theo}\label{t.free} There is a  group $G\subset Diff^1([0,1])$, $C^1$-close to  the identity 
(and totally rational),  such that $G$ is 
isomorphic to the free group $\FF_2$. 
\end{theo}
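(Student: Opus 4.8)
The plan is to construct an explicit elementary group isomorphic to $\FF_2$. By Theorem~\ref{t.realisation}, any elementary group is $C^1$-close to the identity, so it suffices to realize $\FF_2$ as a group generated by a fundamental system supported in $[0,1]$. The natural idea is to start from a standard ``ping-pong'' picture for the free group. Pick two diffeomorphisms $a$ and $b$ of $\RR$, each with a single interval of support $S_a$, $S_b$ and a chosen fundamental domain $I_a$, $I_b$, arranged so that the supports are either nested-inside-a-fundamental-domain or have disjoint interiors, as required by the definition of a fundamental system. The subtlety is that $\{a,b\}$ alone will almost never generate a free group if their supports are simply disjoint (they would commute) or simply nested (one would be, roughly, a ``local'' perturbation of the other). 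So the construction must be iterated: one builds an infinite fundamental system $(f_n)_{n\in\NN}$ whose combinatorics of nesting encodes a free generating structure, and then exhibits two specific infinite products of the $f_n$ (or, safer, uses Theorem~\ref{t.union} after generating $\FF_2$ by finitely many $f_n$ in a nested configuration) that play ping-pong on disjoint families of intervals.

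Concretely, I would do the following. First, recall the archetypal model: the free group $\FF_2=\langle\alpha,\beta\rangle$ acts on a Cantor-like collection of intervals in $[0,1]$ by a Schottky/ping-pong action where $\alpha^{\pm 1}$ and $\beta^{\pm 1}$ each map the complement of one interval into that interval. The goal is to realize such an action by diffeomorphisms whose pairs of successive fixed points are globally nested (no linked pairs), so that the generated group is elementary. Second, I would take a countable collection of disjoint open intervals $U_w\subset[0,1]$ indexed by reduced words $w$ in $\alpha,\beta$, arranged so that $U_{w'}\subset U_w$ whenever $w$ is a prefix of $w'$, with the root interval $U_\emptyset=(0,1)$; this is a nested tree of intervals, hence automatically has no linked pairs. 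Third, I would define $\alpha$ (and symmetrically $\beta$) as an infinite composition of ``simple'' diffeomorphisms $\alpha_w$, one supported on each $U_w$, where $\alpha_w$ moves the sub-tree of intervals below $U_w$ in the pattern prescribed by left multiplication by $\alpha$; the key point is to choose the derivatives so that this infinite composition is still $C^1$ (derivatives at all fixed points equal to $1$, with controlled $C^0$-size of $D\alpha-1$ as the intervals shrink). Fourth, I would check that the collection of all these $\alpha_w,\beta_w$ forms a fundamental system: each has a single support interval, for each one pick a fundamental domain inside its support, and verify the three-way alternative (nested, or nested the other way, or disjoint interiors) — this holds by construction of the nested tree. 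Hence $G=\langle\alpha,\beta\rangle$ is contained in the elementary group generated by this fundamental system, and is therefore $C^1$-close to the identity by Theorem~\ref{t.realisation}; totally rational-ness follows because each simple generator's relative translation number has cyclic image.

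The last thing to verify is that $G$ really is free of rank $2$: this is a ping-pong argument on the nested tree. A nonempty reduced word $w=s_1\cdots s_k$ in $\alpha^{\pm1},\beta^{\pm1}$, evaluated on a suitably chosen ``base'' interval (one deep in a branch not cancelled by $s_1$), produces a nonzero net translation number along one of the branches, hence is not the identity; alternatively one exhibits a point whose orbit under $w$ moves. Care is needed because our $\alpha$ is not the model $\alpha$ but an infinite product, so I would arrange the supports of the $\alpha_w$ to be pairwise interleaved exactly as dictated by the Cayley graph of $\FF_2$, so that the translation-number morphisms $\tau_{a,b}$ of Theorem~\ref{t.structure}, read on the appropriate interval of successive fixed points, detect the reduced-word structure.

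The main obstacle I anticipate is the $C^1$-regularity of the infinite products $\alpha$ and $\beta$ together with the freeness: one wants the nested intervals $U_w$ to shrink fast enough that $\sum$ of the $C^0$-oscillations of the pieces $D\alpha_w-1$ is summable (so the infinite composition converges in $Diff^1_+$), while still leaving enough ``room'' in each fundamental domain for the ping-pong dynamics to be genuinely free and for the nesting alternative in the fundamental-system definition to hold. Balancing these is delicate but should be achievable by a geometric series choice of interval lengths and correspondingly flat ($C^1$-small) bump-type diffeomorphisms on each piece. Once the construction is in place, Theorem~\ref{t.realisation} does all the analytic work of producing the conjugating sequence $h_n$, so no further hard estimates are needed.
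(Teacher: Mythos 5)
Your central strategy --- realize $\FF_2$ as (a subgroup of) an elementary group and then invoke Theorem~\ref{t.realisation} --- cannot succeed, and the obstruction is stated in this very paper. Proposition~\ref{p.solvable2} asserts that every finitely generated group contained in an elementary group is solvable, and Corollary~\ref{c.solvable} deduces that the group generated by a fundamental system contains no non-cyclic free subgroup. So no fundamental system, however ingeniously nested, can generate a group containing $\FF_2$. The structural reason is precisely the nesting you rely on: by Theorem~\ref{t.structure} each interval of successive fixed points carries a relative translation number, and an induction on nesting depth pushes commutators into ever deeper levels (this is the mechanism in the proof of Proposition~\ref{p.solvable}), forcing solvability. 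Your proposed ping-pong on a nested tree of intervals is therefore structurally incompatible with producing a free group from a single fundamental system.

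There is a second, independent problem. You define $\alpha$ and $\beta$ as \emph{infinite} compositions of the simple pieces $\alpha_w$, $\beta_w$. But the elementary group generated by the fundamental system $\{\alpha_w,\beta_w\}$ consists of \emph{finite} words in these generators; an infinite product supported on infinitely many disjoint intervals is not an element of that group. Hence $\langle\alpha,\beta\rangle$ is not a subgroup of the elementary group, and Theorem~\ref{t.realisation} does not apply to it. (Nor does taking the completion $I^\infty$ help: an ``induced'' map in the paper's sense replaces a given $g$ by the identity on \emph{some} of its intervals; it does not produce new infinite products of distinct generators.) This is not a reparable detail: the group the paper actually constructs is exactly of this infinite-product type, and it is genuinely not elementary.

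The paper's route is different and built to sidestep the solvability obstruction. It first proves Proposition~\ref{p.free}, that no nontrivial reduced word is universal among groups in $\cC^1_{id}$; the proof is an induction on word length, using Theorem~\ref{t.jump} to ``stack'' a known witness inside a fundamental domain of a new generator (Lemma~\ref{l.free}). This produces, for each reduced word $\omega$, a \emph{different} pair $f_\omega,g_\omega$ with $\langle f_\omega,g_\omega\rangle\in\cC^1_{id}$ and $\omega(f_\omega,g_\omega)\neq\mathrm{id}$. It then fixes pairwise disjoint segments $I_\omega$, places a smoothly rescaled copy $(\tilde f_\omega,\tilde g_\omega)$ on each $I_\omega$ with $C^1$-size tending to zero as the length of $\omega$ grows, and declares $f$, $g$ to be the resulting diffeomorphisms supported on $\bigcup_\omega I_\omega$. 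Each reduced word $\omega$ then fails to be trivial on the block $I_\omega$, so $\langle f,g\rangle$ is free, and the disjoint-support, shrinking-size construction gives the isotopy by conjugacy to the identity on the union of the blocks. Theorem~\ref{t.jump} is still the key engine, but it is applied word by word inside Proposition~\ref{p.free}, not to build one global elementary group.
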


The proof of Theorem~\ref{t.free} consists in building a group $G_\omega=<f_\omega,g_\omega>\subset Diff^1_+([0,1])$, 
for all the reduced words $\omega$,
so that $G_\omega$ is $C^1$ close to the identity and the pair $\{f_\omega,g_\omega\}$ does not satify the 
relation $\omega$. Then, we glue together the groups $G_\omega$ in a way 
so that their supports are pairwize disjoint. Theorem~\ref{t.jump} is our main tool for building the groups 
$G_\omega$. 

\vskip 1cm

\noindent \textbf{Aknowlegment:} We thank Andres  Navas for numerous discussions and advises which simplified many arguments. 
The first author thanks Sylvain Crovisier and Amie Wilkinson with who  this story started in 2003, when we proved (but never wrote)
Theorem~\ref{t.linked} and \ref{t.translation}.

\section{Isotopies versus sequences of conjugacy to identity. }\label{s.cohomologique}

\subsection{The cohomological equation and the proof of Theorem~\ref{t.cohomologique} }
The aim of the section is the proof (suggested by A. Navas) of Theorem~\ref{t.cohomologique}.  Let us first start with the following observation:

\begin{lemm}
Consider $f\in Diff^1_+([0,1])$ and a sequence $\{h_n\}_{n\in\NN}$ with $h_n\in Diff^1_+([0,1])$. 
Then 

$$\left(h_nf h_n^{-1}\xrightarrow{C^1} Id\right)\Leftrightarrow 
\left(\log D h_n(f(x))-\log Dh_n(x)\xrightarrow{unif} -\log Df(x)\right)$$
 \end{lemm}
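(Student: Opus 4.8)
The plan is to unwind both sides directly from the definition of the $C^1$-topology and the chain rule. Recall that a sequence $\phi_n \in Diff^1_+([0,1])$ converges to the identity in the $C^1$-topology if and only if $\phi_n \to \mathrm{id}$ uniformly and $D\phi_n \to 1$ uniformly; moreover, since everything happens on the compact interval $[0,1]$ and the derivatives are positive, $D\phi_n \to 1$ uniformly is equivalent to $\log D\phi_n \to 0$ uniformly. So first I would reduce the statement ``$h_n f h_n^{-1} \xrightarrow{C^1} \mathrm{id}$'' to the conjunction of ``$h_n f h_n^{-1} \to \mathrm{id}$ uniformly'' and ``$\log D(h_n f h_n^{-1}) \to 0$ uniformly''.

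Next I would compute $D(h_n f h_n^{-1})$ via the chain rule. Writing $y = h_n^{-1}(x)$, one gets
$$D(h_n f h_n^{-1})(x) = Dh_n(f(y)) \cdot Df(y) \cdot D(h_n^{-1})(x) = \frac{Dh_n(f(y)) \, Df(y)}{Dh_n(y)},$$
so that, taking logarithms,
$$\log D(h_n f h_n^{-1})(x) = \log Dh_n(f(y)) - \log Dh_n(y) + \log Df(y).$$
Since $x \mapsto y = h_n^{-1}(x)$ is a bijection of $[0,1]$, the supremum over $x$ of the left side equals the supremum over $y$ of $\bigl|\log Dh_n(f(y)) - \log Dh_n(y) + \log Df(y)\bigr|$. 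Hence $\log D(h_n f h_n^{-1}) \to 0$ uniformly is exactly equivalent to $\log Dh_n(f(x)) - \log Dh_n(x) \to -\log Df(x)$ uniformly in $x$. This is the content of the right-hand side of the claimed equivalence.

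It then remains to check that the uniform convergence of the derivatives already forces the uniform convergence $h_n f h_n^{-1} \to \mathrm{id}$, so that the single condition on the right genuinely captures $C^1$-convergence. For this, note that $h_n f h_n^{-1}$ and $\mathrm{id}$ both fix $0$ and $1$; if $\log D(h_n f h_n^{-1}) \to 0$ uniformly, then $D(h_n f h_n^{-1}) \to 1$ uniformly, and integrating from $0$, $ (h_n f h_n^{-1})(x) = \int_0^x D(h_n f h_n^{-1}) \to \int_0^x 1 = x$ uniformly on $[0,1]$. So the $C^0$-part of the convergence is automatic, and the equivalence follows. The only mildly delicate point — and the step I would be most careful about — is the passage between uniform control of $Dh_n \circ f - Dh_n$-type expressions ``at the points $f(y)$ and $y$'' and uniform control ``at $x$ and $h_n(x)$'': this is handled cleanly by the change of variable $y = h_n^{-1}(x)$, using that $h_n$ is a homeomorphism of $[0,1]$ onto itself, so no uniformity is lost. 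Everything else is routine manipulation of the chain rule and of the equivalence between $D\phi_n\to 1$ and $\log D\phi_n\to 0$ on a compact interval.
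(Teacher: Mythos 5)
Your proof is correct and takes essentially the same route as the paper: unwind $C^1$-convergence, compute $D(h_nfh_n^{-1})$ by the chain rule, change variables $y = h_n^{-1}(x)$, and observe that the $C^0$-part follows automatically by integrating the derivative (since both maps fix the endpoints). The paper's proof is just a terser version of the same argument, phrasing the last step as ``$C^1$-close to an isometry of $[0,1]$, hence to the identity.''
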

 \begin{demo} Just notice that the right term means that $\log D(h_nfh_n^{-1} (h(x)))$ converges uniformly to $0$, that is,
 $D (h_n f h_n^{-1})$ converges uniformly to $1$.  This implies that $h_n f h_n^{-1}$ is $C^1$-close to an isometry of $[0,1]$, 
 that is, to the identity map. 
 \end{demo}
 
A straightforward calculation implies: 
 \begin{coro}
  Assume that  $\psi_n\colon [0,1]\to \RR $ is a sequence  of continuous maps statisfying 
  $$\psi_n(f(x))-\psi_n(x)\xrightarrow{unif} -\log Df(x).$$
 Then $h_n\colon [0,1]\to \RR$ defined by 
 $$ h_n(x)=\frac{\int_0^x e^{\psi_n(t)} dt}{\int_0^1 e^{\psi_n(t)} dt}$$
 is a sequence of diffeomorphisms of $[0,1]$ such that  $h_nf h_n^{-1}\xrightarrow{C^1} Id.$
 \end{coro}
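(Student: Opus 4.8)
The statement to prove is the Corollary: given $\psi_n\colon [0,1]\to\RR$ continuous with $\psi_n(f(x))-\psi_n(x)\xrightarrow{unif}-\log Df(x)$, the maps $h_n(x)=\frac{\int_0^x e^{\psi_n(t)}\,dt}{\int_0^1 e^{\psi_n(t)}\,dt}$ are diffeomorphisms of $[0,1]$ with $h_nfh_n^{-1}\xrightarrow{C^1}Id$.

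\textbf{Approach.} The plan is to apply the preceding Lemma, which reduces the claim to the statement that $\log Dh_n(f(x))-\log Dh_n(x)\xrightarrow{unif}-\log Df(x)$. So first I would simply compute $Dh_n$ from the integral formula, then compare $\log Dh_n(f(x))-\log Dh_n(x)$ with $\psi_n(f(x))-\psi_n(x)$ and observe that these two quantities differ by something that vanishes uniformly — in fact they are \emph{equal}, so the hypothesis transfers verbatim.

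\textbf{Key steps, in order.} First: each $h_n$ is a well-defined $C^1$-diffeomorphism of $[0,1]$. Since $\psi_n$ is continuous on the compact interval $[0,1]$, the function $t\mapsto e^{\psi_n(t)}$ is continuous and strictly positive, hence the denominator $Z_n:=\int_0^1 e^{\psi_n(t)}\,dt$ is a positive real number and $x\mapsto\int_0^x e^{\psi_n(t)}\,dt$ is $C^1$ with strictly positive derivative; therefore $h_n$ is a strictly increasing $C^1$ map with $h_n(0)=0$, $h_n(1)=1$, and $Dh_n>0$ everywhere, so $h_n\in Diff^1_+([0,1])$. Second: differentiate to get $Dh_n(x)=e^{\psi_n(x)}/Z_n$, hence $\log Dh_n(x)=\psi_n(x)-\log Z_n$. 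Third: form the difference along the orbit:
\[
\log Dh_n(f(x))-\log Dh_n(x)=\bigl(\psi_n(f(x))-\log Z_n\bigr)-\bigl(\psi_n(x)-\log Z_n\bigr)=\psi_n(f(x))-\psi_n(x),
\]
the normalizing constant $\log Z_n$ cancelling. Fourth: by hypothesis the right-hand side converges uniformly on $[0,1]$ to $-\log Df(x)$, so $\log Dh_n(f(x))-\log Dh_n(x)\xrightarrow{unif}-\log Df(x)$. Fifth: invoke the Lemma with this sequence $h_n$ to conclude $h_nfh_n^{-1}\xrightarrow{C^1}Id$.

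\textbf{Main obstacle.} There is essentially no obstacle here — this is the ``straightforward calculation'' advertised in the text. The only point needing a word of care is the very first step, checking that $h_n$ genuinely lands in $Diff^1_+([0,1])$ (positivity and continuity of $e^{\psi_n}$ on the compact interval, finiteness and positivity of the normalizing integral $Z_n$); once that is in place, the cancellation of $\log Z_n$ in the telescoped difference is immediate and the conclusion follows directly from the Lemma. If one wanted the stronger conclusion with a continuous \emph{path} $h_t$ rather than a sequence, one could take $\psi_t$ continuous in $t$ and the same formula produces a $C^1$-continuous family, but for the Corollary as stated the sequence version suffices.
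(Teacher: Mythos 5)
Your proof is correct and is precisely the ``straightforward calculation'' the paper alludes to without writing out: differentiating the integral formula gives $\log Dh_n = \psi_n - \log Z_n$, the normalizing constant cancels in the telescoped difference, and the preceding Lemma then applies verbatim. Nothing is missing, and the preliminary check that $h_n\in Diff^1_+([0,1])$ is appropriate to include.
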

 
 For any continuous map $\psi\colon [0,1]\to \RR$, we will denote by $h_\psi\in Diff^1_+([0,1])$ the 
 diffeomorphism defined as above, that is: 
 $$ h_\psi(x)=\frac{\int_0^x e^{\psi(t)} dt}{\int_0^1 e^{\psi(t)} dt}.$$
Notice that:
 
 \begin{itemize}
 \item $\psi\mapsto h_\psi$ is a continuous map from $\cC^0([0,1],\RR)$ to $Diff^1_+([0,1])$. 
  \item $h_{\log Dg}=g$ for every $g\in Diff^1_+([0,1])$.
 \end{itemize}
Consequently, finding  diffeomorphisms $h_n$ conjugating  $f$ $C^1$-close to the identity is equivalent 
  to find almost solutions of the cohomological equation. The advantage of this approach  is 
  that the cohomological equation is a linear equation. As a consequence, convex sums of almost solutions are still almost solutions.

 \begin{lemm}\label{l.convexsum}
  Assume that  $\psi_n\colon [0,1]\to \RR $ is a sequence  of continuous maps statisfying 
  $$\psi_n(f(x))-\psi_n(x)\xrightarrow{unif} -\log Df(x).$$

Let $\psi_t, t\in[0,+\infty)$ be defined as follows:
  \begin{itemize}
  \item if $t=n\in \NN$, then $\psi_t=\psi_n$;
   \item if $t\in(n,n+1)$, then $\psi_t(x)= (n+1-t)\psi_n(x)+ (t-n)\psi_{n+1}(x)$.
  \end{itemize}
Then $\{h_t=h_{\psi_t}\}_{t\in [0,+\infty)}$ is a continuous path of diffeomorphisms statisfying 
  
  $$h_tf h_t^{-1}\limite{t\to +\infty}{C^1} id.$$ 
  
 \end{lemm}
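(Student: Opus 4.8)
The plan is to reduce the statement to the linearity of the cohomological equation, as announced just before it. Two things have to be checked: that $t\mapsto h_t$ is a continuous path in $Diff^1_+([0,1])$, and that $h_tfh_t^{-1}\to id$ in the $C^1$-topology as $t\to+\infty$.

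For the first point, since $\psi\mapsto h_\psi$ is continuous from $\cC^0([0,1],\RR)$ (with the uniform norm) to $Diff^1_+([0,1])$, it is enough to check that $t\mapsto\psi_t$ is continuous from $[0,+\infty)$ to $\cC^0([0,1],\RR)$. This is immediate from the definition: on each interval $[n,n+1]$ one has $\psi_t=(n+1-t)\psi_n+(t-n)\psi_{n+1}$, which is affine in $t$ with values in $\cC^0([0,1],\RR)$, hence uniformly continuous in $t$, and the two expressions agree with $\psi_n$, resp. $\psi_{n+1}$, at the endpoints, so the pieces glue together into a continuous path.

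The core of the proof is to show that the cobord of $\psi_t$ still almost solves the cohomological equation, uniformly in $t$ on tails:
$$\sup_{x\in[0,1]}\bigl|\psi_t(f(x))-\psi_t(x)+\log Df(x)\bigr|\xrightarrow{t\to+\infty}0.$$
Write $\delta_n=\sup_{x\in[0,1]}\bigl|\psi_n(f(x))-\psi_n(x)+\log Df(x)\bigr|$, so that $\delta_n\to0$ by hypothesis. For $t\in[n,n+1]$ and $s=t-n\in[0,1]$, the function $x\mapsto\psi_t(f(x))-\psi_t(x)+\log Df(x)$ equals $(1-s)$ times the corresponding function for $\psi_n$ plus $s$ times the one for $\psi_{n+1}$ --- this is exactly where linearity of the cohomological equation enters: a convex combination of almost solutions is an almost solution. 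Hence its sup norm is at most $(1-s)\delta_n+s\delta_{n+1}\le\max(\delta_n,\delta_{n+1})$, and therefore $\sup_{t\ge n}\sup_{x}|\cdots|\le\sup_{m\ge n}\delta_m\to0$, which is the claim.

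Finally I would conclude as in the proof of the first Lemma above. Using $\log Dh_\psi(y)=\psi(y)-\log\int_0^1 e^{\psi}$ one gets, for every $x$,
$$\log D\bigl(h_tfh_t^{-1}\bigr)\bigl(h_t(x)\bigr)=\log Dh_t(f(x))+\log Df(x)-\log Dh_t(x)=\psi_t(f(x))-\psi_t(x)+\log Df(x),$$
and since $h_t$ is a bijection of $[0,1]$ the previous step says precisely that $\log D(h_tfh_t^{-1})\to0$ uniformly, i.e. $D(h_tfh_t^{-1})\to1$ uniformly; as $h_tfh_t^{-1}$ is an orientation-preserving diffeomorphism of $[0,1]$ fixing $0$ and $1$, this forces $h_tfh_t^{-1}\to id$ uniformly as well, hence $h_tfh_t^{-1}\xrightarrow{C^1}id$. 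There is no genuine obstacle in this argument; the only point that needs a little care is to make the convergence uniform with respect to $t$ on the tails $[n,+\infty)$, and not merely pointwise in $t$, which is what the bound $\max(\delta_n,\delta_{n+1})$ above provides.
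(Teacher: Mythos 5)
Your proof is correct and follows exactly the route the paper intends (the paper states the lemma without a written proof, relying on the preceding discussion of linearity of the cohomological equation, the continuity of $\psi\mapsto h_\psi$, and the first Lemma's observation); you have simply supplied the details. The key points — continuity of $t\mapsto\psi_t$ hence of $t\mapsto h_t$, the bound $(1-s)\delta_n+s\delta_{n+1}\le\max(\delta_n,\delta_{n+1})$ giving uniformity in $t$ on tails, and the identity $\log D(h_tfh_t^{-1})(h_t(x))=\psi_t(f(x))-\psi_t(x)+\log Df(x)$ — are all correctly handled.
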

 
\begin{demo}[Proof of Theorem~\ref{t.cohomologique}]
 Let $G$ be a group and assume that $h_n$ is a sequence of diffeomorphisms  such that 
 $h_n g h_n^{-1} \xrightarrow{C^1} id$ for every $g\in G$.  Let $\psi_n$ denote $\log Dh$ and 
  define $\psi_t, t\in[0,+\infty)$,  as in Lemma~\ref{l.convexsum} as convex sums of the $\psi_n$. 
  One denotes $h_t=h_{\psi_t}$.  
  Notice that, for $t=n\in \NN$, one has $h_t=h_n$, so that the notation is coherent. 
Now, for every $g\in G$, one has:
 $$h_tg h_t^{-1}\limite{t\to +\infty}{C^1} id .$$ 

\end{demo}

\subsection{Increasing union of groups $C^1$-close to the identity:proof of Theorem~\ref{t.union} }

\begin{demo}[Proof of Theorem~\ref{t.union}]
Let $\{G_n\}_{n\in \NN}$ be an increasing sequence of finitely generated groups $C^1$ close to the identity. 
For every $n\in\N$, let $k_n$ be an integer such that $\cS_n=\{g^n_1,\dots,g^n_{k_n}\}$ is a system of generators of $G_n$. 
\\
We denote $\cE_n=\bigcup_{i\leq n} \cS_i$.  
As $G_n$ is $C^1$-close to the identity, there is a sequence $h_{n,i}\in Diff^1_+([0,1])$ such that 

$$\forall g\in G_n, \quad  h_{n,i} gh_{n,i}^{-1}\overset{C^1}{\underset{i\to \infty}{\longrightarrow}} id. $$ 
Fix a sequence $\varepsilon_n>0$ such that $\varepsilon_n \underset{n\to \infty}{\longrightarrow} 0$.  For every $n$,
there is $i(n)$ so that
$$\forall g\in \cE_n,\quad \| h_{n,i(n)} gh_{n,i(n)}^{-1}-id\|_1<\varepsilon_n.$$
\\
As a straightforward consequence, for every $g\in G=\bigcup_{n\in \NN} G_n$ one has 
$$h_{n,i(n)} gh_{n,i(n)}^{-1}\overset{C^1}{\underset{n\to \infty}{\longrightarrow}} id.$$
According to Theorem~\ref{t.cohomologique}, this is equivalent to the fact that $G$ is $C^1$-close to the identity, ending  the proof. 
\end{demo}

As any countable group is an increasing union of finitely generated groups, we easily deduce: 
\begin{coro} If $\{G_n\}_{n\in\NN}$ is an increasing sequence of countable groups and if $G_n\in\cC^1_{id}$ for all $n$, then
$$G=\bigcup_{n\in \NN} G_n \in\cC^1_{id}.$$
 
\end{coro}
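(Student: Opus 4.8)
The plan is to reduce everything to Theorem~\ref{t.union} by a diagonal argument. The first ingredient is the elementary observation that $\cC^1_{id}$ is closed under passing to subgroups: if $h_n\in Diff^1_+([0,1])$ is a sequence of diffeomorphisms conjugating every element of $G_n$ $C^1$-close to the identity, then the very same sequence $h_n$ works for every $g$ in any subgroup of $G_n$. Hence every subgroup of a group in $\cC^1_{id}$ again lies in $\cC^1_{id}$.

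Next, since each $G_n$ is countable, I would write $G_n=\bigcup_{m\in\NN}G_{n,m}$ as an increasing union of finitely generated subgroups: enumerate the elements of $G_n$ and let $G_{n,m}$ be the subgroup generated by the first $m$ of them. By the observation above, each $G_{n,m}$ is a finitely generated group lying in $\cC^1_{id}$. Moreover, because the sequence $(G_n)$ is increasing, we have $G_{i,k}\subset G_i\subset G_k$ whenever $i\le k$.

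Now I would introduce the diagonal groups
$$H_k=\langle G_{1,k},G_{2,k},\dots,G_{k,k}\rangle.$$
Each $H_k$ is finitely generated, being generated by finitely many finitely generated groups; it is contained in $G_k$ (all its generators lie in $G_k$ by the inclusion just noted), hence $H_k\in\cC^1_{id}$; and the sequence $\{H_k\}$ is increasing, since $G_{i,k}\subset G_{i,k+1}$ for every $i\le k$. Finally $\bigcup_{k}H_k=G$: given $g\in G$, choose $n$ with $g\in G_n$, then $m$ with $g\in G_{n,m}$, and put $k=\max(n,m)$; then $g\in G_{n,m}\subset G_{n,k}\subset H_k$. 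Applying Theorem~\ref{t.union} to the increasing sequence $\{H_k\}$ of finitely generated groups $C^1$-close to the identity yields $G=\bigcup_k H_k\in\cC^1_{id}$, as claimed. (Alternatively, one may invoke the Corollary just proved for $\{H_k\}$, but since the $H_k$ are finitely generated, Theorem~\ref{t.union} applies directly.)

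There is no real obstacle here; the single point requiring a little care is to arrange simultaneously that the approximating subgroups are finitely generated, increasing, all $C^1$-close to the identity, and exhaust $G$ — which is exactly what the diagonal choice $H_k$ achieves, using that $\cC^1_{id}$ is hereditary for subgroups and that the original sequence $(G_n)$ is increasing.
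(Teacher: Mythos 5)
Your proof is correct and follows the same idea the paper intends — the paper merely remarks ``as any countable group is an increasing union of finitely generated groups, we easily deduce,'' and your diagonal construction of the $H_k$ is exactly the bookkeeping needed to make that remark precise before applying Theorem~\ref{t.union}.
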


\section{Relative translation numbers for groups $C^1$-close to identity}\label{s.structure}
The aim of this section is to give a direct proof of Theorem~\ref{t.structure} in the case of groups of diffeomorphisms $C^1$-close to identity.   
This proof was essentially done (and never 
written) by the  first author with S. Crovisier and A.  Wilkinson in 2003. 
Section~\ref{s.crossing} presents the (later) proof due to A.  Navas, with completely different arguments,  for groups 
of homeomorphisms without linked fixed points.

Let us restate Theorem~\ref{t.structure} in the setting of groups $G\in \cC^1_{id}$: 

\begin{theo}\label{t.translation} Let $G\subset Diff^1_+([0,1])$ be a subgroup $C^1$-close to identity, $f$ in $G$, and $I$ a connected component 
of $[0,1]\setminus Fix (f)$. Assume $(f(x)-x)>0$ for $x\in I$. 
Then :
\begin{itemize}
 \item for every $g\in G$, either $g(I)=I$ or $g(I)\cap I=\emptyset$. 
 \item Let  denote $G_{I}=\{g\in G|\; g(I)=I\}$ the stabilizer of $I$. There is a unique 
 group morphism $\tau_{f,I}\colon G_{I}\to \RR$ with the following properties
 \begin{itemize}
  \item the kernel $Ker(\tau_{f,I})$ is precisely the set of elements $g\in G$ having a fixed point in $I$. 
  $$\left( g\in G_I \mbox{ and } \tau_{f,I}(g)=0\right)\Longleftrightarrow Fix(g)\cap I \neq \emptyset$$
  \item  $\tau_{f,I}$ is increasing:   given any $g,h\in G_{I}$, 
  assume there is
  $$\exists x\in I , g(x)\geq h(x) \Longrightarrow \tau_{f,I}(g)\geq \tau_{f,I}(h).$$ 
  \item $\tau_{f,I}(f)=1$. 
 \end{itemize}

\end{itemize}
 
\end{theo}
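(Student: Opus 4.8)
The plan is to deduce both assertions from the absence of linked pairs of successive fixed points — available here by Theorem~\ref{t.linked}, since $G$ is $C^1$-close to the identity — together with a Hölder-type translation-number construction on the stabilizer of $I$.

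\emph{First assertion.} Fix $g\in G$. Since $a,b\in Fix(f)$ and $(a,b)$ meets no fixed point of $f$, the points $g(a)<g(b)$ are successive fixed points of $gfg^{-1}\in G$, so the pairs $\{a,b\}$ and $\{g(a),g(b)\}$ are not linked (Theorem~\ref{t.linked}). Among the configurations of two unlinked pairs, either the open intervals are disjoint, whence $g(I)\cap I=\emptyset$, or one closed interval contains the other, say $[g(a),g(b)]\subseteq[a,b]$ (the reverse inclusion is obtained by replacing $g$ with $g^{-1}$). If this inclusion were strict, then after discarding the sub-cases where $g(a)=a$ or $g(b)=b$ — which already produce linked pairs — one has $a<g(a)$ and $g(b)<b$; iterating $g$, the nested intervals $g^n([a,b])$ decrease to a $g$-invariant compact interval $[\alpha,\beta]\subset(a,b)$ with $g(\alpha)=\alpha$, so $g$ has fixed points interior to $(a,b)$, while $g(a)\ne a$; then the component of $[0,1]\setminus Fix(g)$ containing $a$ is a pair of successive fixed points of $g$ linked with $\{a,b\}$, contradicting Theorem~\ref{t.linked}. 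Hence $g(I)=I$.

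\emph{Construction of $\tau_{f,I}$.} Every $g\in G_I$ fixes $a$ and $b$, and by the intermediate value theorem applied to $g(x)-x$ on the connected set $I$, exactly one of the following holds: $g$ has a fixed point in $I$; $g(x)>x$ on all of $I$; $g(x)<x$ on all of $I$. Let $K$ and $P$ be the sets of elements of the first and second type, so $G_I=K\sqcup P\sqcup P^{-1}$. One checks at once that $P$ is a sub-semigroup containing $f$, that $K$ and $P$ are invariant under conjugation in $G_I$, and that the relation $g\succ h$, meaning $g(x)>h(x)$ for every $x\in I$, is a total preorder on $G_I$ invariant under both left and right multiplication, with $g\succ h\iff gh^{-1}\in P$ and $g\sim h\iff gh^{-1}\in K$. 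The decisive point is that $K$ is a subgroup of $G_I$: given $u,w\in K$ one must show $uw$ cannot lie in $P\cup P^{-1}$, i.e. cannot move every point of $I$ in one direction, and this is a combinatorial consequence of the absence of linked pairs, obtained by comparing near $a$ and near $b$ the nested/disjoint configuration of the pairs of successive fixed points of $u$ and of $w$. Granting this, $K$ is normal, the quotient $G_I/K$ inherits from $P$ a bi-invariant total order, and that order is archimedean: if classes $[g],[h]$ were positive with $n[g]<[h]$ for all $n\ge1$, then $x_0<g(x_0)<g^2(x_0)<\cdots$ would be a monotone orbit bounded by $h(x_0)<b$, forcing a fixed point of $g$ in $I$ and contradicting $g\in P$. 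By Hölder's theorem $G_I/K$ embeds order-preservingly into $(\RR,+)$; composing with the projection $G_I\to G_I/K$ and rescaling so that $f$ maps to $1$ yields a morphism $\tau_{f,I}\colon G_I\to\RR$ with kernel exactly $K$ and $\tau_{f,I}(f)=1$, the stated monotonicity being just the equivalence $g\succ h\iff\tau_{f,I}(g)>\tau_{f,I}(h)$. An order-preserving morphism of an archimedean ordered group into $\RR$ is determined by its value on one nontrivial element, which gives uniqueness of $\tau_{f,I}$ (and uniqueness up to a positive factor if the normalisation is dropped).

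\emph{Main obstacle.} The only step that is not routine is the closure of $K$ under multiplication, and this is exactly where the hypothesis enters through Theorem~\ref{t.linked}: two elements each fixing some point of $I$ must be prevented from composing into one with no fixed point in $I$, and it is the no-linked property that rules this out; the exclusion of the strict nesting in the first assertion is a minor point of the same nature. One might instead stay in the $C^1$ category, using the conjugacies $h_n$ supplied by $C^1$-closeness to renormalise the $G_I$-action on $I$ toward an action by translations and to read $\tau_{f,I}$ off a limit; but controlling the normalisation of the $h_n$ is delicate, so routing the argument through Theorem~\ref{t.linked} seems preferable.
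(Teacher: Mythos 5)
Your proposal departs deliberately from the paper's own proof of this statement, and it is worth being explicit about the two routes. The paper's Section~\ref{s.structure} proof is genuinely analytic: for $g\in G$ and $x\notin Fix(f)$ it sets $\tau_f(g,x)=\lim_{t\to1}\frac{g_t(x_t)-x_t}{f(x_t)-x_t}$ (with $g_t=h_tgh_t^{-1}$, $x_t=h_t(x)$), establishes that this limit exists and is a real-valued group morphism via the quantitative composition estimates of Lemmas~\ref{l.C1closetoid}, \ref{l.C1composition}, \ref{l.iterates} and Corollaries~\ref{c.encadrement1}--\ref{c.translationponctelle}, then shows (Lemma~\ref{l.translation}) that the value does not depend on $x\in I$. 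With that construction, the characterization of the kernel and the fact that it is a subgroup are free, because $\tau_{f,I}$ is manifestly a morphism. Your route instead imports the no-linked property and tries to re-run the Hölder-type argument of Section~\ref{s.crossing}. That is a legitimate alternative \emph{in principle}, but it changes what has to be proved, and this is where the two difficulties below arise.

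\textbf{A circularity you should flag.} You invoke Theorem~\ref{t.linked} to get the no-linked property. In the paper, Theorem~\ref{t.linked} is proved as Theorem~\ref{t.enlaces}, and that proof uses Corollary~\ref{c.translationponctelle}, i.e.\ exactly the machinery whose conclusion you are trying to reprove. Your argument is salvageable only because the no-linked property for groups $C^1$-close to the identity can be obtained independently: a $C^1$-close group has no hyperbolic fixed points (derivatives at fixed points are conjugacy-invariant), and Theorem~\ref{t.sanshyperbolique} (Navas, proved in Section~4.4 by entirely different means) then gives the no-linked property. You should route through that, not through Theorem~\ref{t.linked}, or the argument is circular as written.

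\textbf{The genuine gap: closure of $K$.} You correctly identify that the crux is showing $K=\{g\in G_I:\ Fix(g)\cap I\neq\emptyset\}$ is closed under composition — indeed, without it you cannot even assert that $\sim$ is transitive, so the whole total-preorder structure and Hölder argument collapse. But what you offer is not a proof: the phrase ``comparing near $a$ and near $b$ the nested/disjoint configuration of the pairs of successive fixed points of $u$ and of $w$'' is a gesture, not an argument, and I do not see how to turn it into one. The paper's Section~\ref{s.crossing} proof of this fact (which is the argument you are implicitly trying to reconstruct) works differently: it forms the set
$$U=\bigcup_{\{g\in G_I,\ Fix(g)\cap I\neq\emptyset\}} \bigl(I\setminus Fix(g)\bigr),\qquad \Lambda=I\setminus U,$$
proves via the no-linked property that every component of $U$ lies inside a fundamental domain of $f$ (hence $\Lambda\neq\emptyset$), and shows (Lemma~\ref{l.minimal}) that every $g\in K$ restricts to the identity on $\Lambda$. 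Closure of $K$ is then immediate: $u,w\in K$ implies $uw|_\Lambda=\mathrm{id}$, and $\Lambda\neq\emptyset$ supplies the fixed point. Your proof needs either that construction or a genuine substitute; as it stands, the decisive step is merely asserted. The first assertion (the dichotomy $g(I)=I$ or $g(I)\cap I=\emptyset$) and the archimedean/Hölder part are correct and carefully done.
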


Theorem~\ref{t.translation} is the aim of this  Section~\ref{s.structure}.

\subsection{Backgroung on diffeomorphisms $C^1$-close to identity and compositions}

According to \cite[Lemme 4.3.B-1]{Bo} one has

\begin{lemm}\label{l.C1closetoid}
Let $M$ be a  compact manifold. For any $\varepsilon>0$ and $n\in\NN$, there is 
$\delta>0$ so that, for any $f\in Diff^1(M)$  whose $C^1$-distance to identity is 
less than $\delta$,  for every $x\in M$, for any $y$ with $\|y-x\|<n\|f(x)-x\|$,
one has:

$$\|(f(y)-y)-(f(x)-x)\|<\varepsilon  \|f(x)-x\|.$$
 
\end{lemm}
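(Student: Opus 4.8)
The statement is a quantitative continuity estimate: if $f$ is $C^1$-close to the identity, then the displacement function $x\mapsto f(x)-x$ varies slowly at the scale of its own size, uniformly. The plan is to reduce everything to a uniform bound on $Df-\mathrm{Id}$, i.e. on $\|f-\mathrm{Id}\|_{C^1}$. Write $\varphi(x)=f(x)-x$, so that $D\varphi(x)=Df(x)-\mathrm{Id}$. Assuming the $C^1$-distance of $f$ to the identity is less than $\delta$, we have $\|D\varphi(z)\|\le\delta$ for all $z\in M$ (working in charts, or more cleanly via a fixed finite atlas with a Lipschitz control on transition maps; on a compact manifold this only costs a fixed multiplicative constant, which I will absorb silently). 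The core computation is then the mean value / fundamental theorem of calculus estimate
$$\|\varphi(y)-\varphi(x)\|=\Big\|\int_0^1 D\varphi\big(x+t(y-x)\big)(y-x)\,dt\Big\|\le \delta\,\|y-x\|,$$
valid along a segment joining $x$ to $y$ (again, in a chart — see the obstacle paragraph).

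Next I would feed in the hypothesis $\|y-x\|<n\|f(x)-x\|=n\|\varphi(x)\|$. This immediately gives
$$\|\varphi(y)-\varphi(x)\|\le \delta\,\|y-x\|< n\delta\,\|\varphi(x)\|,$$
so choosing $\delta<\varepsilon/n$ (or $\delta$ equal to $\varepsilon/n$ divided by the fixed atlas constant) yields exactly $\|(f(y)-y)-(f(x)-x)\|<\varepsilon\,\|f(x)-x\|$, which is the claim. Since $n$ and $\varepsilon$ are given in advance and $\delta$ is allowed to depend on both, this is harmless; the dependence is in fact explicit and linear, $\delta\asymp\varepsilon/n$.

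The main obstacle is purely the manifold bookkeeping: $M$ is an abstract compact manifold, so "$\|y-x\|$", "$f(x)-x$", and the straight-line segment from $x$ to $y$ are not literally defined. The clean fix is to observe that it suffices to prove the estimate locally: cover $M$ by finitely many charts whose images are convex (say balls), with a Lebesgue number $\rho>0$ for the cover; the quantities in the statement are measured in these charts (this is implicit in how the lemma is used, following \cite{Bo}). For $x,y$ in a common chart we run the computation above verbatim, with $\delta$ inflated by the maximal bi-Lipschitz constant of the finitely many transition maps. For $x,y$ not in a common chart we need $\|y-x\|\ge\rho$, hence $n\|\varphi(x)\|>\rho$; but if additionally $\delta$ is taken small enough that $\|\varphi\|_{C^0}<\delta<\rho/n$, this case is vacuous. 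So after shrinking $\delta$ to handle simultaneously the atlas constant and the Lebesgue number, the segment argument applies in every non-vacuous case, completing the proof. I would present this reduction in one short paragraph and then the two-line integral estimate, exactly as above.
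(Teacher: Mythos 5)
The paper itself gives no proof of this lemma: it is quoted verbatim from \cite[Lemme~4.3.B-1]{Bo}, so there is no in-text argument for me to compare with yours. On its own merits, your proof is correct and is the natural one: write $\varphi=f-\mathrm{id}$, note $\|D\varphi\|_\infty<\delta$, apply the fundamental-theorem-of-calculus estimate along a segment to get $\|\varphi(y)-\varphi(x)\|\le\delta\|y-x\|<n\delta\,\|\varphi(x)\|$, and take $\delta$ of the order $\varepsilon/n$. Your handling of the manifold bookkeeping is also sound: fix a finite atlas with convex chart images, absorb the bi-Lipschitz constants of the transition maps into $\delta$, and use $\|\varphi\|_{C^0}<\delta<\rho/n$ (with $\rho$ a Lebesgue number of the cover) to force $x$ and $y$ into a common chart, since $\|y-x\|<n\|\varphi(x)\|<n\delta<\rho$. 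The case $f(x)=x$ is vacuous, as the hypothesis $\|y-x\|<n\|f(x)-x\|=0$ is then never met, so no degeneracy arises. One remark worth keeping in mind: in the only place the lemma is invoked in this paper, $M=[0,1]$ with its single global chart, so the entire atlas reduction collapses and the proof is literally the two-line integral estimate with $\delta=\varepsilon/n$.
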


As a consequence \cite{Bo} shows: 

\begin{lemm}\cite[Lemme 4.3.D-1]{Bo}\label{l.C1composition} Let $M$ be a compact manifold.  For any $\varepsilon>0$ and $n\in\NN$, there is 
$\delta>0$ so that, for any $f_1, \dots f_n\in Diff^1(M)$  whose $C^1$-distance to identity is 
less than $\delta$,   for every $x\in M$, one has:

$$\|(f_n\dots f_1(x)-x)-\sum_i (f_i(x)-x)\|<\varepsilon \sup_i \|f_i(x)-x\|.$$

\end{lemm}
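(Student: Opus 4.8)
The plan is to combine the obvious telescoping of the composition's displacement with a Gronwall-type control of the intermediate points, using Lemma~\ref{l.C1closetoid} for the per-step comparison. Write $x_0=x$ and $x_k=f_k\circ\cdots\circ f_1(x)$ for $1\le k\le n$, and set $D=\sup_i\|f_i(x)-x\|$ (the case $D=0$ being trivial). Since
$f_n\cdots f_1(x)-x=\sum_{k=1}^n\bigl(f_k(x_{k-1})-x_{k-1}\bigr)$, it suffices to bound $\sum_{k=1}^n\bigl\|\bigl(f_k(x_{k-1})-x_{k-1}\bigr)-\bigl(f_k(x_0)-x_0\bigr)\bigr\|$ by $\varepsilon D$; that is, for each $k$ one must compare the displacement of $f_k$ at the drifted point $x_{k-1}$ with its displacement at $x$.

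First I would control the drift. If $\delta$ is small, each $f_i$ is $C^1$-close to the identity, so $f_i-\mathrm{id}$ is Lipschitz with a constant $\delta'$ that is a fixed multiple of $\delta$ (depending only on $M$) and tends to $0$ with $\delta$; hence $\|f_i(y)-y\|\le\|f_i(x_0)-x_0\|+\delta'\|y-x_0\|\le D+\delta'\|y-x_0\|$. Inserting this into the telescoping identity gives $\|x_k-x_0\|\le kD+\delta'\sum_{j=0}^{k-1}\|x_j-x_0\|$, so by the discrete Gronwall inequality $\|x_k-x_0\|\le kD\,e^{k\delta'}\le 2nD=:C_nD$ for all $k\le n$, once $\delta'\le(\ln 2)/n$.

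Next, for each $k$ I would compare $f_k(x_{k-1})-x_{k-1}$ with $f_k(x_0)-x_0$ by a case split. If $\|f_k(x_0)-x_0\|\ge D/(2C_n)$, then $\|x_{k-1}-x_0\|\le C_nD\le 2C_n^{2}\|f_k(x_0)-x_0\|$, so Lemma~\ref{l.C1closetoid}, applied with its integer parameter equal to $\lceil 2C_n^{2}\rceil$ and its error equal to $\varepsilon/(2n)$, yields $\bigl\|\bigl(f_k(x_{k-1})-x_{k-1}\bigr)-\bigl(f_k(x_0)-x_0\bigr)\bigr\|<\tfrac{\varepsilon}{2n}\|f_k(x_0)-x_0\|\le\tfrac{\varepsilon}{2n}D$. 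If instead $\|f_k(x_0)-x_0\|<D/(2C_n)$, Lemma~\ref{l.C1closetoid} offers no uniform ratio, but here one just uses the Lipschitz bound again: $\bigl\|\bigl(f_k(x_{k-1})-x_{k-1}\bigr)-\bigl(f_k(x_0)-x_0\bigr)\bigr\|\le\delta'\|x_{k-1}-x_0\|\le\delta'C_nD<\tfrac{\varepsilon}{2n}D$ provided $\delta'<\varepsilon/(2nC_n)$. Summing over $k=1,\dots,n$ gives $\sum_k\|\cdots\|<\varepsilon D/2<\varepsilon D$, and one concludes by fixing $\delta$ small enough to satisfy the finitely many smallness requirements above, each of the form $\delta<$ a constant depending only on $\varepsilon$, $n$ and $M$.

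The only genuine obstacle is the one isolated in the second case: the intermediate points $x_{k-1}$ may drift from $x$ by an amount of order $nD$, which is far larger than $\|f_k(x)-x\|$ when that particular displacement happens to be small, so Lemma~\ref{l.C1closetoid} cannot be invoked with a ratio uniform in $k$. It is resolved by noting that in exactly that regime both $f_k(x_{k-1})-x_{k-1}$ and $f_k(x)-x$ are individually of size $O(\delta' n D)$, so their difference is negligible against $D$; equivalently, one falls back on the global Lipschitz estimate for $f_k-\mathrm{id}$. An essentially equivalent presentation is to induct on $n$, writing $f_n\cdots f_1=f_n\circ(f_{n-1}\cdots f_1)$, applying the inductive hypothesis to the inner composition (with a smaller error) and the comparison above to the outer factor $f_n$.
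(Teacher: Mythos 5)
The paper does not prove this lemma; it cites it verbatim from \cite[Lemme~4.3.D-1]{Bo}, so there is no in-text argument to compare against. Your proposal is a correct, self-contained derivation from Lemma~\ref{l.C1closetoid}, which is the natural route and almost certainly what the cited source does: you telescope the total displacement as $\sum_k\bigl(f_k(x_{k-1})-x_{k-1}\bigr)$, bound the drift $\|x_{k-1}-x_0\|\le C_nD$ by a Gronwall-type argument using the Lipschitz smallness of $f_i-\mathrm{id}$, and then compare $f_k(x_{k-1})-x_{k-1}$ to $f_k(x_0)-x_0$ term by term. Crucially, you correctly identify and handle the one genuine subtlety: Lemma~\ref{l.C1closetoid} compares displacements at two points only when their distance is at most a fixed multiple of $\|f_k(x_0)-x_0\|$, which can fail when that particular displacement is much smaller than $D$; your case split, falling back to the raw Lipschitz estimate $\|(f_k(x_{k-1})-x_{k-1})-(f_k(x_0)-x_0)\|\le\delta'\|x_{k-1}-x_0\|\le\delta'C_nD$ in the small-displacement regime, closes this cleanly. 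The resulting $\delta$ depends only on $\varepsilon$, $n$ and $M$, as required. Two minor points, neither of which affects correctness: the statement as printed uses a strict inequality, which degenerates to $0<0$ when $D=0$ (you rightly set this aside as trivial, and the intent is clearly non-strict there); and the passage from the $C^1$-distance on the compact manifold $M$ to a Lipschitz constant for $f_i-\mathrm{id}$ implicitly uses a choice of ambient norm or Riemannian structure with bounded geometry, a standard hand-wave that the paper itself makes when it writes $\|f(x)-x\|$.
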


\subsection{Translation numbers}

In this section,  $G\subset Diff^1_+([0,1])$ denotes a  group  $C^1$-close to the identity. 
Thus, there is a $C^1$-continuous path $h_t\in Diff[0,1]$, $t\in[0,1)$,    so that 
$h_tgh_t^{-1}$ $C^1$-tends to $id$  as $t\to 0$, for every $g\in G$. 
For every element $g$ of $G$, we will denote $ g_t=h_tfh_t^{-1}$.  For every point
$x\in[0,1]$ we denote $x_t=h_t(x)$. 

As a direct consequence of Lemma~\ref{l.C1composition} one gets:

\begin{lemm}\label{l.iterates} Let $g\in G$ and $x\in [0,1]$ so that $g(x)\neq x$. 
Then, for every $n\in \ZZ$, one has:
$$\lim_{t\to 1}\frac{g_t^n(x_t)-x_t}{g(x_t)-x_t}=n$$
\end{lemm}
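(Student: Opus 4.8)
The plan is to leverage Lemma~\ref{l.C1composition} directly, applied to the ``small'' diffeomorphisms $g_t$, $g_t^{-1}$ which are $C^1$-close to the identity as $t\to 1$. First I would fix $g\in G$ and $x\in[0,1]$ with $g(x)\neq x$, and treat the cases $n>0$, $n=0$ and $n<0$ separately; the case $n=0$ is trivial and the case $n<0$ reduces to the positive case by replacing $g$ with $g^{-1}$ (noting $g^{-1}(x)\neq x$ as well, and using that $g(x_t)-x_t$ and $g^{-1}(x_t)-x_t$ have comparable sizes up to a factor tending to $1$, again by Lemma~\ref{l.C1closetoid}). So the heart is the case $n\geq 1$.

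For $n\geq 1$, write $g_t^n = g_t\circ g_t\circ\cdots\circ g_t$ ($n$ factors) and apply Lemma~\ref{l.C1composition} with $M=[0,1]$, all $f_i=g_t$, and $x$ replaced by $x_t=h_t(x)$. Given $\varepsilon>0$, for $t$ close enough to $1$ the $C^1$-distance of $g_t$ to the identity is less than the $\delta$ furnished by Lemma~\ref{l.C1composition} for this $\varepsilon$ and this $n$, hence
$$\bigl|\,(g_t^n(x_t)-x_t) - n\,(g_t(x_t)-x_t)\,\bigr| < \varepsilon\,|g_t(x_t)-x_t|.$$
Dividing by $g(x_t)-x_t$ (which is nonzero: since $g(x)\neq x$, continuity of $h_t$ and of $g$ — or just the observation that $h_t$ is a diffeomorphism, so $h_t(g(x))\neq h_t(x)$, and $h_t g h_t^{-1}(x_t)=g_t(x_t)$ while $g(x_t)$ is close to $g_t(x_t)$; more simply $g(x_t)\to g(x)\neq x=\lim x_t$ so $g(x_t)-x_t$ is bounded away from $0$ for $t$ near $1$), we get
$$\left|\,\frac{g_t^n(x_t)-x_t}{g(x_t)-x_t} - n\cdot\frac{g_t(x_t)-x_t}{g(x_t)-x_t}\,\right| < \varepsilon\cdot\frac{|g_t(x_t)-x_t|}{|g(x_t)-x_t|}.$$
Now I invoke the identity $g_t(x_t)=h_t g h_t^{-1}(h_t(x)) = h_t(g(x))$, which does \emph{not} immediately give $g_t(x_t)-x_t \sim g(x_t)-x_t$; so instead I note that $g_t(x_t)-x_t$ is exactly the case $n=1$ quantity, and one shows $\frac{g_t(x_t)-x_t}{g(x_t)-x_t}\to 1$ as $t\to 1$. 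This last convergence is the genuine content and should be extracted first as a preliminary step (it is essentially the $n=1$ instance of the very lemma being proved).

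The main obstacle is thus establishing the base case $\frac{g_t(x_t)-x_t}{g(x_t)-x_t}\to 1$, i.e. that conjugating by $h_t$ does not distort, to first order, the displacement of the single point $x$ under $g$. I expect this to follow from the defining property of the path $h_t$ together with Lemma~\ref{l.C1closetoid}: writing $g_t(x_t)-x_t = h_t(g(x))-h_t(x)$, and comparing with $g(x_t)-x_t = g(h_t(x))-h_t(x)$, one uses that both $g(x)$ and $g(x_t)$ lie within a bounded multiple of $|g(x_t)-x_t|$ of $x_t$ for $t$ near $1$ (because $x_t\to x$ and $g(x_t)\to g(x)$), then applies the estimate of Lemma~\ref{l.C1closetoid} to the $C^1$-small map $g_t$, or more directly uses that $h_t g h_t^{-1}$ is $C^1$-close to $\mathrm{id}$ to compare $h_t(g(x))-x_t$ with $g(x_t)-x_t$ up to a factor $1+o(1)$. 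Once the base case is in hand, combining it with the displayed inequality above and letting $t\to 1$ then $\varepsilon\to 0$ yields $\lim_{t\to 1}\frac{g_t^n(x_t)-x_t}{g(x_t)-x_t}=n$, and the negative case follows by the symmetry argument, completing the proof.
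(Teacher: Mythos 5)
Your core mechanism is the right one and is exactly what the paper has in mind when it says the lemma is a ``direct consequence'' of Lemma~\ref{l.C1composition}: applying that lemma with $f_1=\dots=f_{|n|}=g_t^{\pm 1}$ at the point $x_t$, and using that $\|g_t-\mathrm{id}\|_1\to 0$ as $t\to 1$, yields
$$\left|\frac{g_t^{\,n}(x_t)-x_t}{g_t(x_t)-x_t}-n\right|<\varepsilon$$
for $t$ close to $1$. That is the whole proof. The issue is that you then try to bridge from the denominator $g_t(x_t)-x_t$ to the denominator $g(x_t)-x_t$ that appears (literally) in the statement, and this bridge is both unnecessary and impossible. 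The printed denominator is a typo: it should read $g_t(x_t)-x_t$, consistently with the formula $\tau_f(g,y)=\lim_{t\to 1}\frac{g_t(y_t)-y_t}{f_t(y_t)-y_t}$ that the paper uses later in the proof of Lemma~\ref{l.translation}, and with the fact that $g_t^{\,n}(x_t)-x_t=h_t(g^n(x))-h_t(x)\to 0$ (because $\|g_t-\mathrm{id}\|_0\to 0$) while $g(x_t)-x_t$ has no reason to tend to $0$, so the ratio you are trying to control would typically tend to $0$, not $n$.

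Concretely, the step in your ``base case'' that fails is the claim ``$x_t\to x$ and $g(x_t)\to g(x)$''. Here $x_t=h_t(x)$, and there is \emph{no} hypothesis that $h_t\to\mathrm{id}$; the hypothesis is only that $h_t g h_t^{-1}\to\mathrm{id}$ for each $g\in G$. Indeed $h_t$ cannot converge in $C^0$ to any homeomorphism $h_\infty$ unless $G$ is trivial, since that would force $h_\infty g h_\infty^{-1}=\mathrm{id}$ for all $g\in G$. So $x_t$ may wander, in particular may accumulate on fixed points of $g$, and the quantity $\frac{g_t(x_t)-x_t}{g(x_t)-x_t}=\frac{h_t(g(x))-h_t(x)}{g(h_t(x))-h_t(x)}$ does not converge to $1$ in general. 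For the same reason, the appeal to Lemma~\ref{l.C1closetoid} applied to $g$ or to $g^{-1}$ in your reduction of the case $n<0$ is not legitimate: that lemma requires the map to be $C^1$-close to the identity, which holds for $g_t$ and $g_t^{-1}$ but not for $g$ or $g^{-1}$. Once you read the denominator as $g_t(x_t)-x_t$, all of these difficulties evaporate and your first paragraph (applied also with $g_t^{-1}$ in place of $g_t$ for negative $n$, using Lemma~\ref{l.C1composition} on $g_t\circ g_t^{-1}=\mathrm{id}$ to get $g_t^{-1}(x_t)-x_t\sim-(g_t(x_t)-x_t)$) is a complete proof.
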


One deduces 

\begin{coro}\label{c.encadrement1} Consider $f,g\in G$ and $x\in[0,1]$ so that $f(x)> x$. Assume that there are $n>0$ and 
$m\in \ZZ$  
so that $$g^n(x)\in [f^m(x),f^{m+1}(x)].$$

Then, for every $t$ close enough to $1$ one has: 

$$\frac{g_t(x_t)-x_t}{f(x_t)-x_t}\in \left[\frac {m-1}n,\frac{m+2}n\right].$$

\end{coro}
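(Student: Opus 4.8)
The plan is to deduce the estimate from two applications of Lemma~\ref{l.iterates}, after transporting the combinatorial hypothesis $g^n(x)\in[f^m(x),f^{m+1}(x)]$ through the order-preserving conjugating diffeomorphisms $h_t$.

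First I would dispose of the degenerate case $g(x)=x$. Then $g^n(x)=x$, and since $f(x)>x$ the sequence $k\mapsto f^k(x)$ is strictly increasing along the $f$-orbit of $x$ (which stays in the connected component of $[0,1]\setminus Fix(f)$ containing $x$), so $x=g^n(x)\in[f^m(x),f^{m+1}(x)]$ forces $m\in\{-1,0\}$; in either case $0\in[\frac{m-1}{n},\frac{m+2}{n}]$ and $\frac{g_t(x_t)-x_t}{f_t(x_t)-x_t}=0$ for every $t$, so there is nothing to prove. Hence from now on $g(x)\neq x$; since $g$ is an increasing homeomorphism of $[0,1]$ this gives $g^n(x)\neq x$ for every $n\neq 0$, so that $g_t^n(x_t)-x_t=h_t(g^n(x))-h_t(x)\neq 0$ and all ratios below make sense.

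For the core argument, note that $h_t$ is increasing, so applying it to $f^m(x)\le g^n(x)\le f^{m+1}(x)$ gives $f_t^m(x_t)\le g_t^n(x_t)\le f_t^{m+1}(x_t)$; subtracting $x_t$ and dividing by $f_t(x_t)-x_t=h_t(f(x))-h_t(x)$, which is $>0$ because $f(x)>x$, sandwiches $B_t:=\frac{g_t^n(x_t)-x_t}{f_t(x_t)-x_t}$ between $\frac{f_t^m(x_t)-x_t}{f_t(x_t)-x_t}$ and $\frac{f_t^{m+1}(x_t)-x_t}{f_t(x_t)-x_t}$. By Lemma~\ref{l.iterates} applied to $f$ (legitimate since $f(x)\neq x$) the two outer ratios tend to $m$ and $m+1$, so after fixing $\varepsilon\in(0,\tfrac12)$ we get $m-\varepsilon\le B_t\le m+1+\varepsilon$, hence $|B_t|\le|m|+2$, for all $t$ close enough to $1$. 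Applying Lemma~\ref{l.iterates} to $g$ instead gives $A_t:=\frac{g_t(x_t)-x_t}{g_t^n(x_t)-x_t}\to\frac1n$, i.e. $nA_t\to1$. Since $\frac{g_t(x_t)-x_t}{f_t(x_t)-x_t}=A_tB_t$, multiplying by $n$ gives $n\cdot\frac{g_t(x_t)-x_t}{f_t(x_t)-x_t}=(nA_t)B_t$, and I would finish with the elementary bound $|(nA_t)B_t-B_t|=|nA_t-1|\cdot|B_t|<\tfrac12$, valid once $|nA_t-1|$ is small enough (a smallness allowed to depend on $m$): then $n\cdot\frac{g_t(x_t)-x_t}{f_t(x_t)-x_t}\in(B_t-\tfrac12,B_t+\tfrac12)\subset(m-1,m+2)$, and dividing by $n>0$ yields the claimed enclosure.

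The only genuinely delicate point --- the one I would watch --- is this last multiplication: $B_t$ need not be positive (for instance when $m<0$, or when $m=0$ and the sandwich dips slightly below $0$), so one cannot simply multiply the two inequalities $nA_t\approx1$ and $B_t\in[m-\varepsilon,m+1+\varepsilon]$; the clean route is the additive estimate above together with the a priori bound $|B_t|\le|m|+2$, where it matters that the threshold imposed on $|nA_t-1|$ is permitted to depend on $m$. Everything else is immediate from Lemma~\ref{l.iterates} and the monotonicity of the path $h_t$.
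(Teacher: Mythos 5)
Your argument is correct, and it uses the same two ingredients as the paper's proof: the order-preservation of $h_t$ to transport the sandwich $f^m(x)\le g^n(x)\le f^{m+1}(x)$, and Lemma~\ref{l.iterates} applied once to $f$ and once to $g$. The genuine difference, and it is an improvement, is the normalization. The paper's own computation divides by $f_t^m(x_t)-x_t$ and rescales by $\frac{n}{m}$ before reading off the answer through a correction factor $\alpha_t\to 1$; as written, that step is undefined when $m=0$ (division by zero, and $\frac{n}{m}$ has no meaning) and reverses inequalities when $m<0$, since $f_t^m(x_t)-x_t$ is then negative. You instead divide throughout by the strictly positive quantity $f_t(x_t)-x_t$, so that $B_t=\frac{g_t^n(x_t)-x_t}{f_t(x_t)-x_t}\in[m-\varepsilon,\,m+1+\varepsilon]$ cleanly for every $m\in\ZZ$, and you pass from the pair $(B_t,\,nA_t)$ to their product via the additive estimate $|nA_t-1|\cdot|B_t|<\tfrac12$, using the a priori bound $|B_t|\le|m|+2$. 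You are right to flag this last step as the delicate one: $B_t$ need not be positive, so one cannot naively multiply the two approximate identities, and the smallness threshold imposed on $|nA_t-1|$ must be allowed to depend on $m$. Your treatment of the degenerate case $g(x)=x$, pinning down $m\in\{-1,0\}$ and checking that $0$ then lies in $[\frac{m-1}{n},\frac{m+2}{n}]$, is also more explicit than the paper's one-line dismissal. In short: same route, but with cleaner bookkeeping that actually covers all integer $m$.
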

Analoguous statements hold in the case $f(x)<x$ or $n<0$.

\begin{demo} The statement is immediately satisfied if $g(x)=x$.  We assume now $g(x)\neq x$. 

The conjugacy by $h_t$ preserves the order.  Thus, by assumption, one has 
$$\frac{g^n_t(x_t)-x_t}{f^m(x_t)-x_t}\in \left[1,\frac{f^{m+1}_t(x_t)-x_t}{f^m(x_t)-x_t}\right].$$ 

For every $t$ we denote $\alpha_t$ so that   $ \frac{g^n_t(x_t)-x_t}{f^m(x_t)-x_t} = \alpha_t\frac nm\frac{g_t(x_t)-x_t}{f(x_t)-x_t}$.  
Lemma~\ref{l.iterates} implies that $\alpha_t$ tends to $1$ as $t\to 1$.

Thus $$\alpha_t\frac{g_t(x_t)-x_t}{f(x_t)-x_t} \in \left[\frac mn, \frac mn\frac{f^{m+1}_t(x_t)-x_t}{f^m(x_t)-x_t}\right].$$
 
For $t\to 1$, Lemma~\ref{l.iterates} implies that $\frac mn\frac{f^{m+1}_t(x_t)-x_t}{f^m(x_t)-x_t}$ tends to $\frac{m+1}n$. 
One concludes by choosing $t$ large enough so that 
$[\alpha_t^{-1}\frac mn,\alpha_t^{-1}\frac{m+1}n]\subset \left(\frac {m-1}n,\frac{m+2}n\right)$.

\end{demo}

And conversely:
\begin{coro}\label{c.encadrement2} Consider $f,g\in G$ and $x\in[0,1]$ so that $f(x)> x$ 
and assume there  are $n, m\in \ZZ$, $n> 0$,  and a sequence $t_i\to 1$, $i\in\NN$ so that  
$$\frac{g_{t_i}(x_{t_i})-x_{t_i}}{f(x_{t_i})-x_{t_i}} \in \left[\frac mn,\frac{m+1}n\right]$$.  

Then  $g^n(x)\in [f^{m-1}(x),f^{m+2}(x)]$
\end{coro}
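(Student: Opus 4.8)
Given $f,g\in G$ and $x\in[0,1]$ with $f(x)>x$, and integers $n,m$ with $n>0$, and a sequence $t_i\to 1$ such that $\frac{g_{t_i}(x_{t_i})-x_{t_i}}{f(x_{t_i})-x_{t_i}}\in[\frac mn,\frac{m+1}n]$, then $g^n(x)\in[f^{m-1}(x),f^{m+2}(x)]$.

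This is the converse of Corollary~\ref{c.encadrement1}. The natural approach is by contradiction using the previous corollary. If $g^n(x)\notin[f^{m-1}(x),f^{m+2}(x)]$, then — since $f(x)>x$ and $f$ has no fixed point between consecutive iterates $f^k(x)$, so the $f^k(x)$ increase to the right endpoint of the component $I$ — there is some integer $\ell$ with $g^n(x)\in[f^\ell(x),f^{\ell+1}(x)]$ where either $\ell\geq m+2$ or $\ell+1\leq m-1$, i.e. $\ell\leq m-2$. (One needs to handle the case $g^n(x)=x$ or $g(x)=x$ separately: if $g(x)=x$ then $g_{t}(x_t)=x_t$ so the ratio is $0$, forcing $m=0$ hence both $\frac mn$ and $\frac{m+1}n$ sandwich... actually $m=0$ and then $g^n(x)=x=f^0(x)\in[f^{-1}(x),f^2(x)]$, fine; similarly $g^n(x)=x$ needs $m=0$ too.)

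Now apply Corollary~\ref{c.encadrement1} with this $\ell$ and with the iterate $g^n$ playing the role there — more precisely, Corollary~\ref{c.encadrement1} applied to the pair $f,g$ with exponent $n$ and the hypothesis $g^n(x)\in[f^\ell(x),f^{\ell+1}(x)]$ yields that for $t$ close enough to $1$, $\frac{g_t(x_t)-x_t}{f(x_t)-x_t}\in[\frac{\ell-1}n,\frac{\ell+2}n]$. In particular this holds for all $t_i$ with $i$ large. But by hypothesis $\frac{g_{t_i}(x_{t_i})-x_{t_i}}{f(x_{t_i})-x_{t_i}}\in[\frac mn,\frac{m+1}n]$ along the whole sequence. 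So we would need $[\frac mn,\frac{m+1}n]\cap[\frac{\ell-1}n,\frac{\ell+2}n]\neq\emptyset$, i.e. $\ell-1\leq m+1$ and $m\leq\ell+2$, i.e. $m-2\leq\ell\leq m+2$. This contradicts $\ell\geq m+2$... wait, $\ell=m+2$ is allowed, and $\ell\leq m-2$ is consistent with $\ell=m-2$. So the naive bound is not quite sharp enough; I would instead feed a slightly sharper version of Corollary~\ref{c.encadrement1} (or re-run its $\varepsilon$-argument) to get the open interval $(\frac{\ell-1}n,\frac{\ell+2}n)$ strictly, or equivalently use that along $t_i$ the ratio stays in the closed interval $[\frac mn,\frac{m+1}n]$ which must then be disjoint from the complement — one squeezes out the boundary cases $\ell=m+2$ and $\ell=m-2$ by noting the containment in Corollary~\ref{c.encadrement1} is actually into an open interval, as its proof shows ("$\subset(\frac{m-1}n,\frac{m+2}n)$" appears with an open interval there).

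The only real subtlety, and the step I would be most careful about, is the combinatorics of which $f$-iterate interval contains $g^n(x)$ and getting the endpoints $m-1, m+2$ exactly right: one loses $1$ on each side passing from $g_{t_i}$ back to $g^n$ (as in Corollary~\ref{c.encadrement1}, which converts an $[m,m+1]$ bound on the infinitesimal ratio into an $[f^{m-1},f^{m+2}]$ bound), and then the contradiction must be set up so these losses don't accumulate further. Concretely I'd write: suppose $g^n(x)\in[f^\ell(x),f^{\ell+1}(x)]$; Corollary~\ref{c.encadrement1} gives the infinitesimal ratio in the open interval $(\frac{\ell-1}n,\frac{\ell+2}n)$ for $t$ near $1$; intersecting with the hypothesis forces $\ell-1<m+1$ and $m<\ell+2$, hence $m-1<\ell<m+2$, hence $\ell\in\{m-1,m,m+1\}$, whence $g^n(x)\in[f^{m-1}(x),f^{m+2}(x)]$. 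Everything else is bookkeeping, plus the remark that the analogous statements hold when $f(x)<x$ or $n<0$ by symmetry (replacing $f$ by $f^{-1}$ or $g$ by $g^{-1}$).
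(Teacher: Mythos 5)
The paper itself gives no proof of Corollary~\ref{c.encadrement2}: it only says it ``follows from the same estimates as Corollary~\ref{c.encadrement1}''. The intended route is the direct one: combine $\frac{g^n_{t_i}(x_{t_i})-x_{t_i}}{g_{t_i}(x_{t_i})-x_{t_i}}\to n$ with the bound on $\frac{g_{t_i}(x_{t_i})-x_{t_i}}{f(x_{t_i})-x_{t_i}}$ to pin $\frac{g^n_{t_i}(x_{t_i})-x_{t_i}}{f(x_{t_i})-x_{t_i}}$ asymptotically in $(m-1,m+2)$, compare with $\frac{f^{m\pm}_{t_i}(x_{t_i})-x_{t_i}}{f(x_{t_i})-x_{t_i}}\to m\pm$, and then pull back the resulting order inequality $f^{m-1}_{t_i}(x_{t_i})<g^n_{t_i}(x_{t_i})<f^{m+2}_{t_i}(x_{t_i})$ through $h_{t_i}^{-1}$. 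You instead argue by contraposition through Corollary~\ref{c.encadrement1}; that is a legitimate variant (and your observation that the proof of Corollary~\ref{c.encadrement1} actually delivers the \emph{open} interval $(\tfrac{\ell-1}{n},\tfrac{\ell+2}{n})$ is exactly what makes the interval-intersection arithmetic close up at $\ell=m\pm2$), but as written it has a real gap.

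The gap is in the very first step: you assert that if $g^n(x)\notin[f^{m-1}(x),f^{m+2}(x)]$ then there exists an integer $\ell$ with $g^n(x)\in[f^\ell(x),f^{\ell+1}(x)]$. That only holds if $g^n(x)$ lies in the connected component $I$ of $[0,1]\setminus\mathrm{Fix}(f)$ containing $x$ --- and there is no a priori reason for that, since $g$ need not preserve $I$. If $g^n(x)\geq\sup I$ (or $\leq\inf I$) there is no such $\ell$ and Corollary~\ref{c.encadrement1} is simply not applicable, so your contradiction never gets off the ground. This is not a cosmetic issue: showing $g^n(x)\in I$ under the bounded-ratio hypothesis is essentially equivalent to the estimate you are trying to prove. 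The cleanest fix is the direct argument above, which produces $g^n(x)\in I$ as part of the output; alternatively, note that the proof of Corollary~\ref{c.encadrement1} is one-sided (the lower bound on the ratio only uses $g^n(x)\geq f^m(x)$), so $g^n(x)>\sup I$ forces the ratio above $\tfrac{M-1}{n}$ for every $M$, contradicting boundedness. Either way, you must address this case explicitly. Two smaller slips: if $g(x)=x$ the ratio is $0$, which forces $m\in\{-1,0\}$, not only $m=0$ (the conclusion still holds in both cases); and the remark ``$g^n(x)=x$ needs $m=0$ too'' is subsumed, since for an orientation-preserving homeomorphism $g^n(x)=x$ already implies $g(x)=x$.
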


Analoguous statements hold in the case $f(x)<x$ or $n<0$.
The proof of Corollary~\ref{c.encadrement2} follows from the same estimates as 
Corollary~\ref{c.encadrement1} and is left to the reader. 

One deduces

\begin{coro}\label{c.translationponctelle}
Consider $f\in G$ and $x$ so that $f(x)\neq x$.  Then
\begin{enumerate}
 \item For any $g\in G$ the ratio $\frac{g_t(x_t)-x_t}{f(x_t)-x_t}$ as a limit 
 $\tau_f(g,x)\in \RR\cup\{-\infty,+\infty\}$ as 
 $t\to 1$.
 \item $\tau_{f^{-1}}(g,x)=\tau_f(g^{-1},x)=-\tau_f(g,x)$ with the convention $-\pm\infty=\mp\infty$. 
 
 \item $\tau_f(g,x)\in\{-\infty,+\infty\}$ if and only if $f$ has a fixed point in $(x,g(x))$ or $(g(x),x)$ 
 (according to the sign of $g(x)-x$). 
 
 \item let denote $G_{x,f}=\{g\in G, \tau_f(g,x)\in\RR\}$.  Then 
 $G_{x,f}$ is a subgroup of $G$ containing $f$ and $\tau_f(g,x)\colon G_{x,f}\to \RR$ is a morphism of 
 groups sending $f$ to $1$.
 
 \item \begin{itemize}
        \item if $\tau_f(g,x)\in\RR^*$ then $\tau_g(f,x)=\frac 1{\tau_f(g,x)}$
        \item $\tau_f(g,x)\in\{-\infty, +\infty\}\Longleftrightarrow\tau_g(f,x)=0$
       \end{itemize}
 \item if  $\tau_f(g,x)\in\RR^*$, then for every $h\in G$ one has
 \begin{itemize}
  \item $\tau_f(h,x)\in\{-\infty, +\infty\}\Longleftrightarrow\tau_g(h,x)\in\{-\infty, +\infty\}$
  \item $\tau_f(h,x)\in\RR \Rightarrow\tau_g(h,x)=\tau_g(f,x)\tau_f(h,x)$.
 \end{itemize}
\item $\tau_f(g,x)=0 \Longleftrightarrow g \mbox{ has a fixed point in } [x,f(x)]$.
\end{enumerate}

\end{coro}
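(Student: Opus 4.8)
\textbf{Proof proposal for Corollary~\ref{c.translationponctelle}.}

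The plan is to establish the seven items essentially in order, extracting each from the two-sided estimates of Corollaries~\ref{c.encadrement1} and~\ref{c.encadrement2} together with the iteration Lemma~\ref{l.iterates}. First, for item (1), fix $f,g\in G$ with $f(x)\neq x$; by replacing $f$ with $f^{-1}$ if needed assume $f(x)>x$, so the points $f^m(x)$, $m\in\ZZ$, form a monotone sequence in $[0,1]$. If $f$ has a fixed point $p$ strictly between $x$ and $g(x)$, then $g^n(x)$ for large $n$ overshoots every $f^m(x)$ on one side, and Corollary~\ref{c.encadrement1} forces the ratio $\frac{g_t(x_t)-x_t}{f(x_t)-x_t}$ to have limit $+\infty$ or $-\infty$; this will also give one direction of item (3). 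If $f$ has no fixed point between $x$ and $g(x)$, the sequence $f^m(x)$ converges monotonically to a limit point beyond $g(x)$ (or $g(x)$ lies in a bounded fundamental interval), so for each $n>0$ there is a unique $m=m(n)$ with $g^n(x)\in[f^{m}(x),f^{m+1}(x)]$; Corollary~\ref{c.encadrement1} then traps $\frac{g_t(x_t)-x_t}{f(x_t)-x_t}$ in $[\frac{m-1}n,\frac{m+2}n]$ for $t$ near $1$, and letting $n\to\infty$ shows the ratios $\frac{m(n)}n$ form a Cauchy sequence whose limit is the desired $\tau_f(g,x)\in\RR$; conversely Corollary~\ref{c.encadrement2} shows any subsequential limit of the ratio lies within $1/n$ of $\frac{m(n)}n$, so the limit genuinely exists. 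This convergence argument, reconciling the combinatorial count $m(n)/n$ with the analytic ratio via the two corollaries, is where the real work lies — everything afterward is formal.

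Once item (1) is in hand, items (2), (3), (5) are immediate: item (2) follows from $f^{-m}(x_t)-x_t = -(f^m(x_t)-x_t)(1+o(1))$ and the symmetry in Lemma~\ref{l.iterates}, or directly from the definition of the ratio; item (3) is exactly the dichotomy analyzed above (fixed point of $f$ strictly between $x$ and $g(x)$ versus not), being careful that a fixed point \emph{equal} to $g(x)$ falls in the finite case; item (5) follows because $\tau_g(f,x)$ is computed from the reciprocal ratio $\frac{f_t(x_t)-x_t}{g_t(x_t)-x_t}$, and a real nonzero limit of $\frac{g_t(x_t)-x_t}{f(x_t)-x_t}$ forces its reciprocal to converge to $\frac1{\tau_f(g,x)}$, while limit in $\{\pm\infty\}$ corresponds to reciprocal limit $0$.

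For item (4), the morphism property is the heart of the algebraic structure. Given $g,h\in G_{x,f}$, I would estimate $\frac{(gh)_t(x_t)-x_t}{f(x_t)-x_t}$ by writing $(gh)_t = g_t h_t$ and using Lemma~\ref{l.C1composition} (with $n=2$): $(g_th_t(x_t)-x_t) = (g_t(x_t)-x_t) + (h_t(x_t)-x_t) + (\text{error bounded by }\varepsilon\sup)$, where one must first check $h_t(x_t)$ stays within a bounded multiple of $\|g_t(x_t)-x_t\|$ of $x_t$ so that Lemma~\ref{l.C1closetoid} applies — this holds because $\tau_f(g,x),\tau_f(h,x)$ are finite, so all displacements are comparable to $f(x_t)-x_t$. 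Dividing by $f(x_t)-x_t$ and passing to the limit gives $\tau_f(gh,x)=\tau_f(g,x)+\tau_f(h,x)$; in particular $G_{x,f}$ is closed under products (finiteness is preserved) and under inverses by item (2), and contains $f$ with $\tau_f(f,x) = 1$ trivially. Item (7) is then the special case: $\tau_f(g,x)=0$ iff the ratio $\frac{g_t(x_t)-x_t}{f(x_t)-x_t}\to 0$; by Corollary~\ref{c.encadrement2} applied with $m=0,n=1$ this forces $g(x)\in[f^{-1}(x),f^{2}(x)]$, and a short additional argument (applying the estimate to all powers $g^n$, whose ratios also tend to $0$) pins $g$ down to having a fixed point in the closed interval bounded by $x$ and $f(x)$; conversely if $g$ fixes a point in $[x,f(x)]$ then $g^n(x)$ never leaves $[f^{-1}(x),f(x)]$, forcing the ratio's limit to $0$ by Corollary~\ref{c.encadrement1}.

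Finally item (6) follows by combining (4) and (5): if $\tau_f(g,x)\in\RR^*$ then $g\in G_{x,f}$ and $\tau_g(f,x)=1/\tau_f(g,x)\in\RR^*$, so $G_{x,g}=G_{x,f}$; for $h\in G_{x,f}$, both $\tau_f$ and $\tau_g$ restricted to this common subgroup are morphisms to $\RR$ sending $f$ to $1$ resp.\ $\tau_g(f,x)$, and since $\tau_f(h,x)$ determines $h$'s displacement ratio relative to $f$, rescaling by $\tau_g(f,x)$ gives $\tau_g(h,x)=\tau_g(f,x)\tau_f(h,x)$ — concretely, $\frac{h_t(x_t)-x_t}{g_t(x_t)-x_t} = \frac{h_t(x_t)-x_t}{f(x_t)-x_t}\cdot\frac{f(x_t)-x_t}{g_t(x_t)-x_t}$ and one passes to the limit. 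The equivalence $\tau_f(h,x)\in\{\pm\infty\}\Leftrightarrow\tau_g(h,x)\in\{\pm\infty\}$ follows from item (3) since, $f$ and $g$ having the same fixed points near $x$ being irrelevant, what matters is whether $f$ (equivalently $g$, by $\tau_g(f,x)\ne 0$ and item (7)) has a fixed point between $x$ and $h(x)$ — and $g$ has a fixed point in $[x,f(x)]$ only if $\tau_f(g,x)=0$, excluded. The main obstacle throughout is item (1); once the limit is shown to exist as an element of $\RR\cup\{\pm\infty\}$ with the location of $\pm\infty$ controlled by item (3), the rest is bookkeeping with Lemma~\ref{l.C1composition}.
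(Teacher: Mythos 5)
Your strategy for items (2)--(7) mirrors the paper's (brief) argument: item (2) from Lemma~\ref{l.iterates}, item (4) from Lemma~\ref{l.C1composition}, items (5) and (6) from the definition, and item (7) from (3) and (5). The core divergence is in item (1), where the paper and you take genuinely different routes, and yours has a gap.

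The paper's argument for (1) is: either no subsequence of the ratio $\frac{g_t(x_t)-x_t}{f(x_t)-x_t}$ is bounded (in which case it tends to $\pm\infty$ with the sign fixed), or some subsequence is bounded, hence has a subsequential limit $\tau$; any rational approximation $\frac mn < \tau < \frac{m+1}n$ passes through Corollary~\ref{c.encadrement2} to a dynamical estimate $g^n(x)\in[f^{m-1}(x),f^{m+2}(x)]$, and then back through Corollary~\ref{c.encadrement1} to the conclusion that the \emph{full} ratio lies eventually in $[\frac{m-2}n,\frac{m+3}n]$; shrinking the rational window gives full convergence to $\tau$. Crucially, this needs only \emph{one} value of $n$ at a time, and never asserts that $g^n(x)$ belongs to the fundamental-domain structure of $f$ for all $n$ simultaneously. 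Your argument, by contrast, asserts as a fact that ``for each $n>0$ there is a unique $m=m(n)$ with $g^n(x)\in[f^{m}(x),f^{m+1}(x)]$.'' That claim is not immediate: even if $f$ has no fixed point strictly between $x$ and $g(x)$, the iterates $g^n(x)$ may \emph{a priori} leave the component $(a,b)$ of $[0,1]\setminus\mathrm{Fix}(f)$ containing $x$ (recall that at this stage of the paper you cannot yet invoke the absence of linked fixed points — Theorem~\ref{t.enlaces} is proved later using exactly this corollary). It \emph{is} provable that $g^n(x)$ stays in $(a,b)$: first show the exponent-$1$ ratio is bounded via Corollary~\ref{c.encadrement1}, then push the boundedness to $g^N$ via Lemma~\ref{l.iterates}, then apply Corollary~\ref{c.encadrement2} to $g^N$ with exponent $1$ to trap $g^N(x)$ between iterates of $f$ applied to $x$. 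But this chain is non-trivial, is precisely what keeps the paper from arguing as you do, and you simply state the conclusion. As written, item (1) of your proposal is incomplete.

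A smaller issue is your parenthetical remark in item (3) that ``a fixed point \emph{equal} to $g(x)$ falls in the finite case.'' That is backwards: if $f(g(x))=g(x)$ and $g(x)\neq x$, then $g(x)$ is an endpoint of the component of $[0,1]\setminus\mathrm{Fix}(f)$ containing $x$, so $g(x)\geq f^m(x)$ for every $m\geq 0$ and Lemma~\ref{l.iterates} forces the ratio to exceed every $m$, i.e.\ $\tau_f(g,x)=\pm\infty$. (The paper's statement of item (3) with the open interval $(x,g(x))$ is itself slightly imprecise at this boundary, but your ``fix'' goes the wrong way; item (7), with the closed interval, is the correct and consistent formulation, and it follows from items (3) and (5) provided one reads (3) with the half-open interval $(x,g(x)]$.)
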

\begin{demo}First notice that the sign of $\tau_f(g,x)$ is determined by the sign of $(f(x)-x)(g(x)-x)$. 

For the first item,  assume that there is a sequence $t_i$ so that 
$\frac{g_{t_i}(x_{t_i})-x_{t_i}}{f(x_{t_i})-x_{t_i}}$ is bounded, and hence, up to consider a subsequence, converges to 
some $\tau\in \RR$.  Then Corollary~\ref{c.encadrement2} implies that any rational estimate of $\tau$:
$$\frac mn <\tau< \frac {m+1}n$$ 
leads to a dynamical estimate of $g^n(x)$.  Now Corollary~\ref{c.encadrement1} implies that  
$\frac{g_t(x_t)-x_t}{f(x_t)-x_t}$ belongs to $[\frac{m-2}n,\frac{m+3}n]$ for any $t$ close enough to $1$. 

By considering finer rational estimates, one easily deduces that 
$\frac{g_t(x_t)-x_t}{f(x_t)-x_t}$ converges to $\tau$.  This concludes the proof of item 1. 

Item 2) is a direct consequence of Lemma~\ref{l.iterates}. 
Item 3) is a direct consequence of Corollaries~\ref{c.encadrement1} and ~\ref{c.encadrement2}.  
Assuming $g(x)>x$ and $f(x)>x$,  notice that if $f$ has no fixed points in $[x,g(x)]$ 
then there is $n>0$ with $f^n(x)>g(x)$ and therefore Corollary~\ref{c.encadrement1}
$\frac{g_t(x_t)-x_t}{f(x_t)-x_t}$ is bounded by $n+1$ for any
$t$ close to $1$. 

Item 4) is a direct consequence of Lemma~\ref{l.C1composition}.  Items 5) and 6) are easy 
consequences of the definition. Item 7) is a direct consequence of items 3) and 5).
\end{demo}

\begin{lemm}\label{l.translation}
Consider $f\in G$ and $x\notin Fix (f)$.  Let $I$ be the connected component of 
$[0,1]\setminus Fix(f)$ containing $x$. Then,
\begin{itemize}
 \item for every $g\in G$ and any $y\in I$ one has
$$\tau_f(g,x)=\tau_f(g,y).$$ 
We will denote it $\tau_{f,I}(g)$
\item $\tau_{f,I}(g)\in\RR\Longleftrightarrow g(I)=I$, that is, $g$ belongs to the stabilizer
$G_I$ of $I$. 
\item $\tau_{f,I}\colon G_I\to \RR$ is a morphism of group sending $f$ to $1$.
\item If $\tau_{f,I}(g)\in\{-\infty,+\infty\} $ then $g(I)$ and $I$  are disjoint.  
\end{itemize}

\end{lemm}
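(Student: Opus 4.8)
The statement collects four facts about the pointwise translation numbers $\tau_f(g,x)$ that were established in Corollary~\ref{c.translationponctelle}; the only genuinely new content is that $\tau_f(g,x)$ does not depend on the choice of $x$ in the connected component $I$ of $[0,1]\setminus Fix(f)$ containing $x$. So the heart of the proof is the first bullet, and the remaining three bullets are then essentially translations of items 3), 4) and 7) of Corollary~\ref{c.translationponctelle} through this invariance. Let me describe how I would organize it.

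\textbf{Step 1: constancy of $\tau_f(g,\cdot)$ on $I$.}
Fix $g\in G$ and two points $x,y$ in the same component $I$ of $[0,1]\setminus Fix(f)$; without loss of generality $f>\mathrm{id}$ on $I$ and, say, $x<y$. The key observation is that $y$ lies in a bounded number of fundamental domains of $f$ away from $x$: since $x,y\in I$ and $f$ has no fixed point in $I$, there is a fixed integer $k$ (depending on $x$ and $y$) with $f^{-k}(x)\le y\le f^{k}(x)$. I would first treat the case where $g(x)\ne x$ and $g(y)\ne y$. Pick any $n>0$; then $g^n(x)$ sits between some $f^m(x)$ and $f^{m+1}(x)$, and applying the order-preserving conjugacy $h_t$ together with Lemma~\ref{l.iterates} one controls the ratios at $x$. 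The point is that the inequality $f^{-k}(x)\le y\le f^k(x)$ is preserved by $h_t$, and Lemma~\ref{l.iterates} shows $f^{-k}_t(x_t)-x_t$ and $f^k_t(x_t)-x_t$ are comparable to $f(x_t)-x_t$ with ratios tending to $-k$ and $k$; hence $f(x_t)-x_t$ and $f(y_t)-y_t$ are comparable up to a bounded multiplicative constant in the limit — and in fact, refining the rational estimates exactly as in the proof of item 1) of Corollary~\ref{c.translationponctelle}, the ratio $(f(y_t)-y_t)/(f(x_t)-x_t)$ converges (to some positive real). Combining $(g_t(x_t)-x_t)/(f(x_t)-x_t)\to\tau_f(g,x)$ and $(g(x)-x)$-versus-$(g(y)-y)$ comparisons the same way, one gets that $\tau_f(g,y)=\tau_f(g,x)$. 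The degenerate cases, where $g(x)=x$ but $g(y)\ne y$ (or vice versa), need the observation that $g$ then has a fixed point in $I$, so $\tau_f(g,x)=0$ by item 7) of Corollary~\ref{c.translationponctelle}, and one must check $\tau_f(g,y)=0$ as well: indeed if $g(y)\ne y$ the fixed point of $g$ at $x$ forces $g^n(y)$ to stay in a bounded piece of $I$ — more precisely $g^n(y)$ lies between $\min(y,g(y))$-ish translates and so by Corollary~\ref{c.encadrement1} the ratio stays bounded and by item 3) it is finite; then by item 7) applied at $y$, since $g$ has the fixed point $x\in[y,f(y)]$ possibly after translating by a power of $f$ and using the morphism property, $\tau_f(g,y)=0$. (This last sub-case is the one I expect to require the most care.)

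\textbf{Step 2: the remaining bullets.}
Once $\tau_{f,I}(g):=\tau_f(g,x)$ is well-defined, the second bullet — that $\tau_{f,I}(g)\in\RR$ iff $g(I)=I$ — follows from item 3) of Corollary~\ref{c.translationponctelle}: $\tau_f(g,x)=\pm\infty$ means $f$ has a fixed point strictly between $x$ and $g(x)$, i.e. $g(x)$ lies outside the component $I=(a,b)$ on the side of $b$ (if $g(x)>x$) or $a$; since this holds for every $x\in I$, $g(I)\cap I=\emptyset$, and conversely if $g(I)=I$ then $g$ maps $I$ into itself so no fixed point of $f$ separates $x$ from $g(x)$, giving finiteness. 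The third bullet, that $\tau_{f,I}\colon G_I\to\RR$ is a group morphism with $\tau_{f,I}(f)=1$, is immediate from item 4): $G_{x,f}=G_I$ by the second bullet, $\tau_f(\cdot,x)$ is a morphism on $G_{x,f}$ sending $f$ to $1$, and it equals $\tau_{f,I}$ by Step 1. The fourth bullet, $\tau_{f,I}(g)=\pm\infty\Rightarrow g(I)\cap I=\emptyset$, is just the contrapositive of the finiteness half of the second bullet. The only real work, therefore, is Step 1, and within it the delicate point is handling points $x\in I$ that happen to be fixed by $g$ while $y$ is not — i.e. showing that $\tau_f(g,\cdot)$ is still constant (equal to $0$) across such points — since for those points the defining ratio at $x$ is identically $0$ but the ratio at $y$ a priori is not obviously convergent to $0$; this is resolved by noting that a fixed point of $g$ inside $I$, combined with $\tau_f(g,\cdot)$ being a group morphism wherever finite and item 7), pins the value to $0$ throughout the component.
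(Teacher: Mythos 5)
Your overall organization is the right one: the content of the lemma is the constancy of $x\mapsto\tau_f(g,x)$ on $I$, and the remaining bullets are then read off from Corollary~\ref{c.translationponctelle}. But there is a genuine gap in Step 1, and it is precisely at the point the paper has to work hardest.

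The paper's proof trichotomizes according to whether $\tau_f(g,x)$ is $0$, a nonzero real, or $\pm\infty$. In the central case $\tau_f(g,x)\in\RR^*$, the tool that makes the comparison between $x$ and $y$ possible is Lemma~\ref{l.C1closetoid}: since $f_t$ and $g_t$ are $C^1$-close to the identity for $t$ near $1$, their displacement functions $z\mapsto f_t(z)-z$ and $z\mapsto g_t(z)-z$ are \emph{almost constant} (in relative terms) over any bounded number of fundamental domains; and because $g_t(x_t)-x_t$ and $f_t(x_t)-x_t$ are in bounded ratio (this is exactly what $\tau_f(g,x)\in\RR^*$ buys you), a bounded number of fundamental domains of one is a bounded number of the other. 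One then gets $g_t(y_t)-y_t\approx g_t(x_t)-x_t$ and $f_t(y_t)-y_t\approx f_t(x_t)-x_t$ simultaneously, which yields local constancy of the ratio. Your Step~1 instead invokes Lemma~\ref{l.iterates} (about iterates $g_t^n$ at a single base point) and ``refining rational estimates as in item 1)'' of Corollary~\ref{c.translationponctelle}; these give you that the ratio $(f_t(y_t)-y_t)/(f_t(x_t)-x_t)$ stays \emph{bounded}, but they do not by themselves show it converges to $1$, nor that the analogous ratio for $g$ converges to the \emph{same} value — and without that, $\tau_f(g,y)=\tau_f(g,x)$ does not follow. The missing ingredient is exactly Lemma~\ref{l.C1closetoid}, and the way $\tau\in\RR^*$ feeds into using it.

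Second, the $\tau_f(g,x)=\pm\infty$ case is absent from your Step~1, which is phrased entirely in terms of $g^n(x)$ remaining comparable to powers of $f$ applied to $x$ — an assumption that fails when $\tau$ is infinite. The paper handles this case separately and directly proves $g(I)\cap I=\emptyset$ (showing $g(a+\varepsilon)\geq b$ for small $\varepsilon$), from which constancy of $\tau_f(g,\cdot)=\pm\infty$ on $I$, and the fourth bullet, both follow. In your Step~2 you then derive the fourth bullet from ``$\tau_f(g,x)=\pm\infty$ for every $x\in I$,'' but that is circular: the constancy in the infinite case is precisely what was skipped, and in fact the disjointness $g(I)\cap I=\emptyset$ is the mechanism that establishes it, not a consequence of it. Also note that disjointness is strictly stronger than $g(I)\neq I$, so it cannot be obtained as a ``contrapositive'' of the second bullet; it needs the explicit argument.

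The $\tau=0$ case (your ``degenerate'' subcase) is in reasonable shape once the other two are available: if $\tau_f(g,y)$ were nonzero real the $\RR^*$-argument would force $\tau_f(g,x)\neq 0$, and if it were infinite the $\pm\infty$-argument would force $g(I)\cap I=\emptyset$, contradicting the fixed point of $g$ inside $I$. So the logic of your dismissal there is sound, modulo filling in the other two cases correctly.
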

\begin{demo}
If $\tau_f(g,x)=0$, then Corollary~\ref{c.translationponctelle} asserts that $g$
as a fixed point in the interior of $I$.  One easily deduces that $\tau_f(g,y)=0$ for every $y\in I$. 
Therefore, Corollary~\ref{c.translationponctelle} implies that $[y,f(y))\cap Fix(g)\neq \emptyset$ for every
$y\in I$.  This implies in particular that the end points of $I$ are fixed points of $g$, so that $g\in G_I$.

Assume now that $\tau_f(g,x)\in \RR^*$.  Thus $g$ has no fixed point in $I$.  Let $J$ be the connected 
component of $x$ in $[0,1]\setminus Fix (g)$.  We have seen $\tau_g(f,x)=\frac1{\tau_f(g,x)}\in \RR^*$,
so that $f$ has no fixed points in
$J$.  One concludes $I=J$, implying that $g\in G_I$.  

Futhermore, Lemma~\ref{l.C1closetoid} implies that, for $t$ close enough to $1$, 
$g_t(y_t)-y_t$ and $f_t(y_t)-x_t$ are almost constant
on several fundamental domains (of $g_t$ and $f_t$, respectively), the error term being small compared 
with $g_t(x_t)-x_t$ and $f(x_t)-x_t$ respectively. 
As, by hypothesis, $g_t(x_t)-x_t $  and $f_t(x_t)-x_t$ remains in bounded ratio as $t\to 1$, 
one gets that the error 
term is small with respect to both. As a consequence, one gets that 
$\tau_f(g,y)=\lim_{t\to 1} \frac{g_t(y_t)-y_t}{f_t(y_t)-y_t}$ is locally constant, 
hence is constant on $I$. 
 
 Finally assume that $\tau_f(g,x)\in\{-\infty,+\infty\}$. This is equivalent to $\tau_g(f,x)=0$.  
 Let $J$ denote the connected component of $x$ in $[0,1]\setminus Fix (g)$.  Then we have seen that 
 each fundamental domain of $g$ in $J$ 
 contains  a fixed point of $f$. Notice that the extremities of $J$ are disjoint from $I$, 
 when at least one of the extremities of $I$ 
 is contained in $J$. 
 One easily concludes that $I\subset J$. 
 
 Now let denote $(a,b)=I$. Up to replace $g$ by $g^{-1}$, let us assume $g(x)-x>0$ so that $g(y)-y>0$ for every $y\in J$. 
 Now for every small $\varepsilon>0$, $[a+\varepsilon,g(a+\varepsilon)]$ contains a fixed point of $f$; 
 as there are no fixed points of $f$ between $a$ and $b$ one deduces  $g(a+\varepsilon)\geq b$. 
 As a consequence, $g(I)\cap I=\emptyset$, ending the proof.  
 
 \end{demo}

We conclude the construction of the relative translation number $\tau_{f,I}$ , 
and the proof of Theorem~\ref{t.translation}, 
by showing 
\begin{lemm}
 Let $f\in G$ and $x$ so that $f(x)>x$.  Let $I$ be the component of $x$ in $[0,1]\setminus Fix (f)$.  
 Then, for $g,g'$ in the stabilizer $G_I$, one has: 
 $$\exists y\in I g(y)\geq g'(y)\Rightarrow \tau_{f,I}(g)\geq \tau_{f,I}(g')$$
\end{lemm}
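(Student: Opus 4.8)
The plan is to reduce the monotonicity statement to a pointwise computation at the level of the conjugates $h_t$, using the characterization of $\tau_{f,I}$ as the limit of the ratios $\frac{g_t(y_t)-y_t}{f_t(y_t)-y_t}$ established in Lemma~\ref{l.translation} and Corollary~\ref{c.translationponctelle}. First I would dispose of the degenerate cases: if $\tau_{f,I}(g')=0$ then $g'$ has a fixed point in $I$, and one checks directly (moving $y$ along $I$ and using that the endpoints of $I$ are then fixed by $g'$, as in the proof of Lemma~\ref{l.translation}) that the hypothesis $g(y)\ge g'(y)$ forces $g(z)\ge z$ for some $z\in I$, whence $\tau_{f,I}(g)\ge 0$ by item (7) of Corollary~\ref{c.translationponctelle} together with the sign discussion; similarly if $\tau_{f,I}(g)=0$ we want $\tau_{f,I}(g')\le 0$, which follows symmetrically. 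Also, replacing $g$ by $g^{-1}$ or using item (2) one may normalize signs.

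The heart of the argument is the case $\tau_{f,I}(g),\tau_{f,I}(g')\in\RR^*$. Here the idea is: from $g(y)\ge g'(y)$ at one point $y\in I$, I want to produce, for $t$ close to $1$, a comparison of the form $g_t^{N}(z_t)\gtrsim (g')_t^{N}(z_t)$ on a long orbit segment, and then translate this back via Corollary~\ref{c.encadrement1}/Corollary~\ref{c.encadrement2} into an inequality between the translation numbers. More concretely, since conjugation by $h_t$ preserves the order, $g_t(y_t)\ge g'_t(y_t)$ for all $t$. Using Lemma~\ref{l.C1closetoid}, for $t$ near $1$ the displacements $g_t(\cdot)-\cdot$ and $g'_t(\cdot)-\cdot$ are almost constant over many fundamental domains near $y_t$, with error small compared to $f_t(y_t)-y_t$; combined with the order inequality this gives $g_t(y_t)-y_t \ge g'_t(y_t)-y_t - o(f_t(y_t)-y_t)$, and dividing by $f_t(y_t)-y_t$ and passing to the limit $t\to 1$ yields $\tau_{f,I}(g)\ge \tau_{f,I}(g')$. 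The only subtlety is ensuring the error terms are genuinely negligible: this is exactly the mechanism already used in the last paragraph of the proof of Lemma~\ref{l.translation}, where the hypothesis that $g_t(x_t)-x_t$ and $f_t(x_t)-x_t$ remain in bounded ratio makes the $C^1$-closeness error small relative to both.

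The main obstacle I anticipate is the case where one of $\tau_{f,I}(g),\tau_{f,I}(g')$ is infinite, i.e. $g$ or $g'$ lies in $G_I$ only in the weak sense with $g(I)\cap I=\emptyset$ — but by hypothesis $g,g'\in G_I$ means $g(I)=g'(I)=I$, so by the second bullet of Lemma~\ref{l.translation} both translation numbers are automatically real, and this case does not arise. Thus the statement reduces cleanly to the two cases above. A secondary technical point is that the inequality $g(y)\ge g'(y)$ is assumed at a single $y$, whereas the ratio $\frac{g_t(y_t)-y_t}{f_t(y_t)-y_t}$ computing $\tau_{f,I}$ is, a priori, sensitive to the chosen point; but Lemma~\ref{l.translation} already tells us this limit is independent of $y\in I$, so it is legitimate to compute it at the particular $y$ where the hypothesis holds. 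I would write the argument by: (i) recalling $\tau_{f,I}(g)=\lim_t \frac{g_t(y_t)-y_t}{f_t(y_t)-y_t}$ at our chosen $y$; (ii) noting $g_t(y_t)\ge g'_t(y_t)$ for all $t$; (iii) invoking the bounded-ratio argument from Lemma~\ref{l.translation} to control the approximation; (iv) dividing and taking limits, handling the zero cases separately via item (7) of Corollary~\ref{c.translationponctelle} as above.
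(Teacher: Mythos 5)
Your proposal does hit the paper's key observation in steps (ii) and (iv): conjugation by $h_t$ preserves order, so $g(y)\geq g'(y)$ gives $g_t(y_t)\geq g'_t(y_t)$ for all $t$, and dividing by the positive quantity $f_t(y_t)-y_t$ and taking the limit $t\to 1$ (using that $\tau_{f,I}$ is exactly this limit, independent of $y\in I$ by Lemma~\ref{l.translation}) yields the claim. In fact the paper's entire proof is this one line, so the rest of your argument is superfluous: the case split on $\tau_{f,I}(g)=0$ or $\tau_{f,I}(g')=0$, the appeal to Lemma~\ref{l.C1closetoid}, and the $o(f_t(y_t)-y_t)$ error term are all unnecessary, since the inequality $g_t(y_t)-y_t\geq g'_t(y_t)-y_t$ is exact (not approximate) and holds for all $t$, making the comparison of the ratios, and hence of their limits, immediate in every case.
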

\begin{demo} Just notice than $\frac{g_t(y_t)-y_t}{f_t(y_t)-y_t}\geq \frac{g'_t(y_t)-y_t}{f_t(y_t)-y_t}$ for every $t$. 
\end{demo}

\subsection{Groups of diffeomorphisms without linked fixed points}

\begin{rema}   A group $G\subset Diff^1_+([0,1])$  is 
without linked fixed points if and only if,
given any $f,g\in G$, and given $]a,b[$ and $]c,d[$ be connected components of $[0,1]\setminus Fix(f)$ and 
 $[0,1]\setminus  Fix(g)$, respectively,   then, 
$$\mbox{ either }]a,b[\cap ]c,d[=\emptyset
\quad \mbox{ or }]a,b[= ]c,d[
\quad\mbox{ or }[a,b]\subset ]c,d[
\quad \mbox{ or }]a,b[\supset [c,d]$$
\end{rema}

Next straightforward lemma gives another formulation of  being without linked fixed points: 

\begin{lemm}\label{l.defilinked} A group $G\subset Diff^1_+([0,1])$  is without linked fixed points if and only if
given  $\{a,b\}$ and $\{c,d\}$ be successive fixed points of $f\in G$ and $g\in G$, respectively,  then 
$$\mbox{ either } \{c,d\}\cap ]a,b[=\emptyset
\quad \mbox{ or }[c,d]\subset ]a,b[
$$
\end{lemm}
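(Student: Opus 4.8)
The plan is to unwind the definition of ``linked'' and check that the dichotomy in the statement is literally the negation, quantified over all $f,g\in G$, of the existence of a linked pair. First I would recall that, for $f,g\in G$, successive fixed points $\{a,b\}$ of $f$ and $\{c,d\}$ of $g$ are linked exactly when $]a,b[\,\cap\{c,d\}$ or $]c,d[\,\cap\{a,b\}$ consists of exactly one point; since $a<b$ and $c<d$, each of these two intersections has $0$, $1$ or $2$ elements, so ``exactly one element'' is precisely the negation of ``$0$ or $2$ elements''.

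The only elementary fact to record is that, $]a,b[$ being an interval, $\{c,d\}\subset\,]a,b[$ is the same as $[c,d]\subset\,]a,b[$, while having no element of $\{c,d\}$ inside $]a,b[$ is just $\{c,d\}\cap\,]a,b[\,=\emptyset$. Hence ``$]a,b[\,\cap\{c,d\}$ has $0$ or $2$ elements'' is exactly the alternative $\{c,d\}\cap\,]a,b[\,=\emptyset$ or $[c,d]\subset\,]a,b[$ appearing in the statement. In other words, for a fixed ordered pair $(\{a,b\},\{c,d\})$ the dichotomy of the lemma holds if and only if $]a,b[\,\cap\{c,d\}$ does not consist of exactly one point, and symmetrically the dichotomy for the ordered pair $(\{c,d\},\{a,b\})$ holds if and only if $]c,d[\,\cap\{a,b\}$ does not consist of exactly one point.

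It then remains to assemble these remarks. By Definition~\ref{d.linked}, $G$ is without linked fixed points iff for all $f,g\in G$ and all successive fixed points $\{a,b\}$ of $f$, $\{c,d\}$ of $g$, neither $]a,b[\,\cap\{c,d\}$ nor $]c,d[\,\cap\{a,b\}$ consists of exactly one point; since this is demanded for every ordered choice of $(f,g)$, the roles of the two pairs may be exchanged, so it is equivalent to demanding, for all $f,g$ and all such $\{a,b\},\{c,d\}$, merely that $]a,b[\,\cap\{c,d\}$ does not consist of exactly one point. By the previous paragraph this last condition is verbatim the dichotomy in the statement, which proves the equivalence. I expect no genuine obstacle here: the argument is a pure translation of definitions, the only two points deserving a word being the trivial interval fact $\{c,d\}\subset\,]a,b[\iff[c,d]\subset\,]a,b[$ and the symmetrization step, namely that quantifying the a priori asymmetric dichotomy over all pairs of elements $f,g$ already recovers the symmetric ``not linked'' condition on the unordered pair.
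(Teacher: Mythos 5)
Your proof is correct and is exactly the ``straightforward'' unwinding the paper has in mind (the paper states this lemma without proof, calling it a straightforward reformulation of Definition~\ref{d.linked}). The only genuine content is the observation that a two-element set meets an open interval in $0$, $1$ or $2$ points, that the $0$- and $2$-point cases are precisely the two alternatives of the lemma, and that quantifying the asymmetric dichotomy over all ordered pairs $(f,g)\in G^2$ already recovers the symmetric ``not linked'' condition --- all of which you make explicit.
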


\begin{theo}\label{t.enlaces} Let $G\subset Diff^1_+([0,1])$ be a subgroup $C^1$-close to identity. 
Then $G$ is without linked fixed points.
\end{theo}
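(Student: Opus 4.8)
The plan is to argue by contradiction, exploiting the relative translation numbers $\tau_{f,I}$ and $\tau_{g,J}$ constructed in the previous subsection together with the fact that a translation number cannot take a value strictly between $0$ and a nonzero value in a way incompatible with being a group morphism sending a distinguished generator to $1$. More concretely, suppose $G\subset Diff^1_+([0,1])$ is $C^1$-close to the identity but has a linked pair of successive fixed points. By Lemma~\ref{l.defilinked}, this means there are $f,g\in G$, successive fixed points $\{a,b\}$ of $f$ and $\{c,d\}$ of $g$, such that exactly one of $c,d$ lies in $(a,b)$; say $a<c<b<d$ (the other configurations are symmetric). So on the interval $(a,b)$ the diffeomorphism $f$ has no fixed point, while $g$ has the fixed point at $c$ in its interior but $b\notin\mathrm{Fix}(g)$; and symmetrically on $(c,d)$, $g$ has no fixed point while $f$ has $b$ in its interior.

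First I would fix the $C^1$-continuous path $h_t$ conjugating every element of $G$ toward the identity and adopt the notation $f_t=h_tfh_t^{-1}$, $x_t=h_t(x)$ of the previous subsection, so that all the pointwise translation-number machinery of Corollary~\ref{c.translationponctelle} and Lemma~\ref{l.translation} is available. Pick a base point $x$ inside $(a,b)\cap(c,d)=(c,b)$, which is nonempty. Since $x\in(a,b)=$ a component of $[0,1]\setminus\mathrm{Fix}(f)$ and also $x\in(c,d)=$ a component of $[0,1]\setminus\mathrm{Fix}(g)$, Corollary~\ref{c.translationponctelle} applies to both $\tau_f(g,x)$ and $\tau_g(f,x)$. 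The key observation is that $g$ has a fixed point, namely $c$, lying in $(a,b)$: more precisely, the endpoint of $I=(a,b)$ on the side toward which $f$ does not push everything — wait, one must be careful; the cleanest route is: $f$ restricted to $I=(a,b)$ moves $x$ either up or down, say $f(x)>x$; then either $f^n(x)$ accumulates at $b$ as $n\to+\infty$, and since $c<b$ the point $c\in\mathrm{Fix}(g)$ lies in the interval $[x,b)$, hence in $[x,f^n(x))$ for large $n$; applying Lemma~\ref{l.translation} / Corollary~\ref{c.translationponctelle} item 7 (a fixed point of $g$ in $[x,f(x)]$ forces $\tau_f(g,x)=0$) after replacing $f$ by $f^n$ and using $\tau_{f^n}(g,x)=\tau_f(g,x)/n$, one concludes $\tau_f(g,x)=0$, i.e. $g$ has a fixed point in the interior of $I$. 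But $\tau_f(g,x)=0\Longleftrightarrow\tau_g(f,x)\in\{-\infty,+\infty\}$ by item 5 of Corollary~\ref{c.translationponctelle}, which by item 3 means $g$ has a fixed point strictly between $x$ and $f(x)$ — consistent — and moreover, by the last bullet of Lemma~\ref{l.translation} applied with the roles of $f$ and $g$ exchanged, $\tau_g(f,x)\in\{-\infty,+\infty\}$ forces the component of $x$ in $[0,1]\setminus\mathrm{Fix}(g)$, which is $(c,d)$, to satisfy $f((c,d))\cap(c,d)=\emptyset$, in particular $b=f(b')$ for no... concretely it forces $f((c,d))$ disjoint from $(c,d)$, contradicting the fact that $b\in(c,d)$ is a fixed point of $f$ (an endpoint behavior: $f$ fixing an interior point of $(c,d)$ is incompatible with $f((c,d))\cap(c,d)=\emptyset$).

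Thus the main steps in order are: (1) set up the contradiction hypothesis and reduce via Lemma~\ref{l.defilinked} to a concrete linked configuration $a<c<b<d$ with base point $x\in(c,b)$; (2) using that one of the $f$-orbit endpoints of $I$ separates $c$ from $x$, deduce from Corollary~\ref{c.translationponctelle}(7) and $\tau_{f^n}=\tau_f/n$ that $\tau_f(g,x)=0$, i.e. $g$ has a fixed point in the interior of $I=(a,b)$; (3) translate this via Corollary~\ref{c.translationponctelle}(5) into $\tau_g(f,x)\in\{-\infty,+\infty\}$, and invoke the last bullet of Lemma~\ref{l.translation} (with $f,g$ swapped) to get $f((c,d))\cap(c,d)=\emptyset$; (4) observe this contradicts $b\in(c,d)\cap\mathrm{Fix}(f)$, since a diffeomorphism fixing an interior point of an interval cannot map that interval off itself. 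The step I expect to be the main obstacle is (2)–(3): carefully choosing which endpoint of which interval plays the separating role so that the hypothesis "exactly one of $\{c,d\}$ lies in $(a,b)$" is genuinely used, and making sure the sign conventions ($f(x)>x$ vs $f(x)<x$, and the side on which $f$-iterates accumulate) are handled uniformly rather than by an unchecked "symmetric case"; once the configuration is pinned down the translation-number bookkeeping is a direct appeal to the already-proved Corollary~\ref{c.translationponctelle} and Lemma~\ref{l.translation}.
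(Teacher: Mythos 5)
Your plan is essentially the paper's: deduce from the existence of a fixed point of $g$ inside $I=(a,b)$ that $\tau_{f,I}(g)=0$, then play the two translation numbers $\tau_f(g,\cdot)$ and $\tau_g(f,\cdot)$ against each other via Corollary~\ref{c.translationponctelle} and Lemma~\ref{l.translation}. However, your step (2) as written contains exactly the configuration error you anticipated. With $a<c<b<d$ and $x\in(c,b)$ you have $c<x$, so $c$ does \emph{not} lie in $[x,b)$, and the forward $f$-iterates of $x$ (which accumulate at $b$) never separate $c$ from $x$; the claim ``$c\in[x,f^n(x))$ for large $n$'' is false. The fix is either to iterate $f^{-1}$ so that $f^{-n}(x)\to a<c$ and apply item~7 to a suitable fundamental domain $[f^{k}(x),f^{k+1}(x)]$ containing $c$ (together with the constancy of $\tau_f(g,\cdot)$ on $I$ from Lemma~\ref{l.translation}), or, more simply, to skip the $f^n$-gymnastics entirely and invoke the kernel characterization already recorded in Theorem~\ref{t.translation}: $\mathrm{Fix}(g)\cap I\neq\emptyset$ is equivalent to $g\in G_I$ and $\tau_{f,I}(g)=0$. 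Once that is repaired, your steps (3)--(4) are valid (item~5 swapped gives $\tau_{g,J}(f)\in\{-\infty,+\infty\}$, then the last bullet of Lemma~\ref{l.translation} gives $f(J)\cap J=\emptyset$, contradicting $b\in J\cap\mathrm{Fix}(f)$). The paper's own endgame is slightly more economical: from $\tau_{f,I}(g)=0$ it uses item~7 forward (g has a fixed point in every $f$-fundamental domain in $I$) to conclude directly that $J\subset\mathrm{Int}(I)$, giving one of the four allowed configurations without a contradiction argument; both endgames work.
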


\begin{demo} Let $f,g\in G$, and $I$ and $J$ be connected components of $[0,1]\setminus Fix (f)$ and 
$[0,1]\setminus Fix (g)$ respectively. 
Assume that one end point of $J$ belongs to $I$.  This implies that $\tau_{f,I}(g)=0$ .  Therefore, 
according to Corollary~\ref{c.translationponctelle}, $g$ has fixed points 
in every fundamental domains of $f$ in $I$.  This implies that $J$ is contained in the interior
of $I$. 

If no end point of $I,J$ belongs to the other interval, 
this implies that either $I=J$ or $I\cap J=\emptyset$. 
\end{demo}

\section{Groups without linked fixed points}

\subsection{Groups without linked fixed points and without crossing}
  In \cite{Na}, Navas  defines the notion of  \emph{groups without crossing} as follows:
 \begin{defi} One says that a group $G\subset Homeo_+([0,1])$ of homeomorphisms is \textbf{\emph{without crossing}} if 
given any $f,g\in G$, one has the following property:

Let $(a,b)$  be a connected components of $[0,1]\setminus Fix(f)$  Then, $\{g(a),g(b)\}\cap  (a,b)=\emptyset$
 \end{defi}

These two notions are equivalent:
 \begin{lemm}
  A group $G\subset Homeo_+([0,1]$ is without crossing if and only if it has no linked fixed points.
 \end{lemm}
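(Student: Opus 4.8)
The plan is to prove the equivalence ``without crossing $\Longleftrightarrow$ without linked fixed points'' by unwinding both definitions and showing that each of the characterizing conditions is logically equivalent to the negation of the other. First I would fix $f,g\in G$ and a connected component $(a,b)$ of $[0,1]\setminus\mathrm{Fix}(f)$, and recall (using Lemma~\ref{l.defilinked}) that the existence of a linked pair of successive fixed points is exactly the existence of successive fixed points $\{a,b\}$ of some $f\in G$ and $\{c,d\}$ of some $g\in G$ with $(a,b)\cap\{c,d\}$ (or symmetrically $(c,d)\cap\{a,b\}$) consisting of exactly one point.

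For the direction ``without crossing $\Rightarrow$ without linked fixed points'', I would argue by contraposition: suppose $G$ has a linked pair, say successive fixed points $\{a,b\}$ of $f$ and $\{c,d\}$ of $g$ with exactly one of $c,d$ lying in $(a,b)$, say $c\in(a,b)$ and $d\notin(a,b)$ (the case $d\in(a,b)$, $c\notin(a,b)$ being symmetric). Then $c$ is a fixed point of $g$; but I want to produce a point whose image under $g$ lands inside $(a,b)$ contradicting the ``without crossing'' hypothesis applied with roles exchanged. The cleanest way: apply the ``without crossing'' condition to the pair $(g,f)$ and the component $(c,d)$ of $[0,1]\setminus\mathrm{Fix}(g)$ — it forces $\{f(c),f(d)\}\cap(c,d)=\emptyset$. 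Since $c\in(a,b)$ and $b\notin(c,d)$ (because $d\notin(a,b)$ forces $b<d$ when, say, $a<c<b\le d$), one checks that $a$ or $b$ sits strictly between $c$ and $d$, and applying ``without crossing'' to $(f,g)$ with component $(a,b)$ gives $\{g(a),g(b)\}\cap(a,b)=\emptyset$, which contradicts the configuration where $(c,d)$ overlaps $(a,b)$ in exactly one endpoint — this is where one has to be a little careful with which endpoint is trapped. Conversely, for ``without linked fixed points $\Rightarrow$ without crossing'', I would again use contraposition: if $\{g(a),g(b)\}\cap(a,b)\neq\emptyset$ for some component $(a,b)$ of $[0,1]\setminus\mathrm{Fix}(f)$, then $g((a,b))$ overlaps $(a,b)$ without containing it and without being contained in it (since $a,b\in\mathrm{Fix}(f)$ but $g(a),g(b)$ need not be, and exactly one of $g(a),g(b)$ lies in $(a,b)$), so the component $(c,d)$ of $[0,1]\setminus\mathrm{Fix}(gfg^{-1})$ equal to $(g(a),g(b))$ — note $gfg^{-1}\in G$ — has exactly one of its endpoints inside $(a,b)$, exhibiting a linked pair.

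The main obstacle I anticipate is bookkeeping the ``exactly one point'' condition: ``without crossing'' literally says $\{g(a),g(b)\}\cap(a,b)=\varnothing$, so its failure means at least one of $g(a),g(b)$ lies in $(a,b)$; I must rule out the possibility that \emph{both} lie in $(a,b)$, since that would not immediately give a \emph{linked} pair (it could give nesting instead). But both $g(a),g(b)\in(a,b)$ would mean $g([a,b])\subset(a,b)$ strictly, forcing $g$ to have a fixed point in $(a,b)$ by a standard argument on $[0,1]$, and one must check this does not secretly produce a crossing elsewhere — in fact for the equivalence it is enough to observe that if $g(a),g(b)$ are both in $(a,b)$ then replacing $(a,b)$ by a suitable nested component of $[0,1]\setminus\mathrm{Fix}(g)$ inside $(a,b)$, whose endpoints are in $(a,b)$ but which is mapped by a power of $f$ partly outside, recovers the one-endpoint situation; alternatively I can phrase the whole proof in terms of Lemma~\ref{l.defilinked} and the trichotomy/quadrichotomy in the Remark preceding it, so that ``not linked'' becomes ``for all successive fixed point intervals, one is contained in the other or they are disjoint,'' and then ``without crossing'' drops out by applying this to $(a,b)$ and to the $g$-orbit interval through a point of $(a,b)$. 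I would write the final proof in the latter style, as it keeps the case analysis to a minimum and makes the symmetry between $f$ and $g$ manifest.
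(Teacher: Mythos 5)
Your general strategy (contraposition/contradiction in both directions, passing through a conjugate $gfg^{-1}\in G$ when needed) is the same one the paper uses, but two of the key steps do not go through as written.

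For the direction ``linked $\Rightarrow$ crossing,'' your claim that ``$a$ or $b$ sits strictly between $c$ and $d$'' fails in the admissible linked configuration $a<c<b=d$ (here $(a,b)\cap\{c,d\}=\{c\}$, so the pairs are linked, yet neither $a$ nor $b$ lies in $(c,d)$). Moreover, the intended contradiction from applying ``without crossing'' to both components is never actually derived. The argument is in fact much shorter than you make it: with $c\in(a,b)$ and $d\geq b$, the point $c$ is not fixed by $f$, and $f$ preserves $(a,b)$; so replacing $f$ by $f^{-1}$ if necessary to get $f(c)>c$, one has $c<f(c)<b\leq d$, hence $f(c)\in(c,d)$, which is a crossing with respect to the $g$-component $(c,d)$. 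This is exactly the paper's one-line conclusion, and your extra machinery introduces the failing case rather than handling it.

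For the direction ``crossing $\Rightarrow$ linked,'' you correctly flag that $\{g(a),g(b)\}\cap(a,b)\neq\emptyset$ may have \emph{both} endpoints in $(a,b)$, in which case $(g(a),g(b))$ is nested, not linked. But your proposed repair does not work: since $a,b\in\mathrm{Fix}(f)$, the diffeomorphism $f$ preserves $(a,b)$, so any component $(c,d)\subset(a,b)$ of $[0,1]\setminus\mathrm{Fix}(g)$ satisfies $f^{k}(c,d)\subset(a,b)$ for \emph{every} $k\in\ZZ$; no power of $f$ can push such an interval ``partly outside.'' The correct move is the opposite of nesting: since $g(b)\in(a,b)$ forces $b\notin\mathrm{Fix}(g)$, take the component $(c,d)$ of $[0,1]\setminus\mathrm{Fix}(g)$ that \emph{contains} $b$. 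If $g$ has a fixed point in $[a,b)$ (which is precisely the situation where both $g(a),g(b)$ might land inside $(a,b)$), that fixed point lies in $[a,b)\cap\mathrm{Fix}(g)$, forcing $c\geq a$, so $a\notin(c,d)$ while $b\in(c,d)$: the pairs $\{a,b\}$ and $\{c,d\}$ are linked. If instead $g$ has no fixed point in $[a,b)$, then $g(a)<a$ (otherwise $g$ would have a fixed point on $[a,b)$ by the intermediate value theorem), and then $(g(a),g(b))$ — a component for $gfg^{-1}\in G$ — satisfies $g(a)<a<g(b)<b$, giving a linked pair. This two-case split, keyed on whether $g$ fixes a point of $[a,b)$, is what the paper does and is what your proof is missing.
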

 
\begin{demo}
Assume first that $G$ admits crossing: thus there is $f\in G$ and  $a,b$ be successive fixed point of $f$,
and $g\in G$ that $g(b)\in (a,b)$ (or $g(a)\in (a,b)$ but this case is analogous).  
\begin{itemize}
\item First assume that $g$ has a fixed point in $[a,b)$. Let $c,d$ be the successive 
fixed points of $g$ so that 
$b\in(c,d)$. Then $b\in(c,d)$ but $[a,b]\nsubseteq (c,d)$ so that $G$ is not without linked fixed points. 
\item Assume now that $g$ has no fixed points in $[a,b)$.
Then $gfg^{-1}$  admits
$c=g(a)<a$ and $d=g(b)\in (a,b)$ as successive fixed point so that $\{a,b\}$ and$ \{c,d\}$
are linked pairs of succesive fixe points.  
\end{itemize}

Conversely assume that $\{a,b\}$ and $\{c,d\}$ are linked pairs of fixed points, of $f\in G$ and $g\in G$, 
respectively.  Up to reverse the orientation or to exchange the role of $f$ and $g$, one may assume that
$d\in (a,b)$ and $c\notin(a,b)$. Up to exchange $f$ with $f^{-1}$, one may assume $f(d)<d$. 

Therefore $c\leq a <f(d)<d$ so that $f(d)\in(c,d)$: $G$ admits a crossing. 
\end{demo}

\subsection{Dynamics of groups without crossing}\label{s.crossing}

In this section $G\subset Homeo_+([0,1])$ is a group without crossing (or equivalently, without 
linked fixed points).  We fix an element $f\in G$ and a connected component $I$ of $[0,1]\setminus Fix (f)$. 
As $G$ is without linked fixed point, for any $g\in G$ 
\begin{itemize}
 \item either $g(I)\cap I=\emptyset$,
 \item  or $g(I)=I$, that is, $g$ belongs to the stabilizer $G_I$ of $I$.
\end{itemize}

In this section, we consider the dynamics of $G_I$ in restriction to $I$. 

As $f$ has no fixed points (by assumption) on $I$, every orbit  $G_I(y)$, $y\in I$ meets a (compact)
fundamental domain 
$[x,f(x)]$ of $f$.  One easily deduces (using Zorn lemma) that
\begin{lemm}
The action of $G_I$  on $I$ admits minimal closed  sets. 
\end{lemm}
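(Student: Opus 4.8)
The plan is to apply Zorn's lemma to the family of nonempty closed $G_I$-invariant subsets of $\overline{I}$ that meet a fixed compact fundamental domain, ordered by reverse inclusion. First I would fix $x\in I$ and set $K=[x,f(x)]$, a compact fundamental domain of $f$ in $I$; the key preliminary observation, already noted in the text, is that every $G_I$-orbit $G_I(y)$ with $y\in I$ meets $K$ (push $y$ by a suitable power of $f\in G_I$ into $K$). Consider the collection $\mathcal F$ of all nonempty closed subsets $F\subseteq \overline I$ that are invariant under $G_I$ and satisfy $F\cap K\neq\emptyset$. This $\mathcal F$ is nonempty since $\overline I\in\mathcal F$ (note $G_I$ fixes the endpoints of $I$, so $\overline I$ is genuinely invariant).

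Next I would check the chain condition. Given a chain $\{F_\alpha\}$ in $\mathcal F$ (totally ordered by inclusion), set $F=\bigcap_\alpha F_\alpha$. This is closed and $G_I$-invariant as an intersection of such sets. For nonemptiness one uses compactness: each $F_\alpha\cap K$ is a nonempty closed subset of the compact set $K$, and these form a chain with the finite intersection property, so $F\cap K=\bigcap_\alpha (F_\alpha\cap K)\neq\emptyset$; in particular $F\neq\emptyset$ and $F\cap K\neq\emptyset$, so $F\in\mathcal F$ is a lower bound for the chain. By Zorn's lemma $\mathcal F$ has a minimal element $\Lambda$ with respect to inclusion.

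Then I would argue $\Lambda$ is a minimal set for the action of $G_I$ on $I$ (more precisely, a minimal nonempty closed invariant subset of $\overline I$; if one wants a minimal set inside $I$ itself one notes $\Lambda$ cannot contain an endpoint $a$ of $I$ unless $\Lambda=\{a\}$, which is excluded since $\Lambda\cap K\neq\emptyset$ and $K\subset I$). Indeed, if $\Lambda'\subsetneq\Lambda$ were a nonempty closed $G_I$-invariant subset, then pick $z\in\Lambda'$; its orbit meets $K$, so $\Lambda'\cap K\neq\emptyset$, whence $\Lambda'\in\mathcal F$, contradicting minimality of $\Lambda$. Hence $\Lambda$ admits no proper nonempty closed invariant subset, i.e.\ it is a minimal set. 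There is essentially no obstacle here; the only point requiring a little care is the bookkeeping around the endpoints of $I$ (ensuring the minimal set obtained is the "right" object), and making sure invariance is stated for $\overline I$ so that the compactness argument in the chain step goes through cleanly.
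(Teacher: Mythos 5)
Your proof is exactly the argument the paper has in mind: it asserts that every $G_I$-orbit in $I$ meets a compact fundamental domain $[x,f(x)]$ and then says "one easily deduces (using Zorn lemma)", and you have simply spelled out that Zorn argument (chains of closed invariant sets meeting $K$, finite-intersection property for the lower bound, minimality from the orbit-meets-$K$ observation). One small imprecision: your parenthetical claim that ``$\Lambda$ cannot contain an endpoint $a$ of $I$ unless $\Lambda=\{a\}$'' is not justified as stated, since $a$ could a priori be an accumulation point of $\Lambda\cap I$; but this is harmless, because $\Lambda\cap I$ is then nonempty, closed in $I$, invariant, and minimal for the action on $I$ (any proper nonempty closed invariant subset $M$ of it would have closure $\overline M\in\mathcal F$ with $\overline M\subsetneq\Lambda$, contradicting minimality of $\Lambda$ in $\mathcal F$), which is all the lemma requires.
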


Furthermore, the following classical result explains what are the possibilities: 
\begin{lemm} Let $H\subset homeo_+(\RR)$ be a group and assume there if $h\in H$ without fixed point.  
Then $H$ admits minimal sets, the union of the minimal sets is closed, and  
\begin{itemize}
 \item either there is a unique minimal set which 
 is is either $\RR$ or the product $\cC\times \ZZ$ where $\cC$ is a Cantor set.
 \item or there is $h\in H$ without fixed points so that every minimal of $G$ is exactly one orbit of 
 $g$.
 \end{itemize}

\end{lemm}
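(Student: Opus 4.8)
The plan is to establish existence of minimal sets, prove a perfect/discrete dichotomy for a single minimal set, and then handle the two cases separately; the topological rigidity in the perfect case will be the hard part.

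\textbf{Existence and dichotomy.} Replacing $h$ by $h^{-1}$ if needed, I assume $h(x)>x$ for all $x$, so that $K=[x_0,h(x_0)]$ is a compact fundamental domain of $h$ and every $H$-orbit meets $K$. Then for any decreasing chain $\cC_1\supseteq\cC_2\supseteq\cdots$ of nonempty closed $H$-invariant sets the $\cC_i\cap K$ are nonempty compacts, so $\bigcap_i\cC_i\neq\varnothing$; Zorn's lemma yields minimal sets, and two distinct ones are disjoint since their intersection is closed and $H$-invariant. For a minimal set $M$, the set of points isolated in $M$ is $H$-invariant and open in $M$, hence $\varnothing$ or all of $M$. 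If all of $M$: then $M$ is closed and discrete in $\RR$, is $h$-invariant with $h$ fixed-point-free, hence unbounded in both directions (it contains $\{h^n(y)\}_{n\in\ZZ}$) and so order-isomorphic to $(\ZZ,<)$. If none: $M$ is perfect; its interior is $H$-invariant and open, so if nonempty its closure is $M$ and its frontier a proper closed invariant set, forcing $M=\RR$; otherwise $M$ is nowhere dense, hence (closed with empty interior in $\RR$) totally disconnected.

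\textbf{Perfect case.} Suppose $M$ is perfect. If $M=\RR$ it is obviously the unique minimal set. If $M$ is nowhere dense and $M'$ is another minimal set, then $M'\cap M=\varnothing$, so $M'$ lies in the union of the (bounded) gaps of $M$ and meets the closure of each gap in a compact set (a boundary accumulation point would lie in $M\cap M'$). The set of rightmost points of $M'$ in the gaps it meets is $H$-invariant, nonempty, and closed (an accumulation point coming from infinitely many distinct gaps would lie in $M\cap M'$), so by minimality it equals $M'$; hence $M'$ meets each gap of $M$ in at most one point. Since $M$ is perfect, every $a\in M$ is approached by a sequence of \emph{distinct} gaps of $M$ whose lengths tend to $0$ (only finitely many gaps near a point are long); if infinitely many met $M'$ their points of $M'$ would converge to $a\in M$, impossible --- so only finitely many gaps near any point of $M$ meet $M'$, which makes $\bigcup\{V\ \text{gap}:M'\cap V\neq\varnothing\}$ closed. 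Collapsing every gap of $M$ by the monotone quotient $q\colon\RR\to\RR/\!\!\sim$ (again a copy of $\RR$, with induced action $\bar H$), $q(M)$ is the whole line and is $\bar H$-minimal (the $q$-preimage of a proper closed invariant set meets $M$ in a proper closed invariant set), whereas $q(M')$ is a nonempty, closed, $\bar H$-invariant, \emph{countable} --- hence proper --- subset: contradiction. So $M$ is the unique minimal set and $\bigcup(\text{minimal sets})=M$ is closed. For its shape, $h$ acts on $M$ freely, properly discontinuously and cocompactly, so $M/\langle h\rangle$ is a compact metrizable perfect totally disconnected space, i.e. a Cantor set $\cC$; the quotient $M\to\cC$ is a normal covering with deck group $\ZZ$, and any covering over a compact totally disconnected base is trivial (refine a finite cover by trivializing clopen sets to a clopen partition), so $M\cong\cC\times\ZZ$.

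\textbf{Discrete case and main obstacle.} If some minimal set is discrete, the previous step shows no minimal set is perfect, so every minimal set is order-isomorphic to $(\ZZ,<)$, the $H$-action on it is transitive by minimality and thus gives a surjection $\rho_M\colon H\twoheadrightarrow\operatorname{Aut}(\ZZ,<)\cong\ZZ$, and $H$ acts transitively on the gaps of $M$. For distinct minimal sets $M,M'$ this forces each gap of $M$ to contain the same number $p\geq1$ of points of $M'$ ($p=0$ gives $M'\subseteq M$), and $p\leq1$ since otherwise the leftmost point of $M'$ in each gap of $M$ is a proper nonempty closed $H$-invariant subset of $M'$; so $M$ and $M'$ interleave. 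When there are finitely many minimal sets $M_1,\dots,M_k$, their union $N$ is order-isomorphic to $(\ZZ,<)$ with the $M_i$ as its $k$ residue classes, the stabilizer $H_0$ of $M_1$ has finite index and acts transitively on $M_1$ so $\rho_N(H_0)=k\ZZ$, and any $h'\in H$ with $\rho_N(h')=k$ fixes each $M_i$ setwise, shifts it by one step, is fixed-point-free (it maps each gap of $N$ to a disjoint one), and realizes each $M_i$ as a single $h'$-orbit; closedness of $\bigcup(\text{minimal sets})$ then follows from a standard orbit-closure argument. I expect the genuine obstacles to be, first, the rigidity of the perfect case (uniqueness of the minimal set and the identification of its homeomorphism type with $\cC\times\ZZ$), and, second, the fully general form of the discrete case, where a single fixed-point-free element must be produced simultaneously for \emph{infinitely many} minimal sets: there the finite bookkeeping above has to be replaced by an exhaustion argument, which --- together with the closedness of the union --- is where the classical literature does the real work.
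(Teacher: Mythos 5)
Your argument correctly handles two of the three cases: the case of a perfect minimal set (uniqueness via collapsing gaps to get a countable closed invariant subset of a minimal line action, and the identification with $\cC\times\ZZ$ via the covering-space argument over a Cantor base), and the case of finitely many discrete minimal sets (the interleaving analysis and the shift-by-$k$ element $h'$). The paper itself gives no proof of this lemma --- it is cited as a ``classical result'' --- so the comparison is purely with the statement to be proved.

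There is, however, a genuine gap, which you yourself flag at the end: when every minimal set is discrete and there are infinitely many of them, you do not prove either of the two remaining assertions of the lemma. First, the existence of a single fixed-point-free element $h'\in H$ whose orbits are exactly the minimal sets: the $h'$ you construct depends on the finite list $M_1,\dots,M_k$ through $\rho_N(h')=k$, and there is no uniform candidate as $k$ grows; the ``exhaustion argument'' you gesture at is precisely the missing content. Second, the closedness of $\bigcup\{\text{minimal sets}\}$ is genuinely nontrivial here: in a gap $(a,b)$ of one minimal set $M_1$, each further minimal set contributes exactly one point, and with infinitely many minimal sets these points accumulate in $[a,b]$; one must rule out that the accumulation point lies inside $(a,b)$ without belonging to any minimal set, and nothing in your argument does so. (Your reduction to ``finitely many'' is where this possibility is silently excluded.) A correct treatment would, for instance, identify the union of discrete minimal sets with a canonical closed invariant set --- e.g.\ the set on which the stabilizers of points act trivially, which is the route the surrounding text of the paper takes via the open set $U$ and $\La=I\setminus U$ --- and deduce both closedness and the existence of $h'$ from that description.

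A small misstatement worth noting but not fatal: in the perfect case you write that ``$\bigcup\{V\ \text{gap}:M'\cap V\neq\varnothing\}$ is closed.'' As a union of open intervals this set is open; it is nonempty and not all of $\RR$, so it cannot be closed. What your locally-finiteness argument actually gives is that $\bigcup\overline{V}$ over those gaps is closed; in any case the conclusion you need --- that $q(M')$ is closed --- already follows from the fact that the monotone collapse $q$ is a proper map (preimages of compacts are compact because each $q^{-1}(\text{point})$ is either a singleton or a bounded gap closure), so properness makes this aside unnecessary.
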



Let denote by $U\subset I$ the union of the open sets $I\setminus Fix(g)$, for $g\in G$ with 
$Fix(g)\cap I\neq \emptyset$.  That is:
$$U=\bigcup_{\{g\in G, Fix(g)\cap I\neq \emptyset\}}I\setminus Fix (g).$$

$U$ is an open set as union of open sets. The no linked fixed points property implies that 
each connected component of $U$ is the union of an increasing sequence of connected components of 
$I\setminus Fix(g_n)$; as a consequence, each connected component of $U$ is contained in a fundamental domain $[x,f(x)]$, $x\in I$.

\begin{rema}If $g$ has a fixed point $x$ in $I$, then it has another fixed point in $(x,f(x)]$;  otherwise, the next fixed point of $g$ 
will be so that (x,y) contains $f(x)$ and thus $G$ has a crossing. 

Thus, $g$ has fixed points in every fundamental domain of $f$ in $I$.
\end{rema}

One deduces that 
\begin{lemm}\label{l.minimal}
 $\La=I\setminus U$ is a nonempty closed subset invariant by $G_I$.  Furthermore, 
 for every $g\in G_I$ with fixed points in $I$,  
 the restriction
 of $g$ to $\La$ is the identity map.
 $$Fix(g)\cap I\neq \emptyset\Longleftrightarrow g|_\La=id_\La.$$
\end{lemm}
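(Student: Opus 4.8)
The plan is to establish the two halves of Lemma~\ref{l.minimal} in sequence. First I would check that $\La = I\setminus U$ is nonempty. The set $U$ was shown to have each connected component contained in a fundamental domain $[x,f(x)]$ of $f$; hence $U$ cannot contain any point of the form $f^n(x)$ for large $|n|$ if $U$ stays inside one fundamental domain, but more robustly, if we fix a fundamental domain $[x_0,f(x_0)]$, the portion $U\cap[x_0,f(x_0)]$ is a union of connected components each compactly contained in $(\,\cdot\,)$, and being a proper open subset of the interior of the fundamental domain it cannot exhaust it; so $\La\cap[x_0,f(x_0)]\neq\varnothing$. (Alternatively: $x_0\notin U$ whenever $x_0$ is an endpoint shared by two adjacent fundamental domains, since a connected component of $U$ lies in a single fundamental domain and endpoints of $I$ lie outside $U$ by the no-crossing remark.) That $\La$ is closed is immediate since $U$ is open, and invariance of $\La$ under $G_I$ follows because $U$ is $G_I$-invariant: the defining family $\{g\in G : Fix(g)\cap I\neq\varnothing\}$ is stable under conjugation by elements of $G_I$ (which fix $I$ setwise), and conjugation permutes the sets $I\setminus Fix(g)$ among themselves.

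Next I would prove the equivalence $Fix(g)\cap I\neq\varnothing \iff g|_\La = id_\La$. For the forward direction, suppose $g\in G_I$ has a fixed point in $I$. By construction $I\setminus Fix(g)\subset U$, so $Fix(g)\cap I \supset I\setminus U = \La$; that is, $g$ fixes every point of $\La$. This direction is essentially definitional. For the converse, suppose $g\in G_I$ has \emph{no} fixed point in $I$; I must produce a point of $\La$ not fixed by $g$. Since $g$ acts on $I\cong\RR$ without fixed points, $g$ is conjugate to a translation and in particular moves \emph{every} point of $I$; so it suffices to show $\La\neq\varnothing$, which was the first step. Thus $g$ moves any point of $\La$, and $g|_\La\neq id_\La$.

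\textbf{Main obstacle.} The only genuine subtlety is the nonemptiness of $\La$, i.e. verifying that $U$ does not swallow all of $I$. The mechanism is that every connected component of $U$ lives inside a single fundamental domain $[x,f(x)]$ of $f$ (this was already recorded just before the remark), so the boundary points of the fundamental-domain tiling of $I$ furnished by iterates of $f$ all lie in $\La$; concretely, if $y\in I$ and $f$ has a fixed endpoint configuration, the orbit points $f^n(z)$ for a fixed $z$ form a bi-infinite sequence of tiling endpoints, and no such endpoint can be interior to a connected component of $U$ without that component straddling two fundamental domains, contradicting the established fact. I should take care to phrase this using only the no-linked-fixed-points hypothesis and the preceding lemmas, not any smoothness; everything here is purely topological, which matches the section's standing assumptions. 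The remaining bookkeeping — closedness, $G_I$-invariance, the two implications — is routine.
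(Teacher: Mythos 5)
Your forward direction via the tautology $I\setminus Fix(g)\subset U$ whenever $Fix(g)\cap I\neq\varnothing$ is exactly the observation the paper's proof rests on, and your handling of closedness, $G_I$-invariance, and the converse (given nonemptiness) is correct; so the overall architecture matches the paper.

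The nonemptiness sub-argument, however, rests on a genuine error. What was recorded just before the lemma is that each connected component $J$ of $U$ is contained in \emph{some} fundamental domain $[x,f(x)]$ with $x\in I$, where $x$ depends on $J$; it does \emph{not} say $J$ fits inside a tile of a fixed tiling $\{[f^n(x_0),f^{n+1}(x_0)]\}_{n\in\ZZ}$. A component $J$ of $U$ can perfectly well straddle a tiling endpoint $x_0$, being contained in $[x,f(x)]$ for some $x\in(f^{-1}(x_0),x_0)$. Consequently the claim that ``the boundary points of the fundamental-domain tiling of $I$ furnished by iterates of $f$ all lie in $\La$'' is false, the parenthetical ``$x_0\notin U$ whenever $x_0$ is an endpoint shared by two adjacent fundamental domains'' is false, and the assertion that each component of $U\cap[x_0,f(x_0)]$ is compactly contained in $(x_0,f(x_0))$ is unjustified. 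The correct (and short) repair: each $J$ lies inside a compact $[x,f(x)]$ with $x\in I$, which is a proper subinterval of the open interval $I$, so $J\subsetneq I$; since $I$ is connected while the components of $U$ are disjoint open intervals, $U=I$ would force $U$ to be a single component equal to $I$, a contradiction. Hence $U\subsetneq I$ and $\La\neq\varnothing$. (The same connectedness reasoning even shows $\La$ meets every fundamental domain $[x_0,f(x_0)]$, because no single component of $U$ can contain both $x_0$ and $f(x_0)$; but the mechanism is not the one you described.)
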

\begin{demo} Let $J$ be a component of $U$. If $g$ is the identity on $J$, in particular the extremities of $J$ are fixed point of $g$. 
Assume now that 
$x\in J$ is not fixed for $g$ and consider the pair $\{a,b\}$ of successive fixed points of $g$ around $x$. By construction of $U$,  $(a,b)\subset J$. 
One deduces easily that the extremities of $J$ are fixed points of $g$. Thus $g$ is the identity map on the boundary $\partial U$. Finally, 
every point $x\in I\setminus Fix(g)$ is contained in a connected component of $U$, so that $g$ is the identity map on $\La$, as announced. 
\end{demo}

Let $G_I^0$ be the set of elements  $g\in G$  so that $Fix (g)\cap I\neq \emptyset$.  
One deduces that
\begin{lemm}$G_I^0$ is a normal subgroup of $G_I$. Futhermore, $\Ga=G_I/G_I^0$ induces a group of 
homeomorphisms of $\La$ whose action is free (the non trivial element have no fixed points).
\end{lemm}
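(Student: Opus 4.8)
The plan is to establish two assertions: first that $G_I^0$ is a normal subgroup of $G_I$, and second that the quotient $\Ga = G_I/G_I^0$ acts freely on $\La$.

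For the normality of $G_I^0$, I would argue as follows. First, $G_I^0$ is a subgroup: if $g, h \in G_I$ both have fixed points in $I$, then by Lemma~\ref{l.minimal} both restrict to the identity on $\La = I\setminus U$, hence $gh^{-1}$ also restricts to the identity on $\La$; since $\La$ is a nonempty closed subset of $I$ (in particular contains the endpoints of $I$ if $I$ is an interval of successive fixed points, or at least is nonempty by Lemma~\ref{l.minimal}), $gh^{-1}$ has fixed points in $I$, so $gh^{-1}\in G_I^0$. The same equivalence $Fix(g)\cap I\neq\emptyset \Longleftrightarrow g|_\La = id_\La$ gives normality immediately: for $g\in G_I^0$ and $\varphi\in G_I$, the conjugate $\varphi g\varphi^{-1}$ acts on $\La$ (which is $G_I$-invariant by Lemma~\ref{l.minimal}) as $\varphi|_\La \circ id_\La \circ \varphi^{-1}|_\La = id_\La$, so $\varphi g\varphi^{-1}\in G_I^0$. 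Thus $G_I^0 \trianglelefteq G_I$ and the quotient $\Ga$ is a well-defined group.

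For the action of $\Ga$ on $\La$: since every element of $G_I^0$ restricts to the identity on $\La$, the restriction map $G_I \to Homeo_+(\La)$, $g\mapsto g|_\La$, factors through $\Ga = G_I/G_I^0$, giving a homomorphism $\Ga \to Homeo_+(\La)$. To see this action is free, suppose $\bar g\in\Ga$ fixes some point of $\La$, i.e.\ the class of $g\in G_I$ has $g|_\La$ with a fixed point in $\La\subset I$. Then $g$ has a fixed point in $I$, so $g\in G_I^0$ by the equivalence in Lemma~\ref{l.minimal}, hence $\bar g$ is trivial in $\Ga$. Equivalently: a nontrivial element of $\Ga$ is represented by some $g\in G_I\setminus G_I^0$, which by Lemma~\ref{l.minimal} has no fixed points in $I$, hence $g|_\La$ has no fixed points on $\La$; this is exactly freeness of the action.

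I expect no serious obstacle here — the statement is essentially a formal consequence of the equivalence already packaged in Lemma~\ref{l.minimal}, together with the fact that $\La$ is $G_I$-invariant and nonempty. The only point requiring a line of care is confirming that the restriction homomorphism is well-defined on the quotient, which is precisely the ``$\Longrightarrow$'' direction of Lemma~\ref{l.minimal} ($Fix(g)\cap I\neq\emptyset \Rightarrow g|_\La = id_\La$), and that freeness is the ``$\Longleftarrow$'' direction. So the proof is short: verify $G_I^0$ is a subgroup, verify normality using $G_I$-invariance of $\La$, note the quotient acts on $\La$, and read off freeness from Lemma~\ref{l.minimal}.
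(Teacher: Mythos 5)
Your proof is correct and follows exactly the route the paper intends: the paper states this lemma without an explicit proof, prefaced by ``One deduces that,'' and the deduction is precisely the formal one you give from the equivalence $Fix(g)\cap I\neq\emptyset \Longleftrightarrow g|_\La=id_\La$ and the $G_I$-invariance of the nonempty set $\La$, both packaged in Lemma~\ref{l.minimal}. One tiny slip worth flagging but not affecting the argument: $\La\subset I$ and $I$ is an \emph{open} interval, so $\La$ cannot contain the endpoints of $I$; the only thing you need (and correctly fall back on) is that $\La$ is nonempty.
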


As a consequence one gets

\begin{lemm}
 Assume that the action of $G_I$ on $I$ has a unique minimal $\cM$ set which is either $I$ or $\cC\times \ZZ$. 
 Then, there is an increasing continuous map from $\cM$ to $\RR$ which induces a 
 semi-conjugacy of the action of 
 $\Ga$ on $\cM$ with a dense group of translation of $\RR$.
 
 Otherwize, $\Ga$ is monogene, and its action on $\La$ is conjugated to a translation. 
\end{lemm}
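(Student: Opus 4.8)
The statement is exactly a Hölder-type classification of the free action of $\Ga=G_I/G_I^0$ on $\La$, so the plan is to build a Poincaré translation number $\tau\colon\Ga\hookrightarrow(\RR,+)$ together with a monotone $\Ga$-equivariant map $\varphi$ of the minimal set into $\RR$, and then to read off the two alternatives from the elementary dichotomy that a subgroup of $\RR$ containing $1$ is either dense or infinite cyclic. The ingredients are already at hand: the previous lemmas give that $\Ga$ acts freely on $\La$, and, since $f$ has no fixed point on $I$ (say $f(x)>x$ there), the class $\bar f\in\Ga$ acts on $\La$ without fixed points, with $f^{n}(x_0)$ converging to the right endpoint of $I$ as $n\to+\infty$ for every $x_0\in\La$; in particular $\langle\bar f\rangle$ is cofinal for the order of $\La$.

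First I would define, for $g\in G_I$, $x_0\in\La$ and $n\ge 1$, the integer $m_n(g)$ by $g^{n}(x_0)\in[f^{m_n}(x_0),f^{m_n+1}(x_0))$, and set $\tau_f(g)=\lim_{n\to\infty}m_n(g)/n$. As in the construction of the relative translation number in Section~\ref{s.structure}, but now using only the no-crossing property in place of the $C^1$ estimates, one checks that the sequence $(m_n(g))_n$ is additive up to a bounded error, so the limit exists; that it does not depend on $x_0\in\La$; that $\tau_f$ is a group morphism with $\tau_f(f)=1$; and that $\tau_f(g)=0$ exactly when $g$ has a fixed point in $I$, i.e. when $g\in G_I^{0}$ — the ``if'' part here uses that, by the no-crossing property, every fundamental domain of $f$ in $I$ meets $\mathrm{Fix}(g)$ for such a $g$, and the ``only if'' part uses the cofinality of $\langle\bar f\rangle$. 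Thus $\tau_f$ factors through an injective morphism $\tau\colon\Ga\hookrightarrow\RR$; put $T=\tau(\Ga)$, a subgroup containing $1$. Moreover the order on $\Ga$ read off at $x_0$ (namely $g\preceq g'$ iff $g(x_0)\le g'(x_0)$) is left-invariant and, by freeness together with the cofinality of $\langle\bar f\rangle$, Archimedean, so that $\tau$ is order-preserving (this is Hölder's theorem): $g(x_0)<g'(x_0)\iff\tau(g)<\tau(g')$. Picking $x_0$ in a minimal set $\cM\subset\La$, the formula $\varphi(y)=\sup\{\tau(g):g\in G_I,\ g(x_0)<y\}$ then defines a non-decreasing map $\varphi\colon\cM\to\RR$ with $\varphi(g(x_0))=\tau(g)$ and $\varphi(h(y))=\tau(h)+\varphi(y)$ for all $h\in G_I$; in other words $\varphi|_{\cM}$ is an order-isomorphism of $\cM$ onto $T$ conjugating the $\Ga$-action to the translation action of $T$, which extends to a semi-conjugacy on $\cM$ as soon as $\varphi$ is continuous.

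Finally I would split according to the hypothesis. If we are in the first case, $\cM$ equal to $I$ or $\cC\times\ZZ$, then some point of $\cM$ has no immediate successor (unlike in $\ZZ$), so $\cM$, hence $T$, is not order-isomorphic to $\ZZ$; thus $T$ is not infinite cyclic, hence is dense in $\RR$. Then $\varphi$ has no jump (a jump would leave an open interval disjoint from the dense $T$), so $\varphi$ is continuous, and its image, being non-empty, closed and $T$-invariant, is all of $\RR$ — giving the asserted increasing continuous $\Ga$-equivariant surjection $\cM\to\RR$ onto the dense translation group $T$. In the remaining case, the classical description of minimal sets of groups of homeomorphisms of the line recalled above applies: the minimal sets are the orbits of some fixed-point-free element, so $\cM$, hence $T$, is order-isomorphic to $\ZZ$; therefore $T$ is infinite cyclic and $\Ga\cong T$ is monogene. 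Choosing a generator $\bar k$ of $\Ga$ represented by $k\in G_I$ (necessarily without fixed point on $I$), and using $G_I=G_I^{0}\langle k\rangle$ together with Lemma~\ref{l.minimal} (elements of $G_I^{0}$ act trivially on $\La$), every $G_I$-orbit in $\La$ is a single $\langle k\rangle$-orbit, closed and discrete in $I$, and $\varphi$ restricts on it to a bijection onto a coset of $T$ conjugating the generator of $\Ga$ to the translation by $\tau(\bar k)$; this is the claimed conjugacy. The one genuinely delicate step is the first half of the second paragraph — proving that the translation number is well defined, basepoint-independent and order-preserving purely topologically (equivalently, the Archimedean/Hölder step); once that is in place, everything else is formal.
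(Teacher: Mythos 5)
Your proposal takes a genuinely different route from the paper, and the difference is exactly where your acknowledged ``delicate step'' sits. The paper does not build a translation number from scratch: in the case $\cM=I$ it observes that $G_I$ acts freely on $I$ and simply cites H\"older's theorem; in the case $\cM\cong\cC\times\ZZ$ it collapses the gaps of $\cM$ to reduce to a free minimal action on an interval and again cites H\"older; and in the remaining case it identifies $\La$ with the union $K$ of $g$-orbits and shows $g$ generates $\Ga$. Your proposal instead tries to reprove the H\"older embedding directly, defining $m_n(g)$ by $g^n(x_0)\in[f^{m_n}(x_0),f^{m_n+1}(x_0))$ and $\tau=\lim m_n/n$. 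This is a legitimate strategy, and your case-split at the end (a subgroup of $\RR$ containing $1$ is dense or infinite cyclic; $\cM=I$ or $\cM\cong\cC\times\ZZ$ forbids the cyclic order type) is a clean way to read off the dichotomy, slightly different from and arguably cleaner than the way the paper separates its two ``dense'' cases.

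The gap is the claim that ``the sequence $(m_n(g))_n$ is additive up to a bounded error \ldots using only the no-crossing property in place of the $C^1$ estimates.'' This does not go through as stated. To get quasi-additivity you need, roughly, $g^{n}f^{m'}(x_0)\ge f^{m'}g^{n}(x_0)$ (or a two-sided bounded version of it), and since $f$ and $g$ do not commute this is exactly the commutator-control part of H\"older's theorem for Archimedean left-ordered groups; the paper's \S\ref{s.structure} obtains this control from Lemma~\ref{l.C1composition}, a genuinely $C^1$ estimate which has no $C^0$ analogue. Moreover, the no-crossing property cannot be the replacement: restricted to the interior of $I$ the relevant elements are fixed-point free, so the no-crossing condition is vacuous there and gives no information about how $f^{m'}$ and $g^{n}$ interleave. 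So as written, the argument hides H\"older's theorem inside the assertion it is meant to prove. If you instead invoke H\"older's theorem (or Navas's dynamical version for free actions of groups of homeomorphisms of $\RR$) at that point, the rest of your proof is correct, but then you are doing essentially what the paper does, just without the gap-collapsing device, handling $\cM=I$ and $\cM\cong\cC\times\ZZ$ in one stroke. The treatment of the discrete case (minimal sets are $g$-orbits, $\La=K$, $\Ga$ monogene) is fine and matches the paper's argument.

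Minor additional remarks. First, in the case $\cM=I$ the paper deduces that $G_I$ itself acts freely, so the morphism is into $\RR$ directly, whereas you work with $\Ga=G_I/G_I^0$ throughout; both are fine since $G_I^0$ is trivial in that case. Second, for the semi-conjugacy you still need $\varphi$ to be continuous on all of $\cM$, not only at points of a single orbit; your density argument (a jump in $\varphi$ would produce an open interval missed by the dense subgroup $T$, contradicting $\varphi(\cM)$ being $T$-invariant and closed) is the right idea, but it deserves to be spelled out since $\cM$ may be a Cantor set.
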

\begin{demo} If the minimal set is $I$ itself, this implies that the elements of $G_I$ have no fixed points in $I$. 
Therefore, H\"older theorem implies that the action is conjugated to a dense translations group. 

Assume now that the action of $G_I$ on $I$ has a unique minimal set $\cM$ homeomorphic to $\cC\times \ZZ$.  
By colapsing each closure of connected 
component of the complement of the minimal on a point, one defines a projection of $I$ to an interval, which is surjective
on $\cM$.  The action passes to the quotient and defines a free minimal action on the quotient interval: 
this quotient action is therefore conjugated to a dense translations group. 

When a minimal set $\cM_0$ is the orbit of an element $g\in G$, every minimal set is an orbit of $g$; 
the union of the minimal sets is a closed subset $K$
on which the dynamics is generated by $g$. In this case, the minimal sets are precisely the orbits of $g$ 
which are invariant by the whole group. If the  orbit  of $x$ for  $g$ is not a minimal of the action, 
there is 
$h\in G_I$ so that $x\neq h(x)$ but $x$ and $h(x)$ are in 
the same fundamental domain for $g$ hence belong to
the same connected component of $I\setminus K$; this implies that $h$ leaves invariant this component, hence has
fixed points in $I$.  This shows that $x$ belongs to the open set $U$.  In other words, we have shown that $\La=K$. 
As  $g$ is a generator of the action on $K$, one deduces that $g$ is a generator of $\Ga$, ending the proof.
\end{demo}

As a consequence we proved Theorem~\ref{t.structure} which provides for groups of homeomorphisms without crossing (or without linked fixed points) 
the same dynamical description as for 
group of diffeomorphisms $C^1$-close to identity, given by Theorem~\ref{t.translation}. 



\subsection{Countable familly of intervals}
\begin{prop}\label{p.countable}
 Let $G\subset Homeo_+([0,1])$ be a subgroup without crossing. 
 Then, the family of pairs of successive fixed points of the elements of $G$ is at most countable. 
\end{prop}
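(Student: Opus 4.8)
The plan is to exploit the ``without crossing'' property to organize the intervals of successive fixed points into a forest-like structure where each level is at most countable. First I would fix, once and for all, a countable dense subset $D\subset(0,1)$, say $D=\QQ\cap(0,1)$. To a pair $\{a,b\}$ of successive fixed points I would like to associate a point of $D$ lying in $(a,b)$; the issue is that a priori uncountably many such pairs could share the same auxiliary point, so the naive map $\{a,b\}\mapsto (\text{some }q\in D\cap(a,b))$ need not be injective. The key observation that resolves this is the nesting structure from Lemma~\ref{l.defilinked}: given two pairs of successive fixed points $\{a,b\}$ and $\{c,d\}$, either the open intervals are equal, or disjoint, or one is compactly contained in the other. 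Hence the set of intervals of successive fixed points, ordered by inclusion, is a \emph{nested} family — any two are either nested or disjoint.

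The main step is then a counting argument on this nested family. I would argue that for each fixed $q\in D$, the set $\cI_q$ of pairs $\{a,b\}$ of successive fixed points with $q\in(a,b)$ is \emph{linearly} ordered by inclusion (any two members both contain $q$, hence cannot be disjoint, hence are nested), so it forms a chain of nested open intervals all containing $q$. A chain of open intervals in $\RR$ ordered by inclusion is at most countable: indeed, if $(a,b)\subsetneq(a',b')$ are two consecutive-in-the-ordering members one can pick a rational in $(a',b')\setminus[a,b]$ (nonempty since the inclusion is strict and $\{a,b\}$ are successive fixed points of some $g$, forcing $a'<a$ or $b<b'$), and distinct gaps in the chain get distinct rationals; more simply, the left endpoints $a$ of members of $\cI_q$ form a subset of $[0,q)$ that is well-ordered-like / monotone, and between consecutive distinct values one inserts a rational, giving an injection into $\QQ$. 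Either way $\cI_q$ is countable. Since every pair of successive fixed points $\{a,b\}$ satisfies $(a,b)\cap D\neq\emptyset$ (as $D$ is dense), we get $\bigcup_{q\in D}\cI_q$ = the whole family, a countable union of countable sets, hence countable.

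The main obstacle I anticipate is making precise the claim that a chain of nested open intervals all containing a common point is countable, and in particular handling the possibility of an increasing or decreasing \emph{continuum} of intervals; the clean way around this is to note that for $\{a,b\},\{a',b'\}\in\cI_q$ with $(a,b)\subsetneq(a',b')$ one has $a'<a$ \emph{or} $b<b'$ strictly, and then to assign to each $\{a,b\}\in\cI_q$ the pair of rationals $(\,\text{some }r\in\QQ\cap(a',b')\text{ below }a\text{ if }a'<a,\ \text{some }s\in\QQ\cap(a',b')\text{ above }b\text{ otherwise}\,)$ relative to its immediate successor in the chain — but since a linear order need not have immediate successors, it is cleaner to simply observe that the map $\{a,b\}\mapsto (\sup\{r\in\QQ: r<a\},\ \inf\{s\in\QQ:s>b\})$ separates members of $\cI_q$ only up to the order type, so instead I would use: the left endpoints $\{a\}$ form a monotone (increasing) family as the intervals shrink, the right endpoints $\{b\}$ a monotone (decreasing) family, and a monotone family of reals is countable (for each $a$ in it, pick a rational in the nonempty open interval between $a$ and the previous endpoint whenever that interval is nonempty; a monotone family has at most countably many ``jumps'' and at most countably many distinct values accumulate from one side, by the standard argument that $\RR$ is separable). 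Assembling these pieces gives the proposition.
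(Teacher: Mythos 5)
Your structural setup is fine (the nesting dichotomy from Lemma~\ref{l.defilinked}, and the observation that all intervals of successive fixed points containing a fixed rational $q$ form a chain under inclusion), but the crucial step --- that such a chain must be countable --- is false as stated, and this is where the proof breaks. A chain of nested open intervals all containing a common point need not be countable: $\{(q-t,q+t) : t\in(0,1)\}$ is a strictly nested chain of open intervals through $q$ indexed by a continuum. Your attempted justification, ``a monotone family of reals is countable,'' is simply not a true statement: $[0,1]$ is a linearly ordered (monotone) family of reals and is uncountable, and the identity map $t\mapsto t$ is monotone with no jumps and uncountable range. The standard ``countably many jumps'' argument bounds the number of discontinuities of a monotone \emph{function}, not the cardinality of its image; the continuous part is the problem, and the inserting-a-rational trick requires immediate successors, which a linear order of reals does not in general possess. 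So your argument only uses the \emph{topological} nesting structure coming from the absence of linked fixed points, and that structure alone does not rule out an uncountable nested continuum of intervals.

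The reason the proposition is nonetheless true --- and the reason the paper's proof works --- is that one must genuinely use the \emph{dynamics}: each pair $\{a,b\}$ is a pair of successive fixed points of some $g\in G$, not just an abstract interval. The paper stratifies by size, setting $\cP_n=\{\{a,b\}:|a-b|\geq 1/n\}$, and then for a pair $\{a,b\}\in\cP_n$ with $g(x)>x$ on $(a,b)$ it chooses $c=g^{-1}(a+\varepsilon)$ so close to $a$ that any pair $\{p,q\}$ meeting $(a,c]$ would, by the no-crossing property, have to lie inside the fundamental domain $[p,g(p)]$ of $g$, forcing $|p-q|<1/n$ and hence $\{p,q\}\notin\cP_n$. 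This produces pairwise disjoint nonempty open intervals $J_{a,b}\subset(a,c]$, one for each $\{a,b\}\in\cP_n$, and disjoint open intervals in $\RR$ are countable. That dynamical exclusion argument is exactly the ingredient your proof is missing; to repair your chain argument you would need to show that within a chain $\cI_q$ there cannot be a continuum of nested pairs, and the only available tool for that is again the group action via an argument of the above type, at which point you have essentially reconstructed the paper's proof.
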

\begin{demo} Le $G$ be a group without crossing  and let $\cP_n$ be the set of pairs of succesive 
fixed points $\{a,b\}$ with $|a-b|\geq \frac 1n$. For proving the lemma it is enough to prove that $\cP_n$ is at most countable, for every 
$n\in \NN\setminus\{0\}$. 

Let $\{a,b\}\in\cP_n$ be such a pair and $g\in G$ so that $\{a,b\}$ are successive fixed points of $g$. Up to replace $g$ by $g^{-1}$, one assumes that
$g(x)-x>0$ for $x\in(a,b)$.

Let $\varepsilon>0$ so that $0<g(x)-x<\frac 1n$ for $x\in (a,a+\varepsilon)$ and denote $c=g^{-1}(a+\varepsilon)$. 

\begin{clai} Any pair $\{p,q\}\in\cP_n$ is disjoint from $(a,c]$.
\end{clai}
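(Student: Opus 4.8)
The plan is to show that any pair $\{p,q\} \in \cP_n$ is disjoint from $(a,c]$, where we recall that $c = g^{-1}(a+\varepsilon)$, that $g$ moves points of $(a,b)$ to the right, and that $0 < g(x)-x < \frac{1}{n}$ on $(a,a+\varepsilon)$. First I would invoke the structure of a group without crossing, in the form of Lemma~\ref{l.defilinked}: a pair $\{p,q\}$ of successive fixed points of an element $h \in G$ must satisfy either $\{p,q\} \cap (a,b) = \emptyset$ or $[p,q] \subset (a,b)$, since $\{a,b\}$ are successive fixed points of $g \in G$. So there are two cases to rule out, namely $\{p,q\}$ lying partly or fully inside $(a,b)$ and meeting $(a,c]$.

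In the first case, suppose $[p,q] \subset (a,b)$ with $p \in (a,c]$ (the case $q \in (a,c]$ forces $p \le q \le c$ as well, so it suffices to treat $p \in (a,c]$). Since $|p-q| \ge \frac1n$, we have $q \ge p + \frac1n > a + \frac1n$. On the other hand, $p \le c$ means $g(p) \le g(c) = a+\varepsilon$, so $g(p) - p \le a + \varepsilon - p < \varepsilon \le$ (we may take $\varepsilon$ as small as we like, in particular $\varepsilon < \frac1n$), while the displacement $g(p)-p$ is at most, in absolute terms, less than $\frac1n$ because $p \in (a, a+\varepsilon)$ by construction — here I should double-check that $p \le c$ indeed puts $p$ in $(a, a+\varepsilon)$, which follows since $g$ is increasing and $g(a) = a$, $g(c) = a+\varepsilon$ give $a < p \le c \Rightarrow a < p < c < a+\varepsilon$ only if $c < a+\varepsilon$; this holds because $g(x) > x$ on $(a,b)$ forces $c = g^{-1}(a+\varepsilon) < a+\varepsilon$. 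So $p \in (a, a+\varepsilon)$ and $g(p)-p < \frac1n$. The key point now is to compare the positions of $p$, $q$ and $g(q)$: I want to derive a contradiction from the fact that $q$ is a fixed point of $h$, using that $g$ has a crossing with $h$. Concretely, $g(q) - q$: since $q > a + \frac1n > a+\varepsilon$ but $q < b$, we have $g(q) < b$; the interval $[q, g(q)]$ overlaps $[p, q]$ only at $q$. Hmm — the cleanest route is: the pair $\{p,q\}$ of successive fixed points of $h$ and the pair $\{a,b\}$ of successive fixed points of $g$; apply the no-crossing property to $g$ and $h$ to locate $g(p), g(q)$ relative to $(p,q)$. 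Since $p$ is a fixed point of $h$ and $g(p) \in (p, q)$ would violate no-crossing (as $g(p) < a+\varepsilon \le p + \frac1n \le q$, and $g(p) > p$), we get a contradiction as soon as $g(p) < q$, i.e. $a+\varepsilon < q$, which we have. That handles the interior case.

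In the second case, $\{p,q\}\cap(a,b) = \emptyset$, so $p \le a$ or $q \ge b$; combined with $\{p,q\}$ meeting $(a,c] \subset (a,b)$ this is impossible unless one of $p,q$ equals $a$ — but $a \notin (a,c]$ since the interval is half-open on the left. So actually the only way to meet $(a,c]$ while being disjoint from the open interval $(a,b)$ is to have an endpoint equal to $b \in$... no, $b \notin (a,c]$ either since $c < a + \varepsilon < b$. Hence the second case gives disjointness immediately. Combining, $\{p,q\}$ is disjoint from $(a,c]$.

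The main obstacle I anticipate is getting the crossing argument in the first case exactly right: one must be careful that $p$ being a fixed point of $h$ together with $g(p)$ landing strictly inside $(p,q)$ genuinely contradicts the no-crossing hypothesis for the pair $g, h$ (this uses that $(p,q)$ is a component of $[0,1]\setminus \mathrm{Fix}(h)$ and that $g(p) \notin \mathrm{Fix}(h)$, i.e. $g(p) \ne p$, which holds since $p \in (a,b)$). The inequality bookkeeping — that $g(p) - p < \frac1n \le q - p$, so $g(p) < q$, while $g(p) > p$ — is routine once the setup is fixed, and then the claim follows. After establishing the claim, the Proposition follows: each pair in $\cP_n$ contributed by such a $g$ "reserves" the interval $(a,c]$ of positive length, these reserved intervals corresponding to distinct pairs are disjoint (two pairs $\{a,b\} \ne \{a',b'\}$ in $\cP_n$ that were linked would contradict no-crossing, and if nested one's reserved interval avoids the other by the claim), and a disjoint family of intervals of positive length in $[0,1]$ is countable, so $\cP_n$ is countable; taking the union over $n$ finishes it.
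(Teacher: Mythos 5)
Your argument is correct and follows essentially the same route as the paper: both proofs apply the no-crossing property twice (once to force $[p,q]\subset(a,b)$, once to force $g(p)\ge q$, i.e.\ $[p,q]$ lies in the fundamental domain $[p,g(p)]$), and then use $p\le c$ together with the choice of $\varepsilon$ to get $g(p)-p<\tfrac1n$, contradicting $|p-q|\ge\tfrac1n$. The only cosmetic difference is that you phrase the final step as producing a crossing from $q-p\ge\tfrac1n$ while the paper deduces $q-p<\tfrac1n$ from no-crossing; these are the same deduction read in opposite directions.
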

\begin{demo} Assume $\{p,q\}$ meets $(a,c]$. As $G$ is without crossing and as $\{a,b\}$ is a pair of succesive fixed points for $g$, the pair
$\{p,q\}$ is contained in a fundamental domain of $g$ that is in $[p, g(p)]$. However our choice of $c$ implies that $|p-q|<\frac 1n$ contradicting the fact 
that $\{p,q\}\in \cP_n$.
\end{demo}

Now, to every pair $\{a,b\}$, one can associated the largest open interval $J_{a,b}=(a,d)$ so that $(a,d)$ is disjoint from any pair $\{p,q\}\in\cP_n$.  
In other words, $d=\inf\{p>a,\exists q \mbox{ so that } \{p,q\}\in \cP_n\}$. 
The claim asserts that this open interval $J_{a,b}$ is not empty (that is, $d>a$).

By construction, if $\{a_1,b_1\},\{a_2,b_2\}$ are two distinct pairs in $\cP_n$, then $J_{a_1,b_1}\cap J_{a_2,b_2}=\emptyset$. 
Now, any family of disjoint open intervals is countable, concluding.

\end{demo}

\subsection{A characterization of crossing, and entropy}

In the next section we will show that groups of diffeomorphisms admiting a crossing have hyperbolic fixed points. 
The main step for the proof is the next lemma, which provides a dynamical characterization of the existence of crossings:

\begin{lemm}\label{l.entropy} Consider $G\subset Homeo_+([0,1])$.  Assume that $G$ admits crossings.  
Then there are $h_1,h_2\in G$ and a segment $I\subset [0,1]$ so that $h_1(I)$ and $h_2(I)$ are disjoint 
segments contained in $I$.

As a direct consequence, the topological entropy of the semi group generated by $h_1^{-1}$ and $h_2^{-1}$ is $\log 2$. 
 
\end{lemm}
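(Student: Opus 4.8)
The statement of Lemma~\ref{l.entropy} has two parts: first, producing $h_1,h_2\in G$ and a segment $I$ with $h_1(I),h_2(I)\subset I$ disjoint; second, deducing that the semigroup generated by $h_1^{-1},h_2^{-1}$ has entropy $\log 2$. The second part is routine: once $h_1(I)$ and $h_2(I)$ are disjoint subsegments of $I$, the maps $\phi_i = h_i^{-1}$ satisfy that $\phi_i$ expands some region, and the classical ping-pong / horseshoe argument gives that every finite word $\phi_{i_1}\cdots\phi_{i_k}$ maps $I$ across itself in a way that the preimages of $I$ under all length-$k$ words are $2^k$ disjoint subsegments, so the semigroup acts with a full shift on $2$ symbols as a factor (or at least a subsystem), whence topological entropy $\geq \log 2$; being generated by two maps it is also $\leq \log 2$. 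I would phrase this as: the $2^k$ branches of the inverse correspondence give $2^k$ separated orbit segments of length $k$, so $h_{top}=\log 2$.

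The real work is the first part. Start from the characterization of crossing already in hand (Lemma~\ref{l.entropy}'s hypothesis plus the equivalence with linked fixed points): there are $f,g\in G$ and a connected component $(a,b)$ of $[0,1]\setminus Fix(f)$ with, say, $g(b)\in(a,b)$ — i.e. $g$ pushes the right endpoint $b$ strictly into the interval $(a,b)$. Normalize so that $f(x)>x$ on $(a,b)$ (replace $f$ by $f^{-1}$ if needed), and note $g(b)\in(a,b)$. Because $f$ has no fixed point in $(a,b)$, iterating $f$ pushes points toward $b$: for any $y\in(a,b)$ and any $n$, $f^n(y)\to b$. So choose $N$ large enough that $f^N(g(b)) =: b'$ is very close to $b$, in particular $b'>g(b)$ and $b' \in (a,b)$; set $h_1 = f^N g$, a map fixing $b$ on the right... no — better: I want a segment $I$ near $b$ that gets mapped properly inside itself by two different elements.

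Here is the clean way I would do it. Pick a small segment $I = [c,b]$ with $c$ close to $b$. The map $g$ sends $b$ to $g(b)<b$; by continuity, if $c$ is close enough to $b$, $g([c,b]) = [g(c),g(b)]$ is a segment disjoint from a neighborhood of $b$ and contained in $(a,b)$. Now I need a second map $h_2$ sending $I$ into $I$, to a segment disjoint from $g(I)$. For this, exploit that $f$ itself — or a suitable $f^{-k}\cdot(\text{something})$ — can be made to send $[c,b]$ into a small segment near $b$: since $f(x)>x$ on $(a,b)$, the inverse $f^{-1}$ contracts toward $a$, so $f^{-1}([c,b]) = [f^{-1}(c),f^{-1}(b)]$; but $b$ is fixed by... no, $b\in Fix(f)$, so $f^{-1}(b)=b$, hence $f^{-1}([c,b])=[f^{-1}(c),b]$, a segment with the same right endpoint $b$, which is \emph{not} strictly inside $I$. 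So instead I would use $h_2 = g'$ where $g'$ is a conjugate of $g$ or a composition arranged so that $h_2(I)$ is a small segment near $b$ but strictly inside $(c,b)$, disjoint from $g(I)$ which lives near $g(b)$ away from $b$. Concretely: since $f^n(g(b))\to b$, for large $n$ the map $h_2 := f^n g$ satisfies $h_2(b) = f^n(g(b))$ close to $b$; choosing $c$ between $g(b)$ and $h_2(b)$... one then checks $h_2([c,b]) = [f^n(g(c)), f^n(g(b))] \subset (c,b)$ for suitable parameters, while $g([c,b])\subset (a,g(b)] \subset (a,c)$, so the two images are disjoint subsegments of $I=[c,b]$. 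Take $h_1 = g$, $h_2 = f^n g$.

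The main obstacle, and the step requiring care, is pinning down the order relations — choosing $c$, and $n$, so that $g(b) < c$ (so that $g(I)$ lies to the left of $I$, hence disjoint from it) while simultaneously $c < f^n(g(c))$ and $f^n(g(b)) < b$ (so that $h_2(I)\subset \mathring I$), and verifying $h_2(I)\cap g(I)=\emptyset$. These are all consequences of: $f^n\to$ "push everything to $b$" uniformly on compact subsets of $(a,b)$, plus continuity of $g$ at $b$; but assembling them in the right sequence — first fix the target behavior near $b$, then choose $n$, then choose $c$ — is where one must be scrupulous. I would organize it as: (1) pick $n$ so that $f^n(g(b))$ is within $\tfrac14$ of $b$ (in the interval $(a,b)$'s metric, scaled so $|a-b|$ is normalized); (2) pick $c\in(\max(g(b), f^n(g(b))) , b)$ wait — need $g(b)<c$ AND $h_2(b)=f^n(g(b))$, and want $h_2(I)\subset(c,b)$, so need $f^n(g(c))>c$; by choosing $c$ close to $b$ and using that $f^n(g(\cdot))$ fixes $b$... hmm, $f^n(g(b))\neq b$ in general. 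Let me instead take $h_2 = f^n$ directly won't work ($f^n$ doesn't map $I$ inside itself since $b$ is fixed). So the construction genuinely needs the composition $f^n g$. I'll present it as: $g$ maps $[c,b]$ strictly to the left of $c$ for $c$ near $b$ (since $g(b)<b$ strictly, take $c\in(g(b),b)$, then $g([c,b])\subset(g(c),g(b)]$ and as $c\to b$, $g(c)\to g(b)$, and one needs $g(b)<c$ which holds); and $f^n g$ maps $[c,b]$ to $[f^n g(c), f^n g(b)]$, which for $n$ large lies in $(c,b)$ provided $f^n g(c) > c$ — true for large $n$ since $g(c)\in(a,b)$ and $f^n(g(c))\to b>c$ — and $f^n g(b)<b$ — true since $g(b)<b$ so $f^n(g(b))<b$ strictly as $b$ is the sup of the orbit. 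Then $f^n g([c,b]) \subset (c,b)$, disjoint from $g([c,b])\subset(a,c)$. Set $h_1=g$, $h_2=f^n g$, $I=[c,b]$: done. The entropy statement then follows from the standard horseshoe lemma as indicated above.
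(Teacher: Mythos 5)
Your construction of $h_1$, $h_2$, $I$ does not satisfy the conclusion of the lemma. You take $I=[c,b]$ with $g(b)<c<b$, $h_1=g$, and $h_2=f^ng$. You then correctly observe that $g([c,b])=[g(c),g(b)]\subset(a,c)$, but $(a,c)$ is \emph{disjoint} from $I=[c,b]$, so $h_1(I)$ is not ``a disjoint subsegment of $I$'': it is not contained in $I$ at all. The lemma requires both $h_1(I)\subset I$ and $h_2(I)\subset I$; your $h_1$ fails this. (You seem to have slipped into only requiring $h_1(I)$ and $h_2(I)$ to be disjoint from \emph{each other}.) The gap is genuine, not a bookkeeping lapse: the whole point is to manufacture two maps that \emph{both} contract $I$ into itself, onto disjoint pieces, and $g$ alone does not do this. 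The repair is exactly what your earlier exploratory remarks pointed toward: take $h_1=f^m g$ and $h_2=f^n g$, fixing $c\in(g(b),b)$ first with $g(c)>a$, then choosing $m$ large so that $f^m(g(c))>c$ (whence $h_1(I)\subset(c,b)$), and then $n\gg m$ so that $g(b)<f^{n-m}(g(c))$ (whence $h_2(I)$ lies strictly to the right of $h_1(I)$ inside $(c,b)$). As written, though, the proof is incorrect.

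For comparison, the paper proceeds from the linked-pairs formulation (Lemma~\ref{l.defilinked}) rather than the crossing formulation: it extracts pairs of successive fixed points $\{a,b\}$ of $f$ and $\{c,d\}$ of $g$ with $a<c<b<d$, normalizes $f(x)>x$ on $(a,b)$ and $g(x)>x$ on $(c,d)$, takes $I=[x_0,b]$ with $x_0\in(a,c)$, pushes $f^n(I)$ into a tiny neighbourhood of $b$, and then spreads these out by $g^{-m}$: the intervals $g^{-m}f^n(I)$, $m\geq 1$, are pairwise disjoint subsegments of $(c,b)\subset\mathring I$, and two of them give $h_1,h_2$. The use of $g^{-m}$ (which moves $b$ strictly into $(c,b)$ and stabilizes the whole picture by the second linked pair) sidesteps the delicate ordering of choices that tripped you up. Your entropy deduction from the ping-pong configuration is fine and matches the paper's.
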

\begin{demo}According to Lemma~\ref{l.defilinked}, there are $f,g\in G$ and successive fixed points 
$\{a,b\}$   and $\{c,d\}$ of $f, g$ respectively,  so that  
(up to reverse the orientation, and to exchange $f$ with $g$) $b\in (c,d)$ but $(a,b)\nsubseteq(c,d)$, 
that is 
$a<c<b<d$. 

Up to exchange $f$ with $f^{-1}$ and $g$ with $g^{-1}$, on may assume that $f(x)-x>0$ on 
$(a,b)$ and $g(x)-x>0$ on $(c,d)$.

Consider $x_0\in (a,c)$, and fix $I=[x_0,b]$.   Then for $n>0$ large $f^n(I)$ is a small segment in $I$ arbitrarily close to
$b$.  Then,  for positive $m$,  $g^{-m} f^n(I)$ form an infinite collection of disjoint segments contained  in $[c,b)$  
hence contained in the interior of $I$.
\end{demo}

\subsection{Group without hyperbolic fixed point}

Recall that a group $G\subset Diff^1_+([0,1])$ is \textbf{\emph{without hyperbolic fixed points }} if, for every $g\in G$, one has 
$Dg(x)=1$ for every $x\in Fix (g)$. The aim of this section is to prove Theorem~\ref{t.sanshyperbolique}, that is, 
\emph{if $G$ is without hyperbolic fixed points, then it is without crossing.}

We present here a proof of  A. Navas. Let us start by stating two lemmas.
\begin{lemm}\label{l.ell} Let $I$ be a segment and $f,g\colon I\to I$ be diffeomorphisms onto their images, and  so that 
$f(I)\cap g(I)=\emptyset$.  Then there is an infinite sequece $\omega_i$, $i\in\NN$, $\omega_i\in\{f,g\}$ 
so that  
$$\limsup \frac1n \log\ell(\omega_{n-1}\omega_{n-2}\cdots\omega_0(I))<0$$

where $\ell$ denotes the length.
\end{lemm}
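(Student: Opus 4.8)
The plan is to exploit the ``Schottky-type'' structure hidden in the hypothesis $f(I)\cap g(I)=\emptyset$, combined with an averaging (Borel--Cantelli type) argument ranging over \emph{all} infinite words at once. For a finite word $w=(\omega_{n-1},\dots,\omega_0)\in\{f,g\}^n$, write $J_w:=\omega_{n-1}\circ\cdots\circ\omega_0(I)$. The first step is to record two elementary facts, both proved by induction on $n$:
\begin{itemize}
\item each $J_w$ is a nondegenerate subinterval of $I$: indeed $\omega_0(I)\subset I$ since $f(I),g(I)\subset I$, and each $\omega_k$ maps subintervals of $I$ to subintervals of $I$ (being injective and continuous with $\omega_k(I)\subset I$);
\item for a fixed length $n$, the $2^n$ intervals $J_w$, $w\in\{f,g\}^n$, are \textbf{pairwise disjoint}. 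For the inductive step, take distinct words of length $n+1$; either their outermost letters differ, and then one interval lies in $f(I)$ and the other in $g(I)$, which are disjoint by hypothesis; or their outermost letters agree, say both equal $\phi\in\{f,g\}$, and then the two intervals are the images under the \emph{injective} map $\phi$ of two disjoint intervals (disjoint by the induction hypothesis).
\end{itemize}
The disjointness yields at once the crucial mass estimate
$$\sum_{w\in\{f,g\}^n}\ell(J_w)\ \le\ \ell(I)\qquad\text{for every }n\ge 1.$$

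Next, fix any $\varepsilon\in(\tfrac12,1)$ and equip $\Omega=\{f,g\}^{\NN}$ with the uniform Bernoulli probability measure $\mu$, so that every length-$n$ cylinder has measure $2^{-n}$. For $\omega=(\omega_0,\omega_1,\dots)\in\Omega$ let $J_n(\omega):=\omega_{n-1}\cdots\omega_0(I)$ be the interval appearing in the statement. Averaging the mass estimate over the length-$n$ cylinders gives $\int_\Omega\ell(J_n(\omega))\,d\mu(\omega)=2^{-n}\sum_{|w|=n}\ell(J_w)\le 2^{-n}\ell(I)$. Consider then
$$\Phi(\omega)\ :=\ \sum_{n\ge1}\varepsilon^{-n}\,\ell(J_n(\omega)),\qquad\text{so that}\qquad \int_\Omega\Phi\,d\mu\ \le\ \ell(I)\sum_{n\ge1}(2\varepsilon)^{-n}\ =\ \frac{\ell(I)}{2\varepsilon-1}\ <\ \infty,$$
the series converging because $2\varepsilon>1$. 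Hence $\Phi(\omega)<\infty$ for $\mu$-almost every $\omega$; for any such $\omega$ the general term $\varepsilon^{-n}\ell(J_n(\omega))$ tends to $0$, so $\ell(J_n(\omega))\le\varepsilon^n$ for all $n$ large enough, whence $\limsup_n\frac1n\log\ell(J_n(\omega))\le\log\varepsilon<0$. Since a full-measure set of $\omega$'s works, in particular at least one sequence $\omega=(\omega_0,\omega_1,\dots)$ does, which is exactly the assertion of the lemma.

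The one point deserving emphasis — and the reason a cruder argument fails — is the passage from ``good finite words of each length'' to ``one good infinite word''. The mass estimate already produces, for each $n$, a word of length $n$ with $\ell(J_w)\le 2^{-n}\ell(I)$, but these words need not be compatible: prepending a letter is not monotone for the length, so the tree of ``good'' finite prefixes may be disconnected and König's lemma does not apply. The averaging/summability argument above circumvents exactly this difficulty, by showing that a nonempty (indeed conull) set of infinite words is eventually ``good''. If one wishes to avoid measure-theoretic language, the same computation can be rephrased as a direct counting estimate: writing $W_n=\{w\in\{f,g\}^n:\ell(J_w)>\varepsilon^n\ell(I)\}$ one gets $|W_n|<\varepsilon^{-n}$, so the proportion of length-$N$ words that are ``bad at some level $n\ge N_0$'' is at most $\sum_{n\ge N_0}(2\varepsilon)^{-n}\to0$, which is $<1$ for $N_0$ large, again producing the desired infinite word.
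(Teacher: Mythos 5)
Your proof is correct, and it rests on exactly the same structural observation as the paper's: the $2^n$ intervals $J_w$, $w\in\{f,g\}^n$, are pairwise disjoint subintervals of $I$, hence $\sum_{|w|=n}\ell(J_w)\le\ell(I)$, so only an exponentially small proportion of length-$n$ words can have length larger than $\varepsilon^n\ell(I)$. Where the two arguments diverge is in the passage from finite words to an infinite one. The paper works over multiples of a fixed period $T$ chosen so that $\sum_n(3/4)^{nT}<1$, observes that the set of infinite words whose every length-$nT$ prefix is ``good'' is a nested intersection of nonempty compacts, and concludes by compactness; you run a first-moment argument against the uniform Bernoulli measure on $\{f,g\}^{\NN}$, showing that $\Phi(\omega)=\sum_n\varepsilon^{-n}\ell(J_n(\omega))$ has finite integral and therefore is finite almost surely. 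The two are interchangeable in spirit — the paper itself records the measure-theoretic version in the remark following the proof, noting it gives the sharper bound $-\log 2$ on a conull set — and your closing paragraph even sketches the paper's counting variant. One small advantage of your version is that it controls $\ell(J_n)$ for \emph{every} large $n$ rather than only for $n$ a multiple of $T$; the paper's argument, as written, only bounds $\ell(J_{nT})$, and filling the gap at intermediate indices requires an additional (easy, unstated) appeal to the uniform bound on $\|Df\|_\infty,\|Dg\|_\infty$. Your route sidesteps that entirely.
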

The proof of Lemma~\ref{l.ell} is postponed at the end of the section. 
\begin{lemm}\label{l.distorsion}
Let $f,g\in Diff^1([0,1])$ be two $C^1$-diffeomorphisms and assume there is a segment $I$ for which there is an
an infinite word $\omega_i$, $i\in\NN$, $\omega_i\in\{f,g\}$ 
so that  
$$\limsup \frac1n \log\ell(\omega_{n-1}\omega_{n-2}\cdots\omega_0(I))<0$$

Then for every $t\in I$ one has 
$$\limsup \frac1n \log\left|D\left((\omega_{n-1}\omega_{n-2}\cdots\omega_0\right)(x)\right|=
\limsup \frac1n \log\ell(\omega_{n-1}\omega_{n-2}\cdots\omega_0(I))<0$$
 
\end{lemm}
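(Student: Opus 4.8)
The statement to establish is Lemma~\ref{l.distorsion}: given a contracting infinite word in $f,g\in Diff^1([0,1])$ acting on a segment $I$, the derivatives $D(\omega_{n-1}\cdots\omega_0)$ decay at the same exponential rate as the lengths $\ell(\omega_{n-1}\cdots\omega_0(I))$, \emph{pointwise} on $I$. The one inequality is free: by the mean value theorem, for each $n$ there is a point $x_n\in I$ with $|D(\omega_{n-1}\cdots\omega_0)(x_n)| = \ell(\omega_{n-1}\cdots\omega_0(I))/\ell(I)$, so $\limsup\frac1n\log\ell(\omega_{n-1}\cdots\omega_0(I))$ is at least the infimum over $x$ of $\limsup\frac1n\log|D(\omega_{n-1}\cdots\omega_0)(x)|$. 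The real content is the uniform bound in the other direction: the derivative at an \emph{arbitrary} fixed $x\in I$ cannot be much smaller (in $\log$, divided by $n$) than the length ratio.

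The tool for this is a bounded-distortion estimate. First I would fix $\varepsilon>0$ and use the uniform continuity of $\log Df$ and $\log Dg$ on the compact interval $[0,1]$: there is $\delta>0$ so that $|\log Df(u)-\log Df(v)|<\varepsilon$ and $|\log Dg(u)-\log Dg(v)|<\varepsilon$ whenever $|u-v|<\delta$. Since $\ell(\omega_{n-1}\cdots\omega_0(I))\to 0$ exponentially, there is $N$ such that for all $n\ge N$ the image $\omega_{n-1}\cdots\omega_0(I)$ has length $<\delta$, hence for all $x,y\in I$,
$$\left|\log|D(\omega_{n-1}\cdots\omega_0)(x)| - \log|D(\omega_{n-1}\cdots\omega_0)(y)|\right| \le \sum_{k=0}^{n-1}\varepsilon \le n\varepsilon,$$
where the $k$-th term compares $\log|D\omega_k|$ at the two points $\omega_{k-1}\cdots\omega_0(x)$ and $\omega_{k-1}\cdots\omega_0(y)$, which lie in the interval $\omega_{k-1}\cdots\omega_0(I)$ of length $<\delta$ for $k\ge N$, and is crudely bounded (say by a fixed constant $C_0=\sup\|\log Df\|_0\vee\|\log Dg\|_0$) for the finitely many $k<N$. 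So the genuine bound is $\le n\varepsilon + N C_0$, which after dividing by $n$ and letting $n\to\infty$ contributes only $\varepsilon$.

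Now combine this with the mean value point $x_n$: for any fixed $x\in I$ and $n\ge N$,
$$\log|D(\omega_{n-1}\cdots\omega_0)(x)| \le \log|D(\omega_{n-1}\cdots\omega_0)(x_n)| + n\varepsilon + NC_0 = \log\frac{\ell(\omega_{n-1}\cdots\omega_0(I))}{\ell(I)} + n\varepsilon + NC_0.$$
Dividing by $n$ and taking $\limsup$ gives $\limsup\frac1n\log|D(\omega_{n-1}\cdots\omega_0)(x)| \le \limsup\frac1n\log\ell(\omega_{n-1}\cdots\omega_0(I)) + \varepsilon$. The same distortion bound applied with the roles reversed gives the matching lower inequality (or one simply notes that $x_n$ was arbitrary among mean-value points, so the length rate is itself sandwiched). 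Since $\varepsilon>0$ was arbitrary, all these $\limsup$'s coincide with the length rate, which is negative by hypothesis. The main (and really the only) obstacle is organizing the telescoping of $\log|D(\text{composition})|$ along the orbit and observing that all but finitely many orbit-intervals are shorter than the distortion scale $\delta$; once that is in place the argument is just the standard Denjoy-type bounded-distortion trick, made legitimate here because the word is contracting rather than because of any finiteness of orbits.
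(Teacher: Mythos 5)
Your proof is correct and follows essentially the same route as the paper: both rely on the uniform continuity of $\log Df$ and $\log Dg$, the exponential shrinking of the orbit intervals $\omega_{k-1}\cdots\omega_0(I)$, and a telescoping comparison of the $\log$-derivative with the $\log$-length ratio. The paper phrases it as "single-step derivative minus single-step length ratio $\to 0$ uniformly, hence the two Cesàro averages share the same $\limsup$," whereas you fix $\varepsilon$, find a threshold $N$ past which all orbit intervals are within the distortion scale $\delta$, and compare the derivative at $x$ directly with the mean-value point $x_n$ for the full composition; the two presentations are the same argument organized slightly differently.
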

\begin{demo}This lemma looks like a distorsion control,
which does not exists in the $C^1$-setting. However the proof
is a easy consequence of the uniform continuity of $Df$ and $Dg$: the length of 
$\omega_{n-1}\omega_{n-2}\cdots\omega_0(I)$  tends to $0$, by assumptions. 

Therefore  
$$\log |D (\omega_n) ( \omega_{n-1}\omega_{n-2}\cdots\omega_0 (x)|-\log \frac{\ell(\omega_{n}\omega_{n-1}
\cdots\omega_0(I))}{\ell(\omega_{n-1}\omega_{n-2}\cdots\omega_0(I))}
\xrightarrow{unif}0, \mbox{ for } n\to \infty$$

One conludes by noticing that 
$$\frac1n \log\left|D\left((\omega_{n-1}\omega_{n-2}\cdots\omega_0\right)(x)\right|=
\frac 1n \sum_0^{n-1} \log |(D\omega_i)(\omega_{i-1}\cdots \omega_0(x))|,$$
and

$$\frac1n \log\ell(\omega_{n-1}\omega_{n-2}\cdots\omega_0(I))= 
\frac 1n \left(\log \ell(I)+\sum_1^{n-1}\log\frac{\ell(\omega_{i}\cdots\omega_0(I))}
{\ell(\omega_{i-1}\cdots\omega_0(I))}\right)  $$

\end{demo}

Before giving the proof of Lemma~\ref{l.ell}, let us conlude the proof of Theorem~\ref{t.sanshyperbolique}. 

\begin{demo}[Proof of Theorem~\ref{t.sanshyperbolique} ]Let $G\subset Diff^1_+([0,1])$ be a group with a crossing.  According to 
Lemma~\ref{l.entropy} there are $f,g\in G$ and a segment $I\subset [0,1]$ so that $f(I)$ and $g(I)$ are disjoint segments 
contained in $I$.

According to Lemma~\ref{l.ell} there is an infinite word $\omega_i\in\{f,g\}$ so that the size  of
the segments $\omega_{n-1}\cdots \omega_0(I)$ decreases exponentially.  Then, Lemma~\ref{l.distorsion}  implies that
for every $x\in I$  one has $|D\omega_{n-1}\cdots \omega_0(x)|<1$.  As $\omega_{n-1}\cdots \omega_0(I)\subset I$
there is a fixed point in $I$ and this fixed point has derivative $<1$, so that it is hyperbolic. 

This implies that $G$ is not without hyperbolic fixed point, concluding the proof.
\end{demo}

\begin{demo}[Proof of Lemma~\ref{l.ell}]

For any $n\in \NN$ let $\Om_n=\{f,g\}^n$ be the set of words $(\omega_i)_{i\in\{0,\dots,n-1\}}$ of length $n$, 
with letters $\omega_i\in \{f,g\}$.  In particular the cardinal of $\Om_n$ is
$$\# \Om_n=2^n$$ 

As the intervals $\omega_{n-1}\dots\omega_0(I)$ are pairwize disjoint, one expects that the length is in 
general no much more than $\frac 1{2^n} \ell(I)$. 

Let denote 
$$B_n=\left\{(\omega_i)_{i\in\{0,\dots,n-1\}}\in \Om_n| \ell(\omega_{n-1}\dots\omega_0(I))\geq \ell(I)\cdot(\frac 23)^n\right\}.$$
A simple calculation shows
$$\# B_n\leq \left(\frac 32\right)^n$$

Thus $\frac{\# B_n}{\# \Om_n}\leq \left(\frac 34\right)^n$. 

Choose $0<\varepsilon<1$ and  $T>0$ so that $\sum_1^\infty \left(\frac 34\right)^{nT}<\varepsilon$. 

Let denote 
$$\cB^T_n=\left\{(\omega_i)_{i\in\{0,\dots,nT-1\}}\in \Om_{nT}| \exists i>0, (\omega_i)\in B_{iT}\right\}.$$ 
A simple calculation shows 
$$\frac{\# \cB^T_n}{\# \Om_{nT}}\leq \sum_1^{n-1}\left(\frac 34\right)^{Tn}.$$

Let $\Om_{\infty}=\{f,g\}^{\NN}$ be the set of infinite words in letters $f,g$.  Its a Cantor set. 
Consider 

$$\cG^T_n=\left\{(\omega_i)_{i\in\NN}\in\Om_\infty| (\omega_i)_{i\in\{0,\dots, nT\}}\notin \cB^T_n\right\}$$
 and  

$$\cG^T_\infty=\left\{(\omega_i)_{i\in\NN}\in\Om_\infty|\forall n>0,  (\omega_i)_{i\in\{0,\dots, nT\}}\notin \cB^T_n\right\}$$
 
Then $\cG^T_n$ is a decreasing sequence of compact subsets of $\Om_\infty$ and 
$$\cG^T_\infty=\bigcap_{n>0} \cG^T_n.$$

The fact that $ \frac{\# \cB^T_n}{\# \Om_{nT}}<1$ implies that $\cG^T_n$ is not empty.  One deduces that $\cG^T_\infty$ is not empty 
(as a decreasing sequence of non empty compact sets). 

One concludes by noticing that, for every word $(\omega_i)_{i\in\NN}\in G^T_\infty$ one has 
$$\limsup\frac 1n\log \ell(\omega_{n-1}\cdots\omega_0)(I)\leq \log(\frac 23).$$

\end{demo}

\begin{rema}The proof above give much more: if one endows $\Om_\infty$ with the measure whose weight on each 
cyclinder of length $n$ is $\frac 1{2^n}$, then for almost every word $(\omega_i)$ in $\Om_\infty$ the 
exponential rate of deacreasing of the length is upper bounded by $-\log 2$. 
\end{rema}

\section{Completion of  a group without crossing}
\subsection{Complete group witout crossing}

Consider a homeomorphism $g\in Homeo_+([0,1])$.  We say that a homeomorphism $h\in Homeo_+([0,1])$ 
is \textbf{\emph{induced}} by 
$g$ if for every $x\in [0,1]$, one has 
$$h(x)\in \{x,g(x)\}$$ 

In other words, $h$ is obtained from $g$ in replacing $g$ by the identity map in the  union of some connected components
of $[0,1]\setminus Fix(g)$. More precisely, one easily check:

\begin{lemm} Given $g\in Homeo_+([0,1])$,  a homeomorphism $h$ is induced by $g$ if
and only if there is  a family $\cI$ of 
connected components of $[0,1]\setminus Fix (g)$ so that  $h$ is   the map  $g_{\cI}$ defined as follows: 
\begin{itemize}
 \item if $x\in \bigcup_{I\in \cI} I$,  $g_{\cI}(x)=g(x)$
 \item otherwise $g_{\cI}(x)=x$.
\end{itemize}
\end{lemm}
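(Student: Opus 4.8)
The plan is to prove the two implications separately; both are short, the only point needing a little care being that $g_\cI$ is genuinely a homeomorphism.

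For the implication from the right-hand description to the left-hand one, suppose a family $\cI$ of connected components of $[0,1]\setminus Fix(g)$ is given, and set $U=\bigcup_{I\in\cI}I$. First I would record the elementary fact that $g(I)=I$ for every connected component $I=(a,b)$ of $[0,1]\setminus Fix(g)$: indeed $g$ is an increasing homeomorphism fixing $a$ and $b$, hence maps $(a,b)$ onto $(g(a),g(b))=(a,b)$. Consequently $g$ maps $U$ bijectively onto $U$, the identity maps $[0,1]\setminus U$ bijectively onto itself, and the two pieces glue to a bijection $g_\cI$ of $[0,1]$. Next I would check that $g_\cI$ is strictly increasing: on a single interval of $\cI$ this is strict monotonicity of $g$, off $U$ it is the identity, and for $x<y$ with $x\in I\in\cI$ and $y\notin I$ (so $y$ lies to the right of $I$, i.e. $y\geq\sup I$) one uses $g(x)\in I$ to get $g_\cI(x)=g(x)<\sup I\leq g_\cI(y)$, with the symmetric argument when $y\in I$ and $x\notin I$. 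Then continuity: $g_\cI$ agrees with $g$ on the open set $U$ and with the identity on the interior of its complement, while at a point $x\in\partial U$ one has $x\in Fix(g)$ (the endpoints of the components of $[0,1]\setminus Fix(g)$ lie in the closed set $Fix(g)$), so $g(x)=x$ and both defining branches of $g_\cI$ tend to $x=g_\cI(x)$ there. A strictly increasing continuous bijection of $[0,1]$ is a homeomorphism, and by construction $g_\cI(x)\in\{x,g(x)\}$ for all $x$, so $g_\cI$ is induced by $g$.

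For the converse, suppose $h$ is induced by $g$, i.e. $h\in Homeo_+([0,1])$ with $h(x)\in\{x,g(x)\}$ for every $x$. I would introduce the two sets $E=\{x:h(x)=g(x)\}$ and $F=\{x:h(x)=x\}$, both closed by continuity of $g$ and $h$, with $E\cup F=[0,1]$ by hypothesis. Fix a connected component $I$ of $[0,1]\setminus Fix(g)$. Since $g$ has no fixed point on $I$, the sets $E\cap I$ and $F\cap I$ are disjoint (a common point would give $g(x)=h(x)=x$), each is relatively closed in $I$, and together they cover $I$; as $I$ is connected, exactly one of them is empty. Let $\cI$ be the set of components $I$ with $F\cap I=\emptyset$, equivalently those on which $h$ coincides with $g$; on every other component $h$ is the identity, and $h$ is also the identity on $Fix(g)$ because there $g$ itself is the identity. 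Setting $U=\bigcup_{I\in\cI}I$, we obtain $h=g$ on $U$ and $h=\mathrm{id}$ off $U$, that is $h=g_\cI$.

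The only step I expect to require genuine care is the verification that $g_\cI$ is a homeomorphism — precisely its behaviour (continuity and strict monotonicity) at the endpoints of the intervals of $\cI$ — which reduces to the two elementary observations $g(I)=I$ and $\partial U\subseteq Fix(g)$. The connectedness dichotomy in the converse is the conceptual core of the argument, but it is entirely routine; everything else is bookkeeping.
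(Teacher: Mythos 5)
Your proof is correct. The paper does not actually give an argument here — it introduces the lemma with ``one easily check'' — and the route you take (the connectedness dichotomy on each component $I$ using the closed sets $E$ and $F$ for the converse, and the monotonicity and boundary checks, resting on $g(I)=I$ and $\partial U\subseteq Fix(g)$, to see that $g_{\cI}$ is indeed a homeomorphism for the forward direction) is precisely the intended elementary verification. One small precision worth adding in the forward direction: to conclude $g_{\cI}\in Homeo_+([0,1])$ rather than merely $Homeo([0,1])$, note that $0,1\in Fix(g)$ so $g_{\cI}$ fixes the endpoints, which together with strict monotonicity gives orientation preservation.
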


\begin{defi}Let $G\subset Homeo_+([0,1])$ be a group without crossing. 
One says that $G$ is \textbf{\emph{complete}} if, for any
$g\in G$,  any homeomorphism induced by $g$ belongs to $G$. 
\end{defi}

The aim of this section is to show
\begin{prop}\label{p.completion} Any group $G$ without crossing is contained in a complete group without crossing.
\end{prop}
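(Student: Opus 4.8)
The plan is to construct $\tilde G$ by iterating the operation ``adjoin all induced homeomorphisms and pass to the generated group'', and then to show by induction that no crossing is ever created. Concretely, put $G_0=G$ and, for $n\ge 0$, let $G_{n+1}$ be the subgroup of $Homeo_+([0,1])$ generated by $G_n$ together with all homeomorphisms $g_{\cI}$ with $g\in G_n$ and $\cI$ a family of connected components of $[0,1]\setminus Fix(g)$ (these are homeomorphisms by the preceding lemma). Set $\tilde G=\bigcup_{n\in\NN}G_n$. Then $G\subset\tilde G$, $\tilde G$ is a group, and it is complete: any $\phi\in\tilde G$ lies in some $G_n$, so every homeomorphism induced by $\phi$ lies in $G_{n+1}\subset\tilde G$. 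Hence the whole statement reduces to the inductive claim: \emph{if $H\subset Homeo_+([0,1])$ is without crossing, then so is the group $H'$ generated by $H$ together with all $h_{\cI}$, $h\in H$.}

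To prove this, let $\cN$ be the family of connected components of the sets $[0,1]\setminus Fix(h)$, $h\in H$; by Lemma~\ref{l.defilinked} and the hypothesis, $\cN$ is a \emph{nested} family of open intervals, it is $H$-invariant, and by Theorem~\ref{t.structure} each $h\in H$ maps every $I\in\cN$ onto $I$ or onto an interval disjoint from $I$, fixing each of its own components setwise. Two facts drive the argument. \textbf{(P)} For every $\phi\in H'$ and $I\in\cN$ one has $\phi(I)\in\cN$ and ($\phi(I)=I$ or $\phi(I)\cap I=\varnothing$): writing $\phi=(h_1)_{\cI_1}\circ\cdots\circ(h_k)_{\cI_k}$ with $h_j\in H$ and following the image of $I$ factor by factor, one checks (using that each $\cI_j$ consists of pairwise disjoint members of $\cN$ and that $h_j$ fixes its own components) that each factor either leaves the current interval unchanged or applies $h_j$ to it, whence $\phi(I)=\gamma(I)$ for some $\gamma\in H$; Theorem~\ref{t.structure} applied to $\gamma$ gives (P). \textbf{(S)} For $\phi$ as above, $\mathrm{supp}(\phi)$ is contained in $\bigcup_{j}\bigcup_{I'\in\cI_j}I'$; since $\cN$ is nested and only finitely many $j$ occur, every connected component of this set, hence every connected component of $[0,1]\setminus Fix(\phi)$, is contained in a single member of $\cN$.

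Now suppose, for contradiction, that $H'$ has a crossing. By Lemma~\ref{l.defilinked} there are $\phi,\psi\in H'$ with successive fixed points $\{a,b\}$ of $\phi$ and $\{c,d\}$ of $\psi$ which, after a possible orientation reversal, satisfy $a<c<b\le d$. By (S), $(a,b)\subset I$ and $(c,d)\subset I'$ with $I,I'\in\cN$; as these overlap they are nested, and a short case analysis using (P) together with $\phi(b)=b$, $\psi(c)=c$ shows that $\phi$ and $\psi$ both preserve a common $J\in\cN$ with $(a,b)\cup(c,d)\subset J$. One is thus reduced to the action of $H'$ on $J$, where one uses the description of the $H$-action given by Theorem~\ref{t.structure}: on the union $\Lambda_J$ of the minimal sets the action is semiconjugate to a group of translations — in particular without linked fixed points — while the completion only alters the dynamics inside the gaps of $J\setminus\Lambda_J$, each of which carries a smaller group without crossing to which the same analysis applies. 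An induction along the nested family $\cN$ (or, equivalently, a direct construction out of $\phi,\psi$ of a crossing for $H$ itself via Lemma~\ref{l.entropy}) contradicts the hypothesis on $H$, completing the inductive step. \textbf{The delicate point — and the main obstacle — is precisely this last reduction:} since $\cN$ need not be well founded one must argue that ``passing inside a smaller $J\in\cN$'' terminates, and that no genuinely new linked pair of successive fixed points is produced by cancellations in the words defining elements of $H'$; controlling this is also what yields the refinements quoted after the statement, namely that $\tilde G$ has the same family of intervals of successive fixed points as $G$, with the same relative translation numbers.
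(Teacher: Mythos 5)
Your overall architecture is the same as the paper's: define $I(G)$ as the group generated by $G$ together with all induced homeomorphisms, iterate to get $I^\infty(G)=\bigcup_n I^n(G)$, and observe that completeness of $I^\infty(G)$ is automatic once each step $H\mapsto I(H)$ preserves the no-crossing property. So the whole weight rests on the inductive claim, and that is exactly where your argument has a gap — one you candidly flag yourself. Your claims (P) and (S) are sound and useful, but the proposed reduction (``pass to a common $J\in\cN$, then recurse into the gaps of $J\setminus\Lambda_J$'') does not terminate: $\cN$ is a nested family of open intervals with no well-foundedness, so ``induction along $\cN$'' is not an induction, and the parenthetical alternative via Lemma~\ref{l.entropy} is asserted but not constructed (Lemma~\ref{l.entropy} goes from a crossing to an entropy-producing pair, not the other way).

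The paper avoids this entirely by running the induction on \emph{word length} rather than on the nested family. It proves the stronger Lemma~\ref{l.successive}: every pair of successive fixed points of an element of $I(G)$ is already a pair of successive fixed points of some element of $G$. The proof takes a hypothetical counterexample $\{a,b\}$ realized by $h=h_n\cdots h_1$ with each $h_i$ induced by some $g_i\in G$, chooses $n$ minimal over all such counterexample pairs, and then systematically shortens the word: one first localizes to the maximal interval $I=(a_j,b_j)$ among the components of $[0,1]\setminus\mathrm{Fix}(g_i)$ around a point $x\in(a,b)$, discards the letters whose support is too big (they act trivially on the relevant orbit, so $n$ was not minimal), identifies the letters $h_{i_j}$ whose support equals $I$ with the full $g_{i_j}$, and then uses the translation number $\tau$ on $I$: if $\sum\tau(g_{i_j})\neq 0$ then $(a,b)=I$, contradicting the choice of $\{a,b\}$; if $\sum\tau(g_{i_j})=0$ one commutes the $g_{i_j}$ out to the left and absorbs their product into a single letter of $G$, strictly shortening the word. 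The no-crossing property of $I(G)$ then falls out because it is a condition only on the set of pairs of successive fixed points, which Lemma~\ref{l.successive} shows is unchanged — and this same lemma is what gives the refinements you anticipate (same intervals of successive fixed points, same relative translation numbers, for $G$ and $I^\infty(G)$). So your setup is right, but you should replace the recursion on $\cN$ by a minimality argument on word length, and the rest of the scheme goes through.
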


Let $G\subset Homeo_+([0,1])$ be a group without crossing.  We will denote by $I(G)\subset Homeo_+([0,1])$ 
the group generated by all the elements $h$ induced by elements $g\in G$. 

\begin{rema}For every $h\in I(G)$ and $x\in[0,1]$ there is $g\in G$ with $h(x)=g(x)$.  
In other words, $I(G)$ and $G$ have the same orbits. 
\end{rema}

\begin{lemm}\label{l.successive} Assume $G$ is without crossing.
If $\{a,b\}$ are successive fixed points for some $h\in I(G)$ then $\{a,b\}$ are successive 
fixed points for some $g\in G$.
\end{lemm}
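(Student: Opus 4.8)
The plan is to peel off the ``induced'' structure of $h$ one layer at a time, exploiting both that $G$ has no linked fixed points — so that all the pairs of successive fixed points involved are nested or disjoint — and that $h$ is a \emph{finite} product of induced elements. Up to replacing $h$ by $h^{-1}$ (still in $I(G)$, with the same fixed-point set) I may assume $h>\mathrm{id}$ on $(a,b)$. Write $h=\gamma_1\cdots\gamma_N$ with $\gamma_k=(g_k)_{\mathcal{I}_k}$ induced by $g_k\in G$, where $\mathcal{I}_k$ is a family of connected components of $[0,1]\setminus Fix(g_k)$. Let $\mathcal{J}=\bigcup_k\mathcal{I}_k$; this is a family of pairs of successive fixed points of elements of $G$, so by Lemma~\ref{l.defilinked} any two members of $\mathcal{J}$ are nested or have disjoint interiors. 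If $x$ lies in no member of $\mathcal{J}$ then every $\gamma_k$ fixes $x$, hence $h(x)=x$; therefore $(a,b)\subset\bigcup_{J\in\mathcal{J}}J$.

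The key finiteness observation is that every nested chain in $\mathcal{J}$ has length $\le N$: two comparable members coming from the same $g_k$ would be two nested connected components of $[0,1]\setminus Fix(g_k)$, which is impossible. Hence every member of $\mathcal{J}$ lies below a maximal one, the maximal members are pairwise disjoint, and — since $(a,b)$ is connected and is partitioned by its intersections with the maximal members — there is a \emph{single} maximal $J^*=(p^*,q^*)\in\mathcal{J}$ with $(a,b)\subset J^*$. By construction $J^*$ is a pair of successive fixed points of some $g_{k^*}\in G$. By maximality of $J^*$ in $\mathcal{J}$, no member of any $\mathcal{I}_k$ strictly contains $J^*$; so each such member is either contained in $J^*$ or has interior disjoint from $J^*$, and since the endpoints of such members that lie at $p^*$ or $q^*$ are fixed points of $g_k$, a short argument shows that every $\gamma_k$ fixes $p^*$ and $q^*$, whence $\gamma_k(J^*)=J^*$ and $h(J^*)=J^*$. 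If $(a,b)=J^*$ we are done.

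Otherwise $(a,b)\subsetneq J^*$ and I restrict everything to $\overline{J^*}\cong[0,1]$. If $\gamma_k$ acts non-trivially on $J^*$ then $g_k$ has a pair of successive fixed points inside $J^*$, hence (its pairs being pairwise disjoint) no pair strictly containing $J^*$, so $g_k$ stabilises $J^*$ and $\gamma_k|_{\overline{J^*}}=(g_k|_{\overline{J^*}})_{\mathcal{I}_k\cap\{J\subset J^*\}}$ is an induced element of $G|_{\overline{J^*}}$. Discarding the factors that act trivially on $\overline{J^*}$, we get $h|_{\overline{J^*}}\in I(\overline{G_0})$ with $\overline{G_0}=\langle g_k|_{\overline{J^*}}\rangle$ again a group without linked fixed points, and $\{a,b\}$ is a pair of successive fixed points of $h|_{\overline{J^*}}$, now in the interior of the ambient segment. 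One iterates this construction. Each step either drops the number of factors, or replaces the ambient segment by a strictly smaller pair of successive fixed points of one of the finitely many $g_1,\dots,g_N$ — two quantities that can each change only finitely often, since each $g_k$ has pairwise disjoint pairs.

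The main obstacle is the termination of this iteration, i.e. ruling out a step that changes neither quantity. This happens precisely when $(a,b)$ sits strictly inside the maximal enclosing pair $J^*$ while $J^*$ is itself one of the activated pairs, so that the ``long'' factors $\gamma_k$ with $J^*\in\mathcal{I}_k$ restrict to \emph{fixed-point-free} homeomorphisms of $J^*$ which never shrink the family. What must then be shown is that a finite composition of fixed-point-free homeomorphisms of $J^*$ together with homeomorphisms whose pairs of successive fixed points are strictly smaller than $J^*$ cannot create a pair of successive fixed points other than $J^*$ itself unless that pair is already realised by an element of $\overline{G_0}\subset G$. The natural tool here is the relative translation number of $\overline{G_0}$ on $J^*$ furnished by Theorem~\ref{t.structure}, used to locate the endpoints $a,b$ through the long factors, together with an induction on the number of long factors (conjugating by a long factor cyclically permutes the $\gamma_k$, which should let one reduce their number).
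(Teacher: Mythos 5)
Your set-up is sound and closely parallels the paper's: you reduce to a single maximal interval $J^*$ in the nested family of activated pairs, observe that all factors stabilise $J^*$, and propose to finish by restricting to $J^*$ and iterating. The paper does essentially the same thing: it fixes $x\in(a,b)$, takes $I$ to be the largest component of $[0,1]\setminus Fix(g_i)$ through $x$, shows $I$ is invariant under all the $g_i$ and hence contains $(a,b)$, and then restricts to $I$. (A small difference: the paper's $I$ is built from the $g_i$-components through $x$, not only the activated ones, which is slightly more convenient for the invariance argument, but morally this is the same reduction.)

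The problem is that you stop exactly where the real work begins. You explicitly flag \emph{termination of the iteration} as the main obstacle — the case where $(a,b)\subsetneq J^*$, $J^*$ is itself an activated pair for some ``long'' factors, and the restriction step shrinks neither the number of factors nor the ambient interval — and then you only gesture at the translation number and at ``an induction on the number of long factors'' without giving an argument. This is precisely the heart of the paper's proof, and it is genuinely nontrivial. The paper resolves it by a minimality argument on the word length together with a dichotomy on the sum $\tau$ of the relative translation numbers of the long factors on $I$. If $\tau\neq 0$, the product moves the minimal set of $G_I$ in $I$ across the whole interval, forcing $\{a,b\}=\partial I$, which is already a pair of successive fixed points of a long $g_{i_j}\in G$ — contradiction. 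If $\tau=0$, there are at least two long factors, and one uses the identity $f\cdot(f^{-1}hf)=hf$ (and the fact that conjugating an induced element by $g\in G$ yields an element induced by $g'\in G$) to pull all the long factors to the front, collapse them into a single element of $G$, and obtain a strictly shorter word having $\{a,b\}$ as a pair of successive fixed points — again a contradiction with minimality. Neither of these two cases is carried out in your proposal; without them the iteration has no reason to terminate, and the proof is incomplete.

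If you want to salvage your version, the cleanest route is to adopt the paper's word-length minimality from the start (choose $\{a,b\}$ and $h=h_n\cdots h_1$ with $n$ minimal among all counterexamples), which replaces your two-parameter decreasing iteration by a single integer, and then supply the $\tau\neq 0$ / $\tau=0$ dichotomy just described. The first case uses Theorem~\ref{t.structure} (the relative translation number and the fact that elements of its kernel act trivially on the union of minimal sets); the second case uses the algebraic rewriting. Both are needed, and both are currently missing.
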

\begin{demo}
Assume, by absurd, that there is a pair of successive fixed points $\{a,b\}$ for an element $h\in I(G)$, but which is not of successive 
fixed points for $G$. So, $h$ can be written as $h= h_n\cdots h_1$ where $h_i$ is induced by $g_i\in G$.  
We choose the pair $\{a,b\}$  so that $n$ is the smallest possible. Fix a point $x\in (a,b)$.

For every $i\in\{1,\dots,n\}$, let $a_i$ be the largest  fixed points of $g_i$ less than or equal to $x$,
and $b_i$ the smallest larger than or equal to $x$.  
 If $g_i(x)\neq x$, then $\{a_i,b_i\}$ are successive fixed points of $g_i$ (otherwise, $a_i=b_i=x$). 
As $h(x)\neq x$, there is at least one index  $i$ for which $g_i(x)\neq x$.

As $G$ is without crossing, the intervals $(a_i,b_i)$ are totally ordered for the inclusion. 
The union $I=\bigcup_1^n (a_i,b_i)$  is invariant by all the $g_i$:  indeed, either $I$ is an interval of successive
fixed points of $g_i$ or $g_i$ has a fixed point in $I$, an then belongs to the stabilizer of $I$. One deduces 
$I$ is fixed by  all the $h_i$.  As a consequence one gets $(a,b)\subset I$. 

Notice that there is $j\in\{1,\dots ,n\}$ so that $I=(a_j,b_j)$. 
Let $i_1,\dots, i_k$ be the set of indices so that $I=(a_{i_j},b_{i_j})$. If there is $i$ so that $h_i=id $ on $I$, 
then $n$ was not the minimum number. As the indices $i_j$ have be chosen so that
$I$ is a component of $[0,1]\setminus Fix(g_{i_j})$, this implies that 
$h_{i_j}=g_{i_j}$ on $I$.  Thus, one gets the same interval $\{a,b\}$ if we substitute the $ h_{i_j}$ by $g_{i_j}$.
Thus we will now assume $h_{i_j}=g_{i_j}$.

One considers now the group generated by the restriction of the $g_i$ to $I$. It is a group without crossing and
the restrictions of the $h_i$ to $I$ are induced by $g_i$.  We consider a minimal set $\cM\subset I$ of the action of the group generated by the 
$g_i$ on $I$:  every $g_i$, $i\notin\{i_1,\dots, i_k\}$, has fixed points in $I$; according to Lemma~\ref{l.minimal}, $g_i$
induces the identity map on $\cM$.  Therefore the same happens for $h_i$, $i\notin\{i_1,\dots, i_k\}$. 
So the action of $h$ on $\cM$ is the same as $g= g_n\cdots g_1$

Let us now consider the translation number $\tau$ relative to $g_{i_1}$ on $I$. 
Then $\tau(g)$ is the sum of the $\tau(g_{i_j})$. 

First assume that $\tau(g)\neq 0$.  Thus the orbits of $g$ on the minimal $\cM$ goes from one extremities 
of $I$ to the other, and so do the orbit of $h$ on $\cM$.  
In particular, $I=\{a,b\}$, so that $\{a,b\}$ is a pair of succesive fixed point of $g_{i_1}$,
contradicting the definition of 
$\{a,b\}$.

Thus $\tau(g)=0$. This implies $k>1$. Now we will use the fact that, if $f\in G$ and $h$ is induced by $g\in G$, 
then $fhf^{-1}$ is induced by $fgf^{-1}$ in $G$.   By using inductively the elementary fact 
$f(f^{-1}hf)= hf$, we can rewrite the word 

$$h_n\dots h_{n_k+1}g_{n_k}h_{n_k-1} \dots h_{n_1+1}g_{n_1}h_{n_1-1}\dots h_1= g_{n_k}\dots g_{n_1}\tilde h_{n-k}\cdots \tilde h_1$$
where the $\tilde h_i$ are induced by elements of $G$. However, $\tilde g=g_{n_k}\dots g_{n_1}$ 
belongs to $G$. Thus,
one can rewrite this word has $\tilde g \tilde h_{n-k}\cdots \tilde h_1$, which has only $n-k+1<n$ letters. 
This contradicts the fact that $n$ was chosen realizing the minimum. 
\end{demo}

\begin{coro}\label{c.translationcompletion}
Assume tha the group $G$ is without crossing. Then $I(G)$ is without crossing. 

Let $\{a,b\}$ be a pair of successive fixed points, for some $f\in G$. Then, the image of the translation 
number associated to $\{a,b\}$ is the same for $G$ and $I(G)$: 
$$\tau_{f,[a,b]}(G)=\tau_{f,[a,b]}(I(G))\subset \RR$$
\end{coro}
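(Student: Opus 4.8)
The goal is Corollary~\ref{c.translationcompletion}: assuming $G$ is without crossing, first that $I(G)$ is without crossing, and second that for a pair $\{a,b\}$ of successive fixed points of some $f\in G$ the translation number images coincide, $\tau_{f,[a,b]}(G)=\tau_{f,[a,b]}(I(G))$. The first assertion follows quickly from Lemma~\ref{l.successive}: by Lemma~\ref{l.defilinked}, being without linked fixed points (equivalently, by the earlier lemma, without crossing) is a property that can be tested on pairs of successive fixed points of elements of the group. Since Lemma~\ref{l.successive} says every pair of successive fixed points of an element of $I(G)$ is already a pair of successive fixed points of an element of $G$, the family of such pairs is the same for $G$ and $I(G)$, and so the ``nested or disjoint'' condition of Lemma~\ref{l.defilinked} is inherited by $I(G)$ from $G$. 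Hence $I(G)$ is without crossing and Theorem~\ref{t.structure} (proved for groups without crossing) applies to it, so the relative translation number $\tau_{f,[a,b]}$ on the stabilizer $I(G)_{[a,b]}$ is well defined.

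For the equality of images, the inclusion $\tau_{f,[a,b]}(G)\subseteq \tau_{f,[a,b]}(I(G))$ is immediate since $G\subset I(G)$ and the morphism for $I(G)$ restricts to the one for $G$ (both send $f$ to $1$, and by the uniqueness clause in Theorem~\ref{t.structure} they agree). For the reverse inclusion, take $h\in I(G)_{[a,b]}$ and write $h=h_n\cdots h_1$ with each $h_i$ induced by some $g_i\in G$. I would like to reduce to controlling the action on a minimal set $\cM$ of $G_{[a,b]}$ inside $(a,b)$, using Lemma~\ref{l.minimal}: any $g_i$ with a fixed point in $(a,b)$ induces the identity on $\cM$, so the corresponding $h_i$ also induces the identity on $\cM$ (it equals either $g_i$ or the identity on each component, and on $\cM$ both are the identity there). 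Thus only those $g_i$ having no fixed point in $(a,b)$ — equivalently those lying in the subgroup on which $\tau_{f,[a,b]}$ is nonzero — contribute to the action of $h$ on $\cM$. Since $\tau_{f,[a,b]}$ is, by construction in Theorem~\ref{t.translation}/Theorem~\ref{t.structure}, read off from the semi-conjugacy of the action on $\cM$ to a translation group, we get $\tau_{f,[a,b]}(h)=\sum_{i}\tau_{f,[a,b]}(g_i)$ where the sum is over the contributing indices; in particular $\tau_{f,[a,b]}(h)$ already lies in $\tau_{f,[a,b]}(G)$ because each such $g_i\in G$ belongs to the stabilizer $G_{[a,b]}$ (having $\{a,b\}$, or a pair containing $\{a,b\}$ in its closure, among its successive fixed points — here the no-crossing property of $G$ forces $g_i([a,b])=[a,b]$ once $g_i$ moves points of $(a,b)$ and has no fixed point inside).

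The subtle point to check carefully — and the step I expect to be the main obstacle — is precisely this last claim, that each contributing $g_i$ actually stabilizes $[a,b]$, so that $\tau_{f,[a,b]}(g_i)$ makes sense as an element of $\tau_{f,[a,b]}(G)$. A priori $g_i$ only satisfies: $h_i$ is induced by $g_i$, $h$ stabilizes $[a,b]$, and $g_i$ has no fixed point in $(a,b)$. I would argue as in the proof of Lemma~\ref{l.successive}: since $G$ is without crossing, for the point $x\in(a,b)$ the intervals $(a_i,b_i)$ of successive fixed points of $g_i$ around $x$ are totally ordered by inclusion, their union $I'$ is invariant under all $g_i$ hence under all $h_i$ hence contains $(a,b)$; combining with the fact that $g_i$ has no fixed point in $(a,b)$ and $G$ is without crossing, $(a,b)$ must be a component of $[0,1]\setminus\mathrm{Fix}(g_i)$ (if it were strictly larger one creates a crossing with $f$), so $g_i\in G_{[a,b]}$. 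One should also handle the bookkeeping showing the remaining $h_i$ can be discarded on $\cM$ without changing $\tau_{f,[a,b]}(h)$, which is exactly the minimal-set argument of Lemma~\ref{l.successive}; I would simply invoke that lemma's proof rather than repeat it. Once each contributing $g_i$ is pinned in $G_{[a,b]}$ and the morphism property of $\tau_{f,[a,b]}$ on $I(G)_{[a,b]}$ gives additivity, $\tau_{f,[a,b]}(h)=\sum_i \tau_{f,[a,b]}(g_i)\in\tau_{f,[a,b]}(G)$, which yields $\tau_{f,[a,b]}(I(G))\subseteq\tau_{f,[a,b]}(G)$ and hence the desired equality.
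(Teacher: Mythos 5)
Your first part — that $I(G)$ is without crossing — matches the paper's argument exactly: both invoke Lemma~\ref{l.successive} to say that $G$ and $I(G)$ have the same pairs of successive fixed points, and both observe via Lemma~\ref{l.defilinked} that ``without crossing'' depends only on this collection of pairs.

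Your second part, however, takes a genuinely different route from the paper's and contains a gap. You decompose $h\in I(G)_{[a,b]}$ as a product $h_n\cdots h_1$ with $h_i$ induced by $g_i\in G$, discard the $g_i$ that induce the identity on the minimal set $\cM$, and then claim every remaining $g_i$ must lie in $G_{[a,b]}$ ``because otherwise one creates a crossing with $f$.'' That last implication is false. If $g_i$ has a component $(c,d)$ of $[0,1]\setminus\mathrm{Fix}(g_i)$ with $c<a$ and $b<d$ (strict on both sides), this is exactly the nested configuration $[a,b]\subset(c,d)$ that the no-linked-fixed-points property \emph{permits} — there is no crossing. Such a $g_i$ need not stabilize $[a,b]$: it can move $[a,b]$ entirely off itself, $\cM$ is not invariant under it, and $\tau_{f,[a,b]}(g_i)$ is undefined, so your formula $\tau_{f,[a,b]}(h)=\sum_i\tau_{f,[a,b]}(g_i)$ does not make sense. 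This situation really occurs, e.g.\ with $G$ an elementary group where $[a,b]=\mathrm{supp}(f)$ sits inside a fundamental domain of a simple $g_1$ and the word $h=g_1^{-1}g_1$ trivially stabilizes $[a,b]$; the individual factors do not. Lemma~\ref{l.successive}'s proof handles analogous bookkeeping by conjugating and deleting letters to shorten a minimal word, but you invoke only the conclusion of that lemma, not the rewriting machinery, so the issue remains open in your argument.

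The paper avoids the product decomposition entirely. It uses the Remark preceding Lemma~\ref{l.successive} (that $G$ and $I(G)$ have the same orbits): given $h\in I(G)_{[a,b]}$ and $x\in\cM$, pick $g\in G$ with $g(x)=h(x)$. Since $h(x)\in(a,b)$, the no-crossing property forces $g\in G_{[a,b]}$. First this shows $\cM$ is invariant under $I(G)_{[a,b]}$; then, because the quotient of $I(G)_{[a,b]}$ by $\ker\tau$ acts freely on $\cM$ and $g^{-1}h$ fixes $x\in\cM$, one gets $\tau_{f,[a,b]}(h)=\tau_{f,[a,b]}(g)\in\tau_{f,[a,b]}(G)$. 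This sidesteps any decomposition of $h$ into pieces and the attendant difficulty of pinning each piece in the stabilizer. If you want to keep a product-based argument, you would have to replicate the full reduction procedure from the proof of Lemma~\ref{l.successive} (conjugating the ``too large'' factors inward and deleting inessential ones) rather than quote it; the single-element-via-orbit argument is substantially simpler.
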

\begin{demo} Groups without crossing are  the groups without linked fixed points. This property is a 
property of the 
set of pairs of successive fixed points. According to Lemma~\ref{l.successive} the pairs of successive 
fixed points are the same for $G$ and for $I(G)$, concluding.

Let now $\{a,b\}$ be a pair of successive fixed points for  some $g\in G$.  
Consider the stabilizer $I(G)_{[a,b]}$. 

As $I(G)$ is without crossing, the translation number relative to $g$ extends on $I(G)_{(a,b)}$. 
Furthermore, the action of $G_{[a,b]}$ on $(a,b)$ admits a minimal set $\cM$. 
For every $x\in [0,1]$ and every $h\in I(G)$ there is $g\in G$ so that $h(x)=g(x)$.  
This implies that
\begin{clai}The minimal set $\cM$ is invariant under the action of $I(G)_{(a,b)}$
\end{clai}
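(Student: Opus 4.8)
The plan is to use the fact just recalled that $I(G)$ and $G$ have the same orbits, i.e.\ that for every $x\in[0,1]$ and every $h\in I(G)$ there is some $g\in G$ with $h(x)=g(x)$. Since the goal is to show $h(\cM)=\cM$ for every $h\in I(G)_{(a,b)}$, and $I(G)_{(a,b)}$ is a subgroup, it is enough to establish the inclusion $h(\cM)\subset\cM$ and then apply the same statement to $h^{-1}$: from $h^{-1}(\cM)\subset\cM$ one gets $\cM=h(h^{-1}(\cM))\subset h(\cM)$, whence equality, with no compactness argument needed.

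To obtain the inclusion I would fix a point $m\in\cM$ and pick $g\in G$ with $h(m)=g(m)$. Since $h$ stabilizes $(a,b)$ and $m\in\cM\subset(a,b)$, the point $g(m)=h(m)$ lies in $(a,b)$. The key step is then to force $g$ into the stabilizer $G_{[a,b]}$: because $G$ is without crossing and $\{a,b\}$ is a pair of successive fixed points of an element of $G$, the first item of Theorem~\ref{t.structure} yields, for this $g$, the dichotomy $g([a,b])=[a,b]$ or $g((a,b))\cap(a,b)=\varnothing$; the second alternative is ruled out by $g(m)\in g((a,b))\cap(a,b)$, so $g\in G_{[a,b]}$. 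Since $\cM$ is invariant under $G_{[a,b]}$ by construction, $h(m)=g(m)\in\cM$, and as $m\in\cM$ was arbitrary, $h(\cM)\subset\cM$.

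The only genuinely delicate point is this last implication forcing $g$ into $G_{[a,b]}$; this is exactly where the combination ``$G$ without crossing'' together with ``$\{a,b\}$ a pair of successive fixed points of $G$'' is essential. Everything else — the passage from $I(G)$ to $G$ on a single orbit point, and the symmetric bookkeeping with $h^{-1}$ — is routine.
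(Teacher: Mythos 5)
Your argument is correct and follows the same route as the paper's: pick $g\in G$ with $h(m)=g(m)$ using that $I(G)$ and $G$ share orbits, then conclude $h(m)\in\cM$ from $G_{[a,b]}$-invariance of $\cM$. In fact you are a bit more careful than the paper, which silently assumes $g\in G_{[a,b]}$; your explicit appeal to the dichotomy from Theorem~\ref{t.structure} to force $g$ into the stabilizer, and the routine $h^{-1}$ bookkeeping for equality, make the argument airtight.
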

\begin{demo} 
if $h\in I(G)$ and $x\in \cM$, and if $h(x)\in (a,b)$,  then there is $g\in G$ with $g(x)=h(x)$; then
$g(x)\in \cM$ (because $\cM$ is invariant by $G_{(a,b)}$; one concludes $h(x)\in\cM$. Thus the minimal set $\cM$ is invariant by $I(G)_{(a,b)}$.
\end{demo}

  Now the action of 
$I(G)_{(a,b)}/Ker (\tau_{g,(a,b))}$ is a  free action. For every $h\in I(G)$ and $x\in \cM$ there is $g\in G$
with $g(x)=h(x)$; this implies that $g$ and $h$ coincides on $\cM$, and thus 
$\tau_{f,(a,b)}(h)=\tau_{f,(a,b)}(g)$,  concluding.
\end{demo}

We don't know if, in general, $I(G)$ is a complete group, that is, if $I(I(G))=I(G)$.  
For this reason, let us denote $I^n(G)$ defined as $I^{n+1}(G)=I(I^n(G))$. The sequence $I^n(G)$ 
is an increasing sequence of groups.  We denote 
$$I^\infty(G)=\bigcup_{n\in\NN} I^n(G)$$

Next Lemma ends the proof of Proposition~\ref{p.completion}:
\begin{lemm}\label{l.completion} For every $G\subset Homeo_+([0,1])$ without crossing, $I^{\infty}(G)$ is a complete group without crossing. 
Furtheremore:
\begin{itemize}
\item the orbits of $I^{\infty}(G)$ and of $G$ are equal;
 \item any pair of successive fixed points $\{a,b\}$ of $I^{\infty}(G)$ are is a pair of successive fixed points of $G$;
 \item for any pair $\{a,b\}$ of successive fixed points of some $f\in G$, the images
 $\tau_{f,[a,b]}(G)$ and $\tau_{f,[a,b]}(I^{\infty}(G))$ are equal;
 \item any complete group without crossing containing $G$ contains $I^{\infty}(G)$. 
\end{itemize}

\end{lemm}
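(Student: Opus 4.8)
The plan is to proceed in four steps, matching the four asserted properties, using the already-established machinery about $I(G)$ (Corollary~\ref{c.translationcompletion}), the stability of ``without crossing'' under the construction $G\mapsto I(G)$, and a passage-to-the-limit argument for the increasing union. First I would record the two facts that transfer immediately along the tower $G\subset I(G)\subset I^2(G)\subset\cdots$: by the Remark preceding Lemma~\ref{l.successive}, $G$ and $I(G)$ have the same orbits, hence by induction all $I^n(G)$ have the same orbits as $G$, and therefore so does $I^\infty(G)=\bigcup_n I^n(G)$ (an element of the union is an element of some $I^n(G)$). Likewise, by Lemma~\ref{l.successive} applied repeatedly, the set of pairs of successive fixed points of $I^{n+1}(G)$ equals that of $I^n(G)$, hence equals that of $G$ for every $n$; since every element of $I^\infty(G)$ lies in some $I^n(G)$, the pairs of successive fixed points of $I^\infty(G)$ coincide with those of $G$. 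In particular ``without crossing'' is a property of the set of pairs of successive fixed points (via Lemma~\ref{l.defilinked}), so $I^\infty(G)$ is without crossing.

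Next I would check that $I^\infty(G)$ is \emph{complete}. Let $h\in I^\infty(G)$ and let $h'$ be a homeomorphism induced by $h$; then $h\in I^n(G)$ for some $n$, so $h'\in I(I^n(G))=I^{n+1}(G)\subset I^\infty(G)$. This is the step where the union construction is genuinely needed: a single application of $I$ need not be complete (the paper explicitly says this is unknown), but an element induced by an element of $I^\infty(G)$ only ever ``sees'' finitely many levels of the tower, so it is captured at level $n+1$.

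For the equality of translation numbers, fix $\{a,b\}$ a pair of successive fixed points of some $f\in G$. Corollary~\ref{c.translationcompletion} gives $\tau_{f,[a,b]}(I^n(G))=\tau_{f,[a,b]}(I^{n-1}(G))$ for each $n\geq 1$ (taking the group $I^{n-1}(G)$ in the role of ``$G$''; note $f$ is still in it and $\{a,b\}$ is still a pair of successive fixed points of it, by the previous paragraph), hence by induction $\tau_{f,[a,b]}(I^n(G))=\tau_{f,[a,b]}(G)$ for all $n$. Since $\tau_{f,[a,b]}$ is a morphism on the stabilizer and any $g\in I^\infty(G)$ stabilizing $[a,b]$ lies in some $I^n(G)$, the image $\tau_{f,[a,b]}(I^\infty(G))$ is the increasing union of the images $\tau_{f,[a,b]}(I^n(G))$, all equal to $\tau_{f,[a,b]}(G)$; so they coincide.

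Finally, for minimality: let $H$ be any complete group without crossing with $G\subset H$. I claim $I^n(G)\subset H$ for all $n$, by induction. For $n=0$ this is the hypothesis. If $I^n(G)\subset H$, then every homeomorphism induced by an element of $I^n(G)$ is induced by an element of $H$, hence lies in $H$ by completeness of $H$; since $I^{n+1}(G)$ is generated by such homeomorphisms together with $I^n(G)\subset H$, we get $I^{n+1}(G)\subset H$. Taking the union, $I^\infty(G)\subset H$. The main obstacle is purely bookkeeping: making sure that at each level of the tower the hypotheses of Corollary~\ref{c.translationcompletion} and Lemma~\ref{l.successive} still apply (i.e. that $f$ remains in the group and $\{a,b\}$ remains a pair of successive fixed points), which is exactly what the orbit-and-fixed-point invariance established in the first step guarantees; there is no genuinely hard analytic point here, the content having already been done in Corollary~\ref{c.translationcompletion}.
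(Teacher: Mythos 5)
Your proof is correct and follows the same approach as the paper; the paper's own proof only writes out the completeness step (declaring it ``the unique non-trivial point'') with the identical argument that an element of $I^\infty(G)$ lies in some $I^n(G)$ so its induced maps lie in $I^{n+1}(G)$, while you spell out the remaining routine bullets via induction on the tower using Lemma~\ref{l.successive} and Corollary~\ref{c.translationcompletion}, exactly as the paper intends the reader to do.
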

\begin{demo}The unique non-trivial point is that $I^{\infty}(G)$ is complete. For that, it is enough to show that, 
if $g\in I^{\infty}(G)$ then every homeomorphism $h$ induced from $g$ also belongs to $I^{\infty}(G)$.  
Notice that there is $n$ so that $g\in I^n(G)$;  thus $h\in I^{n+1}(G)$, concluding. 
 
\end{demo}

The group $I^{\infty}(G)$ is called \textbf{\emph{the completion}} of $G$.

\subsection{Completion of  groups $C^1$-close to identity or without hyperbolic fixed points.}

The aim of this section is to prove that the completion of groups $C^1$-close to identity or without hyperbolic fixed points
are $C^1$-close to identity or without fixed points, respectively. 
Notice that: 
\begin{rema}
 For any sequence $\cH=\{h_n, n\in\NN\}$ of diffeomorphisms of $[0,1]$, let $G_{\cH}$ be the set 
 of diffeomorphisms so that 
  $h_n g h_n^{-1}\xrightarrow{C^1} id$  as $n\to\infty$.  Then $G_{\cH}$ is a group $C^1$-close to identity. 
  \end{rema}
  
\begin{lemm}
  Consider  a group $G$,  $C^1$-close to the identity, and $h_n$ 
 a sequence of diffeomorphisms so that $h_n g h_n^{-1}\to id$ for every $g\in G$.  Consider 
an element $g\in G$, and  a family $\cI$ of 
connected components of $[0,1]\setminus Fix (g)$. 

Then, the induced map $g_{\cI}$ 
(equal to $g$ on the components in $\cI$ and equal to $id$ out of these components),
is a $C^1$ diffeomorphism of $[0,1]$.  Furthermore,  $h_n g_{\cI} h_n^{-1}\xrightarrow{C^1} id$  
as $n\to\infty$.  In other words, $g_{\cI}\in G_{\cH}$. 
\end{lemm}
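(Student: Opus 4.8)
The statement has two parts: first, that the induced map $g_{\cI}$ is $C^1$; second, that it is conjugated $C^1$-close to the identity by the same sequence $h_n$. The first part is elementary and does not use the hypothesis on $G$: since $g$ is $C^1$ and fixes the endpoints of each component $I=(a,b)\in\cI$, the one-sided derivatives of $g$ at $a$ and $b$ exist; the real point is that they must equal $1$, which is \emph{not} automatic in general but \emph{is} forced here because $G$ is $C^1$-close to the identity, hence (Theorem~\ref{t.translation} / Corollary~\ref{c.translationponctelle}) has no hyperbolic fixed points, so $Dg(a)=Dg(b)=1$. Consequently gluing $g|_{[a,b]}$ to the identity produces a map whose derivative is continuous across $a$ and $b$, and $g_{\cI}$ is a $C^1$-diffeomorphism of $[0,1]$.

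For the second and main part, I would argue via the cohomological reformulation from Section~\ref{s.cohomologique}. Writing $\psi_n=\log Dh_n$, the hypothesis $h_n g h_n^{-1}\xrightarrow{C^1} id$ for all $g\in G$ means $\psi_n(g(x))-\psi_n(x)\xrightarrow{unif}-\log Dg(x)$ for every $g\in G$; what must be shown is the same convergence with $g_{\cI}$ in place of $g$, i.e. $\psi_n(g_{\cI}(x))-\psi_n(x)\xrightarrow{unif}-\log Dg_{\cI}(x)$. Split $[0,1]$ into the set $\bigcup_{I\in\cI}I$, where $g_{\cI}=g$ and $\log Dg_{\cI}=\log Dg$, and its complement, where $g_{\cI}=id$ and $\log Dg_{\cI}=0$. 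On the complement the statement to prove is simply $\psi_n(x)-\psi_n(x)=0$, which is trivially true. On $\bigcup_{I\in\cI}I$ we have $\psi_n(g_{\cI}(x))-\psi_n(x)=\psi_n(g(x))-\psi_n(x)$, which converges uniformly to $-\log Dg(x)=-\log Dg_{\cI}(x)$ because it does so on all of $[0,1]$. Uniformity on the union of the two pieces is immediate since the bound from the $g$-estimate is uniform and the estimate on the complement is exact.

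The one subtlety deserving care — and I expect it to be the only real obstacle — is continuity of the limit function and of $g_{\cI}$ at the boundary points $\partial(\bigcup_{I\in\cI}I)$, which need not be just the endpoints of the $I\in\cI$ but may include accumulation points of such endpoints. At such a point $x_0$, $g_{\cI}(x_0)=x_0$ (it is a fixed point of $g$, being a limit of fixed points) and $Dg_{\cI}(x_0)=1$; one needs $Dg(x)\to 1$ as $x\to x_0$ along $\bigcup I$. This follows from $C^1$-ness of $g$ together with $Dg=1$ at every endpoint of every component of $[0,1]\setminus Fix(g)$ (again because $G$ has no hyperbolic fixed points), plus the fact that accumulation of such endpoints at $x_0$ pins $Dg(x_0)=1$ and continuity of $Dg$ does the rest. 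Hence $g_{\cI}\in Diff^1_+([0,1])$ and the uniform cohomological estimate holds globally; by the Lemma at the start of Section~\ref{s.cohomologique}, $h_n g_{\cI} h_n^{-1}\xrightarrow{C^1} id$, i.e. $g_{\cI}\in G_{\cH}$.
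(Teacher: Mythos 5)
Your proof is correct, but you take a different route from the paper. You reduce the claim to the cohomological reformulation of Section~\ref{s.cohomologique} (writing $\psi_n=\log Dh_n$ and splitting the almost-coboundary estimate over $\mathrm{supp}(g_{\cI})$ and its complement). The paper's argument is more direct and avoids the cohomological detour entirely: it simply observes that, since conjugation by $h_n$ carries $Fix(g)$ to $Fix(h_n g h_n^{-1})$ and each component of $[0,1]\setminus Fix(g)$ to a component of $[0,1]\setminus Fix(h_ngh_n^{-1})$, the map $h_n g_{\cI} h_n^{-1}$ is itself a map induced by $h_n g h_n^{-1}$; and a map induced by a $C^1$-diffeomorphism without hyperbolic fixed points is again $C^1$ with $C^1$-distance to the identity no larger than that of the original (both $|f'(x)-x|\le|f(x)-x|$ and $|Df'(x)-1|\le|Df(x)-1|$ pointwise, since at each point $f'$ equals $f$ or the identity). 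Your approach buys a self-contained, line-by-line verification in cohomological language and makes the role of the accumulation-of-endpoints subtlety fully explicit, whereas the paper's approach buys brevity and exhibits the structural fact — conjugation commutes with taking induced maps — which is what actually drives the lemma. Both arguments rely on the same non-trivial input, namely that $\mc{C}^1_{id}\subset\mc{C}^1_{nonhyp}$, to guarantee that $g_{\cI}$ is $C^1$; you correctly identify this and handle the boundary/accumulation points carefully.
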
 
\begin{demo} First notice that $g$ has no hyperbolic fixed point: the derivative of $g$ is $1$ 
at each extremities
of the components of $\cI$. One deduces that $g_{\cI}$ is a diffeomorphism.

Then, $h_n g_{\cI} h_n^{-1}$ is induced by $h_n g h_n^{-1}$. Therefore its $C^1$-distance to identity is smaller that the distance from  $h_n g h_n^{-1}$ to identity.
\end{demo}

\begin{coro}\label{c.c1completion} Consider  a group $G$,  $C^1$-close to identity, and a sequence $\cH=\{h_n\}$ 
of diffeomorphisms so that $h_n g h_n^{-1}\to id$ for every $g\in G$. Then the completion 
$I^{\infty}(G)$ is contained in $G_{\cH}$.  In particular, $I^{\infty}(G)$ is $C^1$-close to identity.
\end{coro}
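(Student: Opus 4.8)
The plan is to prove, by induction on $n$, that each group $I^n(G)$ in the tower $G = I^0(G) \subset I^1(G) \subset \cdots$ from Lemma~\ref{l.completion} is contained in $G_{\cH}$, and then to pass to the union $I^\infty(G) = \bigcup_n I^n(G)$.

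First I would record the two ingredients. By the Remark above, $G_{\cH}$ is itself a group which is $C^1$-close to the identity, with the sequence $\cH = \{h_n\}$ as a witness. Next I would apply the preceding Lemma with the group taken to be $G_{\cH}$ and the conjugating sequence taken to be $\cH$ itself: its hypothesis, namely $h_n g h_n^{-1} \xrightarrow{C^1} id$ for every $g$ in the group, holds for all $g \in G_{\cH}$ by the very definition of $G_{\cH}$. The conclusion is that for every $g \in G_{\cH}$ and every family $\cI$ of connected components of $[0,1]\setminus Fix(g)$, the induced map $g_{\cI}$ is again a $C^1$ diffeomorphism and belongs to $G_{\cH}$. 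In words: $G_{\cH}$ is a group of $C^1$ diffeomorphisms which is stable under the operation $g \mapsto g_{\cI}$ of passing to an induced map. This is the one place where the absence of hyperbolic fixed points is used, to see that $g_{\cI}$ is $C^1$; it is already inside the proof of that lemma, so nothing new is needed.

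Now the induction. The base case $I^0(G) = G \subset G_{\cH}$ is the hypothesis. Assume $I^n(G) \subset G_{\cH}$. By construction $I^{n+1}(G) = I(I^n(G))$ is the subgroup generated by all homeomorphisms $h$ induced by elements of $I^n(G)$; by the characterization of induced maps recalled earlier, every such $h$ has the form $g_{\cI}$ with $g \in I^n(G)$ and $\cI$ a family of connected components of $[0,1]\setminus Fix(g)$. Since $g \in G_{\cH}$ by the induction hypothesis, the stability property above gives $h = g_{\cI} \in G_{\cH}$. As $G_{\cH}$ is a group, the subgroup generated by these $h$, which is all of $I^{n+1}(G)$, is contained in $G_{\cH}$; this closes the induction. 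In particular each $I^n(G)$, and hence $I^\infty(G)$, consists of genuine $C^1$ diffeomorphisms, so no separate regularity argument is needed.

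Taking the union over $n$ yields $I^\infty(G) = \bigcup_{n\in\NN} I^n(G) \subset G_{\cH}$, and a subgroup of a group $C^1$-close to the identity is $C^1$-close to the identity (the same conjugating sequence works), so $I^\infty(G)$ is $C^1$-close to the identity. The only step carrying any content is the appeal to the preceding Lemma ensuring that induced maps of elements of $G_{\cH}$ remain $C^1$ and stay inside $G_{\cH}$; the rest is bookkeeping with the tower $I^n(G)$ of Lemma~\ref{l.completion}, and I do not expect a genuine obstacle.
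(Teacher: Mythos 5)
Your proof is correct and follows the route the paper leaves implicit: the paper states the corollary without spelling out the induction, but the only way to make the tower argument work is precisely your observation that the preceding Lemma applies to $G_{\cH}$ itself (with $\cH$ as its own conjugating sequence, and using the Remark that $G_{\cH}$ is a group $C^1$-close to the identity), so that induced maps of elements of $G_{\cH}$ stay in $G_{\cH}$. Making this explicit is exactly the right move, and the rest — base case $G\subset G_{\cH}$, inductive step via $I^{n+1}(G)=I(I^n(G))$, passage to the union, and the final remark that a subgroup of $G_{\cH}$ is $C^1$-close to the identity — is the intended bookkeeping.
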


\begin{lemm}\label{l.completesanshyp} If $G\subset Diff^1_+([0,1])$ is a group without hyperbolic fixed points, 
then the completion $I^{\infty}(G)$ is  contained in $Diff^1_+([0,1])$ and is without hyperbolic 
fixed points.
\end{lemm}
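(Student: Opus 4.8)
The plan is to prove, by induction on $n$, that each iterate $I^n(G)$ of the construction is contained in $\mathrm{Diff}^1_+([0,1])$ and is without hyperbolic fixed points; since $I^\infty(G)=\bigcup_n I^n(G)$ and both properties are tested on a single element (and its fixed points), the conclusion for $I^\infty(G)$ follows immediately. The base case $n=0$ is the hypothesis on $G$, so I fix $G'=I^{n-1}(G)$, assume inductively that $G'\subset\mathrm{Diff}^1_+([0,1])$ has no hyperbolic fixed point, and treat $I^n(G)=I(G')$.

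First, the regularity. Every generator of $I(G')$ is a map $g_{\cI}$ induced by some $g\in G'$, where $\cI$ is a family of connected components of $[0,1]\setminus\mathrm{Fix}(g)$. Because $G'$ has no hyperbolic fixed point, $Dg=1$ at every endpoint of every such component, so $g_{\cI}$ — which equals $g$ on $\bigcup_{I\in\cI}I$ and the identity elsewhere — has matching one-sided derivatives $1$ at each junction. A short estimate using the uniform continuity of $Dg$ (exactly as in the proof of the lemma preceding Corollary~\ref{c.c1completion}: any point near a fixed point of $g$ but inside a component of $\cI$ lies next to an endpoint of that component, where $Dg=1$) shows that $g_{\cI}\in\mathrm{Diff}^1_+([0,1])$ and that $Dg_{\cI}=1$ on $\mathrm{Fix}(g_{\cI})$, so $g_\cI$ itself has no hyperbolic fixed point. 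Since $I(G')$ is generated by such maps and $\mathrm{Diff}^1_+([0,1])$ is a group, $I(G')\subset\mathrm{Diff}^1_+([0,1])$.

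The delicate point is that $I(G')$ has no hyperbolic fixed point: a composition of two $C^1$ diffeomorphisms without hyperbolic fixed points may acquire one, so this is not formal. I would prove it by an adaptive replacement along the pseudo-orbit of the fixed point. Let $h\in I(G')$ and $p\in\mathrm{Fix}(h)$; write $h=h_k\circ\cdots\circ h_1$ with each $h_i=(g_i)_{\cI_i}$ induced by $g_i\in G'$ (inverses of induced maps are induced, so such a factorization exists). Put $p_0=p$, $p_i=h_i(p_{i-1})$, so $p_k=p$. For each $i$ set $\bar g_i=g_i$ if $p_{i-1}$ lies in a component of $\cI_i$ or lies in $\mathrm{Fix}(g_i)$, and $\bar g_i=\mathrm{id}$ if $p_{i-1}$ lies in a component of $[0,1]\setminus\mathrm{Fix}(g_i)$ not in $\cI_i$; these three cases partition the possibilities. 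In every case $\bar g_i$ and $h_i$ send $p_{i-1}$ to the same point $p_i$, and $D\bar g_i(p_{i-1})=Dh_i(p_{i-1})$: in the first case either $h_i$ coincides with $g_i$ on a neighbourhood of $p_{i-1}$, or $p_{i-1}\in\mathrm{Fix}(g_i)$ forces $Dg_i(p_{i-1})=1$ and, since $|h_i(x)-x|\le|g_i(x)-x|$ everywhere, also $Dh_i(p_{i-1})=1$; in the third case $h_i$ is the identity near $p_{i-1}$. Hence $\bar h:=\bar g_k\circ\cdots\circ\bar g_1$ belongs to $G'$, fixes $p$, and by the chain rule $D\bar h(p)=\prod_i D\bar g_i(p_{i-1})=\prod_i Dh_i(p_{i-1})=Dh(p)$. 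As $G'$ has no hyperbolic fixed point, $D\bar h(p)=1$, so $Dh(p)=1$. This closes the induction and proves the lemma.

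I expect the main obstacle to be precisely this last step: recognizing that "without hyperbolic fixed points'' is not inherited by the generated group for free, and that the right remedy is not a global structural fact (in particular one does not need the "without crossing'' property of Theorem~\ref{t.sanshyperbolique}, though it is available for $I^\infty(G)$) but the purely local device of replacing, factor by factor and along the orbit of the prescribed fixed point, each induced map either by the element it is induced from or by the identity — chosen so that both the orbit of $p$ and the derivative at $p$ are left unchanged, landing inside $G'$. The remaining work (the case analysis on where $p_{i-1}$ sits, and the elementary distortion estimate showing $g_{\cI}$ is $C^1$) is routine.
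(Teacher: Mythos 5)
Your proof is correct, and it takes a genuinely different route from the paper's. The paper argues by contradiction: if $a$ is a hyperbolic fixed point of $h\in I(G)$, it is isolated, so $\{a,b\}$ is a pair of successive fixed points of $h$; Lemma~\ref{l.successive} (proved earlier, using the without-crossing structure) supplies $g\in G$ with the same pair $\{a,b\}$ of successive fixed points; the relative translation number $\tau_{g,(a,b)}(h)$ is then finite, but $Dg(a)=1$ while $Dh(a)\neq 1$ would force it to be infinite. Your proof instead replaces the factorization $h=h_k\cdots h_1$ factor by factor along the orbit of the prescribed fixed point $p$, substituting each induced map $h_i=(g_i)_{\cI_i}$ by either $g_i$ or $\mathrm{id}$ according to where $p_{i-1}$ sits, and checks position and derivative agree at each step. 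This is a purely local argument: it does not invoke Lemma~\ref{l.successive}, the without-crossing property, or the translation-number machinery, only the definition of an induced map, the $C^1$ gluing at tangential endpoints, and the hypothesis that $G'$ has no hyperbolic fixed point. What the paper's route buys is brevity, given the structural theory already built; what your route buys is self-containedness, a cleaner logical dependency graph, and an explicit identification of the element $\bar h\in G'$ responsible for the derivative — it would survive even in settings where the translation number is not available. Your case analysis (interior of a component in $\cI_i$; endpoint or fixed point of $g_i$; interior of a component not in $\cI_i$) is exhaustive, and the observation that in the middle case both $Dg_i(p_{i-1})$ and $Dh_i(p_{i-1})$ equal $1$ (the former by hypothesis, the latter by the squeeze $h_i(x)\in\{x,g_i(x)\}$) is exactly the point that makes the chain-rule bookkeeping close up.
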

\begin{demo}It is enough to show that $I(G)$ is a goup of diffeomorphsim witout hyperbolic fixed points. 

As seen before, as the elements $g\in G$ are diffeomorphism without hyperbolic fixed points, every induced
map is a diffeomorphism. It remains to show that $I(G)$ is without hyperbolic fixed points. 

Assume that $a$ is a hyperbolic fixed point of an element $h\in I(G)$. Then $a$ is an isolated fixed point of $h$. 
Let $b$ be the next fixed point, that is $\{a,b\}$ is a pair of succesive fixed points of $h$. 
According to Lemma~\ref{l.successive} there is $g\in G$ so that $\{a,b\}$ are successive fixed points of 
$g$.  Thus $\tau_{g,(a,b)}(h)$ is well defined and finite.  However,  the derivative $g'(a)$ is $1$ because 
$G$ is without hyperbolic fixed points.  One easily deduces that $h'(a)=1$ 
(otherwise,$\tau_{g,(a,b)}(h)$ is infinite),   contradicting the hypothesis.  
\end{demo}

\subsection{Algebraic presentation: specific subgroups}

The finitely generated groups $C^1$-close to identity may be complicated.  However the complete groups without 
crossing admits special subgroups with a simple presentation. 

An element in $G$ is called \textbf{\emph{simple}} if  $[0,1]\setminus Fix (g)$ 
consist in a unique interval whose  closure is  
the support of $g$ and denoted $supp(g)$. 

\begin{rema} 
\begin{enumerate}
\item Let $G\subset Homeo_+([0,1])$ be a complete group without crossing.  
Each $g\in G$ is the limit, for the $C^0$-topology, of the products of the 
induced simple elements $g_I$ where $I$ covers the set of connected components of $[0,1]\setminus Fix(g)$.
\item Let $G\subset Diff_+^1([0,1])$ be a complete group without hyperbolic fixed points. 
Each $g\in G$ is the limit, for the $C^1$-topology, of the products of the 
induced simple elements $g_I$ where $I$ covers the set of connected components of $[0,1]\setminus Fix(g)$.
\end{enumerate}
\end{rema}

Let  $G\subset Homeo_+([0,1])$ be a group  without crossing. According to Theorem~\ref{t.sanscroisement}, 
given any  pairs $f,g$ simple elements of  $G$, 
one has one of the following possibilities: 
\begin{itemize}
 \item  either the supports $supp(f)$ and $supp(g)$ have disjoint interiors
 \item or the supports are equal
 \item or else the support of one of the diffeomorphism is contained in a fundamental domain of the other.
\end{itemize}

The group generated by $f$ and $g$ depends essentially on these $3$ configurations and admits a simple presentation 
if $supp(f)\neq supp(g)$.

\begin{prop}\label{p.simplesubgroup} Let $G$ be group without crossing let $g_1$ and $g_2$ be two simple elements of $G$.
Then 
\begin{itemize}
 \item if $Int(supp(g_1)\cap supp(g_2))=\emptyset$ then $g_1$ and $g_2$ commute:
 $$<g_1,g_2>=\ZZ^2.$$
 
 \item if    $supp(g_2)\subset Int(supp(g_1))$,  
  the group $<g_1,g_2>$ admits as unique relations 
 the fact that the conjugates of $g_1^ig_2g_1^{-i}$, $i\in\ZZ$, pairwize commute. More precisely
 $$<g_1,g_2>=\left(\bigoplus_{\ZZ} \ZZ\right)\rtimes \ZZ$$
 where $\ZZ$ acts by conjugacy on $\left(\bigoplus_{\ZZ} \ZZ\right)$ as a shift of the $\ZZ$ factors. 
 \end{itemize} 
 \end{prop}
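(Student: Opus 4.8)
The plan is to treat the two cases separately, each time producing the required presentation by exhibiting a normal form for elements of $\langle g_1,g_2\rangle$ and checking it is compatible only with the asserted relations. For the first case, where $Int(supp(g_1)\cap supp(g_2))=\emptyset$, the supports are two intervals with disjoint interiors, so on $supp(g_1)$ the element $g_2$ is the identity and on $supp(g_2)$ the element $g_1$ is the identity; hence $g_1g_2$ and $g_2g_1$ agree pointwise (on each support one of them is trivial, outside both supports both are trivial). Thus $g_1g_2=g_2g_1$, giving a surjection $\ZZ^2\twoheadrightarrow\langle g_1,g_2\rangle$. Injectivity: if $g_1^pg_2^q=id$ then restricting to $Int(supp(g_1))$ (where $g_2$ is trivial and $g_1$ has no fixed point, being simple) forces $p=0$, and similarly $q=0$. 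So $\langle g_1,g_2\rangle\cong\ZZ^2$.

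For the second case, $supp(g_2)\subset Int(supp(g_1))$, write $S=supp(g_2)$ and $J=supp(g_1)$. First I would note that since $G$ is without crossing (Theorem~\ref{t.sanscroisement}, or rather the ``no linked fixed points'' property applied through the stabilizer structure of Theorem~\ref{t.structure}), for each $i\in\ZZ$ the interval $g_1^i(S)$ is again a connected component of $J\setminus Fix(g_1^ig_2g_1^{-i})$; moreover, because $S$ is contained in a fundamental domain of $g_1$ inside $J$ (this is exactly what $supp(g_2)\subset Int(supp(g_1))$ gives, using that $g_1$ is simple, i.e.\ fixed‑point free on the interior of $J$, together with the no‑crossing property which prevents $S$ from straddling a point $g_1^k(x)$), the intervals $\{g_1^i(S)\}_{i\in\ZZ}$ are pairwise disjoint. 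Set $b_i=g_1^ig_2g_1^{-i}$, a simple element with $supp(b_i)=g_1^i(S)$. Since the $b_i$ have pairwise disjoint supports, they pairwise commute (same argument as the first case), so there is a well-defined homomorphism $\left(\bigoplus_\ZZ\ZZ\right)\rtimes\ZZ\twoheadrightarrow\langle g_1,g_2\rangle$, sending the $i$-th factor to $b_i$ and the $\ZZ$ on the right to $g_1$, the semidirect action being the shift because $g_1b_ig_1^{-1}=b_{i+1}$.

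It remains to prove injectivity of this surjection, which is the main obstacle. The plan is: an arbitrary element of the abstract group has a normal form $g_1^n\cdot\prod_{i}b_i^{\,e_i}$ with finitely many nonzero exponents $e_i$; suppose its image is the identity. Applying it to a point far to the left of $S$ inside $J$ and tracking how $g_1^n$ moves it (using that $g_1$ has no fixed point in $Int(J)$, so $g_1^n$ acts without fixed point there for $n\neq0$, and that the $b_i$ have supports contained in disjoint fundamental domains so conjugation by them cannot be undone by $g_1$-translation), one first forces $n=0$ by comparing the positions $g_1^n(x)$ and $x$ relative to the (disjoint) supports $g_1^i(S)$. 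Once $n=0$, the element is $\prod_i b_i^{\,e_i}$ with commuting factors of disjoint support; restricting to each $g_1^i(S)$, where only $b_i$ acts nontrivially and acts without interior fixed point ($b_i$ being simple), forces each $e_i=0$. The delicate point to get right is the $n=0$ step: one must use that no nonzero power of $g_1$ can send the configuration $\bigcup_i g_1^i(S)$ to itself in a way matching a finite product of the $b_i^{\,e_i}$ — this follows because $g_1^n$ shifts the indexing $i\mapsto i+n$ of the supports $g_1^i(S)$, while a finite product of $b_i$'s preserves each $g_1^i(S)$ setwise; so the composite can only be the identity set-map when $n=0$. I expect this bookkeeping with the shifted disjoint supports to be the part requiring the most care, while everything else is routine once the disjointness of $\{g_1^i(S)\}$ is established from the no‑crossing hypothesis.
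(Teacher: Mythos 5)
Your proof is correct and follows essentially the same route as the paper: establish pairwise disjointness of the $g_1^i(S)$ via the no-crossing hypothesis, deduce the $b_i$ commute, build the surjection from $\left(\bigoplus_\ZZ\ZZ\right)\rtimes\ZZ$, and then prove injectivity by first killing the $\ZZ$-exponent and then the remaining exponents on disjoint supports. The only variation is in how the $\ZZ$-exponent $n$ is killed: the paper invokes the relative translation number $\tau_{g_1,J}$ from Theorem~\ref{t.translation}, whereas you argue directly by tracking how $g_1^n$ shifts the family of disjoint intervals $\{g_1^i(S)\}_{i\in\ZZ}$ while $\prod_i b_i^{e_i}$ preserves each one setwise — a more elementary and self-contained version of the same observation.
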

 \begin{proof} We just need to prove the second point.  If $supp(g)\subset Int(supp(f))$, 
 then the images by $f^i$ of $supp(g)$ are pairwize disjoint. This proves 
  that the $g_i= f^i g f^{-i}$, $i\in\ZZ$,  pairwize commute. This allows us to define a morphism
 $$\varphi\colon \left(\bigoplus_{\ZZ} \ZZ\right)\rtimes \ZZ= <a,b| [a^iba^{-i},a^jba^{-j}], i,j\in \ZZ\} \to 
 <f,g>,$$
 so that $\varphi(a) = f$ and $\varphi(b)= g$. 
 It remains to show that $\varphi$ is injective.  For that, notice that 
 every element of the group can be written as $a^ib_{i_1}^{\beta_1}\dots b_{i_k}^{\beta_k}$ with pairwize distinct $b_{i_k}=a^{i_k}ba^{-i_k}$.  
 The image is 
 $f^ig_{i_1}^{\beta_1}\dots g_{i_k}^{\beta_k}$.  The translation number relative to $f$ is $i$ so that the element is not the identity unless $i=0$. 
 In that case the element is $g_{i_1}^{\beta_1}\dots g_{i_k}^{\beta_k}$ which vanishes only if all the $\beta_i$ vanish, ending the proof.

\end{proof}

The proposition admits a straightforward generalization as follows: 
 
 \begin{lemm}\label{l.semidirect}
\begin{enumerate} 
\item  Let $I$, $J$ be two segments with disjoint interiors. 
Consider subgroups  $H,K\subset Homeo_+([0,1])$,   supported on $I$ and $J$, respectively.
Then the group generated by $H$ and $K$ is isomorphic to  $H\oplus K$.

\item Consider $f\in G$  and $I$ a fundamental domain of $f$. Let  $H\subset Homeo_+([0,1])$ be a 
subgroup of homeomorphsisms supported in $I$. 

Then the group generated by $H$ and $f$ is  $$<H,f>=\left(\bigoplus_\ZZ H\right)\rtimes \ZZ$$ 
where the factor $\ZZ$ is generated by $f$ and acts on  $\left(\bigoplus_\ZZ H\right)$ by conjugacy 
 as a shift of the $H$ factors.
\end{enumerate}

\end{lemm}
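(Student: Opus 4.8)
\textbf{Proof plan for Lemma~\ref{l.semidirect}.}

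The plan is to treat the two items essentially as free/amalgamation-type arguments built on the disjointness of supports, using the dynamical description of the supports to verify injectivity of the obvious candidate isomorphism. For item (1), I would first observe that, since $H$ is supported on $I$ and $K$ on $J$ with $\mathring I\cap\mathring J=\emptyset$, every $h\in H$ and $k\in K$ commute: on $\mathring I$ the map $k$ is the identity, on $\mathring J$ the map $h$ is the identity, and on the (at most one) common boundary point both fix it, so $hk=kh$ pointwise. This gives a surjective morphism $H\oplus K\to \langle H,K\rangle$, $(h,k)\mapsto hk$. For injectivity, suppose $hk=\mathrm{id}$; restricting to $\mathring I$ gives $h=\mathrm{id}$ (as $k|_{\mathring I}=\mathrm{id}$), and symmetrically $k=\mathrm{id}$, so $(h,k)=(e,e)$.

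For item (2), since $I$ is a fundamental domain of $f$, the translates $f^n(I)$, $n\in\ZZ$, have pairwise disjoint interiors. Hence for each $n$ the conjugate $f^nHf^{-n}$ is a subgroup supported on $\overline{f^n(I)}$, and by item (1) applied inductively, any two of these conjugates (for distinct $n$) commute elementwise. This yields a well-defined morphism
$$\varphi\colon \left(\bigoplus_\ZZ H\right)\rtimes\ZZ \longrightarrow \langle H,f\rangle$$
sending the $n$-th copy of $H$ to $f^nHf^{-n}$ and the generator of the acting $\ZZ$ to $f$; it is surjective by construction, and the shift action matches because $f\cdot(f^nhf^{-n})\cdot f^{-1}=f^{n+1}hf^{-(n+1)}$. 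It remains to prove $\varphi$ is injective. Every element of $\left(\bigoplus_\ZZ H\right)\rtimes\ZZ$ can be written uniquely as $f^m\cdot\prod_{j} (f^{n_j}h_jf^{-n_j})$ with the $n_j$ pairwise distinct and $h_j\in H\setminus\{e\}$; its image is $f^m w$ where $w$ is supported inside $\bigcup_j f^{n_j}(I)$, a subset of $[0,1]$ meeting each fundamental domain of $f$ only in finitely many translates. If $m\neq 0$ then $f^m w$ has no fixed point in $I$ (since $w|_I$ is either $\mathrm{id}$ or one of the $h_j$, and then $f^m$ moves the relevant fundamental domain), in fact one sees $f^m w$ moves some point of $[0,1]$, so it is not the identity. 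If $m=0$, then on the interior of each $f^{n_j}(I)$ the map $w$ agrees with $h_j$ (the other factors being the identity there), so $w=\mathrm{id}$ forces every $h_j=\mathrm{id}$, hence the element is trivial.

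The main obstacle is the bookkeeping in the injectivity step of item (2): one must be careful that, although $\bigoplus_\ZZ H$ is an \emph{infinite} direct sum, each individual group element involves only finitely many nontrivial factors, so the disjointness of the finitely many relevant translates $f^{n_j}(I)$ suffices and no convergence issue arises; and one must handle the case $m\neq 0$ by exhibiting a genuinely moved point rather than arguing naively on $I$ alone. This is exactly the argument already carried out in the proof of Proposition~\ref{p.simplesubgroup} for $H=\ZZ$, and the general case is identical once $H$ replaces the cyclic group generated by the simple element $g_2$; the normal form for elements of $\left(\bigoplus_\ZZ H\right)\rtimes\ZZ$ plays the role that the normal form $a^i b_{i_1}^{\beta_1}\cdots b_{i_k}^{\beta_k}$ played there.
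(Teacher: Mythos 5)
Your proof is correct, and its structure matches the approach the paper uses for Proposition~\ref{p.simplesubgroup} (of which Lemma~\ref{l.semidirect} is stated without proof as a ``straightforward generalization''): commutativity of conjugates with disjoint supports, the obvious candidate morphism, and injectivity via a normal form where one handles separately the $f$-power $m$ and the $\bigoplus_\ZZ H$-part. The one place you depart from the paper is the injectivity argument for $m\neq 0$: the paper's proof of Proposition~\ref{p.simplesubgroup} invokes the \emph{translation number relative to $f$} (hence implicitly uses the ``without crossing'' framework of Section~3 to ensure this is well defined), whereas you argue directly that $f^m w$ moves a point, for instance a boundary point of $I$, since $w$ fixes $\partial I$ and $f^m(x_0)\neq x_0$. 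Your version is more elementary and in fact more general, since it does not require $\langle H,f\rangle$ to lie in a group without crossing for the translation number to make sense; this is a small but real advantage, and worth noting, because Lemma~\ref{l.semidirect} as stated only assumes $H\subset Homeo_+([0,1])$ is supported in a fundamental domain of $f$. Two cosmetic points: in the last sentence of the $m=0$ case, $w$ agrees on $\mathring{f^{n_j}(I)}$ with $f^{n_j}h_jf^{-n_j}$ rather than with $h_j$ itself; and the normal form should be read in the abstract group $\left(\bigoplus_\ZZ H\right)\rtimes\ZZ$ before applying $\varphi$, not as an identity among homeomorphisms, which is what you intend but is worth saying explicitly.
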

 
\section{Elementary groups} 
 
 In this section we  define rules for a family $\cS\subset Homeo_+(0,1])$ so 
 that the generated group is without crossing and  admits  $\cS$  as a topological basis. 
 
\subsection{Fundamental  systems and elementary groups}
 Recall that $f\in Homeo_+([0,1])$ is called simple if $[0,1]\setminus Fix (f)$
 has a unique connected component (whose closure  is the support $supp(f)$).  A simple homeomorphism
 $f$ is called \textbf{\emph{positive}} is $f(x)\geq x$ for all $x\in [0,1]$.
 
 Let $\cI nt([0,1])$ denote the set of segments of $[0,1]$
 \begin{defi} Consider a family $\cS=\{f,S_f,I_f\}\subset Homeo_+([0,1])\times \cI nt([0,1])\times \cI nt([0,1])$.  One says that $\cS$ 
 is a \textbf{\emph{fundamental system}}
 if: 
 \begin{itemize}
 \item for any $(f,S_f,I_f)\in \cS$, $f$ is a simple positive homeomorphism, $S_f=supp(f)$ and  $I_f\subset supp(f)$ 
 is a fundamental domain of $f$ 
 \item for any distinct $(f,,S_f,I_f)\neq (g,S_g, I_g)\in \cS$ one has 
 \begin{itemize}
  \item either $S_f$ and $S_g$ have disjoint interiors
  \item or $S_f\subset I_g$ or else $S_g\subset I_f$
 \end{itemize}
 \end{itemize}
 \end{defi}

 The aim of this section is: 
 
 \begin{prop}\label{p.elementaire} Let $\cS$ be a fundamental system.  
 Then, the group $G\subset Homeo_+([0,1])$ generated by
 the elements of $\cS$ is without crossing, and totally rational. 
 \end{prop}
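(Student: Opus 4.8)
The strategy is to argue by induction on the "complexity" of words in the generators and to use the tree-like (nested) structure of the supports $S_f$ guaranteed by the definition of a fundamental system. First I would set up the combinatorial skeleton: consider the collection of segments $\{S_f : (f,S_f,I_f)\in\cS\}$ together with, for each nested pair $S_g\subset I_f$, all the translates $f^k(S_g)$. Because $I_f$ is a fundamental domain of $f$, these translates $\{f^k(S_g)\}_{k\in\ZZ}$ have pairwise disjoint interiors, and two supports coming from $\cS$ are either disjoint-interior, equal, or one contained in a fundamental domain of the other. Iterating, one builds a forest $\cT$ whose nodes are the iterated translates of the $S_f$'s: the children of a node $J$ (a translate of some $S_f$) are the maximal translates $g^k(S_h)$ contained in a single fundamental domain of the copy of $f$ carried by $J$. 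The key combinatorial fact to isolate as a lemma is that any $g\in G$ moves each such node either to a sibling node (same parent) or fixes it, and that $\mathrm{Fix}(g)$ is a union of complements of such nodes — i.e. every connected component of $[0,1]\setminus\mathrm{Fix}(g)$ is one of the nodes $J$ of $\cT$.

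Granting that structural lemma, being without crossing follows from Lemma~\ref{l.defilinked}: if $\{a,b\}$ and $\{c,d\}$ are successive fixed points of elements of $G$, then $(a,b)$ and $(c,d)$ are both nodes of the forest $\cT$, and any two nodes of a forest are either disjoint, equal, or one strictly contained in the other — and in the last case the inner one is contained in a single child-subtree, hence in particular $[c,d]\subset(a,b)$ (the boundary points of the inner node are interior to the outer node because a child translate $g^k(S_h)$ sits strictly inside a fundamental domain $I_g$, whose closure is strictly inside $S_g$). For total rationality I would then observe that for a node $J=(a,b)$ which is a translate of $S_f$, the stabilizer $G_{[a,b]}$ acts with $f$ (conjugated into $J$) as an element of infinite order with no fixed point in $(a,b)$, and by the structural lemma every other generator of $G$ whose support meets $(a,b)$ either fixes $[a,b]$ with fixed points inside (hence lies in the kernel of $\tau_{f,[a,b]}$) or is a power-conjugate of $f$ itself; thus the image $\tau_{f,[a,b]}(G_{[a,b]})$ is generated by $\tau_{f,[a,b]}(f)=1$, i.e. is cyclic.

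The main obstacle, and the step deserving the most care, is proving the structural lemma: that for every reduced word $g=s_n^{\pm1}\cdots s_1^{\pm1}$ in the generators, the connected components of $[0,1]\setminus\mathrm{Fix}(g)$ are exactly nodes of $\cT$, and that $g$ permutes the nodes at each level compatibly with the tree order. This is where one must do induction on $n$, carefully tracking how multiplying by one more generator $s^{\pm1}$ — whose own support is a node $S_s$ sitting somewhere in the forest — interacts with the already-understood node-structure of the shorter word; the delicate point is when $S_s$ coincides with or contains a node that the shorter word moves, so that cancellations and "carrying" between fundamental domains occur, and one must verify no new linked configuration is created. I would handle this by first treating the two-generator cases (which is essentially Proposition~\ref{p.simplesubgroup} and Lemma~\ref{l.semidirect}), then reducing the general case to a local computation inside a single minimal node containing $\mathrm{supp}(s)\cup\mathrm{supp}(g_{\text{shorter}})$, where again a fundamental-domain decomposition applies and the induction hypothesis can be invoked on each fundamental domain separately.
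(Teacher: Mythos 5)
Your plan correctly identifies the skeleton of the argument and it is, at a high level, the same one the paper uses: the nested (forest-like) configuration of the supports and their $G$-translates, and the assertion that every connected component of $[0,1]\setminus\mathrm{Fix}(g)$, for $g\in G$, is one of these translates. That assertion is exactly the content of Lemma~\ref{l.elementairesuccessive} (every pair of successive fixed points of an element of $G$ has the form $f(S_h)$ for some $f\in G$ and $(h,S_h,I_h)\in\cS$) together with the companion lemma that $g(S_f)$ either equals $S_f$ or has disjoint interior from it. And the deduction of ``without crossing'' from this fact via Lemma~\ref{l.defilinked} and the nesting of nodes is also what the paper does.

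The problem is that this ``structural lemma'' is essentially the entire content of the proposition, and you do not prove it — you only name it as the delicate point. Moreover, the induction you sketch (build up $g' = s^{\pm1}\cdot g$ one letter at a time and check that the node structure is preserved) is not how the paper argues, and it is likely to be genuinely harder: $\mathrm{Fix}(g')$ can change drastically when a letter is appended (cancellations create large new fixed sets and split or merge components), and there is no a priori control of the new components from the hypothesis on $g$. The paper instead uses a minimal-counterexample argument: take a shortest word $h=f_n^{\pm1}\cdots f_1^{\pm1}$ exhibiting a bad pair $\{a,b\}$; pick the index $i$ whose support $S_{f_i}$ is maximal among supports meeting $(a,b)$; show $S_{f_i}\supset[a,b]$ and that all other supports must lie in $S_{f_i}$ (else a letter can be deleted, contradicting minimality); split the letters into those equal to $f_i$ with total exponent $\alpha$ and the rest; observe that $\alpha=0$ (else $\{a,b\}=\partial S_{f_i}$); and then rewrite the word as a product of conjugates $f_i^{\beta_j}f_jf_i^{-\beta_j}$, which collapses to a strictly shorter word unless the $\beta_j$ are constant, in which case $h$ is a conjugate of a shorter word. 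That rewriting-to-conjugates step, which is what actually kills the induction, is absent from your plan.

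Your total rationality argument has the same gap in a more hidden form. It is not enough to know what the generators of $G$ do on $(a,b)$: you must control every $h\in G_{[a,b]}$. In the paper the claim is obtained as a by-product of the same normal-form analysis — the word representing $h$ can be reduced so that $h$ acts as $f_i^{\alpha}$ on the orbit of $\partial I_{f_i}$, with $\alpha\in\ZZ$, whence $\tau(h)=\alpha\in\ZZ$. You assert this conclusion (``every other generator ... is a power-conjugate of $f$ itself'') without the word-rewriting that justifies it, and the assertion as phrased (about generators rather than about arbitrary elements of the stabilizer) does not by itself yield total rationality of the image.
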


 \begin{lemm}\label{l.elementairesuccessive} Let $\cS$ be a fundamental system and  $G\subset Homeo_+([0,1])$ be the group generated by
 $\cS$.  Assume that $\{a,b\}$ is a pair of successive fixed points of an element of $G$. 
 Then there are $f\in G$ and $(g,S_g,I_g)\in \cS$ so that $[a,b]=f(S_g)$.
 \end{lemm}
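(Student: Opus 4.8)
The plan is to argue by induction on the word length of an element $g\in G$ realizing $\{a,b\}$ as a pair of successive fixed points, writing $g=s_k^{\pm 1}\cdots s_1^{\pm 1}$ with each $s_i$ a generator from $\cS$. The base case $k=1$ is immediate: if $g=s^{\pm 1}$ with $(s,S_s,I_s)\in\cS$, then $\{a,b\}$ is the boundary pair of $S_s=supp(s)$, and one takes $f=\mathrm{id}$ and $(g,S_g,I_g)=(s,S_s,I_s)$. For the inductive step I would fix an interior point $x\in(a,b)$ and, for each $i$, let $(a_i,b_i)$ be the connected component of $[0,1]\setminus Fix(s_i)$ containing $x$ when $s_i(x)\neq x$ (and $a_i=b_i=x$ otherwise); since each $s_i$ is simple, $(a_i,b_i)$ is either empty-as-a-point or equals $Int(supp(s_i))=Int(S_{s_i})$, so the defining trichotomy of a fundamental system forces the nonempty $(a_i,b_i)$ to be nested. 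As in the proof of Lemma~\ref{l.successive}, the union $I=\bigcup_i(a_i,b_i)$ is invariant under every $s_i$, hence under $g$, so $(a,b)\subset I$, and in fact $I=(a_j,b_j)=Int(supp(s_j))$ for some index $j$ realizing the maximal interval.

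The heart of the argument is then to peel off one occurrence of an outermost generator. Let $s=s_j$ be the generator with $supp(s)=\overline I$, and let $i_1<\dots<i_m$ be all indices with $(a_{i_\ell},b_{i_\ell})=Int(supp(s))$. As in Lemma~\ref{l.successive}, using that the translation number $\tau_{s,I}$ relative to $s$ on $I$ is a morphism and that the generators $s_i$ with a fixed point in $I$ act trivially on the minimal set $\cM\subset I$ of the action of the stabilizer (Lemma~\ref{l.minimal}), one computes that $g$ and the "collapsed" word $g_0:=s^{\pm1}\cdots s^{\pm1}$ (the product of just the occurrences of $s^{\pm1}$) induce the same map on $\cM$. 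If the total exponent $\tau_{s,I}(g)=\sum_\ell(\pm1)$ is nonzero, then the $g$-orbits on $\cM$ run from one endpoint of $I$ to the other, which forces $(a,b)=I=Int(supp(s))$; one is done with $f=\mathrm{id}$, $(g,S_g,I_g)=(s,S_s,I_s)$. If the total exponent is zero, then $m\geq 2$, and using the identity $uvu^{-1}\cdot u = u\cdot v$ repeatedly to shuttle the occurrences of $s^{\pm1}$ together (conjugating the letters between them, which keeps them as words in $\cS$), one rewrites $g=s^0\cdot g'=g'$ where $g'$ is a strictly shorter word in the generators; since $\{a,b\}$ is still a pair of successive fixed points of $g'$ (the rewriting does not change the map), the inductive hypothesis applies to $g'$ and produces $f\in G$ and $(h,S_h,I_h)\in\cS$ with $[a,b]=f(S_h)$.

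The main obstacle I expect is not the nesting or the trichotomy — those follow mechanically from the fundamental system axioms — but the bookkeeping in the exponent-zero case: one must be careful that the cancellation of the outermost letter $s$ genuinely shortens the word and that conjugating the intervening syllables by powers of $s$ keeps them inside the generating set $\cS$ (it keeps them inside $G$, which is all that is needed, but the word length must be controlled). This is exactly the mechanism already used in the proof of Lemma~\ref{l.successive}, so the real work is to check that the reduction there — phrased for products of induced homeomorphisms — transfers verbatim to products of the generators of a fundamental system, which it does because the key inputs (no crossing, $\tau$ is a morphism, kernel acts trivially on $\cM$) are available here by Proposition~\ref{p.elementaire}'s conclusion together with Theorem~\ref{t.structure}. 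A secondary point to verify is that when $\tau_{s,I}(g)\neq 0$ one really gets $(a,b)=I$ and not a proper subinterval: this is because a free action of $\ZZ$ (the image of $\tau_{s,I}$) on the minimal set has no invariant proper sub-closed-set with fixed endpoints inside $I$, forcing the endpoints of $(a,b)$ to be the endpoints of $I$.
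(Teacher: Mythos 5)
Your overall strategy---induction on word length, looking at the nested family of supports around a point $x\in(a,b)$, and peeling off an outermost generator $s$---matches the skeleton of the paper's minimal-counterexample argument, but the proposal has two genuine gaps.

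The first is a circularity. You justify the translation-number/minimal-set machinery by appealing to ``Proposition~\ref{p.elementaire}'s conclusion together with Theorem~\ref{t.structure}.'' But Lemma~\ref{l.elementairesuccessive} is itself the first step in the proof of Proposition~\ref{p.elementaire}, and Theorem~\ref{t.structure} applies only to groups already known to be without linked fixed points---which is precisely what Proposition~\ref{p.elementaire} is being proved to establish for $G$. So neither $\tau_{s,I}$ being a morphism on $G_I$, nor the kernel acting trivially on a minimal set $\cM$, is available at this stage of the argument. The paper deliberately avoids this: it replaces $\tau_{s,I}$ and $\cM$ by the concrete orbit of $\partial I_{s}$ under $s$, on which every other letter $f_j$ (having $S_{f_j}\subset I_s$) acts as the identity, so that $g$ restricts there to $s^\alpha$. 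This needs only the fundamental-system axioms, nothing about crossings.

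The second is the inductive step in the zero-exponent case. After shuttling the occurrences of $s^{\pm1}$ to one side via $uvu^{-1}\cdot u=u\cdot v$ and cancelling $s^\alpha=s^0$, what remains is $\prod_{j\in B}s^{-\beta_j}f_j^{\pm1}s^{\beta_j}$, and this is \emph{not} a shorter word in the generators $\cS$: the conjugated syllables are not elements of $\cS$, and if you expand them back into $\cS$-letters the length is typically the same (try $g=s^k f_1 s^{-k}$, where nothing cancels). So the inductive hypothesis cannot be invoked on this expression. The paper closes the gap with a bifurcation you omit. If the exponents $\beta_j$, $j\in B$, are \emph{not all equal}, the conjugates have pairwise-disjoint supports, so $\{a,b\}$ lies in only one of them, and the $\cS$-letters belonging to the other blocks can be deleted from the \emph{original} word, which strictly shortens it. If the $\beta_j$ are \emph{all equal} to some $\beta$, then $g=s^{-\beta}\bigl(\prod_{j\in B}f_j^{\pm1}\bigr)s^{\beta}$ is literally a conjugate by $s^\beta$ of a word of length $|B|<n$; minimality applies to the inner word to give $\{c,d\}=f'(S_h)$ for some $f'\in G$, $h\in\cS$, and then $\{a,b\}=s^{-\beta}(\{c,d\})=(s^{-\beta}f')(S_h)$. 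The conclusion $[a,b]=f(S_g)$ is stable under composing $f$ with any element of $G$, which is exactly what absorbs the conjugating factor---a point your sketch never makes explicit. Without this case distinction the induction does not close.
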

\begin{demo} 
Assumme it is not the case, and consider  a pair $\{a,b\}$ of successive fixed points which are not 
in the orbit of some $S_g$. Let $h=f_n^{\pm 1}\cdots f_1^{\pm 1}$, $f_i\in \cS$,  having $\{a,b\}$ as a pair of successive fixed
points. We chose $\{a,b\}$ and $h$ so that $n$ is minimal for these properties. 

Notice that $(a,b)$ is not disjoint from all the supports of the $S_{f_i}$, otherwize $h$ 
would be the identity
on $[a,b]$ contradicting the definition of $a,b$.

\begin{clai}
Consider $x\in(a,b)$ so that $h(x)\neq x$. The supports $S_{f_j}$ containing 
$x$ are totally ordered by the inclusion, by definition of a fundamental system. 
Let $i$ such that $S_{f_i}$ is the largest support $S_{f_j}, j=1\dots n$, containing $x$. 

Then $S_{f_i}$ contains $[a,b]$. 
\end{clai}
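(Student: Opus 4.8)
The plan is to prove the sharper statement that the interval $S=S_{f_i}$ is invariant under \emph{every} generator occurring in the chosen word $h=f_n^{\pm1}\cdots f_1^{\pm1}$, and then to read the claim off from that.

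First I would pin down the choice of $i$. Since $h(x)\neq x$, at least one generator $f_j$ in the word moves $x$, i.e.\ $x\in\mathring{S_{f_j}}$. Whenever two supports $S_{f_j},S_{f_k}$ both contain $x$ in their interiors their interiors overlap, so the fundamental system axiom forces one of them to sit inside a fundamental domain --- hence inside the support --- of the other; thus the supports containing $x$ in their interiors are totally ordered by inclusion, and I let $S=S_{f_i}$ be the largest of them. (A generator whose support contains $x$ only on its boundary has support strictly contained in any $S_{f_j}$ with $x\in\mathring{S_{f_j}}$, by the same axiom, so $S$ is genuinely the largest support among $S_{f_1},\dots,S_{f_n}$ containing $x$.)

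The heart of the argument is that $f_j(S)=S$ for every $j$, which I would check according to whether $f_j$ fixes $x$. If $f_j(x)\neq x$, then $x\in\mathring{S_{f_j}}$, so $S_{f_j}\subseteq S$ by maximality; in that case $\min S$ and $\max S$ either lie outside $\mathring{S_{f_j}}$ or are endpoints of $S_{f_j}$ itself, so $f_j$ fixes them. If $f_j(x)=x$, then $f_j\neq f_i$, and I apply the axiom to the pair $(f_i,f_j)$: the alternative $S_{f_i}\subseteq I_{f_j}$ is impossible, since it would give $x\in\mathring S\subseteq I_{f_j}\subseteq\mathring{S_{f_j}}$, contradicting $f_j(x)=x$; so either $\mathring{S_{f_j}}\cap\mathring S=\emptyset$, and then $f_j$ fixes all of $S$ (it meets $S_{f_j}$ in at most an endpoint of $S_{f_j}$), or $S_{f_j}\subseteq I_{f_i}\subseteq\mathring S$, and then again $\min S,\max S\notin S_{f_j}$ are fixed by $f_j$. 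In every case $f_j$ fixes the two endpoints of $S$, hence so does the composition $h$.

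To finish, $x$ lies strictly between the fixed points $\min S$ and $\max S$ of $h$, and $(a,b)$ is the connected component of $[0,1]\setminus Fix(h)$ through $x$, so $(a,b)$ cannot contain $\min S$ or $\max S$; therefore $\min S\leq a$ and $b\leq\max S$, that is $[a,b]\subseteq S=S_{f_i}$, as claimed. The only delicate point is the case of a generator $f_j$ with $f_j(x)=x$: there one must invoke the fundamental domain $I_{f_j}$, and not merely the support $S_{f_j}$, to prevent $S_{f_j}$ from protruding out of $S$; everything else is bookkeeping with nested intervals.
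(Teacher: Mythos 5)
Your proof is correct and follows essentially the same route as the paper: show that every generator $f_j$ appearing in the word fixes the two endpoints of $S_{f_i}$ (hence so does $h$), then conclude $[a,b]\subset S_{f_i}$ because $x$ is trapped between two fixed points of $h$. The paper's own argument simply asserts that, by maximality and the fundamental-system axiom, $S_{f_j}$ is either interior-disjoint from $S_{f_i}$ or contained in it; your case split on whether $f_j$ fixes $x$ spells out that assertion in more detail (in particular, your explicit elimination of the alternative $S_{f_i}\subset I_{f_j}$ is exactly the maximality argument the paper leaves implicit), but the key observation and the conclusion are the same.
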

\begin{demo}For every $j$, one has $f_j(S_{f_i})=S_{f_i}$, because the support of $f_j$ is either disjoint from
$S_{f_i}$ or contained in it. Therefore the end points of $S_{f_i}$ are fixed points of all the $f_j$, hence of $h$.
The definition of successive fixed points of $h$ implies that $[a,b]\subset S_{f_i}$, concluding.
\end{demo}

One deduces

\begin{clai}
 $S_{f_j}\subset S_{f_i}$ for every $j$.
\end{clai}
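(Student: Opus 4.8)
\begin{demo}
The plan is to reduce the claim to the minimality of $n$ by deleting the ``irrelevant'' letters of $h$. Recall what is already available: the preceding sub-claim gives $[a,b]\subset S_{f_i}$, and $S_{f_i}$ is inclusion-maximal among those of $S_{f_1},\dots,S_{f_n}$ whose interior contains $x$ (such supports do exist, since $h(x)\neq x$ forces $x\in\mathring{S_{f_k}}$ for at least one $k$, and two of them, having overlapping interiors, are nested by the fundamental system axiom).

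First I would classify each index $j\in\{1,\dots,n\}$ relative to $S_{f_i}$. If $f_j$ is the same element of $\cS$ as $f_i$, then $S_{f_j}=S_{f_i}$. Otherwise the fundamental system axiom leaves exactly three possibilities for the pair $(f_j,f_i)$: $\mathring{S_{f_j}}\cap\mathring{S_{f_i}}=\varnothing$, or $S_{f_j}\subset I_{f_i}$, or $S_{f_i}\subset I_{f_j}$. The third is impossible: it would make $S_{f_j}$ a support strictly larger than $S_{f_i}$, since $S_{f_i}\subset I_{f_j}\subsetneq S_{f_j}$, whose interior still contains $x$, since $x\in S_{f_i}\subset I_{f_j}\subset\mathring{S_{f_j}}$, contradicting the maximality of $S_{f_i}$. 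So every index $j$ is either \emph{inner}, meaning $S_{f_j}\subset S_{f_i}$ (this covers both $S_{f_j}=S_{f_i}$ and $S_{f_j}\subset I_{f_i}\subset S_{f_i}$), or \emph{outer}, meaning $\mathring{S_{f_j}}\cap\mathring{S_{f_i}}=\varnothing$. Then I would record two elementary remarks, both consequences of the inclusion $I_{f_i}\subset\mathring{S_{f_i}}$ built into the definition of a fundamental system: an inner letter $f_j^{\pm 1}$ is supported inside $S_{f_i}$ and fixes $\partial S_{f_i}$, hence maps $S_{f_i}$ onto itself; an outer letter $f_j^{\pm 1}$ restricts to the identity on $S_{f_i}$, since $S_{f_i}\cap\mathring{S_{f_j}}=\varnothing$.

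Then I would show that no index is outer, which is precisely the claim. If some index were outer, let $h'$ be obtained from $h=f_n^{\pm 1}\cdots f_1^{\pm 1}$ by deleting all the outer letters; it has strictly fewer than $n$ letters. Every letter of $h$ maps $S_{f_i}$ onto itself and the outer ones act there as the identity, so $h'$ coincides with $h$ on $S_{f_i}$, in particular on $[a,b]$. Hence $h'(a)=a$, $h'(b)=b$, and $h'(c)=h(c)\neq c$ for every $c\in(a,b)$, so $\{a,b\}$ is again a pair of successive fixed points of $h'$; since the pair $\{a,b\}$ is unchanged, and still not in any $G$-orbit of an $S_g$, this contradicts the minimality of $n$.

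The hard part is the second paragraph: one has to check carefully that the trichotomy of the fundamental system, together with the maximality of $S_{f_i}$ and the inclusion $[a,b]\subset S_{f_i}$, genuinely produces the clean inner/outer dichotomy, and in particular that inner letters preserve $S_{f_i}$ while outer ones fix it pointwise. This is exactly where the structure of a fundamental system --- that each fundamental domain lies inside the corresponding open support --- is used.
\end{demo}
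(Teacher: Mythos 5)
Your proof is correct and follows essentially the same route as the paper: identify the case $S_{f_i}\subset I_{f_j}$ as impossible by the maximality of $S_{f_i}$, conclude that any $f_j$ whose support is not contained in $S_{f_i}$ must act as the identity on $S_{f_i}$ (since $S_{f_i}$ is invariant under all the letters, the whole orbit of $[a,b]$ stays there), and then delete such letters to produce a shorter word with the same pair of successive fixed points, contradicting the minimality of $n$. The only cosmetic difference is that you delete all the ``outer'' letters simultaneously while the paper deletes a single offending letter; either one already yields the contradiction.
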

\begin{demo}If $S_{f_j}$ is not contained in $S_{f_i}$, then $f_j$ is the identity map on $S_{f_i}$. 
Furthermore, $S_{f_i}$ is invariant under all the $f_k$, and contains $[a,b]$.  One deduces that $f_i$ is the 
identity map on the orbit of $[a,b]$ under the subgroup generated by the $f_k, k=1\dots n$.
Thus $\{a,b\}$ is still a pair of successive fixed points of the word obtained by deleting the 
letter $f_j$. This  contradicts the minimality of  $n$.  
\end{demo}

Now we splitt $\{1,...n\}=A\coprod B$ where $A$ are the indices $j$ so that $f_j=f_i$ and 
$B$ the other indices, 
so that, for $j\in B$ one has $$S_{f_j}\subset I_{f_i}.$$ 

Every element $f_j$, $j\in B$, acts as the identity on the orbit for $f_i$ of $\partial I_{f_i}$. 
One deduces 
\begin{clai}
Let $\alpha$ denote the sum  of the coefficient $\pm 1$ of the $f_{k}$, $k\in A$.  Then $\alpha=0$.
\end{clai}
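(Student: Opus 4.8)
The plan is to analyze the action on the minimal set and use the relative translation number relative to $f_i$. First I would note that, by the previous claims, we have reduced to the situation where $[a,b]\subset S_{f_i}$, every $S_{f_j}$ with $j\in B$ is contained in the fundamental domain $I_{f_i}$, and $S_{f_j}=S_{f_i}$ for $j\in A$ with $f_j=f_i^{\pm 1}$. Consider the subgroup $H$ of $G$ generated by the restrictions to $S_{f_i}$ of the letters $f_1,\dots,f_n$; it is without crossing (Proposition~\ref{p.elementaire} applies to the restricted fundamental system, or one invokes that a subgroup of a group without crossing restricted to an invariant interval is without crossing). Since $f_i$ has no fixed point in the interior of $S_{f_i}$, Lemma~\ref{l.minimal} provides a nonempty closed $H$-invariant set $\La\subset (a,b)$ union of the minimal sets, and the relative translation number $\tau=\tau_{f_i,S_{f_i}}$ is defined on the stabilizer $H_{S_{f_i}}=H$.

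Next I would compute $\tau(h)$. For each $j\in B$, the support $S_{f_j}$ lies inside the single fundamental domain $I_{f_i}$ of $f_i$, so $f_j$ has fixed points in $S_{f_i}$; by Lemma~\ref{l.minimal} (equivalently Lemma~\ref{l.translation}, fourth item / the kernel description in Theorem~\ref{t.translation}) such $f_j$ lies in $\mathrm{Ker}(\tau)$, i.e. $\tau(f_j^{\pm1})=0$. For $j\in A$ we have $f_j=f_i$ and $\tau(f_i)=1$, so $\tau(f_i^{\pm1})=\pm1$. Since $\tau$ is a homomorphism $H\to\RR$, $\tau(h)=\tau(f_n^{\pm1}\cdots f_1^{\pm1})=\sum_{k\in A}(\pm1)=\alpha$.

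Then I would argue that $\tau(h)=0$. Since $\{a,b\}$ are successive fixed points of $h$ and $\{a,b\}$ is, by our standing assumption for contradiction, not in the orbit of any $S_g$, in particular $(a,b)\neq S_{f_i}$, so $(a,b)$ is strictly contained in $S_{f_i}$. By Theorem~\ref{t.translation} (the kernel of $\tau$ consists precisely of the elements having a fixed point in $S_{f_i}$; equivalently Corollary~\ref{c.translationponctelle} item 3 and Lemma~\ref{l.translation}), if $\tau(h)\neq 0$ then $h$ has no fixed point in the interior of $S_{f_i}$, forcing $(a,b)=S_{f_i}$ — a contradiction. Hence $\tau(h)=0$, and combining with the previous paragraph gives $\alpha=0$, as claimed.

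The main obstacle I anticipate is making rigorous the reduction to the restricted group and the invocation of the translation number: one must check that the restriction of $G$ (or of the specific finite word $h$) to $S_{f_i}$ genuinely falls under the hypotheses where $\tau_{f_i,S_{f_i}}$ is defined and behaves as a homomorphism with the stated kernel — this uses that $S_{f_i}$ is invariant under all the letters, that the restricted family is still a fundamental system, and Proposition~\ref{p.elementaire} together with Theorem~\ref{t.structure}/Theorem~\ref{t.translation}. The rest — identifying which letters lie in the kernel and summing — is bookkeeping.
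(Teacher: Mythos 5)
The computation you perform is the right one, and the conclusion is correct, but the route you take to justify it is circular. You invoke the relative translation number $\tau_{f_i,S_{f_i}}$ as a homomorphism on the stabilizer, citing Theorem~\ref{t.structure}/Theorem~\ref{t.translation}. However, those theorems construct $\tau$ only for groups that are already known to be without linked fixed points (equivalently, without crossing), or $C^1$-close to the identity. The present claim sits inside the proof of Lemma~\ref{l.elementairesuccessive}, which is itself a step in the proof of Proposition~\ref{p.elementaire}, the statement that an elementary group is without crossing. So you cannot appeal to Proposition~\ref{p.elementaire} (``the restricted fundamental system is without crossing'') nor to the general translation-number machinery here: that is precisely what is being established. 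And since all the $S_{f_j}$ lie in $S_{f_i}$, the ``restricted'' system need not have smaller cardinality, so no induction on the size of the system rescues the argument.

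The paper's own proof is the concrete, unencumbered version of the computation you carry out abstractly: consider the $f_i$-orbit of $\partial I_{f_i}$. Every $f_j$ with $j\in B$ is supported in $I_{f_i}$ and hence fixes every point of this orbit; therefore $h$ agrees with $f_i^{\alpha}$ on this orbit. If $\alpha\neq 0$, monotonicity of $h$ and the fact that the orbit points accumulate only on $\partial S_{f_i}$ force $h$ to have no fixed point in the interior of $S_{f_i}$, so $\{a,b\}=\partial S_{f_i}$, which is already of the forbidden form $f(S_{f_i})$ with $f=\mathrm{id}$. This delivers $\alpha=0$ directly, with no appeal to a translation-number homomorphism. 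In short, your item-by-item accounting ($\tau(f_j)=0$ for $j\in B$, $\tau(f_i^{\pm1})=\pm1$, and ``$\tau(h)\neq 0$ forces $(a,b)=S_{f_i}$'') is exactly what the orbit-of-$\partial I_{f_i}$ argument makes rigorous, but you should replace the invocation of $\tau_{f_i,S_{f_i}}$ by that direct observation to avoid assuming what is being proved.
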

\begin{demo}  Notice that , 
$h=f_i^\alpha$ on the orbit of $\partial I_{f_i}$.  If $\alpha\neq 0$, this implies that $h$ has no fixed point on $S_{f_i}$. 
Thus 
$\{a,b\}=S_{f_i}$ contradicting the definition of $\{a,b\}$. 
\end{demo}

Now, one can rewrite $h$ as a product of conjugated  of $f_j$, $j\in B$ by some power $f_i^{\beta_j}$. In other words there are 
$\beta_j\in \ZZ$, $j\in B$, so that 
$$h= \prod_{j\in B} f_i^{-\beta_j}f_jf_i^{\beta_j}.$$

Each of these conjugates $f_i^{\beta_j} f_j f_i^{-\beta_j}$ is supported in  $f_i^{\beta_j}(S_j)$ 
which is contained in the fundamental domain 
$f_i^{\beta_j}(I_{f_i})$. 

If all the $\beta_j$ are not equal, then the conjugates corresponding to different $\beta_j$ have disjoint 
interior of support, hence commutes. Furthermore, $\{a,b\}$ is contained in one of these intervals.  
One does not  change the pair of successive fixed point by deletting the terms corresponding 
to the other $\beta_j$. Once again, if the $\beta_j$ are not constant, one gets a smaller word, 
contradicting the minimality of $n$.

We can now assume that all the $\beta_j$ are equal to some $\beta$. So, $h$ is the conjugate by $f_i^{\beta}$ of the product of the $f_j, j\in B$.  So, $\{a,b\}$ is the image by $f_i^{\beta}$
of a pair $\{c,d\}$ of successive fixed points of the product of the $f_j, j\in B$. 
As the cardinal of $B$ is strictly smaller
than $n$, the minimality of $n$ implies that $\{c,d\}$ is the image by an element of $G$ of one of the 
$S_g$, $g\in \cS$.  One deduces the same property for $\{a,b\}=f_i^{\beta}(\{c,d\})$, 
getting a contraction with the definition of $\{a,b\}$.
 \end{demo}
 
 If the group  $G$ admits  a crossing,  this means by definition that there is a pair 
 of successive fixed points $\{a,b\}$ and an element 
 $g\in G$ with $g(\{a,b\})\cap (a,b)\neq \emptyset$. According to Lemma~\ref{l.elementairesuccessive}, 
 one can assume
 that $(a,b)$  is some $S_f$ with $f\in \cS$.  One concludes the proof ot Proposition~\ref{p.elementaire} 
 by showing:
 \begin{lemm}Let $\cS$ be a fundamental system and  $G\subset Homeo_+([0,1])$ be the group generated by
 the element of $\cS$. Given any $f\in \cS$ and any $g\in G$ either $g(S_f)$ and $S_f$ have disjoint 
 interior or $g(S_f)=S_f$.   
 \end{lemm}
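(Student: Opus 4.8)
The plan is to reduce to a finite situation and then induct on the number of generators. For the reduction, observe that any $g\in G$ is a product of finitely many elements of $\cS$ and their inverses; let $\cS'\subseteq\cS$ be the finite sub-collection consisting of $f$ together with the generators occurring in one such product. Since a finite sub-collection of a fundamental system is again a fundamental system, and since both $g$ and $f$ lie in $\langle\cS'\rangle$, it suffices to prove the following statement: if $G$ is generated by a fundamental system $\{(g_j,S_{g_j},I_{g_j})\}_{1\le j\le m}$, then for every index $i$ and every $g\in G$ one has $g(S_{g_i})=S_{g_i}$ or the interiors of $g(S_{g_i})$ and $S_{g_i}$ are disjoint.

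I would prove this by induction on $m$. For $m=1$ the group is $\langle g_1\rangle$, and $g_1$ fixes the endpoints of $S_{g_1}$, so $g_1^k(S_{g_1})=S_{g_1}$. For $m\ge2$, pick the index $m$ so that $S_{g_m}$ is maximal for inclusion among the $S_{g_j}$, and set $A=\{j:S_{g_j}\subseteq S_{g_m}\}$ and $B=\{j:\ S_{g_j}\ \text{and}\ S_{g_m}\ \text{have disjoint interiors}\}$; the fundamental-system axiom together with the maximality of $S_{g_m}$ shows that $\{1,\dots,m\}=A\sqcup B$ and $m\in A$. For $j\in A\setminus\{m\}$ one has $S_{g_j}\subsetneq S_{g_m}$, hence $S_{g_j}\subseteq I_{g_m}$, so $H:=\langle g_j:j\in A\setminus\{m\}\rangle$ is supported in the fundamental domain $I_{g_m}$ of $g_m$; by Lemma~\ref{l.semidirect}, $H':=\langle g_j:j\in A\rangle=\langle H,g_m\rangle=(\bigoplus_{\ZZ}H)\rtimes\ZZ$, the factor $\ZZ=\langle g_m\rangle$ shifting the copies of $H$, which are supported in the fundamental domains $g_m^l(I_{g_m})$, pairwise with disjoint interiors. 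On the other hand $K:=\langle g_j:j\in B\rangle$ has support with interior disjoint from $S_{g_m}$, so it commutes elementwise with $H'$; as each generator in $A$ commutes with each generator in $B$, every $g\in G$ can be written $g=g'g''$ with $g'\in H'$, $g''\in K$ and $g'g''=g''g'$. Both $H$ and $K$ are generated by fundamental systems of cardinality strictly less than $m$, so the induction hypothesis applies to them.

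I would then run through the cases for $g_i$. If $i\in B$, then $g''$ preserves each component of the complement of the interior of $S_{g_m}$ while $g'$ is the identity there, so $g(S_{g_i})=g''(S_{g_i})$ and the induction hypothesis for $K$ finishes. If $i=m$, the $\bigoplus_{\ZZ}H$-part of $g'$ and the power of $g_m$ both preserve $S_{g_m}$ and $g''$ is the identity on $S_{g_m}$, so $g(S_{g_m})=S_{g_m}$. If $i\in A\setminus\{m\}$, then $g''$ is the identity on $S_{g_m}\supseteq S_{g_i}$, so $g(S_{g_i})=g'(S_{g_i})$; writing $g'=c\,g_m^k$ with $c\in\bigoplus_{\ZZ}H$, the interval $g_m^k(S_{g_i})$ lies in $g_m^k(I_{g_m})$, on which $c$ acts as the single conjugate $g_m^kh_kg_m^{-k}$ for some $h_k\in H$, whence $g(S_{g_i})=g_m^k\bigl(h_k(S_{g_i})\bigr)$. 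Applying the induction hypothesis to $h_k\in H$ and $S_{g_i}$, and using that $g_m^k(I_{g_m})$ has interior disjoint from $I_{g_m}\supseteq S_{g_i}$ whenever $k\ne0$, one checks in each of the resulting subcases ($k=0$ or $k\ne0$; $h_k(S_{g_i})=S_{g_i}$ or with interior disjoint from $S_{g_i}$) that $g(S_{g_i})$ equals $S_{g_i}$ or has interior disjoint from $S_{g_i}$. This closes the induction, and the lemma follows for the original group.

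The conceptual content is carried entirely by Lemma~\ref{l.semidirect} and by the fact that distinct fundamental domains of a simple homeomorphism meet only at their endpoints; I expect the only genuine work to be the boundary bookkeeping, namely the repeated use of the principle that a homeomorphism supported in a closed set is the identity on the closed components of the complement of its interior, and in particular is the identity on every fundamental domain of a given simple element other than the one containing its support. Once this lemma is established it yields the conclusion of Proposition~\ref{p.elementaire}: combined with Lemma~\ref{l.elementairesuccessive} it shows that $G$ has no crossing (a crossing would occur at a pair of successive fixed points which, after conjugating, may be taken to be some $S_g$, $g\in\cS$, contradicting the lemma), and total rationality then follows from the relative translation number description.
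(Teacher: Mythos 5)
Your proof is correct, but it takes a genuinely different route from the paper's. The paper argues by taking a \emph{minimal counterexample word} $g=g_n^{\alpha_n}\cdots g_1^{\alpha_1}$ with $\sum|\alpha_i|$ minimal: it shows that either some letter can be deleted (contradicting minimality) or else the chain $S_f\subset I_{g_1},\ S_{g_1}\subset I_{g_2},\ \dots$ forces each partial product to push $S_f$ into a fundamental domain with interior disjoint from the previous one, so that $g(S_f)$ ends up with interior disjoint from $S_f$ after all. This is a purely combinatorial word-manipulation and never invokes Lemma~\ref{l.semidirect}. You instead reduce to a finite fundamental system and induct on its cardinality, choosing a maximal support $S_{g_m}$, splitting the generators into $A\sqcup B$, and using the algebraic decomposition $G=\bigl((\bigoplus_{\ZZ}H)\rtimes\ZZ\bigr)\times K$ supplied by Lemma~\ref{l.semidirect} to write $g=g'g''$ and then localize $g'$ to the single conjugate $g_m^k h_k g_m^{-k}$ acting on $g_m^k(I_{g_m})$. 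Both are correct; the paper's version is more elementary and self-contained, while yours makes the algebraic structure of elementary groups do the work and is closer in spirit to the induction the paper later runs for Proposition~\ref{p.solvable} and for Theorem~\ref{t.finitecase}. The boundary bookkeeping you flag (supports meeting only at endpoints, a homeomorphism fixing the closure of the set where it is the identity) is indeed the only delicate point in either version, and you handle it correctly.
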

 \begin{demo} Assume, arguing by absurd, that it is not ne case and consider 
 $g=g_n^{\alpha_n}\dots g_1^{\alpha_1}$, $g_i\in \cS$,   and $f\in\cS$ so that:
 \begin{itemize}
  \item $g_{i+1}\neq g_i$ for every $i\in\{1,\dots n-1 \}$, 
  \item $g(S_f)\neq S_f$ ,
  \item and $g(S_f)$ intersects the interior of $S_f$.
 \end{itemize}
 We chose $g,f$ so that $\sum_1^n |\alpha_i|$ is minimal for these properties.
 
If $S_{g_1}\subset S_f$ or if $S_{g_1}$ and $S_{f}$ have disjoint interiors, 
then $g_1(S_f)=S_f$.  In that case, one may delete $g_1$ contradicting the minimality of $n$. 

Thus, $S_f\subset I_{g_1}$, by definition of a fundamental system. 
So, $g_1^{\alpha_1}(S_f)\subset g_1^{\alpha_1}(I_{g_1})$ and its interior is disjoint from
$I_{g_1}$ but is contained in $S_{g_1}$. 

If $S_{g_1}\not\subset I_{g_2}$, as $g_2\neq g_1$, one gets that either $S_{g_2}$ is of 
interior disjoint from $S_{g_1}$, 
or $S_{g_2} \subset I_{g_1}$.  In both cases, $g_2$ is the identity map on $g^{\alpha_1}(g_f)$.  Thus,
 one may delete 
$g_2$, contradicting the minimality of the word. 

So $S_{g_1}\subset I_{g_2}$ and 
$g_2^{\alpha_2}g_1^{\alpha_1}(S_f)\subset g_2^{\alpha_2}(I_{g_2})\subset S_{g_2}$.  In particular, 
$g_2^{\alpha_2}g_1^{\alpha_1}(S_f)$ is disjoint from $S_f$. 

An easy induction proves that $S_{g_i}\subset I_{g_{i+1}}$ and 
$g_{i+1}^{\alpha_i+1}\cdots g_2^{\alpha_2}g_1^{\alpha_1}(S_f)$ is disjoint from $S_f$ for every $i$,
concluding.  
\end{demo}

\begin{defi} A group $G\subset Homeo_+([0,1])$ is called \textbf{\emph{an elementary group}} if it is generated by a
fundamental system.
\end{defi}

\begin{rema} If $H$ is topologically conjugate to an elementary group $G$, then $H$ is an elementary group. 
\end{rema}

\subsection{The topological dynamics and the topology of the fundamental systems}

 Next proposition explains that a elementary group, generated by a finite fundamental system,  is topologically determined 
 by the topological configuration 
 of the intervals $(S_f,I_f)$ of the funtamental systems.
 
 \begin{prop} Let $\cS$ and $\Si$ be two finite fundamental systems and let
 $G$ and $\Ga$ be the  groups generated by $\cS$ and $\Si$, respectively. 
 Assume that there is a 
 homeomorphism $h\colon [0,1]\to [0,1]$  and a bijection 
 $\varphi\colon \cS\to\Si$, $\varphi(f,S_f,I_f)=(\varphi(f), S_{\varphi(f)},I_{\varphi(f)})$ so that  
 for every $f$, 
 $$S_{\varphi(f)}=h(S_f)\quad\mbox{and}\quad I_{\varphi(f)}=h(I_f)$$

Then there is a homeomorphism $\tilde h\colon [0,1]\to [0,1]$ conjugating $G$ to $\Ga$: 
$$\Ga=\{\tilde h g \tilde h^{-1}, g\in G\}.$$
More precisely for every $(f,S_f,I_f)\in \cS$, $\tilde h f \tilde h^{-1}=\varphi(f)$. 
\end{prop}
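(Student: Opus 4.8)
The plan is induction on $n=|\cS|=|\Si|$. Note first that, up to conjugating everything by affine maps, the proposition holds with $[0,1]$ replaced by any compact segment; I will use this rescaled form whenever I apply the induction hypothesis inside a subinterval. For $n=0$ the groups are trivial and $\tilde h=h$ works. For the inductive step, look at the supports $S_f$, $f\in\cS$, and their maximal elements for inclusion: if $S_f$ and $S_g$ are two distinct maximal supports, the fundamental-system axiom cannot place one inside a fundamental domain of the other (that would make it a proper sub-support), so distinct maximal supports have disjoint interiors. This splits the argument in two.

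Suppose first there are at least two maximal supports $S_{f_1},\dots,S_{f_k}$. Each $S_g$ ($g\in\cS$) lies in a unique one of them, so $\cS=\bigsqcup_i\cS_i$ with $\cS_i=\{g\in\cS:S_g\subseteq S_{f_i}\}$ a fundamental system supported in $S_{f_i}$ and $|\cS_i|<n$; likewise $\Si=\bigsqcup_i\varphi(\cS_i)$, with $\varphi(\cS_i)$ supported in $h(S_{f_i})$. Applying the induction hypothesis on the segment $S_{f_i}$, with the homeomorphism $h|_{S_{f_i}}$ (which still matches supports and fundamental domains), yields $\tilde h_i\colon S_{f_i}\to h(S_{f_i})$ with $\tilde h_i g\tilde h_i^{-1}=\varphi(g)$ for all $g\in\cS_i$. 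These agree with $h$ on each boundary $\partial S_{f_i}$, so they glue with $h$ on $[0,1]\setminus\bigcup_i\mathring S_{f_i}$ into a homeomorphism $\tilde h$. As every $g\in\cS$ is supported in a single block, $\tilde h g\tilde h^{-1}=\varphi(g)$ for all $g\in\cS$, hence $\tilde h G\tilde h^{-1}=\Ga$.

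Suppose now there is a single maximal support $S_0=S_{f_0}$, $f_0\in\cS$. Then every $g\in\cS':=\cS\setminus\{f_0\}$ has $S_g\subseteq S_0$, and since $S_g$ is not contained in its own fundamental domain, the axiom forces $S_g\subset I_0:=I_{f_0}$. Thus $H:=\langle\cS'\rangle$ is supported in the single fundamental domain $I_0$ of $f_0$ (and $\varphi(\cS')$ in $I_0':=h(I_0)=I_{\varphi(f_0)}$), with $|\cS'|=n-1$. The induction hypothesis on the segment $I_0$, applied with $h|_{I_0}\colon I_0\to I_0'$, provides $\hat h\colon I_0\to I_0'$ with $\hat h g\hat h^{-1}=\varphi(g)$ on $I_0'$ for every $g\in\cS'$, carrying the left (resp. right) endpoint of $I_0$ to that of $I_0'$. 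I then extend $\hat h$ over $S_0$ by $f_0$-equivariance, setting $\tilde h(y)=\varphi(f_0)^n\bigl(\hat h(f_0^{-n}(y))\bigr)$ for $y\in f_0^n(I_0)$, $n\in\ZZ$; I send $\partial S_0$ to $\partial S_{\varphi(f_0)}=h(\partial S_0)$ and put $\tilde h=h$ on $[0,1]\setminus S_0$. One then verifies: that $\tilde h$ is well defined at the overlap points $f_0^n(\partial I_0)$ --- this reduces to $\hat h(f_0(p))=\varphi(f_0)(\hat h(p))$ at the endpoints $\partial I_0=\{p,f_0(p)\}$, which holds by the endpoint-matching of $\hat h$ --- and continuous, hence a homeomorphism of $[0,1]$, at $\partial S_0$ (since $f_0^n(I_0)$ accumulates on $\partial S_0$ and $\varphi(f_0)^n(I_0')$ on $\partial S_{\varphi(f_0)}$); that $\tilde h f_0\tilde h^{-1}=\varphi(f_0)$, immediate from the equivariant formula; and that $\tilde h g\tilde h^{-1}=\varphi(g)$ for $g\in\cS'$, being the conjugacy coming from $\hat h$ on $I_0$ and the identity on $f_0^n(I_0)$, $n\neq0$, and on $[0,1]\setminus S_0$ (there $g$ is trivial and $\tilde h$ carries the region off $I_0'$). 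Hence $\tilde h$ conjugates every element of $\cS$ to its image under $\varphi$, so $\tilde h G\tilde h^{-1}=\Ga$ and $\tilde h f\tilde h^{-1}=\varphi(f)$ for all $f\in\cS$.

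The main point requiring care is the bookkeeping in the second case: that forcing $f_0$-equivariance on the conjugacy does not spoil the conjugacies of the elements of $\cS'$ produced by $\hat h$. This works precisely because those elements are supported inside a single fundamental domain of $f_0$ --- exactly what the fundamental-system axiom provides in this configuration. Lemma~\ref{l.semidirect} is the structural reason behind the case split (it realizes $G$ as $(\bigoplus_\ZZ H)\rtimes\ZZ$ in the second case, and as a direct sum over the blocks in the first), but it is not logically needed, since conjugating each generator of $\cS$ to the corresponding generator of $\Si$ already conjugates $G$ onto $\Ga$.
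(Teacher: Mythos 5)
Your proof is correct and follows essentially the same strategy as the paper: induction on $|\cS|$, isolating a maximal support, invoking the induction hypothesis on the remaining generators, and then extending the conjugacy over the maximal support by $f$-equivariance from a fundamental domain (the well-definedness at $f_0^n(\partial I_0)$ being guaranteed because $h$ sends $\partial I_0$ to $\partial I_{\varphi(f_0)}$). The only difference is organizational: the paper avoids your case split by removing a single maximal element $f$ and applying the induction hypothesis directly to the $n$-element fundamental system $\cS\setminus\{f\}$ on all of $[0,1]$ rather than on a subsegment (then gluing with $h$ outside the remaining supports), which handles your "several maximal supports" and "one maximal support" configurations in one step.
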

\begin{demo} We argue by induction on the cardinal of $\cS$ and $\Si$.  If the cardinal is $1$, one just notes that
any to positive simple homeomorphism of $[0,1]$ are topologically conjugates. 

We assume now that the statement has be proved for any fundamental system of cardinal less or equal than $n$
and we consider  fundamental systems $\cS$ and $\Si$ of cardinal $n+1$. 

Let us write $\cS=\{(f,S_f,I_f)\}\cup \tilde \cS$ where  $S_f$ is 
a maximal interval in the (nested) family of supports, and 
$\Si=\{(\phi,S_{\phi}, I_{\phi})\}\cup \tilde \Si$ where $\phi=\varphi(f)$.

We consider $h_1$  so that $h_1 g h_1^{-1}= \varphi(g)$ for $g\in\tilde\cS$.  
Notice that $h_1(S_g)= h(S_g)=S_{\varphi(g)}$ for $g\in \tilde \cS$. 
 As a consequence, there is a homeomorphism $h_2$ which coincides
with $h_1$ on the union $S_{\tilde \cS}$ of the suports $S_g$, $g\in \tilde \cS$,   and with $h$ out of this union. 

Notice that $S_{\tilde \cS}\cap S_f\subset I_f$. Therefore $h_2(I_f)=h(I_f)=I_\phi$. 

Thus there is a unique homeomorphisms  $h_3\colon S_f\to S_\phi$ 
conjugating $f$ with $\phi$ and coinciding with $h_2$ on $I_f$.

The announced homeomorphism $\tilde h$ is the homeomorphism which coincides with $h_3$ on $S_f$, and with
with $h_2$ out of $S_f$. 
\end{demo}

 \subsection{Elementary groups of diffeomorphisms}
 If the homeomorphisms of a fundamental system are diffeomorphisms, the corresponding elementary group
 will be a group of diffeomorphisms.  
 
 \begin{prop} Any elementary group $G\subset Diff^1_+([0,1])$ is without hyperbolic fixed points. 
 \end{prop}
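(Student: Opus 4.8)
The plan is to reduce to an isolated fixed point sitting at an endpoint of a pair of successive fixed points, to produce there a conjugate $F$ of one of the generators whose support is \emph{exactly} that interval (so $F$ is parabolic at the endpoint), and then to use the relative translation number of Theorem~\ref{t.structure} as a finiteness obstruction forbidding a hyperbolic fixed point. So let $g\in G$ and $x\in Fix(g)$; we want $Dg(x)=1$. If $x$ is not isolated in $Fix(g)$, pick $x_n\in Fix(g)$, $x_n\neq x$, $x_n\to x$; then $Dg(x)=\lim_n\frac{g(x_n)-g(x)}{x_n-x}=\lim_n\frac{x_n-x}{x_n-x}=1$. So assume $x$ isolated in $Fix(g)$: then $x$ is an endpoint of a connected component $(a,b)$ of $[0,1]\setminus Fix(g)$, i.e. $\{a,b\}$ is a pair of successive fixed points of $g$ with $x\in\{a,b\}$. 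Replacing $g$ by $g^{-1}$ (which inverts the multiplier at $x$, hence preserves hyperbolicity) and, if $x=b$, either arguing symmetrically or replacing $(G,g)$ by its image under the reflection $t\mapsto 1-t$ (again an elementary group with a hyperbolic fixed point, after replacing the flipped generators by their inverses), I may assume $x=a$ and $g(t)>t$ on $(a,b)$; then $\frac{g(t)-a}{t-a}>1$ on $(a,b)$ forces $Dg(a)\geq 1$, and I argue by contradiction assuming $\mu:=Dg(a)>1$.

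By Proposition~\ref{p.elementaire}, $G$ is without crossing, and by Lemma~\ref{l.elementairesuccessive} there are $\phi\in G$ and $(h,S_h,I_h)\in\cS$ with $[a,b]=\phi(S_h)$. Put $F:=\phi h\phi^{-1}\in G$. Since $\phi$ is a homeomorphism of $[0,1]$ and $h$ is simple with support $S_h$, the diffeomorphism $F$ is simple with $supp(F)=\phi(S_h)=[a,b]$; thus $\{a,b\}$ is a pair of successive fixed points of $F$ and $F(t)>t$ on $(a,b)$ ($h$ being positive). The essential observation is $DF(a)=DF(b)=1$: the simple diffeomorphism $h$ is the identity off $S_h$, so $Dh\equiv 1$ on a one-sided neighbourhood of each endpoint of $S_h$, whence $Dh=1$ on $\partial S_h$; as the multiplier of a fixed point is invariant under conjugacy, $DF=1$ on $\partial[a,b]$, and so $DF^N(a)=(DF(a))^N=1$ for every $N\in\NN$.

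Now the translation-number step. Since $a,b\in Fix(g)$ and $g$ is increasing, $g([a,b])=[a,b]$, so $g$ lies in the stabilizer $G_{[a,b]}$. As $G$ is without crossing, Theorem~\ref{t.structure} gives a relative translation number $\tau:=\tau_{a,b}\colon G_{[a,b]}\to\RR$; it is a group morphism taking finite real values, $\tau(F)>0$ since $F>id$ on $(a,b)$, and (being defined up to a positive scalar) we normalize $\tau(F)=1$. Moreover $\tau$ is monotone: if $u(t)\geq v(t)$ for some $t\in(a,b)$ with $u,v\in G_{[a,b]}$, then $\tau(u)\geq\tau(v)$ — apply to $uv^{-1}$ the description in Theorem~\ref{t.structure} of $\ker\tau$ (the elements with a fixed point in $(a,b)$) and of the sign of $\tau$. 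Fix $N\in\NN$; $g$ and $F^N$ fix $a$ and map $(a,b)$ into itself, and
$$\lim_{t\to a^+}\frac{g(t)-a}{t-a}=\mu>1,\qquad \lim_{t\to a^+}\frac{F^N(t)-a}{t-a}=DF^N(a)=1,$$
so $\lim_{t\to a^+}\frac{g(t)-a}{F^N(t)-a}=\mu>1$ (the denominator being positive for $t\in(a,b)$). Hence there is $t_N\in(a,b)$ with $g(t_N)>F^N(t_N)$, and monotonicity gives $\tau(g)\geq\tau(F^N)=N$. Letting $N\to\infty$ forces $\tau(g)=+\infty$, contradicting $\tau(g)\in\RR$. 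Therefore $\mu=1$, and $G$ is without hyperbolic fixed points.

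The part carrying all the content is the pair of facts "$DF(a)=1$" (so $F$ is parabolic at the endpoint, which is where the hypothesis that the generators of a fundamental system are identity off their supports is used) and "a hyperbolic $g$ out-runs every iterate $F^N$ near $a$, hence $\tau(g)=\infty$"; everything else is bookkeeping — reducing to an isolated endpoint fixed point, extracting $F$ from Lemma~\ref{l.elementairesuccessive}, deriving the monotonicity of $\tau$ from its kernel and its sign, and converting "$g$ dominates $F^N$ near the endpoint" into "$\tau(g)\geq N$". The only points to double-check carefully are the (harmless) normalisation of $\tau$ up to a positive scalar, and that the reflection $t\mapsto 1-t$ does send an elementary group to an elementary group.
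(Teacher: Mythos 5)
Your proof is correct, and it takes a genuinely different route from the paper's. The paper argues by induction on word length: given $g=f_n^{\alpha_n}\cdots f_1^{\alpha_1}$ with $\sum|\alpha_i|$ minimal among words with a hyperbolic fixed point, it looks at the generator $f_i$ of largest support, splits according to whether the total exponent $\alpha$ in $f_i$ vanishes, and in each case either computes $Dg(a)$ directly (it equals $Df_i^\alpha(a)=1$ near the boundary of $S_{f_i}$, because every other generator in the word is identity there) or shortens the word, contradicting minimality. You instead take the structural results already in place (Proposition~\ref{p.elementaire} gives ``without crossing'', Lemma~\ref{l.elementairesuccessive} gives a conjugate $F=\phi h\phi^{-1}$ of a generator with $\mathrm{supp}(F)=[a,b]$ and so $DF(a)=1$, Theorem~\ref{t.structure} gives the relative translation number $\tau_{a,b}$), and convert the germ comparison $\lim_{t\to a^+}(g(t)-a)/(F^N(t)-a)=\mu>1$ into the inequality $\tau(g)\geq\tau(F^N)=N$ via the monotonicity of $\tau$, which you correctly derive from the kernel and sign properties stated in Theorem~\ref{t.structure}. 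Letting $N\to\infty$ contradicts $\tau(g)\in\RR$. Both the combinatorial induction in the paper and your translation-number argument ultimately rest on the same lemmas, but your version isolates the core finiteness obstruction (``$\tau(g)$ is finite, but a hyperbolic $g$ outruns every parabolic $F^N$ near the endpoint'') and avoids redoing a minimality-of-word argument that is essentially already contained in the proof of Lemma~\ref{l.elementairesuccessive}. The bookkeeping you flag yourself is fine: derivatives at fixed points are $C^1$-conjugacy invariants so $DF(a)=Dh(\phi^{-1}(a))=1$; $\tau$ is unique up to a positive scalar so the normalisation $\tau(F)=1$ is legitimate once you know $\tau(F)>0$; and the reflection $t\mapsto 1-t$, after inverting the reflected generators to restore positivity, does send a fundamental system to a fundamental system — though one can also avoid the reflection entirely by running the same domination argument with $F^{-N}$ near $b$ when the hyperbolic endpoint is $b$.
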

 
 \begin{demo} We consider a fundamental system $\cS=\{(f,S,f,I_f)\}$, with $f\in Diff^1_+([0,1])$.  Note that every diffeomorphism $f$ in $\cS$ 
 is, by definition of fundamental system, a simple
 diffeomorphism, and therefore has no hyperbolic fixed points. Let $G$ be the group generated by $\cS$

 Every $g\in G$ can be written as $g= f_n^{\alpha_n}\dots f_1^{\alpha_1}\in G$, $f_i\in \cS$, $f_{i+1}\neq f_i$,  $\alpha_i\in\ZZ$ (this
 presentation of $g$ may be not unique). 
 Arguing by absurd we assume that there is $g\in G$ having 
  hyperbolic fixed points.  We chose $g$ so that $\sum |\alpha_i|$ is the minimal number with this property. 
 
 An hyperbolic fixed point is isolated, so that it belongs to a pair of successive fixed points $\{a,b\}$ 
 (let assume it is $a$). 
 
 We consider $f_i$  so that the support $S_{f_i}$ is the largest of the supports $S_{f_j}$ containing $a,b$.  
 Then $S_{f_i}$ is invariant by all 
 the $f_j$.  If some $S_{f_j}$ is not contained in $S_{f_i}$,  one may delete the letter $f_j$ contradicting the 
minimality of $\sum |\alpha_i|$.
 
We  consider the sum  $\alpha$ of the $\alpha_j$ for which $f_j=f_i$ (recall that $S_{f_i}$ is the largest support). 
If $\alpha\neq 0$ then $g$ coincide with 
$f_i^\alpha$ on the orbit of $\partial(I_{f_i})$.  One  deduces that $\{a,b\}=\partial S_{f_i}$ and that the derivative of
$g$ at each end point is the same as the one of $f_i^{\alpha}$ which is $1$ (because $f_i$ is simple). 

So $\alpha=0$.  This allows us to rewrite $g$ as the product of conjugates of the $f_j\neq f_i$ by powers $f_i^{\beta_j}$.
These diffeomorphisms are supported on $f_i^{\beta_j}(I_{f_i})$ which have disjoint interiors. 
Exactly as in the proof 
of Lemma~\ref{l.elementairesuccessive}, one deduces that, if the $\beta_j$ are not all equal, then 
one may delete some
of the $f_j^{\alpha_j}$ contradicting the minimality of $\sum |\alpha_i|$.  

So the $\beta_j$ are all equal to some $\beta$ and one gets that $g$ is the conjugate by $f_i^{\beta}$ of 
the product of the $f_j^{\alpha_j}$, $f_j\neq f_i$. However, having a hyperbolic fixed point is 
invariant by conjugacy, so that one deduces that the product of the $f_j^{\alpha_j}$, $f_j\neq f_i$
has an hyperbolic fixed point.  This contradicts, once again, the minimality of $\sum |\alpha_i|$.
 \end{demo}

 One of the main result of this paper consits in proving that every elementary group of 
 diffeomorphisms of $[0,1]$ is 
 indeed $C^1$-close to identity.  This will be the aim of Section~\ref{s.C1}

\section{Free group}
The aim of this section is to prove Theorem~\ref{t.free} (assuming Theorem~\ref{t.jump}), that we recall below
\begin{theo}
 There is a subgroup $G\subset Diff^1([0,1])$, $C^1$-close to the identity, and 
  isomorphic to the free group $\FF^2$.
\end{theo}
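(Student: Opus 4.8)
The plan is to manufacture a single pair $f,g\in Diff^1_+([0,1])$ generating a free group by assembling countably many ``local'' pieces, one designed to kill each would-be relation. Enumerate the nontrivial reduced words $\omega_1,\omega_2,\dots$ of the free group on two letters $a,b$. For each $n$ I would first build, using Theorem~\ref{t.jump} applied a finite number of times, a two-generated group $G_n=\langle f_n,g_n\rangle\subset Diff^1_+([0,1])$ which is $C^1$-close to the identity and satisfies $\omega_n(f_n,g_n)\ne\mathrm{id}$. Iterating Theorem~\ref{t.jump} starting from $\ZZ$ (and using the description $\langle f,G\rangle=\bigl(\bigoplus_\ZZ G\bigr)\rtimes\ZZ$ from the Remark following it) produces a nested family of iterated wreath-type groups that are $C^1$-close to the identity; since $\FF_2$ is residually solvable (indeed residually torsion-free nilpotent), for every $\omega_n\ne e$ there should be a member of this family, and a homomorphism of $\FF_2$ into it (i.e.\ a choice of two generators), for which $\omega_n$ has nontrivial image. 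I expect \emph{this} to be the main obstacle: one must identify precisely which $C^1_{id}$-realizable groups are ``large enough'' to detect an arbitrary nontrivial reduced word, and check that the iteration of Theorem~\ref{t.jump} (or its stronger form Theorem~\ref{t.jump2}) actually produces them.

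The rest is a gluing argument. Choose pairwise disjoint closed intervals $J_n\subset(0,1)$ accumulating only at $0$, with $|J_n|$ negligible compared with $\operatorname{dist}(J_n,0)$, and transplant $G_n$ into $Diff^1_+(J_n)$ (extending by the identity outside $J_n$). Since $G_n$ is $C^1$-close to the identity and finitely generated, I may conjugate it inside $J_n$ so that in addition $\|f_n-\mathrm{id}\|_{C^1},\ \|g_n-\mathrm{id}\|_{C^1}\le\varepsilon_n$ with $\varepsilon_n\to0$, while keeping $\omega_n(f_n,g_n)\ne\mathrm{id}$ (a relation is violated in every conjugate). I also fix, for each $n$, a $C^1$-path $t\mapsto\eta_n(t)\in Diff^1_+([0,1])$, supported in $J_n$ and tangent to the identity at $\partial J_n$, with $\eta_n(0)=\mathrm{id}$ and $\eta_n(t)\,\gamma\,\eta_n(t)^{-1}\xrightarrow{C^1}\mathrm{id}$ for every $\gamma\in G_n$ as $t\to1$ (that the conjugating diffeomorphisms can be taken supported in $J_n$ and tangent to the identity there is a short verification via Theorem~\ref{t.cohomologique} applied to $G_n$ inside $J_n$). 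Now let $f,g\in Diff^1_+([0,1])$ be the diffeomorphisms equal to $f_n$, resp.\ $g_n$, on each $J_n$ and to the identity elsewhere; $\varepsilon_n\to0$ together with $|J_n|=o(\operatorname{dist}(J_n,0))$ guarantees that $f$ and $g$ are genuine $C^1$ diffeomorphisms (derivative $1$ and $C^1$-tangency to the identity at $0$).

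Set $G=\langle f,g\rangle$. For each $n$ the restriction of $\omega_n(f,g)$ to $J_n$ equals $\omega_n(f_n,g_n)\ne\mathrm{id}$, so $\omega_n(f,g)\ne\mathrm{id}$; as this holds for every nontrivial reduced word, $G\cong\FF_2$. To see that $G$ is $C^1$-close to the identity I would use a diagonal argument: pick $N(m)\to\infty$ and let $H_m\in Diff^1_+([0,1])$ be equal to $\eta_n(t_n(m))$ on $J_n$ for $n\le N(m)$ and to the identity elsewhere, where $t_n(m)\to1$ is chosen (finitely many conditions) so that $\|\eta_n(t_n(m))\,f_n\,\eta_n(t_n(m))^{-1}-\mathrm{id}\|_{C^1}<1/m$ and likewise for $g_n$. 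Each $H_m$ is a legitimate $C^1$ diffeomorphism, being glued from finitely many maps tangent to the identity at the relevant endpoints. Then $\|H_m f H_m^{-1}-\mathrm{id}\|_{C^1}\le\max\bigl(1/m,\ \sup_{n>N(m)}\varepsilon_n\bigr)\to0$ and similarly for $g$; since $G$ is two-generated and $Diff^1_+([0,1])$ is a topological group for the $C^1$-topology, $H_m\,\gamma\,H_m^{-1}\to\mathrm{id}$ for every $\gamma\in G$, so $G$ is $C^1$-close to the identity. Finally, arranging each $G_n$ to be totally rational (the iterated constructions can be taken with integral translation numbers) and observing that total rationality depends only on the family of intervals of successive fixed points — which for $G$ is the disjoint union over $n$ of those for the $G_n$ — shows $G$ is moreover totally rational.
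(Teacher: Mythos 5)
Your gluing argument (second and third paragraphs) is essentially the one in the paper: for each nontrivial reduced word $\omega$ produce a pair generating a group in $\cC^1_{id}$ that violates $\omega$, shrink these pairs by conjugation so they are $C^1$-small, transplant them onto disjoint intervals $J_n$ accumulating at $0$ with $|J_n|=o(\operatorname{dist}(J_n,0))$, and verify $C^1$-closeness of the glued pair $(f,g)$ by a diagonal conjugacy supported on finitely many $J_n$ at a time. That part is correct and matches the paper. The genuine gap is precisely the step you yourself flag as ``the main obstacle'': for each nontrivial reduced $\omega$ in two letters you must actually exhibit a pair $(f_\omega,g_\omega)$ with $\langle f_\omega,g_\omega\rangle\in\cC^1_{id}$ and $\omega(f_\omega,g_\omega)\neq\mathrm{id}$. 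Your sketch via residual solvability of $\FF_2$ and Magnus-type embeddings into iterated wreath products is plausible, but it is not carried out, and the details are not cosmetic: the iterated Magnus embedding places the free solvable quotients of $\FF_2$ inside wreath products with $\ZZ^2$-type bases at each level, whereas Theorem~\ref{t.jump} builds extensions of the specific shape $\bigl(\bigoplus_\ZZ G\bigr)\rtimes\ZZ$, so one still has to locate a suitable embedding of the rank-$2$ free solvable quotient and decide where the two generators sit inside the resulting group.

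The paper fills exactly this gap by a short combinatorial induction on word length (Proposition~\ref{p.free} and Lemma~\ref{l.free}), which is a genuinely different route and worth having in hand: call a reduced word $\omega$ \emph{universal} if it is a law on every group in $\cC^1_{id}$. Substituting the identity for all letters but one shows a universal word has zero exponent sum in each letter; collecting along a chosen letter $a_1$ therefore rewrites $\omega$ as a reduced word $\varphi(\omega)$, of length at most $\mathrm{length}(\omega)-2$, in the new letters $b_{i,j}=a_1^{j}a_i a_1^{-j}$. Lemma~\ref{l.free} shows that if $\varphi(\omega)$ were not universal — realized by diffeomorphisms $h_{i,j}$ on an interval $I$ — then placing $I$ in the interior of a fundamental domain of a non-hyperbolic $f$, invoking Theorem~\ref{t.jump}, and defining $g_i$ to agree with $f^{-j}h_{i,j}f^{j}$ on $f^{-j}(I)$ would produce a $\cC^1_{id}$-group in which $\omega(f,\{g_i\})$ restricts to $\varphi(\omega)(h_{i,j})\neq\mathrm{id}$; so $\varphi(\omega)$ must be universal as well. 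Descending to length $\leq 2$, where no nontrivial reduced word can be universal, gives the contradiction. This replaces your Magnus-embedding step by an argument that stays entirely within Theorem~\ref{t.jump}, after which your gluing and diagonalization finish the proof exactly as you wrote them.
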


Before that let us notice that the elementary fgroups do not contain any free group $\FF^2$.

\subsection{Finitely generated subgroups of elementary groups are solvable: proof of Proposition~ref{p.solvable2}}

In contrast we notice that elementary groups do not contain free groups:
\begin{prop}\label{p.solvable}
 Any elementary group $G$ generated by a finite fundamental system $\cS$ is solvable, with length bounded by the cardinal of $\cS$.
\end{prop}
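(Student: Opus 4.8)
The plan is to argue by induction on the cardinal $k$ of the fundamental system $\cS$. When $k=1$, the group $G$ is cyclic (generated by a single simple homeomorphism), hence abelian, so solvable of length $1$. For the inductive step, let $\cS=\{(f,S_f,I_f)\}\cup\tilde\cS$ where $S_f$ is chosen to be a \emph{maximal} element of the nested family of supports $\{S_g : g\in\cS\}$. Two cases arise according to how the other supports sit relative to $S_f$: either some support $S_g$, $g\in\tilde\cS$, has interior disjoint from $S_f$, or every $S_g$ with $g\in\tilde\cS$ satisfies $S_g\subset S_f$ (by maximality of $S_f$ it cannot contain $S_f$ unless equal; equal supports can be handled as a degenerate sub-case, or avoided by choosing $\cS$ with distinct supports among the generators supported on the top interval). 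I would first reduce, using Lemma~\ref{l.semidirect}(1), to the situation where all generators in $\tilde\cS$ have support inside $S_f$: indeed the generators splitting off with disjoint support contribute a direct summand, and a finite direct sum of solvable groups of lengths $\le k_1,\dots,k_r$ is solvable of length $\le\max k_i$, with the counts of generators adding up to at most $k$.

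So assume $S_g\subset S_f$ for all $g\in\tilde\cS$; since $S_f=\mathrm{supp}(f)$ and $I_f$ is a fundamental domain of $f$, by the fundamental-system axioms each such $S_g$ lies in some $f$-translate of $I_f$. Consider the normal closure $N$ of $\langle\tilde\cS\rangle$ in $G$. The subgroup $N$ is generated by the conjugates $f^i g f^{-i}$, $g\in\tilde\cS$, $i\in\ZZ$. These conjugates are supported on the pairwise-disjoint intervals $f^i(I_f)$ (more precisely $f^i$ of the supports inside $I_f$), so $N$ decomposes as a direct sum $\bigoplus_{i\in\ZZ} H_i$ where each $H_i=f^i\langle\tilde\cS\rangle f^{-i}$ is isomorphic to $H:=\langle\tilde\cS\rangle$. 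Now $H$ is an elementary group generated by the fundamental system $\tilde\cS$ of cardinal $k-1$ (the fundamental-system axioms restrict to any sub-collection), so by the induction hypothesis $H$ is solvable of length $\le k-1$. A direct sum of copies of a group of derived length $\ell$ has derived length $\ell$, so $N$ is solvable of length $\le k-1$. Finally $G/N$ is generated by the image of $f$, hence cyclic, hence abelian; therefore $G$ is solvable of length $\le k$, completing the induction.

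The main obstacle I anticipate is making the decomposition $N=\bigoplus_{i\in\ZZ}H_i$ genuinely clean: one must verify that $\langle\tilde\cS\rangle$ is \emph{entirely} supported inside $I_f$ (not merely its generators), which follows because the support of a product is contained in the union of the supports and these all lie in $I_f$, and then that distinct $f$-translates $f^i(I_f)$ have disjoint interiors — this is exactly the meaning of $I_f$ being a fundamental domain of $f$. A secondary subtlety is the bookkeeping of the solvable length under the two reductions: splitting off disjoint-support summands does not increase length, passing to $\bigoplus_{i\in\ZZ}H_i$ does not increase length, and the extension by the cyclic quotient $G/N$ adds at most $1$; summing the generator counts correctly so that the bound stays $\le\#\cS$ requires care when the disjoint-support case and the nested case are combined, but Lemma~\ref{l.semidirect}(1) and the elementary fact that $\mathrm{length}(A\oplus B)=\max(\mathrm{length}\,A,\mathrm{length}\,B)$ handle this.
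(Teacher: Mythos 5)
Your proof is correct and follows essentially the same strategy as the paper's: induction on $\#\cS$, with a maximal support $S_f$, splitting off direct summands with disjoint support (as in the paper's case $A\neq\emptyset$), and in the nested case exhibiting the group as an extension of a $\ZZ$-indexed direct sum of $f$-conjugates of the smaller elementary group by a cyclic quotient, invoking Lemma~\ref{l.solvable}. The only cosmetic difference is that you phrase the key step via the normal closure $N$ of $\langle\tilde\cS\rangle$ and the abelian quotient $G/N\cong\ZZ$, whereas the paper observes directly that $[G,G]\subset\bigoplus_{i\in\ZZ}f^iG_Bf^{-i}$; these are equivalent, and your worry about equal supports is moot since the fundamental-system axioms already force distinct generators to have distinct supports.
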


Recall that a group is \textbf{\emph{solvable}} if the sequence $G_1=[G,G]$,\dots, $G_{n+1}=[G_n,G_n]$ statifies that there is 
$k$ so that $G_k=\{1\}$. The infimum of such $k$ is \textbf{\emph{the length $\ell(G)$}} of the solvable group $G$.

As a direct corollary one gets

\begin{coro}\label{c.solvable}
For any fundamental system $\cG$, the group $G$ generated by $\cG$ does not contain any non cyclic free group. 
\end{coro}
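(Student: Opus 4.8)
The plan is to prove that any elementary group $G$ generated by a finite fundamental system $\cS$ is solvable, by induction on the cardinality $n=\#\cS$. The base case $n=1$ is immediate: a single simple positive homeomorphism generates a cyclic group $\ZZ$, which is abelian, hence solvable with length $\leq 1$. For the inductive step, I would mimic the structure already used repeatedly in the paper (notably in the proofs of Lemma~\ref{l.elementairesuccessive} and the following lemma): pick a fundamental system $\cS=\{(f,S_f,I_f)\}\cup\tilde\cS$ where $S_f$ is chosen \emph{maximal} in the nested family of supports of the elements of $\cS$. Write $H=\langle\tilde\cS\rangle$, the elementary group generated by the remaining $n-1$ triples, which by the induction hypothesis is solvable of length $\leq n-1$.

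The key structural observation is how $f$ interacts with $H$. Since $S_f$ is a maximal support, for every $(g,S_g,I_g)\in\tilde\cS$ one has either $S_g\subset I_f$ (so $g$ is supported in the fundamental domain $I_f$ of $f$) or $S_f$ and $S_g$ have disjoint interiors (so $g$ and $f$ commute, as their supports meet only at endpoints). Let $H_0\subset H$ be the subgroup generated by those $g\in\tilde\cS$ of the first type (supported in $I_f$) and $H_1$ the subgroup generated by those of the second type; then $G=\langle f, H_0, H_1\rangle$ with $H_1$ commuting with $f$ and with $H_0$ (disjoint interiors of supports). By Lemma~\ref{l.semidirect}(2), the subgroup $\langle f,H_0\rangle$ is isomorphic to $(\bigoplus_\ZZ H_0)\rtimes\ZZ$; the normal subgroup $\bigoplus_\ZZ H_0$ is a direct sum of copies of $H_0$, hence solvable of the same length as $H_0$, and the quotient is $\ZZ$, so $\langle f,H_0\rangle$ is solvable of length $\leq \ell(H_0)+1\leq (n-1)+1=n$. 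Finally $G$ is generated by this solvable subgroup together with the commuting subgroup $H_1$, so $G=\langle f,H_0\rangle\cdot H_1$ with $H_1$ central relative to the rest; more carefully, $G$ is an internal product where $H_1$ is solvable of length $\leq n-1$ and commutes with everything else, and such a product of solvable groups (one normal, or simply a commuting product) is solvable with length bounded by the maximum of the two, namely $n$.

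The main obstacle I anticipate is being careful about the exact bookkeeping of the solvable length and the group-theoretic extension lemmas: one needs that an extension $1\to N\to E\to Q\to 1$ with $N,Q$ solvable gives $E$ solvable with $\ell(E)\leq\ell(N)+\ell(Q)$, that a (possibly infinite) direct sum $\bigoplus_\ZZ H_0$ has the same solvable length as $H_0$, and that a product $AB$ of two subgroups that commute elementwise is solvable with $\ell(AB)\leq\max(\ell(A),\ell(B))$ (since $AB\cong A\times B/(\text{amalgamation})$, a quotient of $A\times B$). None of these are deep, but assembling them to get the clean bound $\ell(G)\leq\#\cS$ requires organizing the induction so that the ``$+1$'' is only paid once per genuinely new maximal support and the commuting factors do not accumulate. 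Once Proposition~\ref{p.solvable} is established, Corollary~\ref{c.solvable} is immediate since a non-cyclic free group $\FF_2$ contains a free subgroup of every finite rank and is not solvable, hence cannot embed in any solvable group; and Proposition~\ref{p.solvable2} follows because any finitely generated subgroup of an elementary group $G=\langle\cS\rangle$ is contained in the subgroup generated by finitely many elements of $\cS$, which is elementary of finite type, hence solvable by Proposition~\ref{p.solvable}.
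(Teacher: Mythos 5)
Your proposal is correct and follows essentially the same approach as the paper: prove Proposition~\ref{p.solvable} by induction on the cardinality of the fundamental system, peeling off a maximal support and combining the semi-direct-product structure of Lemma~\ref{l.semidirect} with Lemma~\ref{l.solvable}, then deduce the corollary from the fact that a free subgroup is finitely generated and hence sits inside a finitely generated (solvable) elementary subgroup. The only cosmetic difference is that you merge the paper's two cases (set $A$ of disjoint-support generators empty or not) into a single uniform step by writing $G=\langle f,H_0\rangle\times H_1$.
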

\begin{demo}[Proof of Corollary~\ref{c.solvable}] Assume there is a subgroup $<f,g>\subset G$ isomorphic to $F_2$. 
Notice that $f$ and $g$ are written as finite words in the 
generators in $\cG$ of $G$.  Thus the group $<f,g>$ is a subgroup of a 
group generated by a finite fundamental system.  This group is solvable, hence does not contain any subgroup isomorphic to $F_2$, 
contradicting the hypothesis.
\end{demo}

Notice that our argument above proved  Proposition~\ref{p.solvable2}.

We start the proof of Proposition~\ref{p.solvable} by showing:
\begin{lemm}\label{l.solvable} Fix and integer $k$.  Assume that $\{G_i\}_{i\in\NN}$ is a sequence of 
solvable groups of length  $\ell(G_i)\leq k$.  Then the abelian product 

$$G=\bigoplus_{i\in\NN} G_i$$
is solvable with length $\ell(G)\leq k$.
\end{lemm}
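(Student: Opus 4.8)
The plan is to compute the whole derived series of $G=\bigoplus_{i\in\NN}G_i$ coordinate by coordinate, showing that the $n$-th derived subgroup satisfies $G^{(n)}=\bigoplus_{i\in\NN}G_i^{(n)}$, and then to conclude from the uniform bound $\ell(G_i)\le k$. First I would recall that $G=\bigoplus_{i\in\NN}G_i$ is the subgroup of the full direct product $\prod_{i\in\NN}G_i$ consisting of the tuples with finite support, the group law being coordinatewise. In a direct product the commutator of two elements is taken coordinatewise: if $x=(x_i)_i$ and $y=(y_i)_i$, then $[x,y]=\big([x_i,y_i]\big)_i$, and the support of $[x,y]$ is contained in $\mathrm{supp}(x)\cup\mathrm{supp}(y)$, hence is finite.

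From this I would deduce $[G,G]=\bigoplus_{i\in\NN}[G_i,G_i]$. The inclusion $\supseteq$ holds because each $[G_i,G_i]$ embeds into $[G,G]$ and these subgroups pairwise commute (elements of distinct factors commute), so the subgroup they generate is exactly their direct sum; the inclusion $\subseteq$ holds because the generators of $[G,G]$ are the commutators $[x,y]$, and the coordinatewise formula above already puts each of them in $\bigoplus_{i\in\NN}[G_i,G_i]$. Applying this identity inductively to the family $\{G_i^{(n)}\}_{i\in\NN}$ in place of $\{G_i\}_{i\in\NN}$ (which has the same "restricted product" structure) gives $G^{(n)}=\bigoplus_{i\in\NN}G_i^{(n)}$ for every $n\ge 0$.

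Finally, $\ell(G_i)\le k$ means $G_i^{(k)}=\{1\}$ for every $i\in\NN$, so $G^{(k)}=\bigoplus_{i\in\NN}\{1\}=\{1\}$, which shows that $G$ is solvable with $\ell(G)\le k$. There is no real obstacle in this argument; the only point deserving a line of care is that forming commutators, and hence iterated derived subgroups, preserves the finite-support condition, which is immediate since $\mathrm{supp}([x,y])\subseteq\mathrm{supp}(x)\cup\mathrm{supp}(y)$.
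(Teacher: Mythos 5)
Your proof is correct and follows the same approach as the paper, whose entire proof is the one-line observation that $[G,G]=\bigoplus_{i\in\NN}[G_i,G_i]$. You simply spell out the two inclusions, note that finite support is preserved, and make the induction on the derived series explicit, all of which the paper leaves implicit.
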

\begin{demo}Just notice that $[G,G]=\bigoplus_{i\in\NN}[G_i,G_i]$.
\end{demo}

\begin{demo}[Proof of Proposition~\ref{p.solvable}] We present a proof by induction on the cardinal of $\cS$.  
If this cardinal is $1$, the group
is a cyclic abelian group, hence is solvable of length $1$.  We assume now that Proposition~\ref{p.solvable} is proved for fundamental systems of 
cardinal less or equal to $n$. 
Let $\cS=\{(f_i, S_i,I_i), i\in\{1,\dots,n+1\}\}$ be a fundamental system.  Up to re-index the $f_i$, one may assume that 
$S_{n+1}$ is maximal for the inclusion among the $S_i$. 

Let $A\subset \{1,\dots, n\}$ denote the set of indices $i$ for which $S_i$ and $S_{n+1}$ have disjoint interiors, and 
$B=\{1,\dots, n\}\setminus A$ is the set of indices $j$ for which  $S_j$ is contained in $I_{n+1}$. 

First assume that $A$ is not empty.  Then $\cS_A=\{(f_i, S_i,I_i), i\in A\}$ and $\cS'=\{(f_j,S_j,I_j), j\in B\cup\{n+1\}\}$ are 
fundamental systems with cardinal $\leq n$.  Let $G_A$ and $G'$ the elementary groups generated by $\cS_A$ and $\cS'$, respectively. 
They are solvable groups of length bounded by $n$. Therefore, $G=G_A\oplus G'$ is solvable of length bounded by $n$, 
according to Lemma~\ref{l.solvable}, which concludes the proof in this case.

We assume now that $A$ is empty so that $B=\{1,\dots, n\}$.  Let denote $G_B$ the group generated by the fundamental system 
$\cS_B=\{(f_j, S_j,I_j), i\in B\}$.  One easily shows that $[G,G]$ is contained in $\bigoplus_{i\in \ZZ} G_{B,i}$ where 
$G_{B,i}=f_{n+1}^iG_Bf_{n+1}^{-1}$.  These groups are solvable of length $\ell(G_{B_i})\leq n$ by our induction hypothesis, so that 
$G$ is solvable of length $\ell(G)\leq n+1$ according to Lemma~\ref{l.solvable}.
 
\end{demo}

\subsection{Proof of Theorem~\ref{t.free}}
Let $\cA=\{a_i,i\in\NN\}$ be a countable set called alphabet. 
Let say that a word $\omega=\{\omega_i\}$ in  $n$ letters of the alphabet $\cA$ is 
\textbf{\emph{universal (among the  groups in $\cC^1_{id}$)}} if $\omega(f_1,\dots,f_n)=id$ for any 
$f_1,\dots, f_n\in Diff^1_+([0,1])$ for which the group $<f_1,\dots, f_n>$ is $C^1$-close to the identity. 

The length of such a word is $n$.  The word is reduced if $\omega_{i+1}\omega_i\neq 1$, and cyclically reduced 
if furthermore $\omega_n\omega_1\neq 1$. 

We will prove: 
\begin{prop}\label{p.free}
There is no  universal reduced non-trivial words. 
\end{prop}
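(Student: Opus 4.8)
The goal is to show that for every reduced non-trivial word $\omega$ in $n$ letters there exist $f_1,\dots,f_n\in\Diff^1_+([0,1])$ generating a group $C^1$-close to the identity such that $\omega(f_1,\dots,f_n)\neq id$. The idea is to realize the counter-example on a single ``cell'' and to assemble it using Theorem~\ref{t.jump}. First I would reduce to the case $n=2$: a reduced non-trivial word $\omega$ in $n$ letters specializes, by substituting suitable powers of two letters $a,b$ (e.g.\ $a_i\mapsto b a^i b^{-1}$ or a similar injective assignment into $\FF_2$), to a reduced non-trivial word in $a,b$; so a pair $(f,g)$ with $\langle f,g\rangle\in\cC^1_{id}$ and $\omega\neq id$ on the image yields the claim for $\omega$ by pulling back along a faithful-enough substitution. (One must be a little careful that the substitution does not collapse $\omega$ to the trivial element; using the standard embedding of $\FF_\infty$ into $\FF_2$ does the job.)

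Next, for a fixed reduced non-trivial word $\omega$ in $a,b$, I would build a group $G_\omega=\langle f_\omega,g_\omega\rangle\subset\Diff^1_+([0,1])$ that is $C^1$-close to the identity and on which $\omega(f_\omega,g_\omega)\neq id$. The construction is inductive on the length of $\omega$ and uses Theorem~\ref{t.jump} as the engine: one wants $f_\omega$ (say the last letter appearing) to have a fundamental domain $I$ containing the support of a group already known to be $C^1$-close to the identity that witnesses non-triviality of the ``tail'' of $\omega$, so that in $\langle f_\omega,G\rangle=(\bigoplus_\ZZ G)\rtimes\ZZ$ the word $\omega$ reads off as a non-trivial element of the semidirect product. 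Concretely, writing $\omega$ in the normal form dictated by the shift action, a non-trivial reduced word always produces either a non-zero total $\ZZ$-exponent (handled by a translation-number argument à la Theorem~\ref{t.translation}) or a non-trivial element in one of the $\bigoplus_\ZZ$ factors (handled inductively); in both cases Theorem~\ref{t.jump} guarantees that adjoining the new generator keeps the group in $\cC^1_{id}$.

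The main obstacle, and the technical heart, is arranging the supports and fundamental domains coherently so that the inductive hypothesis applies cleanly: one needs the witnessing subgroup for the tail of $\omega$ to sit inside \emph{a single fundamental domain} of the newly added diffeomorphism, which forces one to control simultaneously the combinatorics of $\omega$ (which letters appear, in what order, with what exponents) and the nested/disjoint configuration of intervals. This is exactly where the stronger Theorem~\ref{t.jump2} (the version where $I$ need only lie in a fundamental domain, not its interior) is needed, because the word may force the support of the tail group to abut the boundary of the fundamental domain. Once each $G_\omega$ is built, the final step is routine: glue together countably many such $G_\omega$, one for each reduced non-trivial $\omega$, on pairwise disjoint subintervals of $[0,1]$, so that the resulting group is the direct sum $\bigoplus_\omega G_\omega$; by Theorem~\ref{t.union} (together with Corollary, the countable version) this direct sum is again $C^1$-close to the identity, and by construction no reduced non-trivial $\omega$ is universal. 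In fact the same construction, carried out with a single pair of generators glued diagonally across all the cells, produces a faithful action of $\FF_2$ that is $C^1$-close to the identity, which is Theorem~\ref{t.free}.
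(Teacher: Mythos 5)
Your approach is the same as the paper's in its essentials: induction on the word length, the inductive step being to extract one letter, rewrite $\omega$ as a product of conjugates of the remaining letters by powers of the extracted one (your ``normal form dictated by the shift action''), and then use Theorem~\ref{t.jump} in contrapositive form to transport non-universality from this shorter word $\varphi(\omega)$ back to $\omega$.

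A few corrections are worth recording. The preliminary reduction to two letters is unnecessary and, if maintained through the induction, works against the length argument: the conjugate-rewriting step produces a word $\varphi(\omega)$ in a fresh alphabet $\{b_{i,j}\}$ of length at most $n-2$, and re-expressing each $b_{i,j}$ as a word in $a,b$ only inflates the length. The paper instead inducts on word length over an arbitrary alphabet and concludes with the observation that a reduced word of length $\le 2$ cannot be universal (this uses the preliminary remark that each letter of a universal word has vanishing exponent sum). The heart of the induction is Lemma~\ref{l.free} ($\omega$ universal implies $\varphi(\omega)$ universal), which is precisely the contrapositive you gesture at: if $h_{i,j}$ witness non-universality of $\varphi(\omega)$ on an interval $I$, one chooses $f$ with $I$ in the \emph{interior} of a fundamental domain, defines $g_i$ piecewise so that $f^j g_i f^{-j}|_I=h_{i,j}$, and notes that $\omega(f,\{g_i\})|_I=\varphi(\omega)(\{h_{i,j}\})\neq id$ while $\langle f,g_i\rangle\subset\langle f,h_{i,j}\rangle\in\cC^1_{id}$ by Theorem~\ref{t.jump}. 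Because $I$ is taken in the interior of a fundamental domain, Theorem~\ref{t.jump} already suffices and Theorem~\ref{t.jump2} plays no role in this proposition (its extra generality is needed only for Theorem~\ref{t.realisation}). Finally, the gluing of the witness groups $G_\omega$ on pairwise disjoint intervals is the passage from Proposition~\ref{p.free} to Theorem~\ref{t.free}, not part of the proof of Proposition~\ref{p.free} itself.
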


Let us deduce the Theorem~\ref{t.free} from Proposition~\ref{p.free}. 

\begin{demo} Assume there is no universal reduced word.  In particular there is no universal word in 2 letters. 
Therefore, for any word $\omega$ in $2$ letters there is a pair $f_\omega, g_\omega$ so that the group 
$<f_\omega, g_\omega>$ is $C^1$-close to the identity and $\omega(f_\omega,g_\omega)\neq id$. 

One fix a sequence  $I_\omega\subset [0,1]$ of pairwize disjoint segments. For every $\omega$,  one choose
diffeomorphisms
$\tilde f_\omega, \tilde g_\omega$ supported on $I_\omega$, smoothly conjugated to $(f_\omega,g_\omega)$, and 
so that  $\tilde f_{\omega},\tilde g_{\omega}$ 
tend uniformly to the identity when the length of $\omega$ tends to $\infty$ (that is possible because  
$<f_\omega, g_\omega>$ is $C^1$-close to the identity).

One defines $f$ and $g$ as being $\tilde f_\omega$,$\tilde g_\omega$ on $I_\omega$ and the identity map out of 
the union of the $I_\omega$. One easily check that $f$ and $g$ are homeomorphisms. 
Now, $f$ and $g$ are diffeomorphisms because $\tilde f_{\omega},\tilde g_{\omega}$ 
tend uniformly to the identity. 

Finally the   group $<f,g>$ is isotopic to identity and 
 $\omega(f,g)\neq id$ for every reduced word $\omega$. 
\end{demo}

\subsection{No universal relation: proof of Proposition~\ref{p.free}}

Notice that, if $\omega$ is a universal word, then for each letter $a_i\in \cA$, the sum of the coeficients in 
$a_i$ of the $\omega_i$ vanishes.  Otherwize, $\omega(id,\dots,id,f,id, \dots,id)$
would be different from $id$, for $f\neq id$ at the $i^{th}$ position, contradicting the universality of 
the relation $\omega$.

Consider a letter appearing in $\omega$, say $\omega_1$. 
Then, one can write $\omega$ as a product of conjugates of the other letters by powers of the letter corresponding to  $\omega_1$.
 One replace each $\omega_1^{j}a_i\omega_1^{-j}$ by a new letter denoted $b_{i,j}$. 
 One gets a reduced words $\varphi(\omega)$ of 
 length  less or equal than $n-2$ (in the alphabet $\cB=\{b_{i,j}\}$). 
 
 Notice that a reduced word of length $\leq 2$ cannot be universal. 
 One concludes the proof of Proposition~\ref{p.free} and therefore of Theorem~\ref{t.free} by proving
 
 \begin{lemm}\label{l.free}
 If $\omega$ is universal then $\varphi(\omega)$ is universal. 
 \end{lemm}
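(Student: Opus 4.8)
The plan is to argue by contraposition: assuming $\varphi(\omega)$ is not universal, we manufacture, out of a group witnessing this, an assignment of diffeomorphisms showing that $\omega$ is not universal either. The mechanism is the semidirect-product structure of Theorem~\ref{t.jump} (and Lemma~\ref{l.semidirect}): the letter corresponding to $\omega_1$ will be sent to a diffeomorphism $f$ with $f(I)\cap I=\emptyset$, where $I$ carries a copy of the realizing group; since then the translates $f^k(I)$, $k\in\ZZ$, are pairwise disjoint, the conjugates $f^{j}(\cdot)f^{-j}$ of the remaining generators have pairwise disjoint supports, so that the values of all the new letters $b_{i,j}$ can be prescribed independently of one another.

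First I would record some preliminaries, all immediate from the definition of $\cC^1_{id}$: a subgroup of a group $C^1$-close to the identity is again $C^1$-close to the identity; a conjugate $\phi H\phi^{-1}$ by a fixed $\phi\in Diff^1_+([0,1])$ is again $C^1$-close to the identity (replace a witnessing sequence $h_n$ by $h_n\phi^{-1}$); and such a group has no hyperbolic fixed point (if $h_ngh_n^{-1}\to\mathrm{id}$ in $C^1$ and $g(x)=x$, then $D(h_ngh_n^{-1})(h_n(x))=Dg(x)$, forcing $Dg(x)=1$). The last point yields the gluing step I need: if $H\subset Diff^1_+([0,1])$ is $C^1$-close to the identity and $\rho$ is a $C^1$-diffeomorphism of $[0,1]$ onto a segment $J=[c,d]\subset(0,1)$, then conjugating every element of $H$ by $\rho$ on $J$ and extending by the identity off $J$ produces genuine $C^1$-diffeomorphisms of $[0,1]$ supported in $J$ (the derivatives glue $C^1$ at $c$ and $d$ precisely because the elements of $H$ are $1$-tangent to the identity at $0$ and $1$), the resulting group is still $C^1$-close to the identity (extend a witnessing sequence for $H$, conjugated by $\rho$ on $J$, by $C^1$-diffeomorphisms of $[0,c]$ and $[d,1]$ making the gluing $C^1$; these new conjugacies preserve $J$, so they conjugate the extended group exactly as the old ones conjugate $H$, rescaled by the fixed map $\rho$), and it evaluates any word to the $\rho$-conjugate of its value for $H$. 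This is the same ``conjugate smoothly into a subsegment'' move already used in the proof of Theorem~\ref{t.free}.

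Now suppose $\varphi(\omega)$ is not universal. Let $P$ be the finite set of indices of the letters $b_{i,j}$ occurring in $\varphi(\omega)$, and let $h_{i,j}\in Diff^1_+([0,1])$, $(i,j)\in P$, be such that $\Gamma=\langle h_{i,j}:(i,j)\in P\rangle$ is $C^1$-close to the identity and $\varphi(\omega)(\{h_{i,j}\})\neq\mathrm{id}$. By the gluing step I may assume $\Gamma$ is supported in a segment $I\subset(0,1)$, keeping both properties. Pick $f\in Diff^1_+([0,1])$ with $f(I)\cap I=\emptyset$; then $I$ lies in a fundamental domain of $f$, so the segments $f^k(I)$, $k\in\ZZ$, are pairwise disjoint, and by Theorem~\ref{t.jump} the group $\langle f,\Gamma\rangle$ is $C^1$-close to the identity. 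Assign $f$ to the letter corresponding to $\omega_1$, and to each other letter $a_i$ occurring in $\omega$ assign $f_i:=\prod_{j:(i,j)\in P}f^{-j}h_{i,j}f^{j}$; the factors have pairwise disjoint supports $f^{-j}(I)$, so $f_i$ is a well-defined $C^1$-diffeomorphism lying in $\langle f,\Gamma\rangle$. Hence the group generated by the assigned diffeomorphisms is a subgroup of $\langle f,\Gamma\rangle$, so it is $C^1$-close to the identity.

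It remains to compute $\omega$ on this assignment. By construction of $\varphi$, after free cancellation $\omega$ is the product $\prod_s b_{i_s,c_s}^{\varepsilon_s}$ of the conjugates $f^{c_s}a_{i_s}^{\varepsilon_s}f^{-c_s}$ into which it is rewritten, where $c_s$ is the running exponent of the letter of $\omega_1$; evaluating, $\omega$ on the assignment equals $\prod_s(f^{c_s}f_{i_s}f^{-c_s})^{\varepsilon_s}$. For each $s$, $f^{c_s}f_{i_s}f^{-c_s}=\prod_j f^{c_s-j}h_{i_s,j}f^{-(c_s-j)}$, and every factor with $j\neq c_s$ is supported on a translate $f^{c_s-j}(I)$ disjoint from $I$; since these factors commute with the $j=c_s$ factor and among themselves, $f^{c_s}f_{i_s}f^{-c_s}$ preserves $I$ and restricts on $I$ to $h_{i_s,c_s}$. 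Therefore $\omega$ on the assignment preserves $I$ and restricts on $I$ to $\prod_s h_{i_s,c_s}^{\varepsilon_s}=\varphi(\omega)(\{h_{i,j}\})$, which is supported in $I$ and nontrivial; so $\omega$ on the assignment is not the identity, contradicting the universality of $\omega$. This proves $\varphi(\omega)$ is universal. The only mildly delicate points are the gluing step and the bookkeeping of the rewriting $\varphi$; the heart of the argument, the disjoint-support computation in the last paragraph, is immediate once $f(I)\cap I=\emptyset$ is arranged.
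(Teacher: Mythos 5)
Your proof is correct and follows essentially the same route as the paper's: contradiction from a non-universal $\varphi(\omega)$, realize the $h_{i,j}$ on a segment $I$ inside a fundamental domain of an $f$, invoke Theorem~\ref{t.jump} to get $\langle f,\Gamma\rangle\in\cC^1_{id}$, assign $f$ to the letter of $\omega_1$ and the disjointly-supported products $f_i=\prod_j f^{-j}h_{i,j}f^{j}$ to the other letters, and check that $\omega$ evaluated on this assignment restricts on $I$ to $\varphi(\omega)(\{h_{i,j}\})\neq\mathrm{id}$. You merely flesh out two points the paper leaves implicit (the smooth conjugation of $\Gamma$ into a subsegment, and the bookkeeping that $f^{c_s}f_{i_s}f^{-c_s}$ restricts on $I$ to $h_{i_s,c_s}$); both are handled correctly.
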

 \begin{demo}Assume that $\varphi(\omega)$ is not universal.  Therefore, there is an interval $I$ and  
 $h_{i,j}\in Diff^1(I)$ so that $\varphi(\omega)(h_{i,j})\neq id$ 
 and the group generated by the $h_{i,j}$ is $C^1$-close to the identity. 
 
 One considers $f\in Diff^1_+([0,1])$, without hyperbolic fixed points, and so that $I$ is contained in the interior of a fundamental domain of $f$. 
 According to Theorem~\ref{t.jump}, the group generated by $f$ and the $h_{i,j}$ is $C^1$-close to the 
 identity. 
 
 One denotes by $g_i$ the diffeomorhisms which coincides with $f^{-j}h_{i,j}f^j$ on $f^{-j}(I)$, 
 for every $j$ for which $h_{i,j}$ is defined, and the identity out of the $f^{-j}(I)$. The group generated by the 
 $g_i$ is $C^1$-close to the identity. 
 
 Now $\omega(f,\{g_i\})$ is  a diffeomorphism which coincides with $\varphi(\omega)(h_{i,j})$ 
 in restriction to $I$, hence is not the identity map, contradicting the hypothesis on $\omega$.   
 \end{demo}

\section{Group extensions in the class $\cC^1_{id}$.}
The aim of this section is to prove  Theorem~\ref{t.jump}: given a group $G\subset Diff^1_+([0,1] $, $C^1$-close to the identity 
and supported in the interior of a fundamental domain of a diffeomorphism $f\in Diff^1_+([0,1])$, without hyperbolic 
fixed points, 
the group $<G,f>$ is $C^1$-close tot the identity.  We will prove her a slightly stronger version which will be used in the proof of 
Theorem~\ref{t.realisation}: 
\begin{theo}\label{t.jump2}
Let $f\in Diff^1_+([0,1])$ be a diffeomorphism without hyperbolic fixed points and $G\subset Diff^1_+([0,1])$ be a group supported on a fundamental domain 
$[x_0,f(x_0)]$.  Assume that there is a $C^1$-continuous path $h_t\in Diff^1_+([0,1])$, $t\in[0,1)$, so that 
\begin{itemize}
 \item for every $g\in G$ $$h_tgh_t^{-1}\overset{C^1}{\longrightarrow} Id,$$ and 
 \item  $h_t$ is supported on $[x_0,f(x_0)]$. 
\end{itemize}

Then the group $<f,G>$ generated by $f$ and $G$ is $C^1$-close to the identity.

\vskip 2mm

More precisely,
there is a $C^1$-continuous path $H_t\in Diff^1_+([0,1])$, $t\in[0,1)$, so that 
\begin{itemize}
 \item for every $g\in <f,G>$ $$H_tgH_t^{-1}\overset{C^1}{\longrightarrow} Id,$$ and 
 \item  $H_t$ is supported on the support of $f$ and $DH_t(0)=DH_t(1)=1$. 
\end{itemize}

\end{theo}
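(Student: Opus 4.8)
By Lemma~\ref{l.semidirect}, $\langle f,G\rangle\cong\bigl(\bigoplus_{\ZZ}G\bigr)\rtimes\ZZ$ with $f$ acting as the shift of the factors; in particular $\langle f,G\rangle$ is generated by $f$ together with $G$. It therefore suffices to build a $C^1$-continuous path $H_t\in Diff^1_+([0,1])$, $t\in[0,1)$, supported on $supp(f)$ with $DH_t(0)=DH_t(1)=1$, such that $H_tfH_t^{-1}\xrightarrow{C^1}\mathrm{id}$ and $H_tgH_t^{-1}\xrightarrow{C^1}\mathrm{id}$ for every $g\in G$: indeed, for a fixed $\gamma=\gamma_1\cdots\gamma_k\in\langle f,G\rangle$ written in these generators, $H_t\gamma H_t^{-1}=\prod_{i=1}^k H_t\gamma_iH_t^{-1}$ is a product of a \emph{fixed} number of factors, each tending to $\mathrm{id}$ in $C^1$, hence tends to $\mathrm{id}$ in $C^1$; by Theorem~\ref{t.cohomologique} this is exactly the assertion. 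Following Section~\ref{s.cohomologique} one organises the construction through the cohomological equation: $\Psi_t:=\log DH_t$ should be an approximate solution, as $t\to1$, of $\Psi_t(\gamma(x))-\Psi_t(x)\to-\log D\gamma(x)$ simultaneously for $\gamma=f$ and for every $\gamma=g\in G$, with $\Psi_t$ supported on $supp(f)$; the linearity of these equations is what permits the superposition below.

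\textbf{The two ingredients and the spreading.} Since $f$ has no hyperbolic fixed point, $\langle f\rangle$ is $C^1$-close to the identity by \cite{Fa} (cyclic case); as $f=\mathrm{id}$ off $supp(f)$, one obtains an approximate solution $\phi_t$ of the cohomological equation of $f$ supported on $supp(f)$. The second ingredient is the hypothesis $\psi_t:=\log Dh_t$, supported on $D_0:=[x_0,f(x_0)]$, solving the equation of every $g\in G$; note that $\psi_t$ vanishes at the endpoints of $D_0$, since a $C^1$-diffeomorphism of $[0,1]$ supported on $D_0$ has derivative $1$ there. One spreads $\psi_t$ over the fundamental domains $D_i:=f^i(D_0)$ of $f$: on $D_i$ one puts a copy of $\psi_t$, transported through a diffeomorphism $\mu_i\colon D_0\to D_i$ together with the distortion term $-\log D\mu_i$ forced by the chain rule, damped by a slowly varying cutoff $\rho_t(i)$ supported on $|i|\le N_t$ with $N_t\to\infty$ and $\rho_t(0)=1$; the resulting function $\Xi_t$ is continuous, supported on a compact subinterval of $supp(f)$, and $\Psi_t:=\phi_t+\Xi_t$ is the candidate. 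The choice of the $\mu_i$ is delicate: one cannot take $\mu_i=f^i$, whose distortion over $D_0$ need not stay bounded as $i\to\pm\infty$ — this would destroy the continuity and the $C^1$-gluing of $\Xi_t$ near the ends of $supp(f)$ — and producing a usable family of $\mu_i$ (together with the correct target geometry) is exactly where the cyclic case of \cite{Fa}, equivalently the absence of a hyperbolic fixed point for $f$, enters.

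\textbf{Verification.} For $\gamma=f$: $\phi_t(f(x))-\phi_t(x)\to-\log Df(x)$ by the choice of $\phi_t$, while on each $D_i$ one has $|\Xi_t(f(x))-\Xi_t(x)|=O(1/N_t)\to0$ uniformly, because $\rho_t$ varies by $O(1/N_t)$ from one domain to the next and $\psi_t$ is bounded; hence $\Psi_t$ solves the equation of $f$. For $\gamma=g\in G$: outside $D_0$ both sides vanish (as $Dg=1$ there, $G$ having no hyperbolic fixed point); on $D_0$, where $\rho_t\equiv1$, one has $\Xi_t(g(x))-\Xi_t(x)=\psi_t(g(x))-\psi_t(x)\to-\log Dg(x)$, and it remains to ensure that the contribution $\phi_t(g(x))-\phi_t(x)$ of the $f$-solution tends to $0$ on $D_0$, which one arranges by using an approximate $f$-solution $\phi_t$ with small oscillation over each fundamental domain. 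Finally $\Psi_t$ can be made to vanish outside $supp(f)$ and with the prescribed endpoint derivatives, so that $H_t:=h_{\Psi_t}$, renormalised on $supp(f)$, is a path as required.

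\textbf{Main obstacle.} The technical heart is the \emph{simultaneous} $C^1$-control, over the countably many fundamental domains of $f$ and uniformly up to the endpoints of $supp(f)$, of both directions at once: the spreading of $\psi_t$ that unwinds the copies of $G$ on the $D_i$ must not destroy the near-identity behaviour along $f$, while the distortion of $f$ along its iterates — which governs the continuity and the $C^1$-matching of $\Psi_t$ near the ends of $supp(f)$, as well as the mismatch at the seams $f^i(x_0)$ coming from $Df(x_0)\neq1$ — is in general unbounded. Taming it requires genuinely exploiting the absence of a hyperbolic fixed point for $f$, in the quantitative form supplied by \cite{Fa}, together with a careful uniform-in-$i$ choice of the transports $\mu_i$, of the geometry of the target subintervals and of the correctors at the seams, of the cutoff $\rho_t$, and of the rate at which $t\to1$ is approached; this is what makes Theorem~\ref{t.jump2} — and hence Theorems~\ref{t.jump} and \ref{t.realisation} — the technical core of the paper.
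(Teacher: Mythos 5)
Your architecture is right at the level of ideas — reduce to the two generating classes, reformulate via the cohomological equation, use an $f$-cocycle solution coming from the cyclic case of \cite{Fa}, spread the $G$-cocycle solution $\psi_t=\log Dh_t$ across the fundamental domains of $f$ — but your \emph{additive} decomposition $\Psi_t=\phi_t+\Xi_t$ cannot work as stated, and the paper's proof composes instead of adding for exactly this reason.

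Here is the concrete obstruction. In your $g$-verification on $D_0$ you need $\phi_t(g(y))-\phi_t(y)\to0$ for the \emph{original} $g\in G$, whose displacement on $D_0$ is of macroscopic size; so you need $\mathrm{osc}_{D_0}(\phi_t)\to0$. But $\phi_t$ must also solve the $f$-cohomology: taking $x=x_0$ gives $\phi_t(f(x_0))-\phi_t(x_0)\to-\log Df(x_0)$, and both $x_0,f(x_0)$ lie in $D_0=[x_0,f(x_0)]$, whence $\mathrm{osc}_{D_0}(\phi_t)\geq|\log Df(x_0)|+o(1)$. Since $x_0$ is an interior point of the support of $f$, there is no reason for $Df(x_0)=1$, so the two requirements are incompatible. (This is precisely the ``mismatch at the seams coming from $Df(x_0)\neq1$'' you flag, but it is not merely a gluing nuisance — it rules out the decomposition.) The paper sidesteps this by forming $H_t=\alpha_t\circ\tilde h_t$ rather than $h_{\phi_t+\Xi_t}$; writing $\Psi_t(y)=\log D\alpha_t(\tilde h_t(y))+\log D\tilde h_t(y)$, the $\phi_t$-term appears as $\phi_t(\tilde h_t(g(y)))-\phi_t(\tilde h_t(y))$. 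Because $\tilde h_t g\tilde h_t^{-1}\to\mathrm{id}$, the argument $\tilde h_t(g(y))$ is $o(1)$-close to $\tilde h_t(y)$, so \emph{equicontinuity} of $\log D\alpha_t$ on $D_0$ — which Lemma~\ref{l.step1induction} shows is achievable, unlike vanishing oscillation — suffices; that is exactly Lemma~\ref{l4.sansenlace}. The paper even warns of this: when the support of $G$ is a full fundamental domain one must weaken ``affine on $D_0$'' to ``equicontinuous $\log$-derivative on $D_0$.''

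Two further points. First, in your $f$-verification you invoke ``$\psi_t$ is bounded''; this is true for each fixed $t$ but not uniformly in $t$ (the conjugacies bringing $G$ to the identity typically have derivatives blowing up as $t\to1$). You would have to tie the growth of $N_t$ to a bound for $\|\psi_t\|_\infty$, and then check that the region where $\rho_t\neq0$ still fits inside $\mathrm{Supp}(f)$ with the correct $C^1$-gluing at the accumulation points — none of which is done. Second, and more fundamentally, the paragraph you label ``Main obstacle'' openly defers the construction of $\mu_i$, $\rho_t$ and the endpoint estimates; in the paper these are not left as heuristics but are the actual content of Lemma~\ref{l1.sansenlace} (extension of $h_t$ to an $\varepsilon_t$-almost-$f$-commuting $\tilde h_t$, via Proposition~\ref{p.groupesanspointfixeenlace}), Lemma~\ref{l.step1induction} (the isotopy $\alpha_t$ with equicontinuous $\log$-derivative on $D_0$, via Theorem~\ref{t.conjugaisonprescrite}), and Lemmas~\ref{l3.sansenlace}--\ref{l4.sansenlace} (robustness and conjugation by equicontinuous families). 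So as a proof, the proposal is incomplete; and even as a plan, the additive ansatz must be replaced by the composite one before the remaining estimates can be meaningfully carried out.
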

Notice that Theorem~\ref{t.jump} follows directly from Theorem~\ref{t.jump2}: if the support of $G$ is contained in the interior of the 
fundamental domain $(x_0,f(x_0))$ then given  $h^0_t\in Diff^1(supp(G)), t\in [0,1)$ realizing an isotopy by conjugacy of   $G$ to the identity, 
one easily build another isotopy by conjugacy $h_t\in Diff^1([0,1])$, supported on $[x_0,f(x_0]$.

Theorem~\ref{t.jump2} is the main technical result of this paper.  The proof is the aim of the whole section.  Let
us first present a sketch of proof. 
\subsection{Sketch of proof}

The proof  uses strongly arguments in \cite{Fa} which build explicitely conjugacies from 
a given diffeomorphisms to a neighborhood of the identity.  
Here we will need to come back to the contruction of \cite{Fa} for getting a simultanous conjugacy. 
For this reason it will be sometimes practical to use the following notation

\begin{nota}
Given  $f,g\in Diff^1_+([0,1])$ and $(h_t)_{t\in[0,1)}$, $h_t\in Diff^1_+([0,1])$  a 
$C^1$-continuous path of diffeomorphisms, 
 the notation 
$$f\underset{h_t}{\rightsquigarrow}g$$ 
means that $(h_tfh_t^{-1})_{t\in[0,1)}$ is an \textbf{\emph{isotopy by 
conjugacy 
from $f$ to $g$}}, that is:  
$$h_tfh_t^{-1}\overset{C^1}{\underset{t\to 1}{\longrightarrow}} g. $$ 

One will write $f\rightsquigarrow g$ if there exists an isotopy by conjugacy from $f$ to $g$.
\end{nota}

\begin{demo}[Sketch of proof of Theorem~\ref{t.jump2}]
Let $G$ be a group $C^1$-close to the identity, supported in  a 
fundamental domain $[a,b]$ of 
$f\in Diff^1_+([0,1])$. 

Let $(h_t)_{t\in[0,1)}$ be a continuous path  of $C^1$-diffeomorphisms realizing
an isotopy by conjugacy from $G$ to $id$ and such that $h_t$ has derivative equal to $1$ at 
$a$ and $b$. We will  extend these diffeomorphisms $\{h_t\}_{t\in[0,1)}$
 to diffeomorphisms $\tilde h_t$  of $[0,1]$ in such a way that 
 $h_t f h_t^{-1}\overset{C^1}{\longrightarrow} f$ (Lemma \ref{l1.sansenlace}): in other word, for $t\to 1$, 
 the extensions $\tilde h_t$ almost commute with $f$. 
 
In this way, the impact 
induced on $f$ by the isotopy by conjugacy from $G$ to $id$ will be slight.

Let  $(\varphi_t)_{t\in[0,1)}$ be a continuous path  of $C^1$-diffeomorphisms for which   
 $\varphi_t f \varphi_t^{-1}\overset{C^1}{\longrightarrow} id$ (the existence of  $(\varphi_t)_{t\in[0,1)}$ is given \cite{Fa}); 
assume that   one can choose $\varphi_t$ so that, furthermore,  $\varphi_t$ is affine on $[a,b]$ 
or all $t\in[0,1)$.
Under this assumption, the conjugacy by $(\varphi_t)_{t\in[0,1)}$ 
will not affect  the $C^1$-distance of $h_tg h_t^{-1}$ to the identity for $g\in G$, 
and we can compose the two isotopies, 
by first conjugating by $\tilde h_t$ and then by $\varphi_t$: 
$$\forall g\in<G,f>, g\underset{\varphi_t\circ\tilde h_t}{\rightsquigarrow} id.$$ 

Indeed, we will ensure the existence of such $\varphi_t$ affine on the support of $G$ when the support of $G$ is 
contained in the interior of a fundamental domain of $f$.  When the support of $G$ is precisely one fundamental 
domain, we will weaken slightly this assumption, ensuring that the logarithm of the derivative of 
$\varphi_t$  is equicontinuous on $[a,b]$ (Lemma~\ref{l.step1induction}).  This will ensure that conjugating by $\varphi_t$ will have a bounded 
effect on the $C^1$-distance of $h_tg h_t^{-1}$ to the identity for $g\in G$. 

\end{demo}

As explained in this sketch of proof the two main tools for Theorem~\ref{t.jump} are Lemmas~\ref{l1.sansenlace} and 
\ref{l.step1induction} stated in next sections. 

\subsection{Background from \cite{Fa}}
We rewrite  \cite[Proposition 9]{Fa} 
in a form which will be more convenient here. 

If $h_0$ is a diffeomorphism supported in a fundamental domain of the diffeomorphism $f$, one gets a homeomorphism
$h$ commuting with $f$ by defining the restriction $h_n=h|_{I_n}$, $I_n=f^n(I_0)$ as  
$h_{n}=f_{n-1} h_{n-1} f^{-1}_{n-1}$ where $f_n$ is the restriction $f|_{I_n}$. The homeomorphism $h$ 
will be a diffeomorphism
if and only if $h_n$ tends to the identity map in the $C^1$ topology as $n\to\pm \infty$. That is not the case in general. 
However, if one only wants that $f$ and $h$ \emph{almost} commute, then one is allowed to modify slghtly the 
induction process, and one can do it guaranteeing this convergence to the identity.  
That was the aim of \cite{Fa}.  In Proposition~\ref{p.Fa1} and \ref{p.groupesanspointfixeenlace} below, 
we renormalized the intervals $I_n$ so that then $f_n$ appear as diffeomorphisms of $[0,1]$.

\begin{prop}\label{p.Fa1}
Let $(f_{n})_{(n)\in[0,1)\times\N}$ be a sequence of diffeomorphisms of
$[0,1]$ such that   $(f_{n})_{n\in\N}$ converges to the identity in the $C^1$-topology when $n$ tends to infinity 

Let  $(h_{0})$ be a 
diffeomorphism of $[0,1]$ such that, $Dh_{0}(0)=1=Dh_{0}(1)$.
Fix  $\varepsilon>0$ .

\vskip 2mm

 Then, there exists a sequence $(\psi_{n})_{(n)\in\N}$ of $C^1$-diffeomorphisms of $[0,1]$

such that : 
\begin{itemize}
\item $D\psi_{n}(0)=1=D\psi_{n}(1)$ for all $n\in\N$ ;
\item  $\Vert f_{n}\circ\psi_{n}-f_{n}\Vert_{1}<\varepsilon$ for all $n\in\N$ ;
\item the sequence $(h_{n})_{n\in\N}$ of $C^{1}$-diffeomorphisms of $[0,1]$ , defined by induction as 
$h_{0}$ and 
\\$h_{n}=f_{n-1}\psi_{n-1}h_{n-1}f_{n-1}^{-1}$ if $n\in\N^{*}$ satisfies: 
 $$\exists N>0, \forall n\geq N, h_n=id.$$
\end{itemize}
\end{prop}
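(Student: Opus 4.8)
The plan is to reduce everything to driving $h_0$ to the identity along the prescribed recursion $h_n=f_{n-1}\psi_{n-1}h_{n-1}f_{n-1}^{-1}$, using only corrections $\psi_{n-1}$ with $D\psi_{n-1}(0)=D\psi_{n-1}(1)=1$ and $\Vert f_{n-1}\psi_{n-1}-f_{n-1}\Vert_1<\varepsilon$: the conclusion that $h_n=id$ for $n\ge N$ is exactly the statement that the $h_n$ hit the identity after finitely many steps, since $\psi_n=id$ is admissible and keeps $h_{n+1}=f_n\,id\,f_n^{-1}=id$ once $h_n=id$. The first step is to convert the $\varepsilon$-constraint into a genuine $C^1$-estimate on $\psi_{n-1}$. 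Since $(f_n)$ tends to the identity in $C^1$, the derivatives $Df_n$ converge uniformly to $1$, so they are equicontinuous and uniformly bounded away from $0$ and $+\infty$; writing $\Vert f_{n-1}\psi_{n-1}-f_{n-1}\Vert_1<\varepsilon$ as $|f_{n-1}(\psi_{n-1}(x))-f_{n-1}(x)|<\varepsilon$ together with $|Df_{n-1}(\psi_{n-1}(x))\,D\psi_{n-1}(x)-Df_{n-1}(x)|<\varepsilon$, one deduces that for large $n$ the constraint essentially says that $\psi_{n-1}$ is $C^1$-close to the identity, and conversely that any diffeomorphism which is $C^1$-close enough to the identity with derivative $1$ at $0$ and at $1$ is admissible. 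In particular each step in which $f_{n-1}$ is already $C^1$-close to the identity comes with a correction budget of $C^1$-size of order $\varepsilon$, while a step with $\psi_{n-1}=id$ is just conjugation of $h_{n-1}$ by $f_{n-1}$.

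Next I would run a discretized isotopy towards the identity. Fix the $C^1$-path $\gamma_s=(1-s)h_0+s\cdot id$, $s\in[0,1]$, from $h_0$ to the identity; it stays among diffeomorphisms with derivative $1$ at the endpoints, and its $C^1$-norm, the positive lower bound of its derivative, and the modulus of continuity of its derivative are all controlled by those of $h_0$ (so the same holds for $\gamma_s^{-1}$). For a large integer $N$ and a fine subdivision $0=s_0\le s_1\le\cdots\le s_N=1$, one asks the recursion to follow this path, i.e. $h_n=\gamma_{s_n}$; this forces $\psi_{n-1}=f_{n-1}^{-1}\gamma_{s_n}f_{n-1}\gamma_{s_{n-1}}^{-1}$, the endpoint conditions on $\psi_{n-1}$ then hold automatically, and admissibility amounts to $\Vert\gamma_{s_n}f_{n-1}\gamma_{s_{n-1}}^{-1}-f_{n-1}\Vert_1<\varepsilon$. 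By the triangle inequality this splits into a term bounded by a modulus evaluated at the mesh $|s_n-s_{n-1}|$ (using the uniform control of $\gamma$ and the uniform bound on $\Vert f_n\Vert_1$), and the term $\Vert\gamma_{s_{n-1}}f_{n-1}\gamma_{s_{n-1}}^{-1}-f_{n-1}\Vert_1$, which is small exactly when $f_{n-1}$ is $C^1$-close to the identity. Once $s_N=1$ one has $h_N=id$, and the recursion stays at the identity thereafter.

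The main obstacle is making these two requirements compatible for all indices at once. The second term above is genuinely not small for the first few indices, where $f_{n-1}$ is still far from the identity; there $h_n$ is forced to drift to a conjugate $F_nh_0F_n^{-1}$ of $h_0$ by $F_n=f_{n-1}\circ\cdots\circ f_0$, and the $C^1$-norm of $F_n$ — hence the number of $\varepsilon$-sized steps needed afterwards to straighten $h_n$ back to the identity, and hence the smallness required of the subsequent $f_n$ — need not remain bounded as more factors are added. Organizing the termination index $N$, the subdivision, and a suitable non-trivial choice of the $\psi_{n-1}$ already in the early steps so that every one of the finitely many constraints $D\psi_{n-1}(0)=D\psi_{n-1}(1)=1$ and $\Vert f_{n-1}\psi_{n-1}-f_{n-1}\Vert_1<\varepsilon$ holds simultaneously is exactly the explicit construction carried out in \cite[Proposition 9]{Fa}; Proposition~\ref{p.Fa1} is that construction transported to the renormalized intervals $I_n$, on which the restrictions $f|_{I_n}$ appear as the diffeomorphisms $f_n$ of $[0,1]$.
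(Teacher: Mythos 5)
The paper itself provides no proof of Proposition~\ref{p.Fa1}: it is introduced explicitly as a convenient reformulation of \cite[Proposition 9]{Fa}, and that reference carries the burden. Your proposal does essentially the same thing --- at the decisive point it defers to \cite[Proposition 9]{Fa} --- so there is no internal argument in the paper to compare against, and in that sense you and the paper take the same route.

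Regarding the sketch that precedes your citation: the outline is sound, and you have correctly located the real difficulty. If one freezes $\psi_n=\mathrm{id}$ until $f_n$ is $C^1$-small and then tries to run down a path from $h_{n_0}=F_{n_0}h_0F_{n_0}^{-1}$ to $\mathrm{id}$, the $C^1$-size of $h_{n_0}$ (hence of the path $\gamma_s$ and of the modulus of continuity of $D\gamma_s$) can grow unboundedly as $n_0\to\infty$, while the smallness of $\Vert f_n-\mathrm{id}\Vert_1$ needed to make the conjugation term $\Vert \gamma_s f_n\gamma_s^{-1}-f_n\Vert_1$ less than $\varepsilon/2$ shrinks accordingly. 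So the threshold $n_0$, the (necessarily non-trivial) corrections $\psi_n$ in the early steps, and the mesh on the path cannot be chosen one after another; they must be designed together, and that design is exactly what \cite[Proposition 9]{Fa} supplies. You flag this honestly, so it is not an error, but it is the substance of the proposition, not a loose end; what you do carry out --- translating the $\varepsilon$-budget on $\Vert f_n\psi_n - f_n\Vert_1$ into a $C^1$-budget on $\psi_n$, checking the endpoint-derivative condition on $\psi_{n-1}=f_{n-1}^{-1}\gamma_{s_n}f_{n-1}\gamma_{s_{n-1}}^{-1}$, and splitting the constraint into a mesh term plus a conjugation term --- is the routine part. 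One small slip at the end: the proposition as stated concerns an abstract sequence $(f_n)$ converging to the identity, with no ambient $f$ or intervals $I_n$; the renormalization you invoke is how the proposition is later applied in Lemma~\ref{l1.sansenlace}, not part of its statement.
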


For Theorem~\ref{t2.sansenlace} we will need the version with parameters, also due to \cite{Fa}, 
of this proposition.  Let us state it below:

\begin{prop}\label{p.groupesanspointfixeenlace}
Let $(f_{t,n})_{(t,n)\in[0,1)\times\N}$ be a collection of diffeomorphisms such that : 
\begin{itemize}
\item for all $n$, $(f_{t,n})_{t\in[0,1)}$ is a $C^{1}$-continuous path in $Diff^1_+([0,1])$ so that 
$Df_{t,n}(0)$ and $Df_{t,n}(1)$ do not depend on $t\in [0,1)$;
\item for all $t\in[0,1)$, $(f_{t,n})_{n\in\N}$ converges to the identity  in the $C^1$-topology, when $n$ tends to infinity.
 \end{itemize}

Let  $(h_{t,0})_{t\in[0,1)}$ be a $C^1$continuous path of 
diffeomorphisms of $[0,1]$ such that, for all $t\in[0,1)$, one has $Dh_{t,0}(0)=1=Dh_{t,0}(1)$.
Let $(\varepsilon_t)_{t\in[0,1)}$ be continuous path of strictly positive real numbers.

\vskip 2mm

 Then, there exists a collection $(\psi_{t,n})_{(t,n)\in[0,1)\times\N}$ of $C^1$-diffeomorphisms of $[0,1]$ 
such that : 
\begin{itemize}
\item $D\psi_{t,n}(0)=1=D\psi_{t,n}(1)$ for all $(t,n)\in[0,1)\times\N$ ;
\item 
$\Vert f_{t,n}\circ\psi_{t,n}-f_{t,n}\Vert_{1}<\varepsilon_t$ for all $(t,n)\in[0,1)\times\N$ ;

\item Let $(h_{t,n})_{n\in\N,t\in [0,1)}$ be the collection of $C^{1}$-diffeomorphisms of $[0,1]$, defined by induction from  $h_{t,0}$ by
$$h_{t,n}=f_{t,n-1}\psi_{t,n-1}h_{t,n-1}f_{t,n-1}^{-1} \mbox{  if } n\in\N^{*}.$$ 

Then, for every $t\in[0,1)$, there is $N_t>0$, increasing with $t\in[0,1)$, so that 
$$h_{t,n}=f_{t,n-1}\psi_{t,n-1}h_{t,n-1}f_{t,n-1}^{-1}=id, \quad  \forall n\geq N_t.$$ 
\item $(\psi_{t,n})_{t\in[0,1)}$ is $C^1$-continuous for all $n\in\N$ ;
\item $(h_{t,n})_{t\in[0,1)}$ is $C^1$-continuous for all $n\in\N$.
\end{itemize}
\end{prop}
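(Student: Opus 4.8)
\textbf{Plan of proof of Proposition~\ref{p.groupesanspointfixeenlace}.}
The plan is to run the inductive construction of the $\psi_{t,n}$ and $h_{t,n}$ exactly as in the parameter-free Proposition~\ref{p.Fa1}, but carrying the parameter $t$ along and checking at each stage that the objects produced depend $C^1$-continuously on $t$. The underlying mechanism is the standard ``push to the identity'' trick: given $h_{t,n-1}$ with $Dh_{t,n-1}(0)=Dh_{t,n-1}(1)=1$, one wants to choose $\psi_{t,n-1}$, also tangent to the identity at the endpoints, so that the correction $h_{t,n}=f_{t,n-1}\psi_{t,n-1}h_{t,n-1}f_{t,n-1}^{-1}$ is ``closer'' to the identity, while $\|f_{t,n-1}\psi_{t,n-1}-f_{t,n-1}\|_1<\varepsilon_t$. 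Since $f_{t,n}\to id$ in $C^1$ as $n\to\infty$ uniformly on compact subsets of $[0,1)$ in $t$, conjugation by $f_{t,n-1}$ is $C^1$-close to the identity operator for $n$ large, so after finitely many steps (depending on $t$, but one can arrange monotonicity by taking $N_t$ to be a non-decreasing function of $t$) the remaining diffeomorphism can be killed outright: one sets $\psi_{t,n-1}=h_{t,n-1}^{-1}$-adjusted-by-$f$ so that $h_{t,n}=id$, and $\psi_{t,m}=id$ for all $m\geq N_t$. The point that makes this legitimate is that the size of the push needed at step $n$ is controlled by $\|h_{t,n-1}-id\|_1$, and this quantity decreases geometrically once $f_{t,n-1}$ is close enough to the identity.

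The first thing I would do is isolate the ``one-step lemma'' with parameters: for a $C^1$-continuous family $k_t\in Diff^1_+([0,1])$ with $Dk_t(0)=Dk_t(1)=1$ and a $C^1$-continuous family $f_t$, there is a $C^1$-continuous family $\psi_t$ with $D\psi_t(0)=D\psi_t(1)=1$, with $\|f_t\psi_t-f_t\|_1$ as small as prescribed, and such that $f_t\psi_t k_t f_t^{-1}$ has $C^1$-norm to the identity bounded by (a fixed fraction of) $\|k_t-id\|_1$ plus an error that goes to $0$ with the $C^1$-distance of $f_t$ to the identity. This is precisely the content extracted from \cite{Fa} in the non-parametric case; the only new feature is continuity in $t$, which follows because all the constructions in \cite{Fa} (cutting off near the endpoints, averaging, reparametrising) are done by explicit formulas that depend continuously on the data. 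Then I would feed this lemma into an induction on $n$: at each $n$, apply the one-step lemma to $k_t=h_{t,n-1}$ and $f_t=f_{t,n-1}$, obtaining $\psi_{t,n-1}$ and hence $h_{t,n}$, all $C^1$-continuous in $t$ and tangent to the identity at the endpoints as required.

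The termination clause needs a small amount of care with the parameter. For each fixed $t$, since $f_{t,n}\to id$ in $C^1$, there is some $n$ beyond which conjugation by $f_{t,n-1}$ contracts $C^1$-norms to the identity (up to the allowed $\varepsilon_t$-slack) by a definite factor, so $h_{t,n}\to id$ and one can truncate: choose $N_t$ so that $\|h_{t,N_t-1}-id\|_1$ is small enough that a single push $\psi_{t,N_t-1}$ (still tangent to the identity at $0$ and $1$, still $\varepsilon_t$-small after composing with $f_{t,N_t-1}$) makes $h_{t,N_t}=id$, and then set $\psi_{t,m}=id$, $h_{t,m}=id$ for $m\geq N_t$. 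To get $N_t$ non-decreasing in $t$ one simply replaces the a priori choice by $N_t=\sup_{s\le t} N_s$ (finite because of local control), redoing the last pushes on the enlarged range; this keeps everything $C^1$-continuous. A cosmetic point is that $h_{t,n}$ is defined piecewise in $t$ (it equals the inductively constructed diffeomorphism for $t$ with $N_t>n$ and equals $id$ for $t$ with $N_t\le n$), and continuity at the gluing parameters holds because $h_{t,n}\to id$ precisely as $t$ crosses the threshold.

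\textbf{Main obstacle.} The real difficulty is not the parametrisation but making sure the one-step push can always be taken \emph{tangent to the identity at both endpoints} while still being effective — the derivative constraint at $0$ and $1$ is what forbids a naive ``conjugate by $h_{t,n-1}^{-1}$'' and forces the more delicate construction of \cite{Fa}; here one must additionally verify that the $C^1$-continuity in $t$ of that delicate construction, together with the hypothesis that $Df_{t,n}(0),Df_{t,n}(1)$ are independent of $t$, is enough to keep the boundary derivatives of $\psi_{t,n}$ and $h_{t,n}$ pinned at $1$ uniformly. Since the heavy lifting is already done in \cite{Fa} (the non-parametric Proposition~\ref{p.Fa1}), the honest statement is that this proposition is obtained from the proof of \cite[Proposition~9]{Fa} by carrying the parameter through unchanged and invoking the continuous dependence of each elementary operation on its input, and I would present it that way rather than reproving \cite{Fa}.
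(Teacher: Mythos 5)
The paper itself gives no proof of this proposition: it is stated with the remark that it is ``also due to \cite{Fa}'' (the second author's earlier work), exactly as you conclude at the end of your write-up. Your outline --- run the inductive ``push to the identity'' construction of \cite{Fa} with the parameter $t$ carried through, use the tangency-to-identity constraint at the endpoints, truncate after a finite number $N_t$ of steps, and take a monotone envelope of $N_t$ --- is a faithful description of what the proof in \cite{Fa} does, so you have correctly identified both the content and the intended presentation.
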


\subsection{Diffeomorphisms almost commuting with $f$ and prescribed in a fundamental domain}

\begin{lemm}\label{l1.sansenlace}
Let us consider $f\in\mc{D}iff_+^1([0,1])$, $I=[x,f(x)]$ a 
fundamental domain of $f$, $(a,b)$ the connected component of $[0,1]\setminus{Fix}(f)$ containing $x$, 
and $(h_t)_{t\in[0,1)}$ a $C^1$-continuous path of 
$C^1$-diffeomorphisms of $[0,1]$ supported on $I$.

Then, for all continuous path $(\varepsilon_t)_{t\in[0,1)}$ of strictly positive real numbers,
there exists a $C^1$-continuous path $(\tilde{h}_t)_{t\in[0,1)}$ 
of $C^1$-diffeomorphisms of $[0,1]$ such that, for all $t\in[0,1)$,:
\begin{itemize}
\item  $\tilde{h}_t$ coincides with the identity map on a neighbourhood of $a$ and $b$ ;
\item  the support of $\tilde h_t$ is contained  in the orbit for $f$ of the support of $h_t$:
$$\mathrm{Supp}(\tilde{h}_t)\subset \bigcup\limits_{n\in\Z}f^n(\mathrm{Supp}(h_t));$$ 
\item $\tilde h_t$ coincides with $h_t$ on the fundamental domain $I$: 
$$\restriction{\tilde{h}_t}{I}=h_t;$$ 
\item $\Vert\tilde{h}_tf\tilde{h}_t^{-1}-f\Vert_1<\varepsilon_t$.
\end{itemize}
\end{lemm}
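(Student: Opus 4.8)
The plan is to build $\tilde h_t$ by spreading $h_t$ along the $f$-orbit of the fundamental domain $I$, reading everything in affine charts and feeding it into the construction of \cite{Fa}. Write $I_n := f^n(I) = [f^n(x), f^{n+1}(x)]$ for $n\in\ZZ$; since $f$ has no fixed point in $(a,b)$, the $I_n$ tile $(a,b)$, the orbit $(f^n(x))_{n\in\ZZ}$ is strictly monotone, and it converges to the two endpoints of $(a,b)$, so $|I_n|\to 0$ as $n\to\pm\infty$. Let $A_n\colon[0,1]\to I_n$ be the increasing affine bijection and put $\hat f_n := A_{n+1}^{-1}\circ f\circ A_n\in Diff^1_+([0,1])$, the renormalisation of $f\colon I_n\to I_{n+1}$. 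The first step is the observation that $\hat f_n\to\mathrm{id}$ in the $C^1$-topology as $n\to\pm\infty$: indeed $D\hat f_n(s)=\tfrac{|I_n|}{|I_{n+1}|}Df(A_n(s))$, and writing $|I_{n+1}|=Df(\xi_n)|I_n|$ for some $\xi_n\in I_n$ gives $D\hat f_n(s)=Df(A_n(s))/Df(\xi_n)$; as $A_n(s)$ and $\xi_n$ both lie in $I_n$, which shrinks to a point of $\mathrm{Fix}(f)$, and $Df$ is continuous and strictly positive, this ratio tends to $1$ uniformly in $s$. (No hypothesis on $Df$ at $a$ and $b$ is used.) This is exactly the input required to apply the machinery of \cite{Fa} to $f$ read in these charts.

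Next, for $t\in[0,1)$ set $\hat h_{0,t}:=A_0^{-1}\circ h_t\circ A_0$. Since $h_t$ is supported in $I$ and is a diffeomorphism of $[0,1]$, its derivative at $x$ and at $f(x)$ equals $1$, hence $D\hat h_{0,t}(0)=D\hat h_{0,t}(1)=1$, and $t\mapsto\hat h_{0,t}$ is a $C^1$-continuous path. Apply Proposition~\ref{p.groupesanspointfixeenlace} twice — to the renormalised data indexed by $n\ge 0$ and by $n\le 0$ respectively, the two being compatible since both start from $\hat h_{0,t}$ at $n=0$ — with $f_{t,n}:=\hat f_n$ (independent of $t$, and $C^1$-converging to $\mathrm{id}$), initial path $h_{t,0}:=\hat h_{0,t}$, and $\varepsilon'_t:=\varepsilon_t/(1+\|Df\|_\infty)$ where $\|Df\|_\infty=\sup_{[0,1]}Df$. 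This yields $C^1$-continuous families $(\psi_{n,t})$ with $D\psi_{n,t}(0)=D\psi_{n,t}(1)=1$ and $\|\hat f_n\psi_{n,t}-\hat f_n\|_1<\varepsilon'_t$, together with $C^1$-continuous families $(\hat h_{n,t})$ defined by $\hat h_{n,t}=\hat f_{n-1}\psi_{n-1,t}\hat h_{n-1,t}\hat f_{n-1}^{-1}$, with $\hat h_{n,t}=\mathrm{id}$ for $|n|\ge N_t$ ($N_t$ increasing in $t$). Define $\tilde h_t$ to be $A_n\circ\hat h_{n,t}\circ A_n^{-1}$ on each $I_n$ and the identity on $[0,1]\setminus(a,b)$. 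An immediate induction gives $D\hat h_{n,t}(0)=D\hat h_{n,t}(1)=1$ for all $n$, so the pieces glue with matching value and derivative at each $f^n(x)$ and $\tilde h_t\in Diff^1_+([0,1])$; it is the identity near $a$ and $b$ (where $\hat h_{n,t}=\mathrm{id}$), coincides with $h_t$ on $I$ (where $\hat h_{0,t}=A_0^{-1}h_tA_0$), and — choosing the corrections $\psi_{n,t}$ supported in the region where $\hat h_{n,t}\neq\mathrm{id}$, which is all that is needed since they only adjust the derivative there — has $\mathrm{Supp}(\tilde h_t)\subset\bigcup_n f^n(\mathrm{Supp}(h_t))$. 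The $C^1$-continuity of $t\mapsto\tilde h_t$ on $[0,1)$ follows because, on any compact parameter interval, only finitely many $\hat h_{n,t}$ (those with $|n|\le\sup N_t<\infty$) are non-trivial, each $C^1$-continuous in $t$.

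It remains to bound $\|\tilde h_tf\tilde h_t^{-1}-f\|_1$, which is the core point. Fix $n$, $y\in I_n$, $s=A_n^{-1}(y)$. Since $\tilde h_t$ preserves $I_n$ and $f(I_n)=I_{n+1}$, one computes $\tilde h_t^{-1}(y)=A_n\hat h_{n,t}^{-1}(s)$, then $f(\tilde h_t^{-1}(y))=A_{n+1}\hat f_n\hat h_{n,t}^{-1}(s)$, then $\tilde h_tf\tilde h_t^{-1}(y)=A_{n+1}\hat h_{n+1,t}\hat f_n\hat h_{n,t}^{-1}(s)$; the relation $\hat h_{n+1,t}=\hat f_n\psi_{n,t}\hat h_{n,t}\hat f_n^{-1}$ collapses $\hat h_{n+1,t}\hat f_n\hat h_{n,t}^{-1}$ to $\hat f_n\psi_{n,t}$, so $\tilde h_tf\tilde h_t^{-1}=A_{n+1}\circ(\hat f_n\psi_{n,t})\circ A_n^{-1}$ on $I_n$, whereas $f=A_{n+1}\circ\hat f_n\circ A_n^{-1}$ there. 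As the charts contribute a length ratio $|I_{n+1}|/|I_n|=Df(\xi_n)\le\|Df\|_\infty$, on $I_n$ one gets
$$\|\tilde h_tf\tilde h_t^{-1}-f\|_0\le|I_{n+1}|\,\varepsilon'_t,\qquad \|D(\tilde h_tf\tilde h_t^{-1})-Df\|_0\le\|Df\|_\infty\,\varepsilon'_t,$$
hence $\|\tilde h_tf\tilde h_t^{-1}-f\|_1\le(1+\|Df\|_\infty)\varepsilon'_t=\varepsilon_t$ on $I_n$; on $[0,1]\setminus(a,b)$ and on the $I_n$ with $|n|\ge N_t$ (where $\psi_{n,t}=\mathrm{id}$) the two maps agree exactly. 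Taking the supremum over $n$ gives $\|\tilde h_tf\tilde h_t^{-1}-f\|_1<\varepsilon_t$.

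The main obstacle is not any individual estimate but the fact that the naive $f$-commuting extension of $h_t$, namely $A_n\hat h_{n,t}A_n^{-1}$ with $\hat h_{n,t}=\hat f_{n-1}\hat h_{n-1,t}\hat f_{n-1}^{-1}$ and no corrections, is in general only a homeomorphism: the whole point, already solved in \cite{Fa} and packaged into Proposition~\ref{p.groupesanspointfixeenlace}, is to absorb this defect into the small perturbations $\psi_{n,t}$ so that the extension becomes a genuine $C^1$-diffeomorphism, almost commuting with $f$, depending $C^1$-continuously on $t$, and supported in the prescribed set.
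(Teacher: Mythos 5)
Your proof is correct and follows essentially the same route as the paper's: renormalize $f$ and $h_t$ into affine charts on the fundamental domains $I_n=f^n(I)$, feed the resulting data into Proposition~\ref{p.groupesanspointfixeenlace} for $n\ge 0$ and $n\le 0$, and glue the renormalized pieces back together. Your explicit identity $\tilde h_t f\tilde h_t^{-1}=A_{n+1}\circ(\hat f_n\psi_{n,t})\circ A_n^{-1}$ on $I_n$ and the choice $\varepsilon'_t=\varepsilon_t/(1+\|Df\|_\infty)$ to absorb the chart-scaling factor $|I_{n+1}|/|I_n|$ make the final estimate cleaner than the paper's ``straightforward calculations,'' but the underlying argument is the same.
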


The proof of Lemma~\ref{l1.sansenlace} consists in pushing  $h_t$ by $f$ in the iterates 
$f^n(I)$ of the fundamental 
domain. In each of these fundamental domains, one applies a small pertubation so that 
the diffeomorphism obtained in $f^n(I)$ becomes closer to $id$. 


\begin{demo}[Proof of Lemma \ref{l1.sansenlace}]

Let $(\varepsilon_t)_{t\in[0;1]}$ be a continuous path of strictly positive real numbers converging to $0$.
We  denote by $(f_n)_{n\in\N}$ the sequence of $C^1$-diffeomorphisms of $[0,1]$ defined by: for all $n\in\N$, $f_n$ is 
the normalization of the diffeomorphism $\restriction{f}{[f^{n}(x);f^{n+1}(x)]}$, 
that is : $f_n$ is obtained by conjugating 
$\restriction{f}{[f^{n}(x), f^{n+1}(x)]}$ by the affine maps from $[f^{n}(x),f^{n+1}(x)]$ and $[f^{n+1 }(x),f^{n+2}(x)]$to $[0,1]$. 
Notice that, as $f$ is $C^1$, the sequence 
$(f_n)_{n\in\N}$ converges to $id$ when $n$ tends to $\infty$, with respect to the $C^1$-topology.

One considers then $h_{t,0}$ as being the normalization of 
$\restriction{{h}_t}{[x,f(x)]}$ on the interval
$[0,1]$. In particular, the equality $Dh_{t,0}(0)=1=Dh_{t,0}(1)$ is satisfied. 

Proposition \ref{p.groupesanspointfixeenlace}, 
asserts that there exists a collection $(\psi_{t,n})_{t,n\in[0,1)\times\N}$ of diffeomorphisms of
$[0,1]$ 
such that :
\begin{itemize}
\item $(\psi_{t,n})_{t\in[0,1)}$ is a $C^1$-continuous path for all $n\in\N$ ;
\item for all $(t,n)\in[0,1)\times\N$, $D\psi_{t,n}(0)=1=D\psi_{t,n}(1)$
\item 
for all $(t,n)\in[0,1)\times\N$, one has : $\Vert f_n\circ\psi_{t,n}-f_n\Vert_{1}<\varepsilon_t$ ;
\item for all $t\in[0,1)$, the sequence of diffeomorphisms of $[0,1]$ defined 
\\by : $\left\lbrace\begin{array}{lll}
         h_{t,0} &  & \\
         h_{t,n} & = & f_{n-1}\psi_{t,n-1}h_{t,n-1}f_{n-1}^{-1} \mbox{for all } n\in\N^*
\end{array}\right.$ is stationnary, equal to $id$ for all $n\in\N$ great enough.
\item for all $n\in\N$, the path $(h_{t,n})_{t\in[0,1)}$ is a $C^1$-continuous path.
\end{itemize}
One gets a similar result and a similar $C^1$-continuous collection $(h_{t,n})_{(t,n)\in[0,1)\times(-\N)}$ by considering  
the negative iterates of $f$.

Consider now the $C^1$-continuous path $({h}_t)_{t\in[0,1)}$ of $C^1$-diffeomorphisms of $[0,1]$ defined by :
\begin{itemize}
\item $\restriction{{h}_t}{[f^n(x),f^{n+1}(x)]}$ is conjugated to $h_{t,n}$ by the affine map from $[f^n(x),f^{n+1}(x)]$ to $[0,1]$,
for  all $n\in\ZZ$ ;
\item ${h}_t=id$ on the complement of $\bigcup\limits_{n\in\Z}f^n([x,f(x)])$.
\end{itemize}

By  straightforward calculations using  $\Vert f_n\circ\psi_{t,n}-f_n\Vert_1<\varepsilon_t$ 
one gets $$\Vert D(h_{t,n+1}f_nh_{t,n}^{-1})-Df_n\Vert_1<\varepsilon_t,$$ from which follows 
$\Vert h_{t,n+1}f_nh_{t,n}^{-1}-f_n\Vert_1<\varepsilon_t$, thus 
$\Vert h_{t}fh_{t}^{-1}-f\Vert_1<\varepsilon_t$, concluding the proof.

\end{demo}

\subsection{Conjugacy  to the identity prescribed in a fundamental domain}\label{s.prescribeconjugacy}
The aim of this section is :

\begin{lemm}\label{l.step1induction} Let $f\in Diff^1_+([0,1])$ be a diffeomorphism without  
hyperbolic fixed point, and let $[a,b]$, $b=f(a)$, be a fundamental domain of $f$. 
There exists a $C^1$-continuous path $(\alpha_t)_{t\in[0,1)}$ of $C^1$-diffeomorphisms of $[0,1]$ so that: 
\begin{itemize}
 \item $D\alpha_t(0)=1=D\alpha_t(1)$ for all $t\in[0,1)$.
 \item $(\alpha_t)_{t\in[0,1)}$ has equicontinuous $\mathrm{Log}$-derivative on $[a,b]$: 
$$ \begin{array}{c}
 \forall \varepsilon>0, \exists \delta>0, \forall t\in[0,1), \forall x,y\in [a,b] \mbox{ so that }
 |x-y|\leq \delta
 \mbox{ one has: } \\ |\log D\alpha_t(x)-\log D\alpha_t(y)|<\varepsilon  \end{array}$$
 \item $f\underset{\alpha_t}{\rightsquigarrow}id$
\end{itemize}

\end{lemm}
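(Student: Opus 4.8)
The plan is to go through the cohomological equation of Section~\ref{s.cohomologique}. By the corollary following the first lemma there, it is enough to produce, for $t\in[0,1)$, a family of continuous maps $\psi_t\colon[0,1]\to\RR$, depending continuously on $t$ for the $C^0$-topology, with $\psi_t\circ f-\psi_t\xrightarrow{unif}-\log Df$ as $t\to 1$: then $\alpha_t:=h_{\psi_t}$ is automatically a $C^1$-continuous path realizing $f\underset{\alpha_t}{\rightsquigarrow}id$. Since $\log D\alpha_t=\psi_t-\log\int_0^1e^{\psi_t}$, equicontinuity of $\log D\alpha_t$ on $[a,b]$ is nothing but equicontinuity of the family $(\psi_t)_{t\in[0,1)}$ on $[a,b]$, while $D\alpha_t(0)=e^{\psi_t(0)}/\int_0^1e^{\psi_t}$ and similarly at $1$. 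So I would build such a $\psi_t$ that is, in addition, equicontinuous on $[a,b]$; the conditions $D\alpha_t(0)=D\alpha_t(1)=1$ will be ensured exactly as in Propositions~\ref{p.Fa1} and~\ref{p.groupesanspointfixeenlace}, which are arranged so as to keep the relevant diffeomorphisms equal to the identity near $0$ and $1$.

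First I would reduce to the case $\mathrm{Fix}(f)\cap(0,1)=\emptyset$, so that the iterates $f^n([a,b])$, $n\in\ZZ$, tile $(0,1)$. In general one performs the construction on the restriction of $f$ to each connected component of $[0,1]\setminus\mathrm{Fix}(f)$ and glues the resulting paths of conjugacies, which are the identity near the endpoints of the components; only finitely many components have diameter $\ge\varepsilon$ and are treated precisely, whereas on the others $f$ is automatically $C^1$-close to the identity, since $Df$ is continuous and equals $1$ on $\mathrm{Fix}(f)$; a diagonal argument as in the proof of Theorem~\ref{t.union} then assembles a single path, and this affects $[a,b]$ only through the component containing it.

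For the tiled case, I would fix once and for all a smooth $\sigma\colon[a,b]\to\RR$ with $\sigma(f(a))-\sigma(a)=-\log Df(a)$, and build $\psi_t$ so that it agrees with $\sigma$ on $[a,b]$ up to a correction that is $C^1$-small uniformly in $t$ and tends to $0$ as $t\to 1$, while on $f^n([a,b])$ with $n\neq 0$ it is obtained from $\sigma$ by propagating the relation $\psi_t(f(x))-\psi_t(x)=-\log Df(x)$ through the successive fundamental domains, perturbed on each of them by a correction whose supremum tends to $0$ as $t\to 1$. Such corrections are available precisely because $f$ has no hyperbolic fixed point: $\log Df(f^n(x))\to 0$ uniformly in $x\in[a,b]$ as $n\to\pm\infty$, so the cohomological ``drift'' is slow and can be absorbed by small perturbations before $\psi_t$ escapes to infinity near the endpoints. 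This is exactly the construction of \cite{Fa}, packaged in Proposition~\ref{p.groupesanspointfixeenlace} (its perturbations $\psi_{t,n}$ serve this very purpose), applied with the $f_n$ equal to the normalizations of $f$ on the fundamental domains and with $\sigma$ as the seed on $[a,b]$. The resulting family $(\psi_t)$ is then equicontinuous on $[a,b]$ (being $\sigma$ plus a uniformly $C^1$-small term), and $\alpha_t:=h_{\psi_t}$ is the path required.

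I expect the main obstacle to be precisely the point that forces one through the delicate construction of \cite{Fa} rather than propagating the cohomological equation by hand: controlling, over the infinitely many fundamental domains accumulating at an endpoint, the accumulated oscillation on $[a,b]$ of $\log D(f^n)=\sum_{k=0}^{n-1}\log Df\circ f^k$, which in the merely $C^1$ world need not be summable and has to be cancelled domain by domain by the perturbations.
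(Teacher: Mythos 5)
Your approach is essentially the paper's: prescribe the conjugacy's log-derivative on the fundamental domain $[a,b]$ by a fixed seed $\sigma$ (so the equicontinuity is free), then propagate outward through the machinery of \cite{Fa} packaged in Proposition~\ref{p.groupesanspointfixeenlace}; the paper implements exactly this via Theorem~\ref{t.conjugaisonprescrite} together with Lemma~\ref{l.conjugaisonprescrite}, working directly at the diffeomorphism level rather than with the cohomological equation. Two remarks. First, the endpoint conditions need a word of care in your packaging: the formula $\alpha_t = h_{\psi_t}$ gives $D\alpha_t(0) = e^{\psi_t(0)}/\int_0^1 e^{\psi_t}$, so having $\psi_t$ locally constant near $0$ and $1$ (which is what the \cite{Fa} truncation buys you) does not by itself give $D\alpha_t(0)=1$; you need the self-consistency $e^{\psi_t(0)}=\int_0^1 e^{\psi_t}$, which holds automatically when $\psi_t=\log D h_t$ for an honest $h_t\in Diff^1_+([0,1])$ (since $\int_0^1 Dh_t=1$), i.e.\ when you let the \cite{Fa} construction hand you the diffeomorphism and only afterwards read off $\psi_t$, but not for an arbitrary additively-built ``$\sigma$ plus corrections''. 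Second, your explicit reduction to $\mathrm{Fix}(f)\cap(0,1)=\emptyset$ is a welcome clarification: the paper's Lemma~\ref{l.conjugaisonprescrite} and Theorem~\ref{t.conjugaisonprescrite} carry this hypothesis, while Lemma~\ref{l.step1induction} does not, and the paper invokes them without addressing the mismatch; your observation that $f$ is already uniformly $C^1$-close to the identity on all but finitely many components is the right way to glue.
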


The proof is a variation on the proof of the main result in \cite{Fa}:
\begin{theo}[\cite{Fa}]
Given $f\in Diff^1_+([0,1])$ without fixed points in $(0,1)$, given any continuous pathes $0<a_t<b_t<1$, $t\in[0,1)$,
 given any $C^1$-continuous path ${g_t}_{t\in[0,1)}$, where $g_t\in Diff^1_+([0,1])$ is a diffeomorphism without fixed point in 
 $(0,1)$ which coincides with $f$ on $[0,a_t]$ and on $[b_t,1]$,
there is a $C^1$-continuous path $h_t\in Diff^1_+([0,1])$, $t\in[0,1)$, 
 so that, for every $t$, $h_t$ coincides
 with the identity on a neighborhood of $0$ and of $1$, and the $C^1$ 
 distance $\|h_t fh_t^{-1}-g_t\|^1$ tends to $0$ as $t\to 1$.
\end{theo}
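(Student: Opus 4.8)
Up to replacing $f$ by $f^{-1}$ and each $g_t$ by $g_t^{-1}$ --- which preserves the hypotheses (as $g_t^{-1}$ still coincides with $f^{-1}$ on $[0,a_t]\cup[b_t,1]$) and does not affect the conclusion (since $\|h_tfh_t^{-1}-g_t\|^1\to 0$ if and only if $\|h_tf^{-1}h_t^{-1}-g_t^{-1}\|^1\to 0$) --- I would assume $f(x)<x$ on $(0,1)$; then $g_t$ is also a contraction towards $0$, coinciding with $f$ on $[0,a_t]$ and on $[b_t,1]$. The mechanism of the statement is best seen through the following obstruction, which I would record first: if a homeomorphism conjugated $f$ to $g_t$ and was equal to the identity on a neighbourhood of one endpoint, then, propagating the relation $h\circ f=g_t\circ h$ through the infinitely many fundamental domains accumulating at that endpoint, it would be the identity everywhere, forcing $f=g_t$. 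Hence an interior-supported conjugacy cannot exist unless $f=g_t$, and one can only hope for the $\varepsilon_t$-approximation of the statement. The plan is to produce $h_t$ as an honest conjugacy near $1$ (where $f=g_t$) which is then ``switched off'' towards $0$, using the fundamental-domain renormalisation machinery of \cite{Fa}.

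For the set-up, I would fix for each $t$ a basepoint $p_t\in(f^{-1}(b_t),1)$, varying continuously with $t$, so that $f(p_t)>b_t$, and consider the $f$-tiling $J_n^t=[f^{n+1}(p_t),f^n(p_t)]$, $n\in\ZZ$, of $(0,1)$: on $\bigcup_{n\le 0}J_n^t=[f(p_t),1)$ one has $f=g_t$, the window $[a_t,b_t]$ lies in $\bigcup_{n\ge 1}J_n^t$, and for $n$ large $J_n^t\subset(0,a_t)$, again with $f=g_t$. I would let $h_t$ send $J_n^t$ to $K_n^t$, where $K_0^t=J_0^t$ and $K_{n+1}^t=g_t(K_n^t)$ (the $g_t$-tiling based at $p_t$): one has $K_n^t=J_n^t$ until the orbit of $J_0^t$ enters $[a_t,b_t]$, and the two tilings ``shear apart'' only on the finitely many domains meeting $[a_t,b_t]$ and a few below. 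After normalising $J_n^t$ and $K_n^t$ affinely onto $[0,1]$, the renormalisations $f_{t,n}$ of $f\colon J_n^t\to J_{n+1}^t$ and $g_{t,n}$ of $g_t\colon K_n^t\to K_{n+1}^t$ both converge to the identity in the $C^1$-topology as $n\to\infty$ (because $f$ and $g_t$ are $C^1$), depend $C^1$-continuously on $t$ for each fixed $n$, coincide for all but finitely many $n$, and --- after a further $t$-dependent normalisation if needed --- may be assumed to have $t$-independent derivatives at $0$ and $1$, as the proposition below requires.

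Then I would run the \cite{Fa} induction: prescribe the renormalisation $h_{t,n}$ of $h_t|_{J_n^t}$ by $h_{t,0}=\mathrm{id}$ and
\[ h_{t,n}=g_{t,n-1}\,\psi_{t,n-1}\,h_{t,n-1}\,f_{t,n-1}^{-1},\qquad n\ge 1, \]
for suitable perturbations $\psi_{t,n}$ with $D\psi_{t,n}(0)=D\psi_{t,n}(1)=1$. The bound $\|g_{t,n-1}\psi_{t,n-1}-g_{t,n-1}\|_1<\varepsilon_t$ yields $\|h_{t,n}\,f_{t,n-1}\,h_{t,n-1}^{-1}-g_{t,n-1}\|_1<\varepsilon_t$, which unwinds through the affine normalisations to $\|h_tfh_t^{-1}-g_t\|^1<\varepsilon_t$ on $K_n^t$; and if the $\psi_{t,n}$ are chosen so that $h_{t,n}=\mathrm{id}$ for all large $n$ (and for $n\le 0$), the pieces glue to a genuine $h_t\in Diff^1_+([0,1])$ equal to the identity near $0$ and near $1$. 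The existence of such $\psi_{t,n}$, $C^1$-continuously in $t$, is Proposition~\ref{p.groupesanspointfixeenlace}, applied with the two sequences $(f_{t,n})$ and $(g_{t,n})$ in place of one; this causes no difficulty, since $f_{t,n},g_{t,n}\to\mathrm{id}$ and $g_{t,n}^{-1}f_{t,n}\to\mathrm{id}$, so the proof of that proposition adapts verbatim, the ``switch-off'' step reducing $h_{t,n}$ towards the identity using that $g_{t,n}^{-1}f_{t,n}$ is $C^1$-close to the identity. Taking $\varepsilon_t\to0$ gives $\|h_tfh_t^{-1}-g_t\|^1\to0$, which is the assertion.

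The conceptual heart, and the main obstacle, is precisely the interface between the two regimes: because $f$ and $g_t$ agree near both endpoints but shear across $[a_t,b_t]$, the exact conjugacy built from the top cannot be continued back to the identity near $0$, and Proposition~\ref{p.groupesanspointfixeenlace} must be used to choose the $\psi_{t,n}$ so that the induced $h_{t,n}$ simultaneously absorb the accumulated shear and return to the identity after finitely many steps, uniformly enough in $t$ that the truncation index is increasing in $t$ and the family $h_t$ stays $C^1$-continuous across the integer jumps. This is the phenomenon already dealt with in \cite{Fa}; the genuinely new bookkeeping is carrying the two tilings $\{J_n^t\}$ and $\{K_n^t\}$ along together and checking that the error estimate survives the renormalisations --- a computation of the type already performed at the end of the proof of Lemma~\ref{l1.sansenlace}.
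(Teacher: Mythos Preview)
Your proof is correct and follows the same strategy as the paper's sketch: exploit that $f=g_t$ near one endpoint to start with the identity there, propagate the (approximate) conjugacy across the window $[a_t,b_t]$, and then invoke Proposition~\ref{p.groupesanspointfixeenlace} to dampen back to the identity near the other endpoint where again $f=g_t$. The only difference is organizational: the paper first builds the exact conjugacy $\hat h_t$ (identity near $0$) as a single object and then dampens it near $1$, whereas you start from the identity near $1$ and run a unified two-sequence induction towards $0$ --- but since your sequences $f_{t,n}$ and $g_{t,n}$ coincide for all but finitely many $n$, this reduces immediately to the same single-sequence application of the proposition after crossing the window.
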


  Let us sketch the proof of \cite{Fa}, so that we will explain the modification we need here. 
  
\begin{demo}[Sketch of proof of \cite{Fa}]
 As $f$ and $g_t$ coincides on $[0,a_t]$, there is a unique diffeomorphism $\hat h_t$ of $[0,1)$
which is the identity map in a neighborhood of $0$, and conjugating the restriction $f|_{[0,1)}$ to $g_t|_{[0,1)}$.
\cite{Fa} chooses $h_t$ so that it coincides with $\hat h_t$ out of an arbitrarily small neighborhood of $1$.
The idea is that, in a neighborhood of $1$, $f$ anf $g_t$ coincide so that $\hat h_t$ commute with $f$. 
One concludes as in the proof of Lemma~\ref{l1.sansenlace}: by using 
Proposition~\ref{p.groupesanspointfixeenlace} one can modify $\hat h_t$ slowly in the successive fundamental domains
of $f$ in order to get a diffeomorphism $h_t$ coinciding with $\hat h_t$ out of a small neighborhood of $1$, 
with the identity map in a smaller neighborhood of $1$ and almost comuting with $f$ on $[b_t,1]$.
 
 \end{demo}
 
Let us now modify  slightly the proof of \cite{Fa}. 
Consider  point $x_t, y_t\in [a_t,b_t]$ varying continuously with $t\in[0,1)$.

Let  $\varphi_t\colon [x_t, f(x_t)]\to [y_t,g_t(y_t)]$, $t\in[0,1)$ be a $C^1$-continuous path of
 diffeomorphisms satisfying  $$D\varphi_t(f(x_t)Df(x_t)=Dg_t(y_t)D\varphi_t(x_t).$$
 Then there is a unique $C^1$-diffeomorphism $\tilde h_t\colon(0,1)\to (0,1)$ conjugating $f$ to $g_t$ and 
 coinciding with $\varphi_t$ on $[x_t,f(x_t)]$.  As before, as $f$ and $g_t$ coincide on $[0,a_t]$ and $[b_t,1]$,
 one gets that $\tilde h_t$ commutes with $f$ in a neighborhood of $0$ and of $1$. One concludes as before: 
 one can modify $\tilde h_t$ slowly in the successive fundamental domains
of $f$ and $f^{-1}$ in order to get a diffeomorphism $h_t$ coinciding with $\tilde h_t$ out of a 
small neighborhood of $0$ and of $1$, 
with the identity map in a smaller neighborhoods of $0$ and $1$ and almost commuting with $f$ on $[0,a_t]$ and 
on $[b_t,1]$.

 Summarizing, this proves: 

\begin{theo}\cite{Fa}\begin{itemize}\label{t.conjugaisonprescrite}
 \item given $f\in Diff^1_+([0,1])$ without fixed points in $(0,1)$ so that $f(x)-x>0$ on $(0,1)$
 \item given any continuous pathes $0<a_t<b_t<1$ $t\in[0,1)$, 
 \item given any continuous pathes $x_t,y_t\in[a_t,b_t]$, $t\in[0,1)$
 \item given any $C^1$-continuous path ${g_t}_{t\in[0,1)}$, where $g_t\in Diff^1_+([0,1])$ is a diffeomorphism without fixed point in 
 $(0,1)$   so that  that $g_t$ coincides with $f$ on $[0,a_t]$ and on $[b_t,1]$
 \item given any $C^1$-continuous path $\varphi_t\colon [x_t, f(x_t)]\to [y_t,g_t(y_t)]$  so that 
  $$D\varphi_t(f(x_t)Df(x_t)=Dg_t(y_t)D\varphi_t(x_t).$$
 
\end{itemize}
 Then there is a $C^1$-continuous path $h_t\in Diff^1_+([0,1])$, $t\in[0,1)$, 
 so that 
 \begin{itemize}
  \item for every $t$, the diffeomorphism $h_t$ coincides
 with the identity on a neighborhood of $0$ and of $1$
 \item $h_t$ coincides with $\varphi_t$ on $[x_t,f(x_t)]$
 \item the $C^1$  distance $\|h_t fh_t^{-1}-g_t\|_1$ tends to $0$ as $t\to 1$.
 \end{itemize}
\end{theo}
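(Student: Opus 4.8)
The plan is to produce the path $h_t$ in two steps, following the pattern of the proof of Lemma~\ref{l1.sansenlace} and the sketch of \cite{Fa} recalled above. First I would extend $\varphi_t$ to a conjugacy on the open interval. Because $[x_t,f(x_t)]$ is a fundamental domain of $f$ and $[y_t,g_t(y_t)]$ one of $g_t$, and both $f$ and $g_t$ have no fixed point in $(0,1)$ and push points towards $1$, the only way to extend $\varphi_t$ to a map $\tilde h_t\colon(0,1)\to(0,1)$ with $\tilde h_t f=g_t\tilde h_t$ is to set $\tilde h_t|_{[f^n(x_t),f^{n+1}(x_t)]}=g_t^{\,n}\circ\varphi_t\circ f^{-n}$ for every $n\in\ZZ$. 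These pieces always agree in value at the common endpoints; they agree in first derivative there precisely because the hypothesis $D\varphi_t(f(x_t))Df(x_t)=Dg_t(y_t)D\varphi_t(x_t)$ says that $g_t\circ\varphi_t\circ f^{-1}$ has the same $1$-jet as $\varphi_t$ at $f(x_t)$, and this $C^1$-matching is preserved under the conjugations $\Phi\mapsto g_t^{\,n}\Phi f^{-n}$. Hence $\tilde h_t$ is a $C^1$-diffeomorphism of $(0,1)$, depending $C^1$-continuously on $t$, equal to $\varphi_t$ on $[x_t,f(x_t)]$ and conjugating $f$ to $g_t$; it extends continuously to $[0,1]$ fixing $0$ and $1$, but a priori fails to be $C^1$ at the endpoints.

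The second step trades this defect at the endpoints for a small commutator. Since $f=g_t$ on $[0,a_t]\cup[b_t,1]$, the conjugacy relation forces $\tilde h_t$ to commute with $f$ near $0$ and near $1$, so its only flaw is the possible blow-up of its derivative at these two points. I would repair it exactly as in Lemma~\ref{l1.sansenlace}: pick continuous paths $0<c_t<a_t$ and $f(b_t)<c'_t<1$, chosen moreover so that $\tilde h_t(c_t)<a_t$ and $\tilde h_t(c'_t)>b_t$, note that then $[x_t,f(x_t)]\subset(c_t,c'_t)$ and $g_t=f$ on $[0,c_t]\cup[c'_t,1]$; normalise by affine maps the restrictions of $f$ to the fundamental domains of $f$ contained in $[0,c_t]$ accumulating at $0$ and in $[c'_t,1]$ accumulating at $1$ --- by uniform continuity of $Df$ these normalisations converge to the identity in $C^1$, no hyperbolicity assumption being needed --- and feed them, together with the normalised restriction of $\tilde h_t$ to the fundamental domain adjacent to $c_t$ (resp. $c'_t$), into Proposition~\ref{p.groupesanspointfixeenlace} with a continuous error path $\varepsilon_t\to0$. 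Regluing the resulting pieces over $[0,c_t]$ and over $[c'_t,1]$ while keeping $\tilde h_t$ unchanged between $c_t$ and $c'_t$ yields a $C^1$-continuous path $h_t\in Diff^1_+([0,1])$ which is the identity near $0$ and near $1$, coincides with $\tilde h_t$ on $[c_t,c'_t]\supset[x_t,f(x_t)]$ --- in particular with $\varphi_t$ on $[x_t,f(x_t)]$ --- and satisfies $\|h_tfh_t^{-1}-f\|_1<\varepsilon_t$ on the two modified intervals.

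To conclude, one checks, as in Lemma~\ref{l1.sansenlace}, that $h_tfh_t^{-1}$ coincides with $\tilde h_tf\tilde h_t^{-1}=g_t$ off a neighbourhood $V_t$ of $\{0,1\}$ whose size shrinks as $t\to1$, whereas on $V_t$ one has $g_t=f$ together with $\|h_tfh_t^{-1}-f\|_1<\varepsilon_t$; hence $\|h_tfh_t^{-1}-g_t\|_1\le\varepsilon_t\to0$, which is exactly the assertion.

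The genuinely delicate point is the second step: running the scheme of Proposition~\ref{p.groupesanspointfixeenlace} with parameters and at both endpoints simultaneously when the cut points $c_t,c'_t$ --- hence the whole family of fundamental domains being altered --- move with $t$ and may tend to $0$, $1$ as $t\to1$. The normalisation conventions underlying Proposition~\ref{p.groupesanspointfixeenlace} (in particular that $Df_{t,n}$ at the endpoints be $t$-independent and that $Dh_{t,0}(0)=Dh_{t,0}(1)=1$) then have to be arranged by a careful, possibly $t$-dependent, choice of the affine renormalisations, and the $C^1$-matching between the untouched $\tilde h_t$ on $[c_t,c'_t]$ and the modified pieces --- supplied by the conditions $D\psi_{t,n}(0)=D\psi_{t,n}(1)=1$ --- must hold for every $t$. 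Everything else --- the $C^0$ and $C^1$ matching in the first step, the disjointness of the modified intervals from $[x_t,f(x_t)]$, and $C^1$-continuity in $t$ --- is routine bookkeeping.
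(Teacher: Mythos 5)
Your argument reproduces the paper's own proof sketch: extend $\varphi_t$ to a global $C^1$-conjugacy $\tilde h_t$ on $(0,1)$ (the derivative compatibility condition making the $C^1$-matching of the pieces $g_t^n\circ\varphi_t\circ f^{-n}$ automatic), observe that $\tilde h_t$ commutes with $f$ near $0$ and $1$ because $f=g_t$ there, and then repair the possible boundary blow-up by the parametric perturbation scheme of Proposition~\ref{p.groupesanspointfixeenlace}, exactly as in Lemma~\ref{l1.sansenlace}. The delicate normalisation issues you flag in the final paragraph are real, but they are glossed over to the same degree in the paper's own sketch, so this is essentially the same proof at the same level of detail.
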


According to Theorem~\ref{t.conjugaisonprescrite}, Lemma~\ref{l.step1induction} is now a direct consequence of the following 
lemma:

\begin{lemm}\label{l.conjugaisonprescrite} Let $f\in Diff^1_+([0,1])$ without hyperbolic fixed points,  
without fixed points in $(0,1)$ 
so that $f(x)-x>0$ on $(0,1)$, and 
$[a,b]$, $b=f(a)$, be a fundamental domain of $f$.   Then:
\begin{itemize}
 \item there is a $C^1$-continuous path $g_t$, $t\in[0,1)$, $g_t\in Diff^1_+([0,1])$, so that:
\item $g_t$ is without fixed point in $(0,1)$
\item  there are continuous pathes $0<a_t< b_t<1$ so that $g_t$ coincides with $f$ on $[0,a_t]\cup[b_t,1]$
\item $g_t\underset{t\to 1}{\overset{C^1}{\longrightarrow}}id$
 
\item  there is a $C^1$-continuous path of diffeomorphisms 
$\varphi_t\colon [a,f(a)]\to [a,g_t(a)]$ so that 
 $$D\varphi_t(f(a)Df(a)=Dg_t(a)D\varphi_t(a), $$  
\item and $(\varphi_t)_{t\in[0,1)}$ has equicontinuous $\mathrm{Log}$-derivative on $[a,f(a)]$. 
\end{itemize}
\end{lemm}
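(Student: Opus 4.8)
The plan is to build the path $g_t$ explicitly by modifying $f$ only on a shrinking fundamental domain, and then to exhibit the prescribed conjugacy $\varphi_t$ on $[a,f(a)]$ by hand. First I would observe that the real content is a local statement: I need a family $g_t$ that agrees with $f$ outside a small window $[a_t,b_t]$, converges $C^1$ to the identity, and on the single fundamental domain $[a,f(a)]$ differs from $f$ in a way that can be ``absorbed'' by a conjugacy with equicontinuous $\log$-derivative. Since $f$ has no hyperbolic fixed points, $Df(0)=Df(1)=1$; this is what will let the deformation be $C^1$-small near the endpoints. The standard trick (cf. \cite{Fa}) is that any contraction of $(0,1)$ is $C^1$-conjugate, after pushing most of the interval into a tiny neighbourhood of $0$ and of $1$, to something $C^1$-close to the identity; I would set $a_t,b_t\to 0,1$ and let $g_t$ be the result of ``compressing'' $f$ on $[a_t,b_t]$ toward a linear-like map there, leaving $f$ untouched on $[0,a_t]\cup[b_t,1]$ and interpolating smoothly. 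The condition $g_t(x)-x>0$ on $(0,1)$ is preserved throughout because we only shrink the graph of $f$ toward the diagonal without crossing it, and $g_t\to id$ in $C^1$ follows because the part of $[0,1]$ where $g_t$ differs visibly from $id$ is carried into neighbourhoods of $0,1$ where $Df$ is already close to $1$.

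Next I would produce $\varphi_t$. The requirement is a $C^1$-continuous path of diffeomorphisms $\varphi_t\colon[a,f(a)]\to[a,g_t(a)]$ satisfying the derivative-matching boundary condition $D\varphi_t(f(a))\,Df(a)=Dg_t(a)\,D\varphi_t(a)$, with $\log D\varphi_t$ equicontinuous in $t$ on $[a,f(a)]$. Because $b=f(a)$ is fixed while $g_t(a)$ varies continuously and converges to $a$ (since $g_t\to id$... wait, $g_t(a)-a$ need not go to $0$; but $g_t(a)$ stays in a compact subinterval of $(0,1)$ bounded away from $0,1$ by construction, and varies continuously), the interval $[a,g_t(a)]$ is a compactly-varying target. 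I would take $\varphi_t$ affine away from the two endpoints and correct it near $a$ and near $f(a)$ so that the derivative-matching condition holds; since $Df(a)$, $Df(f(a))$ are fixed positive numbers and $Dg_t(a)$, $Dg_t(f(a))$ vary continuously in a compact range, the ratios involved are bounded and bounded away from $0$, so the needed local derivative values for $\varphi_t$ at $a$ and $f(a)$ are bounded. Equicontinuity of $\log D\varphi_t$ then follows because $\varphi_t$ is (say) a fixed interpolation profile rescaled between two bounded derivative values, so its $\log$-derivative is a uniformly Lipschitz family.

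The main obstacle I expect is reconciling the global constraints on $g_t$ with the local prescription: $g_t$ must simultaneously (i) be a genuine contraction without fixed points in $(0,1)$, (ii) agree with $f$ near $0$ and $1$ on intervals $[0,a_t],[b_t,1]$, (iii) converge $C^1$ to $id$, and yet (iv) on the {\it fixed} fundamental domain $[a,f(a)]$ retain enough of $f$'s structure — in particular $g_t$ restricted near $a$ and $f(a)$ must have controlled (non-degenerate, non-blowing-up) derivatives so that the matching condition for $\varphi_t$ is solvable with equicontinuous $\log$-derivative. The delicate point is that as $t\to 1$ the ``active'' window $[a_t,b_t]$ shrinks, so $[a,f(a)]$ eventually lies inside the region where $g_t=f$ is untouched --- which actually {\it helps}, because then $\varphi_t$ may be taken equal to the identity for $t$ near $1$ and the equicontinuity is trivial in the limit; the real work is only for $t$ in a compact initial range $[0,t_0]$, where everything is uniformly controlled by compactness. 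So the proof reduces to: choose $a_t,b_t$ with $[a,f(a)]\subset[0,a_t]$ for $t\ge t_0$, handle $t\in[0,t_0]$ by an explicit bounded interpolation, and invoke Theorem~\ref{t.conjugaisonprescrite} to finish. I would carry out the construction of $g_t$ first, then $\varphi_t$, then check the four bullet conditions in order.
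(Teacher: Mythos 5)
Your plan contains a directional error that undermines the whole argument. You assert that as $t\to 1$ the ``active'' window $[a_t,b_t]$ shrinks, so that $[a,f(a)]\subset[0,a_t]$ eventually, and $g_t=f$ on $[a,f(a)]$ for $t$ near $1$. This is impossible: $g_t$ coincides with $f$ on $[0,a_t]\cup[b_t,1]$ and simultaneously $g_t\to id$ in $C^1$, while $f$ is fixed with $f(x)>x$ on all of $(0,1)$; hence the boundary regions $[0,a_t]$ and $[b_t,1]$ must shrink, i.e.\ $a_t\to 0$ and $b_t\to 1$, so $[a,f(a)]$ eventually lies entirely \emph{inside} the active window, where $g_t$ is no longer $f$. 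Your subsequent claim that $\varphi_t$ may be taken to be the identity for $t$ near $1$ therefore also fails: $\varphi_t$ maps the fixed interval $[a,f(a)]$ onto $[a,g_t(a)]$, and $g_t(a)\to a$ since $g_t\to id$, so $\varphi_t$ must collapse a fixed interval to a vanishing one, forcing $D\varphi_t\to 0$. Equicontinuity of $\log D\varphi_t$ is thus a genuine issue at $t\to 1$, not a compactness afterthought, and your ``bounded derivative values'' reasoning does not address the relevant invariant (the ratio $D\varphi_t(f(a))/D\varphi_t(a)$, which must stay bounded even as $D\varphi_t\to 0$). Also, invoking Theorem~\ref{t.conjugaisonprescrite} here is a misreading: that theorem consumes Lemma~\ref{l.conjugaisonprescrite} as input in the proof of Lemma~\ref{l.step1induction}, it does not help prove it.

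The paper sidesteps all of this by reversing your order of construction: it builds $\varphi_t$ \emph{before} $g_t$. Concretely, one fixes a single $\varphi_{t_0}\colon[a,f(a)]\to[a,\varphi_{t_0}(f(a))]$ normalised so that $Df(a)\,D\varphi_{t_0}(f(a))/D\varphi_{t_0}(a)=1$, and for $t>t_0$ takes $\varphi_t$ to be $\varphi_{t_0}$ post-composed with an affine map carrying $[a,\varphi_{t_0}(f(a))]$ onto $[a,d_t]$ for some path $d_t\to a$. Composing with an affine map adds a constant to $\log D\varphi_t$, which does not affect equicontinuity, and leaves the ratio $\alpha_t:=Df(a)\,D\varphi_t(f(a))/D\varphi_t(a)$ invariant (so $\alpha_t\equiv 1$). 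Only then is $g_t$ constructed, as a $C^1$-path with no interior fixed points, agreeing with $f$ near $0$ and $1$, converging $C^1$ to $id$, and satisfying the two prescribed boundary data $g_t(a)=d_t$ and $Dg_t(a)=\alpha_t$; the matching condition $D\varphi_t(f(a))Df(a)=Dg_t(a)D\varphi_t(a)$ then holds by construction. If you build $g_t$ first, $Dg_t(a)$ is whatever your interpolation produces, and $\varphi_t$ is then forced to satisfy a boundary condition you no longer control — exactly where the equicontinuity bound would break.
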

\begin{demo}[Proof of Lemma~\ref{l.step1induction}] Lemma~\ref{l.conjugaisonprescrite} and Theorem~\ref{t.conjugaisonprescrite} 
 imply that there is $h_t$
so that $h_t f h_t^{-1}$ is $C^1$-asymptotic to the isotopy $g_t$ which tends to the identity.  Furthermore, $h_t$ 
coincides with $\varphi_t$ on $[a,f(a)]$, hence has equicontinuous $\mathrm{Log}$-derivative on the fundamental domain 
$[a,f(a)]$, ending the proof.
\end{demo}

 Lemma~\ref{l.conjugaisonprescrite} announces the existence of two objects: the path 
 $g_t\underset{t\to 1}{\overset{C^1}{\longrightarrow}}id$, and the path $\varphi_t$ with equicontinous $\mathrm{Log}$-derivative.
 This suggestes a natural splitting of the proof in two  easy observations.

 \begin{lemm} Consider $f\in Diff^1_+([0,1])$ without hyperbolic fixed points,
 without fixed points in $(0,1)$ so that $f(x)-x>0$ on $(0,1)$, and a fundamental domain $[c,d=f(c)]$  of $f$. 
 
 Let $c<d_t\leq d$ be 
a continuous path so that $d_t\to c$ as $t\to 1$ and $\alpha_t>0$ be a continuous path with 
$\alpha_t\to 1$ as $t\to 1$. 

Then there is a $C^1$ continuous path of diffeomorphisms $g_t\in Diff^1_+([0,1])$, there are continuous pathes $0<a_t<b_t<1$ so that
\begin{itemize}
 \item $g_t(x)>x$ for $x\in(0,1)$
 \item $g_t$ coincides with $f$ on $[0,a_t]\cup[b_t,1]$
 \item $g_t(c)=d_t$
 \item $Dg_t(c)=\alpha_t$
 \item  $g_t\underset{t\to 1}{\overset{C^1}{\longrightarrow}}id$
\end{itemize}
 \end{lemm}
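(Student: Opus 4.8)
The plan is to construct $g_t$ by hand: it coincides with $f$ on two shrinking end intervals $[0,a_t]$ and $[b_t,1]$, and on the complementary interval $[a_t,b_t]$ (which exhausts $(0,1)$ as $t\to1$) it is an explicit diffeomorphism obtained by prescribing its derivative. First fix continuous paths $a_t,b_t\in(0,1)$ with $a_t\to 0$ and $b_t\to1$ as $t\to1$, and with $a_t<f^{-1}(c)$ and $d<b_t$ for every $t$; then $f(a_t)<c<d_t\le d<f(b_t)$ and $c\in(a_t,b_t)$, and $a_t,b_t$ stay at positive distance from $c$. Since $0$ and $1$ are fixed points of $f$ and $f$ has no hyperbolic fixed point we have $Df(0)=Df(1)=1$, so $\|f-id\|_1$ restricted to $[0,a_t]$ and to $[b_t,1]$ tends to $0$ as $t\to1$. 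Hence on these two intervals setting $g_t:=f$ is compatible with $g_t\overset{C^1}{\longrightarrow}id$, and it already provides $g_t(x)>x$ there, together with $g_t(0)=0$, $g_t(1)=1$, $Dg_t(0)=Dg_t(1)=1$.

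The work is on $[a_t,b_t]$, which I split at $c$. On $[a_t,c]$ I want an increasing $C^1$-diffeomorphism onto $[f(a_t),d_t]$ with derivative $Df(a_t)$ at $a_t$ and $\alpha_t$ at $c$, and on $[c,b_t]$ one onto $[d_t,f(b_t)]$ with derivative $\alpha_t$ at $c$ and $Df(b_t)$ at $b_t$. Writing $g_t=id+\delta_t$, the construction on $[a_t,c]$ reduces to choosing a continuous $\mu_t=\delta_t'$ with $\mu_t(a_t)=Df(a_t)-1$, $\mu_t(c)=\alpha_t-1$ and $\int_{a_t}^{c}\mu_t=(d_t-c)-(f(a_t)-a_t)$, and symmetrically on $[c,b_t]$. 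I take $\mu_t$ equal to a constant $\nu_t$ on the bulk of the interval, interpolated linearly to the prescribed boundary values on two end intervals of length $\ll\min(f(a_t)-a_t,\,d_t-c)$, with $\nu_t$ the constant making the integral constraint hold, so $\nu_t$ is close to $\frac{(d_t-c)-(f(a_t)-a_t)}{c-a_t}$. The resulting $\delta_t$ is monotone on the bulk, so it stays between the two positive numbers $f(a_t)-a_t$ and $d_t-c$ up to a correction smaller than half their minimum; thus $\delta_t>0$. Also $Dg_t=1+\delta_t'>0$, because $\nu_t$ is close to $\frac{(d_t-c)-(f(a_t)-a_t)}{c-a_t}>\frac{-(f(a_t)-a_t)}{c-a_t}>-1$, where the last inequality is exactly $a_t<f^{-1}(c)$; likewise on $[c,b_t]$ the bulk slope is close to $\frac{(f(b_t)-b_t)-(d_t-c)}{b_t-c}>\frac{-(d_t-c)}{b_t-c}\ge\frac{-(d-c)}{b_t-c}>-1$, which is where $d_t\le d<b_t$ is used. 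Gluing the two pieces, $g_t$ has left and right derivative $\alpha_t$ at $c$ and matches $f,Df$ at $a_t$ and $b_t$, so $g_t\in Diff^1_+([0,1])$, it is strictly increasing, and $g_t(c)=d_t$, $Dg_t(c)=\alpha_t$, $g_t(x)>x$ on $(0,1)$.

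It remains to see $g_t\overset{C^1}{\longrightarrow}id$ and the $C^1$-continuity of $t\mapsto g_t$. As $t\to1$ the quantities $f(a_t)-a_t$, $d_t-c$, $f(b_t)-b_t$ tend to $0$ (continuity of $f$ and $d_t\to c$) and the slopes $Df(a_t)-1$, $Df(b_t)-1$, $\alpha_t-1$ tend to $0$ (continuity of $Df$ and $\alpha_t\to1$); hence $\nu_t$ and all boundary slopes tend to $0$, so $\|\delta_t\|_{C^1}\to0$ on $[a_t,b_t]$, while $\|g_t-id\|_1=\|f-id\|_1\to0$ on $[0,a_t]\cup[b_t,1]$. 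Together these give $g_t\overset{C^1}{\longrightarrow}id$. The $C^1$-continuity of $t\mapsto g_t$ is clear: $a_t,b_t,d_t,\alpha_t$ and all the auxiliary quantities (the end-interval lengths and the constant $\nu_t$) are continuous functions of $t$, and $g_t$ is obtained from them by integration and gluing.

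The step needing genuine care — and the reason the statement is not a triviality — is reconciling the requirement $g_t(x)>x$ (that is $\delta_t>0$) with a usable bound on $Dg_t$: the displacement $\delta_t$ must travel between the two positive boundary values $f(a_t)-a_t$ and $d_t-c$, which may be of very different orders of magnitude, and a careless interpolation would create a wiggle of size comparable to $\max(f(a_t)-a_t,\,d_t-c)$, which can be far larger than $\min(f(a_t)-a_t,\,d_t-c)$ and drive $\delta_t$ negative. Confining the derivative corrections to end intervals short enough that $\delta_t$ cannot move appreciably there, and keeping $\delta_t$ monotone in between, is precisely what avoids this; the constraints $a_t<f^{-1}(c)$ and $d_t\le d<b_t$ then guarantee that the bulk slope never reaches $-1$, so $g_t$ remains a diffeomorphism even for the $t$ away from $1$ (for which $g_t$ is not required to be $C^1$-close to the identity).
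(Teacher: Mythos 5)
Your proof is correct, and it realizes precisely what the paper only sketches: the paper gives this lemma a one-paragraph ``Hint for a proof'' asserting that such an isotopy exists because $d_t\to c$, $\alpha_t\to 1$, and $Df(0)=Df(1)=1$ make $f$ arbitrarily $C^1$-close to the identity near the endpoints, with no explicit construction. Your explicit interpolation in the derivative on $[a_t,b_t]$, with the key observations that $a_t<f^{-1}(c)$ and $d<b_t$ keep the bulk slope of $\delta_t$ above $-1$ and that confining the derivative corrections to end intervals short relative to $\min(f(a_t)-a_t,\,d_t-c)$ keeps $\delta_t>0$, is a faithful and complete elaboration of exactly the approach the paper alludes to.
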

 \begin{demo}[Hint for a proof:]
In other words, $g_t$ is an isotopy of $g_0$ to the identity map, without creating new fixed points, prescribing the image anf the derivative 
at a point $c$ and requiring that $g_t$ coincides with $f$ in a small neighborhood of $0$ and $1$.  That is possible because we require that the image 
$d_t=g_t(c)$ tends to $c$, that the derivative $\alpha_t=Dg_t(c)$ tends to $1$, and because 
$Df(0)=Df(1)=1$ so that $f$ is arbritrarily $C^1$-close to the identity map in sufficiently small neighborhoods of $0$ and $1$.
\end{demo}

 \begin{lemm} Consider $f\in Diff^1_+([0,1])$ without hyperbolic fixed points,
 without fixed points in $(0,1)$, so that $f(x)-x>0$ on $(0,1)$.  
Let $[c,d]$, $d=f(c)$ be a fundamental domain of $f$. Then, there is a $C^1$-continuous path of diffeomorphisms
$\varphi_t\colon [c,d]\to [c,\varphi_t(d)]$, $t\in[0,1)$,  so that 
\begin{itemize}
 \item $\varphi_t(d)\to c$ as $t\to 1$;
 \item Let us denote $\alpha_t=\frac{Df(c)\cdot D\varphi_t(d)}{D\varphi_t(c)}$. Then $\alpha_t\to 1$ as $t\to 1$;
 \item $\log(D\varphi_t)$, $t\in[0,1)$, is equicontinuous. 
\end{itemize}
 \end{lemm}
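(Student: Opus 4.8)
The plan is to take $\varphi_t$ to be a single fixed ``profile'' diffeomorphism of the fundamental domain, rescaled so that its image is a tiny interval sitting at the left endpoint $c$. The key observation is that $Df(c)$ does not depend on $t$ (both $f$ and the fundamental domain $[c,d]$ are fixed), so one profile works for all $t$ and the three required properties become immediate; in particular the derivative ratio at the endpoints is forced to be a constant, which is exactly what we want since a $t$-dependent ratio would conflict with equicontinuity.

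Concretely, first I would fix, once and for all, a $C^1$ orientation-preserving diffeomorphism $\Phi\colon[0,1]\to[0,1]$ with $\Phi'>0$ everywhere and
$$\frac{\Phi'(1)}{\Phi'(0)}=\frac{1}{Df(c)};$$
such a $\Phi$ exists, e.g. with $\psi(u)=(Df(c))^{-u}$ one puts $\Phi(u)=\left(\int_0^1\psi\right)^{-1}\int_0^u\psi$, which is smooth, increasing, fixes $0$ and $1$, and satisfies $\Phi'(1)/\Phi'(0)=\psi(1)/\psi(0)=1/Df(c)$. Next I would choose any continuous path $d_t\in(c,d]$, $t\in[0,1)$, with $d_t\to c$ as $t\to1$ (say $d_t=c+(1-t)(d-c)$) and set
$$\varphi_t(x)=c+(d_t-c)\,\Phi\!\left(\frac{x-c}{d-c}\right),\qquad x\in[c,d].$$
Since $\Phi$ is an increasing diffeomorphism of $[0,1]$ and $d_t$ varies continuously with $t$, the family $(\varphi_t)_{t\in[0,1)}$ is a $C^1$-continuous path of diffeomorphisms from $[c,d]$ onto $[c,d_t]$, and $\varphi_t(d)=d_t\to c$, which is the first requirement.

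For the other two requirements I would simply differentiate: $D\varphi_t(x)=\frac{d_t-c}{d-c}\,\Phi'\!\left(\frac{x-c}{d-c}\right)$. Hence $D\varphi_t(d)/D\varphi_t(c)=\Phi'(1)/\Phi'(0)=1/Df(c)$, so that $\alpha_t=Df(c)\cdot D\varphi_t(d)/D\varphi_t(c)=1$ for every $t\in[0,1)$, and in particular $\alpha_t\to1$. Moreover $\log D\varphi_t(x)=\log\frac{d_t-c}{d-c}+\log\Phi'\!\left(\frac{x-c}{d-c}\right)$, so all the functions $\log D\varphi_t$ on $[c,d]$ differ by additive constants from the single continuous function $x\mapsto\log\Phi'\!\left(\frac{x-c}{d-c}\right)$; as the latter is uniformly continuous on the compact interval $[c,d]$, the family $\{\log D\varphi_t\}_{t\in[0,1)}$ is equicontinuous (the additive constants, although they drift to $-\infty$, do not affect the modulus of continuity). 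There is no genuine obstacle here; the only point to keep in mind is the observation above that $Df(c)$ is $t$-independent. (The hypothesis that $f$ has no hyperbolic fixed point is not used in this particular lemma; it is inherited from the statement of Lemma~\ref{l.conjugaisonprescrite}.)
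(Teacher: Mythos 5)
Your proof is correct and takes essentially the same approach as the paper: fix one diffeomorphism with the right boundary-derivative ratio (so that $\alpha=1$), then post-compose by affine contractions, which shrink the image, leave the ratio $\alpha_t$ unchanged, and only add a constant to $\log D\varphi_t$. The only difference is that you give an explicit formula for the initial profile $\Phi$, whereas the paper simply asserts the existence of a suitable $\varphi_{t_0}$.
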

 \begin{demo}
Notice that adding a constant to a 
function does not change the equicontinuity properties. As a consequence, one can compose each 
$\varphi_t$ by some affine map without changing the equicontinuity of the family $\log(D\varphi_t)$; 
furthermore composing by an affine map does not change the ratio 
$\frac{Df(c)\cdot D\varphi_t(d)}{D\varphi_t(c)}$. In other words, the first item is for free. 
 
Now, we choose some $\varphi_{t_0}$ so that  
$\frac{Df(c)\cdot D\varphi_t(d)}{D\varphi_t(c)}=1$, and for $t>t_0$ one chooses $\varphi_t$ 
as beeing the composition of $\varphi_{t_0}$ by some affine map. 
 
 \end{demo}

\subsection{Conjugacy by a equicontinous Log-derivative map}

\begin{lemm}\label{l4.sansenlace}
Let $(\alpha_t)_{t\in[0,1)}$ be a $C^1$-continuous path of $C^1$-diffeomorphisms
of $[0,1]$ with equicontinuous  Log derivative: $\{\log D\alpha_t\}_{t\in[0,1)}$ is equicontinuous. 

Then for every $\eta>0$, there is $\varepsilon>0$ such that, for all $g\in\mc{D}iff^1_+([0,1])$ satisfying
$\Vert g-id\Vert_{1}<\varepsilon$, one has : 
$$\Vert\alpha_tg\alpha_t^{-1}-id\Vert_{1}<\eta \mbox{ for all }t\in[0,1).$$

In particular, if $(g_t)_{t\in[0,1)}$ is a path of diffeomorphisms converging to $id$ when $t$ tends to $1$, 
then $$\alpha_tg_t\alpha_t^{-1}\limite{t\to 1}{C^1}id.$$
\end{lemm}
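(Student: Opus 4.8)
The plan is to write $h=\alpha_tg\alpha_t^{-1}$ and to bound $\|h-id\|_0$ and $\|Dh-1\|_0$ separately, using the chain rule to reduce everything to the behaviour of $\log D\alpha_t$ under a small displacement of its argument. First I would record a \emph{uniform two‑sided bound} on the derivatives: since each $\alpha_t$ is a diffeomorphism of $[0,1]$ one has $\int_0^1 D\alpha_t=1$, so $\log D\alpha_t$ vanishes at some $x_t\in[0,1]$. Applying the equicontinuity hypothesis with, say, $\varepsilon_0=1$ gives a $\delta_0>0$; covering $[0,1]$ by $N=\lceil 1/\delta_0\rceil$ intervals of length $<\delta_0$ and walking from $x_t$ to an arbitrary point yields $|\log D\alpha_t(x)|\le N$ for all $x\in[0,1]$ and all $t\in[0,1)$. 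Hence there is $M\ge 1$ with $M^{-1}\le D\alpha_t(x)\le M$ for all $x,t$.

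For the $C^0$ estimate, given $x\in[0,1]$ set $y=\alpha_t^{-1}(x)$. Then $h(x)-x=\alpha_t(g(y))-\alpha_t(y)$, and the mean value theorem together with the bound $D\alpha_t\le M$ gives $|h(x)-x|\le M\,|g(y)-y|\le M\,\|g-id\|_0$. Thus $\|h-id\|_0\le M\varepsilon$ whenever $\|g-id\|_1<\varepsilon$.

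For the $C^1$ estimate, which is where equicontinuity is genuinely used, the chain rule gives, with $y=\alpha_t^{-1}(x)$,
$$Dh(x)=D\alpha_t(g(y))\cdot Dg(y)\cdot D(\alpha_t^{-1})(x)=\frac{D\alpha_t(g(y))}{D\alpha_t(y)}\cdot Dg(y).$$
Given $\eta>0$, pick $\eta'>0$ so small that $|s|<\eta'$ and $|r-1|<\eta'$ force $|e^sr-1|<\eta$. Equicontinuity of $\{\log D\alpha_t\}_{t\in[0,1)}$ provides $\delta>0$ with $|u-v|\le\delta\Rightarrow|\log D\alpha_t(u)-\log D\alpha_t(v)|<\eta'$ for every $t$. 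Set $\varepsilon=\min(\delta,\eta',\eta/M)$. If $\|g-id\|_1<\varepsilon$ then for every $x$ we have $|g(y)-y|<\delta$, hence $|\log D\alpha_t(g(y))-\log D\alpha_t(y)|<\eta'$, while $|Dg(y)-1|<\eta'$; therefore $|Dh(x)-1|<\eta$. Combining with the $C^0$ bound $\|h-id\|_0\le M\varepsilon\le\eta$ we get $\|\alpha_tg\alpha_t^{-1}-id\|_1<\eta$ for all $t\in[0,1)$. The final ``in particular'' assertion follows at once by applying this to $g=g_t$ and letting $t\to 1$.

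\textbf{Main obstacle.} There is no deep difficulty; the only point needing care is that equicontinuity of $\{\log D\alpha_t\}$ by itself does not give a uniform sup‑bound on $D\alpha_t$ — this requires the normalization $\int_0^1 D\alpha_t=1$ to anchor the functions — and one must be careful that the equicontinuity is uniform in $t$, so that a single $\delta$ works along the whole path.
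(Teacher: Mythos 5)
Your proof is correct and takes essentially the same route as the paper: the heart of both arguments is the chain-rule identity
$$D(\alpha_t g\alpha_t^{-1})(x)=\frac{D\alpha_t(g(y))}{D\alpha_t(y)}\cdot Dg(y),\qquad y=\alpha_t^{-1}(x),$$
followed by the observation that equicontinuity of $\{\log D\alpha_t\}$ controls $\log D\alpha_t(g(y))-\log D\alpha_t(y)$ once $|g(y)-y|$ is small. Where you differ is that you also treat the $C^0$ part explicitly, deriving a uniform bound $M^{-1}\le D\alpha_t\le M$ from equicontinuity plus the normalization $\int_0^1 D\alpha_t=1$; the paper leaves the $C^0$ estimate entirely unaddressed. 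Your extra step is correct and a welcome bit of rigor, though you could get it more cheaply: once $|Dh(x)-1|<\eta$ uniformly, the fact that $h=\alpha_t g\alpha_t^{-1}$ fixes $0$ gives $|h(x)-x|=\bigl|\int_0^x (Dh(s)-1)\,ds\bigr|\le\eta$ directly, so the uniform bound $M$ is not actually needed for the $C^0$ estimate (though it is still a true and occasionally useful consequence of the hypotheses).
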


\begin{demo} Consider $x\in[0,1]$ and $y=\alpha_t^{-1}(x)$.  Then 
$$D(\alpha_tg\alpha_t^{-1})= \frac{D\alpha_t(g(y)}{D\alpha_t(y)}\cdot Dg(y).$$
By assumption, $|Dg(y)-1|<\varepsilon$.
Therefore, it is enough to check that $$\log(\frac{D\alpha_t(g(y)}{D\alpha_t(y)})=\log D\alpha_t(g(y))-\log D\alpha_t(y)$$
is uniformly bounded in function of $\varepsilon$, and that this bound tends to $0$ as $\varepsilon\to 0$. 
Notice that $|g(y)-y|<\epsilon$. Thus, the equicontinuity of 
$\log D\alpha$ provides the uniform bound of $\log(\frac{D\alpha_t(g(y)}{D\alpha_t(y)})$ in function of $\varepsilon$.
\end{demo}

\subsection{ Isotopy by conjugacy to the identity and perturbations}

\begin{defi}\label{d1.sansenlace} Let $\varepsilon_t>0$  and $\eta_t>0$, $t\in[0,1)$ be  
continuous pathes with 
$\varepsilon_t\to 0$  and $\eta_t\to 0$ as $t\to 1$.

A $C^1$-continuous path $(\psi_t)_{t\in[0,1)}$, $\psi_t\in Diff^1_+([0,1])$ is
\textbf{\emph{an $(\varepsilon_t)_{t\in[0,1)}$-robust isotopy by conjugacy of 
speed $(\eta_t)_{t\in[0,1)}$ from $f$ to $id$}} if, for all continuous path 
$(g_t)_{t\in[0,1)}$ satisfying $\Vert g_t-f\Vert_{1}<\varepsilon_t$, one has : 
$$\Vert \psi_tg_t\psi_t^{-1}-id\Vert_1<\eta_t.$$
\end{defi}

\begin{lemm}\label{l3.sansenlace}
Let $f$, $(\varphi_t)_{t\in[0,1)}$ be $C^1$-diffeomorphisms of $\R$ 
such that $\Vert\varphi_tf\varphi_t^{-1}-id\Vert_1\limite{t\to 1}{}0$; $\varphi_0=id$. 
For all continuous path $(\varepsilon_t)_{t\in[0,1)}$ of strictly positive real 
numbers converging to $0$, there exist a continuous path $(\eta_t)_{t\in[0,1)}$ of strictly 
positive real numbers converging to $0$ and a continuous map $r : [0,1)\longrightarrow[0,1)$, 
satisfying $r(0)=0$, $r(t)\limite{t\to 1}{}1$ such that, $(\psi_t=\varphi_{r(t)})_{t\in[0,1)}$ is
an $(\varepsilon_t)_{t\in[0,1)}$-robust isotopy by conjugacy of 
speed $(\eta_t)_{t\in[0,1)}$ from $f$ to $id$.
\end{lemm}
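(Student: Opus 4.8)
The plan is to take $\psi_t=\varphi_{r(t)}$ for a suitable continuous increasing reparametrisation $r$ of $[0,1)$ with $r(0)=0$ and $r(t)\to1$ as $t\to1$; then $\psi_t$ is automatically a $C^1$-continuous path with $\psi_0=\varphi_0=id$, so the only thing to check is the robustness estimate. Set $\delta(s)=\|\varphi_s f\varphi_s^{-1}-id\|_1$; by hypothesis $s\mapsto\delta(s)$ is continuous on $[0,1)$ and tends to $0$ as $s\to1$. For any diffeomorphism $g$ the triangle inequality gives
\[ \|\varphi_s g\varphi_s^{-1}-id\|_1\ \le\ \delta(s)+\|\varphi_s g\varphi_s^{-1}-\varphi_s f\varphi_s^{-1}\|_1 , \]
so the problem reduces to bounding the error term $\|\varphi_s g\varphi_s^{-1}-\varphi_s f\varphi_s^{-1}\|_1$ when $g$ is $C^1$-close to $f$, for the values of $s$ that $r$ will actually hit.

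First I would isolate the one genuinely analytic point, a \emph{local uniform continuity of conjugation}: for every compact interval $J\subset[0,1)$ and every $\rho>0$ there is $\beta>0$ such that $s\in J$ and $\|g-f\|_1<\beta$ imply $\|\varphi_s g\varphi_s^{-1}-\varphi_s f\varphi_s^{-1}\|_1<\rho$. This follows from a compactness argument: the map $(s,g)\mapsto\varphi_s g\varphi_s^{-1}$ is $C^1$-continuous on $J\times Diff^1_+([0,1])$ (composition and inversion are $C^1$-continuous operations and $s\mapsto\varphi_s$ is a $C^1$-continuous path), so if the assertion failed one could pick $s_k\in J$ and $g_k\to f$ with $(s_k,g_k)\to(s_\ast,f)$ along a subsequence while the error stays $\ge\rho$, contradicting continuity at $(s_\ast,f)$.

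With this in hand the construction is diagonal. Pick $0=s_0<s_1<s_2<\cdots$ with $s_n\uparrow1$ and $\delta(s)<\frac{1}{2n}$ for every $s\in[s_n,1)$ and every $n\ge1$ (possible since $\delta\to0$); apply the local uniform continuity to each compact interval $[s_n,s_{n+1}]$ with $\rho=\frac{1}{2n}$ to obtain $\beta_n>0$, taken decreasing; then, since $\varepsilon_t\to0$, pick $0=t_0<t_1<t_2<\cdots$ with $t_n\uparrow1$ and $\varepsilon_t<\beta_n$ for all $t\ge t_n$. Let $r$ be the continuous increasing function on $[0,1)$ that is affine on each $[t_{n-1},t_n]$ and satisfies $r(t_n)=s_n$ for all $n\ge0$. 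For $t\in[t_{n-1},t_n]$ with $n\ge2$ one then has $r(t)\in[s_{n-1},s_n]\subset[s_{n-1},1)$, hence $\delta(r(t))<\frac{1}{2(n-1)}$, and $\varepsilon_t<\beta_{n-1}$ because $t\ge t_{n-1}$, so the error term is $<\frac{1}{2(n-1)}$; combined with the displayed inequality this yields $\|\psi_t g_t\psi_t^{-1}-id\|_1<\frac{1}{n-1}$ for every path $(g_t)$ with $\|g_t-f\|_1<\varepsilon_t$. On the initial interval $[0,t_1]$ the same quantity is bounded by a finite constant $C_0$, since $r([0,t_1])\subset[0,s_1]$ is compact, $\varepsilon_t$ is bounded there, and $\{\varphi_s\}_{s\in[0,s_1]}$ is $C^1$-compact, so conjugation by $\varphi_s$, $s\in[0,s_1]$, maps $C^1$-bounded sets into one fixed $C^1$-bounded set. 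Finally take any continuous path $(\eta_t)_{t\in[0,1)}$ of positive reals with $\eta_t\to0$ dominating this (null) staircase bound, i.e. $\eta_t>C_0$ on $[0,t_1]$ and $\eta_t>\frac{1}{n-1}$ on $[t_{n-1},t_n]$ for $n\ge2$; then $(\psi_t)$ is an $(\varepsilon_t)_{t\in[0,1)}$-robust isotopy by conjugacy of speed $(\eta_t)_{t\in[0,1)}$ from $f$ to $id$, with $r(0)=0$ and $r(t)\to1$.

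The hard part is conceptual rather than computational: it is the synchronisation of two independent ``clocks''. The reparametrisation $r$ may be allowed to reach a region where $\delta$ is already small only once the running time $t$ is large enough that the prescribed tolerance $\varepsilon_t$ has dropped below the corresponding $\beta_n$ produced by the local uniform continuity lemma. Once one has that lemma and this diagonal choice of $r$, everything else — continuity of $r$, the value $r(0)=0$, the convergence $r(t)\to1$, and the extraction of $\eta_t$ — is routine bookkeeping.
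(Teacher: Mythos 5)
Your proof is correct, and it takes a genuinely different route from the paper's at the one delicate point. The paper's argument is split into two lemmas: \textbf{Lemma~\ref{l.robust1}} first produces a \emph{continuous} path $\nu_t>0$ such that $\|g_t-f\|_1<\nu_t$ forces $\|\varphi_t g_t\varphi_t^{-1}-id\|_1<\mu_t:=2\|\varphi_t f\varphi_t^{-1}-id\|_1$, with the estimate done by hand on $D\varphi_t$ via its modulus of continuity, and the continuity of $t\mapsto\nu_t$ extracted from a lower semi-continuity property of moduli of continuity (the paper explicitly flags this as ``the unique difficulty''); \textbf{Lemma~\ref{l.robust2}} then reparametrises so that $\nu_{r(t)}>\varepsilon_t$ eventually. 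You instead observe that once one has decided to reparametrise, a continuous $\nu_t$ is not needed: it suffices to have, for each compact $J\subset[0,1)$ and tolerance $\rho$, a single $\beta>0$, and this follows from compactness of $J$ together with the joint $C^1$-continuity of $(s,g)\mapsto\varphi_s g\varphi_s^{-1}$ — a soft fact about $\mathrm{Diff}^1$ as a topological group, no modulus-of-continuity estimates required. The synchronisation is then done with the staircase $s_n, \beta_n, t_n$ and a piecewise-affine $r$, which plays exactly the role of Lemma~\ref{l.robust2}. The trade-off: the paper's Lemma~\ref{l.robust1} is a cleaner stand-alone statement about the original path $(\varphi_t)$, whereas your compactness argument is more elementary and sidesteps the lower semi-continuity issue entirely, at the cost of building $r$ somewhat more explicitly. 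Both routes also need and use the same preliminary triangle inequality reducing to the error term $\|\varphi_s g\varphi_s^{-1}-\varphi_s f\varphi_s^{-1}\|_1$, and your extraction of the final continuous $\eta_t$ from the null staircase bound is the same bookkeeping the paper does at the end of its proof.
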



We split the proof in two  lemmas.  The first ones just states that any isotopy by conjugacy is $\varepsilon_t$ robust, if one chooses
$\varepsilon_t>0$ small enough. 

\begin{lemm}\label{l.robust1} Consider $f\in Diff^1_+([0,1])$ and  a $C^1$-continuous path $(\varphi_t)_{t\in[0,1)}$, $\varphi_0=id$,
so that $\Vert\varphi_tf\varphi_t^{-1}-id\Vert_1\limite{t\to 1}{}0$. 

Denote $\mu_t=2\cdot\Vert \varphi_tf\varphi_t^{-1}-id\Vert_1$.  Then there is a continuous path $\nu_t>0$ so that for  all continuous path 
$(g_t)_{t\in[0,1)}$ satisfying $\Vert g_t-f\Vert_{1}<\nu_t$, one has : 
$$\Vert \varphi_tg_t\varphi_t^{-1}-id\Vert_1<\mu_t.$$
 
 In other words, the isotopy by conjugacy  $\varphi_t$ is $\nu_t$-robust of speed $\mu_t$.
\end{lemm}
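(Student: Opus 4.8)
The plan is to make the obvious continuity/compactness argument explicit. For a fixed $t\in[0,1)$, the map $g\mapsto \varphi_t g\varphi_t^{-1}$ is continuous from $Diff^1_+([0,1])$ to itself (conjugation by a fixed $C^1$-diffeomorphism is $C^1$-continuous), and it sends $f$ to $\varphi_t f\varphi_t^{-1}$, which by definition lies in the open ball of radius $\mu_t=2\|\varphi_tf\varphi_t^{-1}-id\|_1$ around $id$ (note $\mu_t>0$ because if $\varphi_tf\varphi_t^{-1}=id$ then $f=id$ and the statement is trivial; otherwise $\mu_t$ is strictly positive and strictly larger than $\|\varphi_tf\varphi_t^{-1}-id\|_1$, leaving room). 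Hence for each $t$ there exists $\nu_t>0$ such that $\|g-f\|_1<\nu_t$ implies $\|\varphi_t g\varphi_t^{-1}-id\|_1<\mu_t$. The only real content is to show that $\nu_t$ can be chosen to depend continuously (or at least measurably/positively) on $t$, uniformly on compact subintervals of $[0,1)$.

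First I would fix a compact subinterval $[0,s]\subset[0,1)$ and produce a single $\nu>0$ that works for all $t\in[0,s]$: the family $\{\varphi_t\}_{t\in[0,s]}$ is compact in $Diff^1_+([0,1])$ (continuous image of a compact interval), so the two-variable map $(t,g)\mapsto \varphi_t g\varphi_t^{-1}$ is uniformly continuous on $[0,s]\times\{g:\|g-f\|_1\le 1\}$; together with the fact that $\inf_{t\in[0,s]}\big(\mu_t-\|\varphi_tf\varphi_t^{-1}-id\|_1\big)=\inf_{t\in[0,s]}\|\varphi_tf\varphi_t^{-1}-id\|_1>0$ (a continuous positive function on a compact set, positive since $f\neq id$), this yields a uniform $\nu=\nu(s)>0$ with the desired property for all $t\in[0,s]$. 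Then I would patch these local constants into a genuine continuous path $t\mapsto\nu_t$ on all of $[0,1)$ by taking, say, an exhaustion $s_n\uparrow 1$, the associated constants $\nu(s_n)$, and interpolating so that $\nu_t\le \min\{\nu(s_n): s_n\ge t\}$ while remaining continuous and strictly positive; any such $\nu_t$ inherits the implication from the uniform estimates.

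I do not expect a serious obstacle here: the statement is essentially "conjugation is continuous, uniformly on compacts of the parameter," and the mild annoyance is only bookkeeping — ensuring $\nu_t$ is an honest continuous positive path rather than just a pointwise choice, and handling the degenerate case $f=id$ separately. The one point to be slightly careful about is that the $C^1$-norm estimate for $\|\varphi_tg\varphi_t^{-1}-id\|_1$ involves $D\varphi_t$ and $D\varphi_t^{-1}$, whose sup-norms are bounded on the compact family $\{\varphi_t\}_{t\in[0,s]}$; this boundedness is exactly what makes the uniform continuity work, and it is where compactness of $[0,s]$ is used. Once $\nu_t$ is in hand, the conclusion "$\varphi_t$ is a $\nu_t$-robust isotopy by conjugacy of speed $\mu_t$ from $f$ to $id$" is immediate from Definition~\ref{d1.sansenlace}, since $\mu_t\to 0$ as $t\to 1$ by hypothesis.
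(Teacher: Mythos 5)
Your proposal is correct in substance and identifies the same key difficulty as the paper: for each fixed $t$, the existence of a suitable $\nu_t>0$ is immediate from continuity of $g\mapsto\varphi_t g\varphi_t^{-1}$ at $f$, and the entire content of the lemma is in arranging $t\mapsto\nu_t$ to be a genuine continuous positive path. Where you diverge from the paper is in the bookkeeping. The paper writes down the explicit estimate that drives the lemma, namely that one must control the modulus of continuity of $D\varphi_t$, and then observes that the modulus-of-continuity threshold $\delta_t$ depends lower-semi-continuously on $\varphi_t$ (hence on $t$), so a strictly positive continuous minorant $\nu_t<\delta_t$ exists. You instead restrict to compacts $[0,s]$, extract a uniform $\nu(s)$, and patch via an exhaustion $s_n\uparrow 1$. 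Both resolve the continuity issue; the paper's route is a one-step minorization of a lower-semi-continuous function, yours is the standard compact-exhaustion interpolation.

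One caveat worth flagging. Your justification "the family $\{\varphi_t\}_{t\in[0,s]}$ is compact, so $(t,g)\mapsto\varphi_t g\varphi_t^{-1}$ is uniformly continuous on $[0,s]\times\{g:\|g-f\|_1\le 1\}$" is not quite a self-contained argument: the closed $C^1$-ball in $g$ is \emph{not} compact, so uniform continuity on that product does not follow formally from compactness of $\{\varphi_t\}$. What actually makes it work is the explicit bound
\[
\bigl\|\varphi_t g\varphi_t^{-1}-\varphi_t f\varphi_t^{-1}\bigr\|_1
\ \le\ \|D\varphi_t^{-1}\|_\infty\Bigl(\ \|D\varphi_t\|_\infty\,\|Dg-Df\|_\infty\ +\ \|Df\|_\infty\,\omega_{D\varphi_t}\bigl(\|g-f\|_0\bigr)\ \Bigr),
\]
where $\omega_{D\varphi_t}$ is the modulus of continuity of $D\varphi_t$; compactness of $[0,s]$ gives uniform bounds on $\|D\varphi_t\|_\infty$, $\|D\varphi_t^{-1}\|_\infty$, and on $\omega_{D\varphi_t}$, and the estimate is manifestly uniform in $g$ over the ball. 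This is precisely the estimate the paper foregrounds, so while your conclusion is right, you should make this estimate explicit rather than appeal to uniform continuity of the conjugation map on a non-compact set. With that adjustment your exhaustion-and-patching argument is a perfectly legitimate alternative to the paper's lower-semi-continuity argument, and your handling of the degenerate case $f=id$ (where $\mu_t\equiv 0$ makes the statement vacuous/false) is a sensible caveat, though it never arises in the application since $f$ has no interior fixed points there.
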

\begin{demo}[Sketch of proof]  For every $t\in[0,1)$, one  needs to bound $\frac{| D\varphi_t(g(x))-D\varphi_t(f(x)|}{D\varphi_t(x)}$, for 
$|g(x)-f(x)|<\nu_t$,  uniformly in $x\in[0,1]$, by a constant $\tilde \mu_t$ depending in a simple continuous way on $\mu_t$.
As $D\varphi_t$ is bounded on $[0,1]$, one essentially needs to bound  (uniformly in $x$) 
$D\varphi_t(g(x))-D\varphi_t(f(x))$, for 
$|g(x)-f(x)|<\nu_t$. In other words, $\nu_t$ depends strongly on the continuity modulus $\delta_t$ of $\varphi_t$ for the constant $\hat \mu_t$, where 
$\hat \mu_t=$. 
$$|x-y|<\delta_t\Rightarrow D\varphi_t(x)-D\varphi_t(y)<\tilde\mu_t\cdot  \max_{x\in[0,1]}|D\varphi_t(x)|$$

The unique difficulty is to choose $0<\nu_t<\delta_t$ depending continuously on $t\in[0,1)$.  This is possible because the modulus of continuity of
a continuous function (of a compact metric set) associated to a given constant, depends lower-semi-continuously on the function. 
One concludes by noticing that, given a strictly positive lower semi continuous map $\delta_t\colon [0,1)\to \RR$, there is a strictly positive function 
$0<\nu_t<\delta_t$.
 
\end{demo}

For getting Lemma~\ref{l3.sansenlace} from Lemma~\ref{l.robust1}, one just  needs to apply the following simple observation
\begin{lemm}\label{l.robust2}
 Let $\varepsilon_t>0$ and $\nu_t>0$, $t\in[0,1)$, be continuous paths  so that $\varepsilon_t\to 0$. Then, 
 there is a continuous map $r\colon [0,1)\to[0,1)$, 
 $r(0)=0$ and $r(t)\to 1$ as $t\to 1$, and there is $0\leq t_0<1$ so that, for every $t_0\leq t<1$,  one has:
 $$\nu_{r(t)}>\varepsilon_t.$$
\end{lemm}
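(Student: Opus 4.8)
The plan is to build $r$ by an explicit ``staircase'' reparametrization adapted to the decay rate of $\nu$. First I would fix an increasing sequence $s_n\in[0,1)$ with $s_0=0$ and $s_n\to 1$ (say $s_n=1-\frac1{n+1}$), and set $m_n=\min_{s\in[0,s_n]}\nu_s$; since $\nu$ is continuous and strictly positive on the compact interval $[0,s_n]$, each $m_n$ is a strictly positive real number. The key point is that the hypothesis $\varepsilon_t\to 0$ allows me, for every $n$, to pick $\tau_n\in[0,1)$ such that $\varepsilon_t<m_{n+1}$ for all $t\in[\tau_n,1)$.

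Next I would define inductively $t_0=0$ and $t_n=\max\bigl(\tau_n,\tfrac{1+t_{n-1}}2\bigr)$. This sequence stays in $[0,1)$, is strictly increasing, satisfies $t_n\geq\tau_n$, and converges to $1$ (from $t_n\geq\tfrac{1+t_{n-1}}2$ any accumulation point must equal $1$). Then I would let $r\colon[0,1)\to[0,1)$ be the unique continuous map which equals $s_n$ at $t_n$ and is affine on each segment $[t_n,t_{n+1}]$. By construction $r$ is continuous and non-decreasing, $r(0)=r(t_0)=s_0=0$, and $r(t)\to 1$ as $t\to 1$, since $r(t)\geq s_n$ whenever $t\geq t_n$ and $s_n\to 1$.

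Finally I would check the inequality. Given $t\geq t_1$, let $n\geq 1$ be the index with $t\in[t_n,t_{n+1}]$. Then $r(t)\in[s_n,s_{n+1}]\subseteq[0,s_{n+1}]$, so $\nu_{r(t)}\geq m_{n+1}$; on the other hand $t\geq t_n\geq\tau_n$, so $\varepsilon_t<m_{n+1}\leq\nu_{r(t)}$. Hence the statement holds with the lemma's $t_0$ taken equal to $t_1$.

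I do not expect a genuine obstacle here: the statement is elementary, and the only point requiring care is the index bookkeeping — one must feed the threshold for $m_{n+1}$ (not for $m_n$) into $t_n$, so that on the segment where $r$ takes values up to $s_{n+1}$ the bound $\varepsilon_t<m_{n+1}$ is already in force. The conceptual content is simply that, because $\nu$ may decay to $0$, the reparametrization $r$ has to be forced to approach $1$ slowly enough, and the staircase through the points $(t_n,s_n)$ encodes exactly ``how slowly'' in terms of the sublevel minima $m_n$ of $\nu$.
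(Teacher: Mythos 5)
Your proof is correct. The paper does not actually prove Lemma~\ref{l.robust2}: it is introduced with the phrase ``one just needs to apply the following simple observation'' and then used directly in the proof of Lemma~\ref{l3.sansenlace}, so there is no argument in the text to compare yours against. Your staircase construction is the natural way to make the observation precise, and the index bookkeeping is right: since $r$ takes values at most $s_{n+1}$ on $[t_n,t_{n+1}]$, you correctly require the threshold $\tau_n$ to enforce $\varepsilon_t<m_{n+1}$ (not $m_n$), and the extra $\max$ with $\tfrac{1+t_{n-1}}{2}$ guarantees $t_n\to 1$ without spoiling $t_n\geq\tau_n$. The only cosmetic point worth flagging is a clash of notation: you introduce an auxiliary sequence $(t_n)$ with $t_0=0$, but the lemma's conclusion also names a threshold $t_0$; you resolve it by taking the lemma's $t_0$ equal to your $t_1$, which is fine, but in the write-up it would be cleaner to rename the sequence to avoid the collision.
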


\begin{demo}[Proof of Lemma~\ref{l3.sansenlace}] Choose $\mu_t$, $\nu_t>0$ given by Lemma~\ref{l.robust1}, 
and $r(t)$ and $t_0$ given by Lemma~\ref{l.robust2}.
Thus, for all continuous path 
$(g_t)_{t\in[0,1)}$ satisfying $\Vert g_t-f\Vert_{1}<\varepsilon_t<$, for every $t\geq t_0$ one has 
$$\Vert \varphi_{r(t)} g_t\varphi_{r(t)}^{-1}-id\Vert_1<\mu_{r(t)}.$$ 
Notice that $t\to \mu_{r(t)}$ is continuous and tends to $0$ as $t\to 1$.

In otherwords, the choice $\eta_t=\mu_{r(t)}$ is convenient for $t\geq t_0$. 
One extend such $\eta_t$ for $t\in[0,t_0]$ by a simple compactness argument.  More precisely:
one chooses $\eta_t$, $t\in[0,1)$ so that :
\begin{itemize}
 \item for every $t\in[0,1)$, $\eta_t\geq \mu_{r(t)}$
 \item $\eta_t=\mu_{r(t)}$ for every $t$ close enough to $1$. 
 \item for every $t\in[0,t_0)$, 
 $$\eta_t \geq \max_{x\in [0,1]} Df(x) + \max \varepsilon_t + \max_{t\in[0,t_0], x\in[0,1]} D\varphi_{r(t)}(x)
 +\max_{t\in[0,t_0], x\in[0,1]} D \varphi_{r(t)}^{-1}(x)$$
 \item $t\mapsto \eta_t$ is continuous.
\end{itemize}
For this choice of $\eta$, $\varphi_{r(t)}$ is $\varepsilon_t$-robust 
with speed $\eta_t$, concluding the proof.
 
\end{demo}

\subsection{Group extensions in the class $\cC^1_{id}$: proof of Theorem~\ref{t.jump}}

We are now ready to prove Theorem~\ref{t.jump}: 

\begin{demo}[Proof of Theorem~\ref{t.jump}]Let $f$ be a $C^1$-diffeomorphism of $[0,1]$, 
without hyperbolic fixed points  and $I=[x,f(x)]$ a fundamental domain of $f$.
Let $G\subset \mc{D}iff^1([0,1])$ be a group of diffeomorphisms whose supports are included in $I$.
Assume $G$ is  $C^1$-close to $id$;  more precisely, we assume that there is a $C^1$-continuous path 
of diffeomorphisms $h_t$, $t\in[0,1)$, supported on $I$, which realizes an isotopy by conjugacy of the elements of $G$ 
to the identity.  One will prove that the group $\langle G,f\rangle$, generated by $f$ and the elements of $G$,
is $C^1$-close to the identity and admits an isotopy by conjugacy to the identity $H_t$, $t\in[0,1)$, so that $D(H_t)(0)=D(H_t(1)=1$. 
Indeed, $H_t$ will coincide with the identity in small neighbohoods of $0$ and $1$.

One begins by extending the path $(h_t)_{t\in[0,1)}$ to $[0,1]$ by Lemma \ref{l1.sansenlace},
in such a way that $\Vert h_t f (h_t)^{-1}-f\Vert_1<\varepsilon_t$, 
where $(\varepsilon_t)_{t\in[0,1)}$ is some continuous path of strictly positive real numbers 
converging to $0$, and $h_t$ coincide with the identity in a neighborhood of $0$ and $1$.

As explained in Section \ref{s.prescribeconjugacy}, and from Lemma \ref{l3.sansenlace},
one can choose an $(\varepsilon_t)_{t\in[0,1)}$-robust isotopy $(\alpha_t)_{t\in[0,1)}$ from
$f$ to $id$ which has equicontinuous $\mathrm{Log}$-derivative, and so that $\alpha_t$ coincides 
with the identity map insmall neighborhoods of $0$ and $1$..

Then, by definition of an $(\varepsilon_t)_{t\in[0,1)}$-robust isotopy, one has :
\[ \Vert \alpha_th_tfh_t^{-1}\alpha_t^{-1}-id\Vert_1\limite{t\to 1}{}0, \]
and, from Lemma \ref{l4.sansenlace}, one has also :
\[ \forall g\in G, \Vert \alpha_th_tgh_t^{-1}\alpha_t^{-1}-id\Vert_{1}\limite{t\to 1}{}0. \]

Thus $H_t=\alpha_th_t$ is the announced isotopy by conjugacy of $<f,G>$ to the identity. 
\end{demo}

\section{Isotopy to the identity of groups generated by a fundamental system}\label{s.C1}

The aim of this section is to prove Theorem ~\ref{t.realisation}: any group $G$ of diffeomorphisms of $[0,1]$ generated by 
a fundamental system  is $C^1$-close to the identity. We will prove  a slightly stronger version: 
the diffeomorphisms $f_n$ are not assumed to be simple. 

\begin{theo}\label{t2.sansenlace}
Let $(f_n)_{n\in\N}$ be a collection of $C^1$-diffeomorphisms of $\R$ with compact support 
and without hyperbolic fixed point and, for each $n\in\N$, let $I_n$ be a given fundamental domain of 
$f_n$ such that :
\\
for all $i<n$, \hspace{.5cm}
\begin{minipage}[t]{6cm}
\begin{itemize}
\item either $\mathrm{Supp}(f_n)\subset I_i$ ;
\item or $\mathrm{Supp}(f_i)\subset I_n$ ;
\item or $\mathring{\mathrm{Supp}}(f_n)\cap\mathring{\mathrm{Supp}}(f_i)=\varnothing$  
\end{itemize}
\end{minipage}
\\
(where 
$\mathring{\mathrm{Supp}}(f)$ denotes the interior of the support of $f$).
\\
\medskip
Then the group $\langle f_n,n\in\N\rangle$ generated by $(f_n)_{n\in\N}$ is isotopic by 
conjugacy to the identity.
\end{theo}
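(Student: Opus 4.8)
The strategy is to reduce Theorem~\ref{t2.sansenlace} to Theorem~\ref{t.union} and Theorem~\ref{t.jump2}, exactly as announced in the introduction. Since the ambient group $\langle f_n, n\in\N\rangle$ is an increasing union of the finitely generated groups $G_N = \langle f_0,\dots,f_{N-1}\rangle$, and since Theorem~\ref{t.union} tells us that an increasing union of finitely generated groups $C^1$-close to the identity is again $C^1$-close to the identity, it suffices to prove the theorem when the family $(f_n)$ is finite, say $n\in\{0,\dots,N-1\}$. For the finite case, the plan is to argue by induction on $N$. The base case $N=1$ is the statement that a single diffeomorphism without hyperbolic fixed points is isotopic by conjugacy to the identity, which is (a special case of) the result of \cite{Fa} recalled in the excerpt; alternatively it follows from Lemma~\ref{l.step1induction} applied on each connected component of $\R\setminus\mathrm{Fix}(f_0)$.

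\textbf{The inductive step.} Assume the result for families of size $\le N$, and take a fundamental system of size $N+1$. Reindex so that $\mathrm{Supp}(f_N)$ is maximal for inclusion among the supports $\mathrm{Supp}(f_i)$; I expect this choice is what makes the dichotomy usable. Split $\{0,\dots,N-1\} = A \sqcup B$, where $i\in A$ if $\mathring{\mathrm{Supp}}(f_i)\cap\mathring{\mathrm{Supp}}(f_N)=\varnothing$ and $i\in B$ if $\mathrm{Supp}(f_i)\subset I_N$ (the third alternative $\mathrm{Supp}(f_N)\subset I_i$ is excluded by maximality of $\mathrm{Supp}(f_N)$, as is $\mathrm{Supp}(f_i)\subset\mathrm{Supp}(f_N)\setminus I_N$ in the inclusion case — one checks that a support contained in $\mathrm{Supp}(f_N)$ but meeting its interior must in fact lie in the fundamental domain $I_N$ up to applying a power of $f_N$; more carefully, one replaces $f_i$ for $i\in B$ by a conjugate $f_N^{k_i} f_i f_N^{-k_i}$ so that the support lands in $I_N$, which does not change the generated group). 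The group $H_A := \langle f_i : i\in A\rangle$ is supported off $\mathrm{Supp}(f_N)$, hence by Lemma~\ref{l.semidirect} the group $\langle H_A, f_N, \{f_i\}_{i\in B}\rangle$ is the direct sum $H_A \oplus \langle f_N, \{f_i\}_{i\in B}\rangle$; and $H_A$ together with $\langle f_N,\{f_i\}_{i\in B}\rangle$ are each generated by fundamental systems of size $\le N$ (the latter after the conjugation normalizing the $f_i$, $i\in B$), hence each is $C^1$-close to the identity by the induction hypothesis.

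\textbf{Assembling the isotopy.} Here the work splits into two pieces. First, one needs that a direct sum $H_A \oplus K$ of two groups $C^1$-close to the identity with disjoint supports is $C^1$-close to the identity: this is easy, since one can concatenate (take $h_t$ on one support, $k_t$ on the other, identity elsewhere), and I would record it as a short lemma, noting that $DH_t=1$ at the boundary points of the supports so the glued diffeomorphism is $C^1$. Second — and this is the real content — one must handle $K := \langle f_N, \{f_i\}_{i\in B}\rangle$, whose elements $\{f_i\}_{i\in B}$ generate a group $G_B$ supported in the single fundamental domain $I_N$ of $f_N$, with $G_B$ being $C^1$-close to the identity by induction. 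This is precisely the situation of Theorem~\ref{t.jump2}: provided the isotopy by conjugacy $(h_t)$ realizing $G_B \rightsquigarrow id$ can be taken supported on $I_N = [x_0, f_N(x_0)]$ with $Dh_t = 1$ at $x_0$ and $f_N(x_0)$, Theorem~\ref{t.jump2} gives an isotopy by conjugacy of $\langle f_N, G_B\rangle$ to the identity, supported on $\mathrm{Supp}(f_N)$ with derivative $1$ at the endpoints. The hypothesis that the isotopy for $G_B$ is supported on a fundamental domain and has derivative $1$ at its endpoints is exactly the form in which the conclusion of Theorem~\ref{t.jump2} is stated, so the induction is self-feeding: at each stage one produces an isotopy with the required boundary behavior on the relevant fundamental domain. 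I expect the main obstacle to be bookkeeping — carefully checking that after reindexing and the conjugations normalizing the $f_i$ with $i\in B$, the smaller families are still genuine fundamental systems with fundamental domains chosen appropriately, and that the isotopy produced at the end of the inductive step has its support in a fundamental domain of whatever diffeomorphism sits one level up. The analytic heart has already been done in Theorem~\ref{t.jump2}; this section is the combinatorial induction that packages it.
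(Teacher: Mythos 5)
Your reduction to the finite case via Theorem~\ref{t.union}, and your induction on the size of the fundamental system which pulls out a maximal support and deploys Theorem~\ref{t.jump2} for the nested part, is exactly the paper's argument (the paper phrases the case split as ``several maximal supports'' vs.\ ``a unique maximal support'', but picking one maximal $f_N$ and splitting $\{0,\dots,N-1\}=A\sqcup B$ as you do is the same idea). Two small remarks. First, the sentence claiming that $\langle f_N,\{f_i\}_{i\in B}\rangle$ is itself generated by a fundamental system of size $\le N$ ``hence is $C^1$-close to the identity by the induction hypothesis'' is a slip: when $A=\varnothing$ this system has size $N+1$ and you cannot quote the induction hypothesis for it. Your very next paragraph, however, does the right thing (apply the inductive hypothesis only to $G_B=\langle f_i:i\in B\rangle$, of size $\le N$, and then Theorem~\ref{t.jump2} to the pair $(f_N,G_B)$), so the overall plan is sound --- you just shouldn't have asserted the first claim. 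Second, the conjugation $f_i\mapsto f_N^{k_i}f_if_N^{-k_i}$ to push supports into $I_N$ is unnecessary: the hypothesis of the theorem is stated with the \emph{given} fundamental domains, so for a maximal $\mathrm{Supp}(f_N)$ the dichotomy directly yields $\mathrm{Supp}(f_i)\subset I_N$ or disjoint interiors, nothing in between. Otherwise, the role of the boundary-derivative normalization $Dh_t=1$ at the endpoints of the support (which is precisely the extra conclusion of Theorem~\ref{t.jump2}, and of the finite-case statement) in making the gluing $C^1$, and the self-feeding nature of that normalization through the induction, are exactly the points the paper relies on, and you have identified them.
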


Let $f_n$, $n\in \NN$, be a collection of diffeomorphisms satisfying the hypotheses of Theorem~\ref{t2.sansenlace} and let denote
by $G$ the group generated by the $f_n$.  Therefore, $G$ is the increasing union of the groups 
$G_n=<f_0,\dots,f_n>$. According to Theorem~\ref{t.union}, 
if all the $G_n$ are $C^1$-close to the identity, $G$ is $C^1$-close to the identity. 

Therefore, Theorem~\ref{t2.sansenlace} is a straightforward consequence of Theorem~\ref{t.union} with the 
following finite version of Theorem~\ref{t2.sansenlace}: 

\begin{theo}\label{t.finitecase}
Let $N>0$ be an integer and  $(f_n)_{n\in \{0,\dots, N\}}$ be a collection of 
$C^1$-diffeomorphisms of $[0,1]$  without hyperbolic fixed point and, 
for each $n\in\N$, let $I_n$ be a given fundamental domain of 
$f_n$ such that :
\\
for all $i<n$, \hspace{.5cm}
\begin{minipage}[t]{6cm}
\begin{itemize}
\item either $\mathrm{Supp}(f_n)\subset I_i$ ;
\item or $\mathrm{Supp}(f_i)\subset I_n$ ;
\item or $\mathring{\mathrm{Supp}}(f_n)\cap\mathring{\mathrm{Supp}}(f_i)=\varnothing$  
\end{itemize}
\end{minipage}
\\
(where 
$\mathring{\mathrm{Supp}}(f)$ denotes the interior of the support of $f$).
\\
\medskip
Then the group $\langle f_0,\dots, f_N\rangle$  is $C^1$-close to the identity. More precisely there is a
$C^1$-continuous family $\{h_t\}_{t\in[0,1)}$, $h_t\in Diff^1_+([0,1])$,   supported on  
$\bigcup_0^N \mathrm{Supp}(f_i)$, so that 
$Dh_t(0)=Dh_t(1)=1$ and 
$$\forall i\in\{0,\dots, N\}, \quad  f_i \underset{h_t}{\rightsquigarrow} id $$
\end{theo}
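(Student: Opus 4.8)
The plan is to prove Theorem~\ref{t.finitecase} by induction on $N$, the number of diffeomorphisms in the fundamental system, using Theorem~\ref{t.jump2} as the inductive step. For $N=0$ the group is cyclic generated by a diffeomorphism without hyperbolic fixed point; since \cite{Fa} (or Lemma~\ref{l.step1induction}) shows that such a diffeomorphism admits an isotopy by conjugacy to the identity, and one may arrange that the conjugating path is supported on $\mathrm{Supp}(f_0)$ with derivative $1$ at the endpoints, the base case holds.

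For the inductive step, suppose the statement holds for fundamental systems of at most $N$ elements, and consider $(f_0,\dots,f_N)$. First I would re-index so that $\mathrm{Supp}(f_N)$ is maximal for inclusion among the supports $\mathrm{Supp}(f_i)$ (this is possible since the supports are nested or interior-disjoint). Partition $\{0,\dots,N-1\}=A\sqcup B$, where $A$ is the set of indices $i$ with $\mathring{\mathrm{Supp}}(f_i)\cap\mathring{\mathrm{Supp}}(f_N)=\varnothing$, and $B$ the set of indices $j$ with $\mathrm{Supp}(f_j)\subset I_N$. Let $G_A=\langle f_i, i\in A\rangle$ and $G_B=\langle f_j, j\in B\rangle$; these are generated by fundamental systems of cardinality $\le N$, hence by the inductive hypothesis each is $C^1$-close to the identity, with conjugating paths supported on $\bigcup_{i\in A}\mathrm{Supp}(f_i)$ and $\bigcup_{j\in B}\mathrm{Supp}(f_j)$ respectively, and derivative $1$ at endpoints.

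Now $G_B$ is supported on the fundamental domain $I_N$ of $f_N$, so Theorem~\ref{t.jump2} applies: the group $\langle f_N, G_B\rangle$ is $C^1$-close to the identity, with a conjugating path $H_t^{(1)}$ supported on $\mathrm{Supp}(f_N)$ with $DH_t^{(1)}(0)=DH_t^{(1)}(1)=1$. On the other hand, $G_A$ has support with interior disjoint from $\mathrm{Supp}(f_N)$ (hence from the support of the whole group $\langle f_N,G_B\rangle$), so by Lemma~\ref{l.semidirect}(1) the full group $\langle f_0,\dots,f_N\rangle$ is the direct sum $G_A\oplus\langle f_N,G_B\rangle$ of two groups with disjointly-supported conjugating paths. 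Since the two conjugating paths act on disjoint intervals, one can glue them: define $h_t$ to be the conjugating path of $G_A$ on $\bigcup_{i\in A}\mathrm{Supp}(f_i)$, the path $H_t^{(1)}$ on $\mathrm{Supp}(f_N)$, and the identity elsewhere. Because each piece has derivative $1$ at the boundary of its support, the glued $h_t$ is a $C^1$-continuous path of $C^1$-diffeomorphisms of $[0,1]$ with $Dh_t(0)=Dh_t(1)=1$, and it conjugates every generator $f_i$ to the identity: for $i\in A$ because $h_t$ restricted there is the $G_A$-path, and for $i\in B\cup\{N\}$ because $h_t$ restricted to $\mathrm{Supp}(f_N)$ is $H_t^{(1)}$ while it is the identity off $\mathrm{Supp}(f_N)$, so conjugation does not disturb the part of $f_i$ supported inside. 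This gives the desired path for $N$, completing the induction.

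The main obstacle is the bookkeeping in the gluing step: one must verify carefully that gluing $C^1$-diffeomorphisms along common fixed endpoints (where all derivatives equal $1$) yields a genuinely $C^1$ path of diffeomorphisms of $[0,1]$, and that the $C^1$-convergence of the conjugates is preserved under this gluing — but this is exactly the type of argument already carried out in the proof of Theorem~\ref{t.jump2} (the condition $DH_t(0)=DH_t(1)=1$ in Theorem~\ref{t.jump2} was included precisely to make such gluings possible), so no new idea is required. A secondary point is the degenerate case $A=\varnothing$, where the group is simply $\langle f_N,G_B\rangle$ and Theorem~\ref{t.jump2} alone finishes it, and the case $B=\varnothing$, where $G_B$ is trivial and one applies the $N=0$ argument to $f_N$ together with the inductive hypothesis for $G_A$.
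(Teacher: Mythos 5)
Your proof is correct and follows essentially the same strategy as the paper: induction on the number of generators, with Theorem~\ref{t.jump2} as the engine and gluing of disjointly-supported conjugating paths (each with derivative $1$ at the endpoints of its support) as the final step. The only cosmetic difference is the organization of the inductive step: the paper distinguishes two cases according to whether the set of maximal supports is a singleton (in which case it applies Theorem~\ref{t.jump2} to the whole remaining system) or has several elements (in which case it merely glues the inductively-produced paths on the disjoint maximal supports), whereas you fix a single maximal support $S_N$, split the remaining indices into those with support interior-disjoint from $S_N$ and those with support inside $I_N$, apply Theorem~\ref{t.jump2} to $f_N$ and $G_B$, and glue with $G_A$ --- a uniform treatment that subsumes both of the paper's cases.
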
 
\subsection{Proof of Theorem~\ref{t.finitecase}}

One proves Theorem~\ref{t.finitecase} by induction on $N$.  
For $N=0$, this is precisely the main result of \cite{Fa}. Assume now that Theorem~\ref{t.finitecase} 
is proved for $N\geq 0$; we will prove it for $N+1$. 

Let $f_0,\dots, f_{N+1}$ 
be diffeomorphisms satisfying the hypotheses of Theorem~\ref{t.finitecase}. 
The supports $S_i,S_j$ of  $f_i,f_j$, $i\neq j$ either have  disjoint interiors
or are included one in a fundamental domain of the 
other. 
Consider $\cI\subset \{0,\dots, N+1\}$ be the indices for which $S_i$  is maximal for the inclusion.

\subsubsection*{First assume that $\cI$ contains more than $1$ element.} Then, 
for every $i\in \cI$ the collection $\{f_j, S_j\subset S_i\}$ satisfies the hypotheses of
Theorem~\ref{t.finitecase}
and contains strictly less element than $N+1$.  Therefore, the induction hypothesis provides continuous pathes
$h^i_t, t\in[0,1)$, supported on $S_i$, realizing an isotopy 
of all the $f_j$ with $S_j\subset S_i$ to the identity, and so that the derivatives at 
$0$ and $1$ are equal to $1$.
One defines the announced family $h_t$  as coinciding with $h^i_t$ on $S_i$, $i\in \cI$.

\subsubsection*{Assume now that $\cI$ contains a unique element}
Up to change the indexation, one may assume that $\cI=\{N+1\}$.  Thus, the group $G_N=<f_0,\dots,f_N>$ is 
supported in the fundamental domain $I_{n+1}$ of $f_{N+1}$.  

By the induction hypothesis, there is a $C^1$-continuous path $h^N_t$, $t\in[0,1)$, supported on $I_{N+1}$ and 
realizing an isotopy by conjugacy of the elements of $G_N$ to the identity. 

Thus $G_N$ and $f_{N+1}$ satisfiy the hypotheses of Theorem~\ref{t.jump2}, which provides the announced path 
$h^{N+1}_t$, concluding the proof.

\noindent Christian Bonatti, \'Eglantine Farinelli

\noindent {\small Institut de Math\'ematiques de Bourgogne\\
UMR 5584 du CNRS}

\noindent {\small Universit\'e de Bourgogne, Dijon 21004, FRANCE}

\noindent {\footnotesize{E-mail : bonatti@u-bourgogne.fr}}

\noindent {\footnotesize{E-mail : trotti@live.fr}}


\begin{thebibliography}{MM}

\bibitem[BW]{BW} L. Burslem, A. Wilkinson, \emph{Global rigidity of solvable group actions on $S^1$.} Geom. Topol. 8 (2004), 877–924 
\bibitem[Bo]{Bo} C. Bonatti \emph{Feuilletages proches d'une fibration} SBM: Ensaios Matematicos, no 5, (1993).

\bibitem[BMNR]{BMNR} C. Bonatti, I. Monteverde, A. Navas, C. Rivas \emph{Rigidity for $C^1$ 
actions on the interval arising from hyperbolicity I: solvable groups} preprint, arXiv :1309.5277
 

 \bibitem[Fa] {Fa}\'E. Farinelli \emph{Conjugacy classes of diffeomorphisms of the interval
in $C^1$-regularity} preprint, arXiv:1208.4771


\bibitem[Ko]{Ko} N. Kopell. \emph{Commuting diffeomorphisms.} Global Analysis 
(Proceedings of Symposia in Pure Mathematics, XIV). American Mathematical Society, Providence, RI, 1968, pp. 165–184.

\bibitem[Mc]{Mc} A. McCarthy \emph{Rigidity of trivial actions of abelian-by-cyclic groups.}
Proc. Amer. Math. Soc. 138 (2010), no. 4, 1395–1403. 

\bibitem[Na]{Na} A. Navas \emph{On the dynamics of(left) orderable groups} 
Ann. Inst. Fourier (Grenoble) 60 (2010), no. 5, 1685–1740.

\bibitem[Se]{Se} F.Sergeraert  \emph{Feuilletages et diff\'eomorphismes 
infiniment tangents \`a l'identit\'e}.  Invent. Math. 39 (1977), no. 3, 253–275.

\bibitem[Sz]{Sz} G. Szekeres. \emph{Regular iteration of real and complex functions.}
Acta Math. 100 (1958), 203–258.
 
 \bibitem[To]{To} Y.Togawa,  \emph{Centralizers of $C^{1}$-diffeomorphisms.} Proc. Amer. Math. Soc. 71 (1978), no. 2, 289–293.
\end{thebibliography}
\end{document}